\documentclass[11pt]{article}
\usepackage[left=1in,top=1in,right=1in,bottom=1in,head=0in,nofoot]{geometry}
\setlength{\footskip}{30pt}

\usepackage{amsmath,amssymb,amsthm}
\usepackage{natbib}
\usepackage{graphicx}
\usepackage{times}
\graphicspath{{./figs/}}
\usepackage{enumerate}
\usepackage{caption}
\usepackage{subcaption}
\usepackage[toc]{appendix}
\usepackage{bbold}
\usepackage{authblk}
\usepackage{chngcntr}
\usepackage{apptools}
\AtAppendix{\counterwithin{lemma}{section}}
\AtAppendix{\counterwithin{theorem}{section}}
\AtAppendix{\counterwithin{proposition}{section}}
\AtAppendix{\counterwithin{corollary}{section}}
\AtAppendix{\counterwithin{equation}{section}}

\usepackage[usenames,dvipsnames]{xcolor}

\renewcommand*{\eqref}[1]{(\ref{#1})}

\newcommand{\lb}{\left(}
\newcommand{\rb}{\right)}
\newcommand{\E}{\mathbb{E}}
\renewcommand{\P}{\mathbb{P}}
\newcommand{\R}{\mathbb{R}}
\renewcommand{\S}{\mathcal{S}}

\newcommand{\T}{\mathcal{T}}
\newcommand{\F}{\mathcal{F}}
\newcommand{\eps}{\epsilon}

\newcommand{\td}[1]{\tilde{#1}}
\newcommand{\Var}{\mathrm{Var}}
\newcommand{\Cov}{\mathrm{Cov}}
\newcommand{\tr}{\mathrm{tr}}
\newcommand{\op}{\mathrm{op}}
\newcommand{\diag}{\mathrm{diag}}

\newcommand{\tauunadj}{\hat{\tau}_{\mathrm{unadj}}}
\newcommand{\exps}[1]{\exp\left\{#1\right\}}
\newcommand{\htau}{\hat{\tau}_{\mathrm{adj}}}
\newcommand{\taue}{\hat{\tau}_{e}}
\newcommand{\hdtau}{\hat{\tau}_{\mathrm{adj}}^{\mathrm{de}}}

\newcommand{\HC}{\mathrm{HC}}

\newcommand*{\tran }{^{\mkern-1.5mu\mathsf{T}}}
\newcommand{\cure}{\mathcal{E}}

\newcommand{\lihua}[1]{{\color{black}#1}}
\newcommand{\id}{\mathbb{I}}
\newcommand{\projone}{\mathbb{V}}
\newcommand{\one}{\mathbb{1}}
\newcommand{\oneone}{\one}
\newcommand{\onezero}{\one}
\newcommand{\onet}{\one}
\newcommand{\OLS}{OLS}
\newcommand{\ATE}{ATE}
\newcommand{\iid}{i.i.d.}

\DeclareMathOperator*{\argmin}{arg\,min}

\DeclareMathOperator*{\rank}{rank}
\newtheorem{theorem}{Theorem}
\newtheorem{lemma}{Lemma}
\newtheorem{proposition}{Proposition}
\newtheorem{corollary}{Corollary}

\newtheorem{assumption}{Assumption}

\allowdisplaybreaks

%%% User-defined macros should be placed here, but keep the
\begin{document}
%\begin{frontmatter}

\title{Regression adjustment in completely randomized experiments with a diverging number of covariates}
 
\author[1]{Lihua Lei}
\affil[1]{Departments of Statistics, Stanford University \thanks{lihualei@stanford.edu}}

\author[2]{Peng Ding}
\affil[2]{Departments of Statistics, University of California, Berkeley \thanks{pengdingpku@berkeley.edu}}

\maketitle

\begin{abstract}
Randomized experiments have become important tools in empirical research. In a completely randomized treatment-control experiment, the simple difference in means of the outcome is unbiased for the average treatment effect, and covariate adjustment can further improve the efficiency without assuming a correctly specified outcome model. In modern applications, experimenters often have access to many covariates, motivating the need for a theory of covariate adjustment under the asymptotic regime with a diverging number of covariates. We study the asymptotic properties of covariate adjustment under the potential outcomes model and propose a bias-corrected estimator that is consistent and asymptotically normal under weaker conditions. Our theory is purely randomization-based without imposing any parametric outcome model assumptions. To prove the theoretical results, we develop novel vector and matrix concentration inequalities for sampling without replacement.
\end{abstract}

~\\
\noindent \textbf{Keywords:}  causal inference; average treatment effect; high-dimensional covariates; model misspecification

\section{Introduction}\label{sec:intro}

\subsection{Randomized experiment and Neyman's randomization model}

Randomized experiments have been powerful tools in agricultural, industrial, biomedical, and social sciences \citep[e.g.,][]{fisher37, kempthorne52, box2005statistics, rosenberger2015randomization, duflo2007using, gerber2012field, imbens15}. In a treatment-control experiment,  let $Y_{i}(1)$ and $ Y_{i}(0)$ be the potential outcomes if unit $i\in\{1,\ldots, n \}$ receives the treatment and control, respectively \citep{neyman23}. Define the parameter of interest as the average treatment effect (\ATE) 
$
\tau =  n^{-1}\sum_{i=1}^{n}\tau_{i},
$
where $ \tau_{i} = Y_{i}(1) - Y_{i}(0)$ is the individual treatment effect for unit $i.$ In a completely randomized experiment, the experimenter randomly assigns $n_1$ units to the treatment group and $n_0$ units to the control group, with $n=n_1+n_0$. Let $T_{i}$ denote the assignment of the $i$-th unit where $T_{i} = 1$ corresponds to the treatment and $T_i=0$ corresponds to the control. For unit $i$, only $Y_{i}^{\mathrm{obs}}  = Y_{i}(T_{i})$ is observed while the other potential outcome $Y_{i}(1 - T_{i})$ is missing. 
  
\citet{neyman23} assumed that all potential outcomes are fixed and the randomness comes solely from the treatment indicators. This finite-population perspective has a long history for analyzing randomized experiments \citep[e.g.][]{kempthorne52, imbens15, dasgupta2015causal, Middleton2015cluster, mukerjee2018using, fogarty2018regression, li16, li2020rerandomization}. It clarifies the role of the study design in the analysis without postulating a hypothetical outcome generating process. By contrast, the super-population perspective \citep[e.g.][]{tsiatis2008covariate, berk2013covariance, negi2019robust} assumes that the potential outcomes and other individual characteristics are independent and identically distributed (\iid) ~draws from some distribution. These two perspectives are both popular in the literature, but they are different in the source of randomness: the finite-population perspective conditions on the potential outcomes and quantifies the uncertainty from the treatment assignment, and the super-population averages over the potential outcomes and quantifies the uncertainty from the independent sampling process.  We focus on the former throughout the paper.

Let $\one$ denote the vector with all entries $1$, $\id$ the identity matrix, and $\projone= \id  -  (\one\tran \one )^{-1} \one \one\tran$  the projection matrix orthogonal to $\one$, with appropriate dimensions depending on the context.   Let $\|\cdot\|_{q}$ be the vector $q$-norm, i.e. $\|\alpha\|_{q} = (\sum_{i=1}^{n}|\alpha_{i}|^{q})^{ 1 / q }$ and $\|\alpha\|_{\infty} = \max_{1\leq i \leq n}|\alpha_{i}|$. Let $\|\cdot\|_{\op}$ denote the matrix operator norm. Let $N(0, 1)$ denote the standard normal distribution, and $t(\nu)$ denote standard $t$ distribution with degrees of freedom $\nu$.

\subsection{Average treatment effect estimates with and without regression adjustment}

Let $\T_{t} = \{i: T_{i} = t\}$ be the indices and $n_{t} = |\T_{t}|$ be the fixed sample size for the treatment arm $t\in \{0, 1\}$. Consider a completely randomized experiment in which $\mathcal{T}_1$ is a random size-$n_1$ subset of $\{1,\ldots, n\}$ uniformly over all $n!/( n_1! n_0! )$ subsets.  The simple difference in means 
$$
\tauunadj =  n_1^{-1} \sum_{i\in \T_{1}}Y_{i}^{\mathrm{obs}} -  n_0^{-1} \sum_{i\in \T_{0}}Y_{i}^{\mathrm{obs}} 
=  n_1^{-1} \sum_{i\in \T_{1}}Y_{i}(1) -  n_0^{-1} \sum_{i\in \T_{0}}Y_{i}(0)
$$
is unbiased for $\tau$ with variance $S_{1}^{2}/n_{1} + S_{0}^{2} / n_{0} - S_{\tau}^{2}/n$ \citep{neyman23}, where $S_{1}^{2}, S_{0}^{2}$ and $ S_{\tau}^{2}$ are the finite-population variances of the $Y_{i}(1)$'s, $Y_{i}(0)$'s and $\tau_{i}$'s, respectively.

The experimenter usually collects pre-treatment covariates. If they are predictive of the potential outcomes,   incorporating them in the analysis can improve the estimation efficiency. Suppose unit $i$ has a $p$-dimensional vector of pre-treatment covariates $x_{i}\in \R^{p}$. Early works on the analysis of covariance assumed a constant treatment effect \citep{fisher37, kempthorne52, hinkelmann07}, under which a commonly-used  estimate is the coefficient of the treatment indicator of the ordinary least squares (\OLS) fit of the $Y_{i}^{\mathrm{obs}}$'s on $T_i$'s and $x_i$'s. \citet{freedman08a} criticized this  approach, showing that it can be even less efficient than $\tauunadj $ in the presence of treatment effect heterogeneity, and the estimated standard error based on the \OLS ~can be inconsistent for the true standard error under the randomization model.

\citet{lin13} proposes a simple solution. We center the covariates at $n^{-1}\sum_{i=1}^{n}x_{i} = 0$ without loss of generality. His estimator for $\tau$ is the coefficient of the treatment indicator in the \OLS ~fit of the $Y_{i}^{\mathrm{obs}}$'s on $T_i$'s, $x_i$'s and the  interaction terms $T_ix_i$'s. His estimator is consistent, asymptotically normal, and more efficient than $\tauunadj$.  He further shows that the Eicker--Huber--White standard error is consistent for the true standard error. His results hold under the finite-population randomization model, without assuming that the linear model is correct.

We use an alternative formulation of the regression adjustment and consider the following family of covariate-adjusted  estimators:
\begin{equation}\label{eq:family_regadj}
\hat{\tau}(\gamma_{1}, \gamma_{0}) =  n_1^{-1} \sum_{i\in \T_{1}}(Y_{i}^{\mathrm{obs}} - x_{i}\tran \gamma_{1}) -  n_0^{-1} \sum_{i\in \T_{0}}(Y_{i}^{\mathrm{obs}} - x_{i}\tran \gamma_{0}).
\end{equation}
Because $n_{t}^{-1}\sum_{i\in \T_{t}} x_{i}\tran \gamma_t $ has expectation zero over all possible randomizations, the estimator in \eqref{eq:family_regadj} is unbiased for any fixed coefficient vectors $\gamma_t \in \R^{p}$ $(t=0,1)$. It is the difference-in-means estimator with potential outcomes replaced by $\{ Y_{i}(1) - x_{i}\tran \gamma_{1}, Y_{i}(0) - x_{i}\tran \gamma_{0} \} _{i=1}^n $.

Let $Y(t) = (Y_{1}(t), \ldots, Y_{n}(t))\tran \in \R^{n}$ denote the vector of potential outcomes under treatment $t$ and $X = (x_{1} , \ldots, x_{n} )\tran $ denote the covariate matrix. Without loss of generality, we assume
$
\one\tran X = 0$ and $\rank(X) = p,
$
i.e., the covariate matrix has centered columns and full column rank. Otherwise, we transform $X$ to $\projone X$ and remove the redundant columns to ensure the full column rank condition. This operation does not affect inferential validity because $X$ is fixed, or, equivalently, our inference conditions on $X$.

Let $\beta_{t}$ be the population \OLS ~coefficient of regressing $Y(t)$ on $( \one, X)$: 
\begin{eqnarray}
(\mu_{t}, \beta_{t}) = \argmin_{\mu\in \R, \beta\in \R^{p}}\|Y(t) - \mu\one - X\beta\|_{2}^{2}  
= \lb  n^{-1} \sum_{i=1}^{n}Y_{i}(t) ,     (X\tran X)^{-1}X\tran Y(t)  \rb  , \label{eq:mut-ols}
\end{eqnarray}
which holds because $X$ is orthogonal to $\one$. \citet[][Example 9]{li2017general} show that the \OLS ~coefficients $(\beta_{1}, \beta_{0})$ in \eqref{eq:mut-ols} minimize the variance of the estimator in \eqref{eq:family_regadj}. We emphasize that $\beta_{1}$ and $\beta_{0}$ are both unobserved population quantities.

The classical analysis of covariance chooses $\gamma_{1} = \gamma_{0} = \hat{\beta}$, the coefficient of the covariates in the \OLS ~fit of the $Y_{i}^{\mathrm{obs}}$'s on $T_i$'s and $x_i$'s with an intercept. This strategy implicitly assumes away treatment effect heterogeneity, and can lead to inferior properties when $\beta_1 \neq \beta_0$ \citep{freedman08a}. \cite{lin13} chooses $\gamma_{1} = \hat{\beta}_1$ and $\gamma_0 = \hat{\beta}_0$, the coefficients of the covariates in the \OLS ~fit of $Y_{i}^{\mathrm{obs}}$'s on $x_i$'s with an intercept, in the treatment and control groups, respectively. Numerically, this is identical to the estimator obtained from the regression with interactions discussed before. Throughout the rest of the paper, we refer to it as the regression-adjusted estimator.

\subsection{Our contributions}\label{subsec:contribution}

In practice, experiments often have many covariates. Therefore, it is important to approximate the sampling distribution with $p$ growing with the sample size $n$ at a certain rate. Under the finite-population randomization model, \citet{bloniarz16} discussed a high dimensional regime with possibly larger $p$ than $n$ but assumed that the potential outcomes could be well approximated by a sparse linear combination of the covariates, under the regime with the number of non-zero coefficients being much smaller than $n^{1/2} / \log p$. Under a super-population framework, \citet{wager16} discussed covariate adjustment using the \OLS ~and some other machine learning techniques.

We study the regression-adjusted estimator under the finite-population perspective in the regime where $p < n$ but $p$ grows with $n$ at a certain rate. We argue that this type of large-$n$-moderate-$p$ asymptotics is more important than the large-$n$-fixed-$p$ asymptotics to analyze completely randomized experiments when $p$ is not a negligible number compared to $n$. For instance, the study on pulmonary artery catheter in  \citet{bloniarz16} has 1013 subjects with 59 covariates. In this case, $p$ is approximately $n^{0.6}$ and thus the inferential guarantees based on fix-$p$ asymptotics are questionable.

We focus on this estimator because it is widely used in practice thanks to its simplicity, and it does not require any tuning parameter, unlike other high dimensional or machine learning methods. As in the classic linear regression, the asymptotic properties depend crucially on the maximum leverage score 
$$
\kappa = \max_{1\leq i\leq n} H_{ii},
$$
where the $i$-th leverage score $H_{ii}$ is $i$-th diagonal entry of the hat matrix $H = X(X\tran X)^{-1}X\tran $.  Under the regime $\kappa \log p\rightarrow 0$, we prove the consistency of the regression-adjusted estimator under mild moment conditions on the population \OLS ~residuals. In the favorable case where all leverage scores are close to their average $p / n$, the consistency holds if $p = o(n / \log n)$.

In addition, we prove that the regression-adjusted estimator is asymptotically normal under $\kappa p\rightarrow 0$ and extra mild conditions, with the same variance formula as in the fixed-$p$ regime. Furthermore, we propose a debiased estimator, which is asymptotically normal under an even weaker assumption $\kappa^{2} p \log p\rightarrow 0$, with the same variance as before. Therefore, this new estimator reduces the asymptotic bias without inflating the asymptotic variance. In the favorable case where all leverage scores are close to their average $p / n$, the regression-adjusted estimator is asymptotically normal when $p = o(n^{1/2})$, but the debiased estimator is asymptotically normal when $p = o\{ n^{2/3} / (\log n)^{1/3}\} $. The regression-adjusted estimator may also be asymptotically normal in the latter regime, but it requires an extra condition; see Theorem \ref{thm:asym_normality_htau}. In our simulation, the debiased estimator indeed yields better finite-sample inferences.

For statistical inference, we propose several asymptotically conservative variance estimators, which yield valid asymptotic Wald-type confidence intervals for the \ATE. We prove the results under the same conditions as required for the asymptotic normality. To prove these results, we also make some technical contributions by proving novel vector and matrix concentration inequalities for sampling without replacement.

\section{Regression Adjustment}\label{sec:estimators}

\subsection{Point estimators}\label{sec::pointestimators}

We reformulate the regression-adjusted estimator. The \ATE  ~$\tau$ is the difference between the two intercepts of the population \OLS ~coefficients in \eqref{eq:mut-ols}:
$
\tau =   n^{-1}  \sum_{i=1}^n Y_i(1) -  n^{-1}  \sum_{i=1}^n Y_i(0)  =   \mu_{1} - \mu_{0}.
$ 
Therefore, we focus on estimating $\mu_1$ and $\mu_0.$  Let $X_{t}\in \R^{n_{t}\times p}$ denote the sub-matrix formed by the rows of $X$, and $Y_{t}^{\mathrm{obs}}\in \R^{n_{t}}$ the subvector of $Y^{\mathrm{obs}}=(Y_1^{\mathrm{obs}},\ldots, Y_n^{\mathrm{obs}} )\tran $, with indices in $\T_{t}$ $(t=0,1)$.  The regression-adjusted estimator follows two steps. First, for $t\in \{0, 1\}$, we regress $Y_{t}^{\mathrm{obs}}$ on $X_{t}$ with an intercept, and obtain the fitted intercept $\hat{\mu}_{t}\in \R$ and coefficient of the covariate $\hat{\beta}_{t}\in \R^{p}$. Second, we estimate $\tau$ by
\begin{equation}\label{eq:estimator}
\htau = \hat{\mu}_{1} - \hat{\mu}_{0}.
\end{equation}
In general, $\htau $ is biased in finite samples. Correcting the bias gives stronger theoretical guarantees as our later asymptotic analysis confirms. Here we propose a bias-corrected estimator. Define the potential residuals based on the population \OLS ~as  
\begin{equation}
  \label{eq:et}
  e(t) = Y(t) - \mu_{t} - X\beta_{t},\quad (t=0,1) . 
\end{equation}
The property of the \OLS ~guarantees that $e(t)$ is orthogonal to $\one$ and $X$: 
\begin{equation}
  \label{eq:error_key_property}
  \one\tran e(t) = 0,\quad 
  X\tran e(t) = 0 ,\quad (t=0,1). 
\end{equation}
Let $\hat{e}\in \R^{n}$ be the vector residuals from the sample \OLS, where $  \hat{e}_{i}  =  Y_{i}^{\mathrm{obs}} - \hat{\mu}_{1} - x_{i}\tran \hat{\beta}_{1}$ for the treated units and $ \hat{e}_{i} =   Y_{i}^{\mathrm{obs}} - \hat{\mu}_{0} - x_{i}\tran \hat{\beta}_{0}$ for the control units. 
For any vector $\alpha\in \R^{n}$, let $\alpha_{t}$ denote the subvector of $\alpha$ with indices in $\T_{t}$, e.g., $Y_{t}(1)$ and $ e_{t}(1)$ are the subvectors of $Y(1)$ and $e(1)$ corresponding to the units in treatment arm $t$, respectively. 
Let
$$
H = X(X\tran X)^{-1}X\tran,\quad H_t= X_{t}(X_{t}\tran X_{t})^{-1}X_{t}\tran
$$
be the hat matrices of $X$ and $X_{t}$, respectively. Let $H_{ii}$ be the $i$-th diagonal element of $H$, also termed as the leverage score, and let $H_{t,ii}$ be the diagonal element of $H_t $ corresponding to unit $i$. From the higher order asymptotic expansion, the bias of $\htau$ depends on 
\begin{equation}\label{eq:hatDeltat}
\Delta_{t} = n^{-1}\sum_{i=1}^{n}  e_{i}(t)  H_{ii}, \quad
\Delta = \max\{ |\Delta_1|, |\Delta_0|   \}.
\end{equation}
With the empirical analogs $\hat{\Delta}_{t} = n_{t}^{-1}\sum_{i\in \T_{t}} \hat{e}_{i}  H_{ii}$, we introduce the following debiased estimator: 
\begin{equation}\label{eq:estimator_debiased}
\hdtau = \htau - \lb \frac{n_{1}}{n_{0}}\hat{\Delta}_{0} - \frac{n_{0}}{n_{1}}\hat{\Delta}_{1}\rb . 
\end{equation}
When $p = 1$, \eqref{eq:estimator_debiased} reduces to the bias formula in \citet[][Section 6 point (iv)]{lin13}. Thus \eqref{eq:estimator_debiased} is an extension to the multivariate case. With some algebraic manipulations, we can show that $\hdtau$ is a finite-population analog of \cite{tan2014second}'s bias-corrected regression estimator in the context of survey sampling with a fixed $p$.

\subsection{Variance estimators}
With a fixed $p$, \cite{lin13} proved that $n^{1/2}(\htau - \tau)$ is asymptotically normal with variance
\begin{eqnarray}
\sigma_{n}^{2}  &=&  n_1^{-1} \sum_{i=1}^{n}e_i^2(1) +  n_0^{-1} \sum_{i=1}^{n}e_i^2(0) -  n^{-1} \sum_{i=1}^{n}\{e_{i}(1)- e_{i}(0)\}^{2}. \label{eq:sigman}%  \\
% &=&\sum_{i=1}^{n}\lb \lb\frac{n_{0}}{n_{1}n}\rb^{1/2}e_{i}(1)+ \lb\frac{n_{1}}{n_{0}n}\rb^{1/2}e_{i}(0)\rb^{2} .  \label{eq:sigman2}
\end{eqnarray}
% The second form \eqref{eq:sigman2} follows from some simple algebra and shows that $\sigma_{n}^{2}$ is always non-negative. The first form 
Formula \eqref{eq:sigman} motivates conservative variance estimators since the third term in \eqref{eq:sigman} has no consistent estimator without further assumptions on $e(1)$ and $e(0)$. Ignoring it and estimating the first two terms in \eqref{eq:sigman} by their sample analogs, we have the following variance estimator: 
\begin{equation}\label{eq:sigman2hat_simple}
\hat{\sigma}^{2} = \frac{n}{n_{1}(n_{1} - 1)}\sum_{i\in \T_{1}} \hat{e}_i^2 + \frac{n}{n_{0}(n_{0} - 1)}\sum_{i\in \T_{0}} \hat{e}_i^2.
\end{equation}
Although \eqref{eq:sigman2hat_simple} appears to be conservative due to the neglect of the third term in \eqref{eq:sigman}, we find in numerical experiments that it typically underestimates $\sigma_{n}^{2}$ if the number of covariates is large. The classic linear regression literature \citep[e.g.][]{mackinnon13} suggests rescaling the residual as $\td{e}_{i} = \zeta_{i}\hat{e}_{i}$, where $\zeta_{i} = 1$ for HC0, $\zeta_{i} = \{ (n_t-1)/(n_t-p) \} ^{1/2}$ for HC1, $\zeta_{i} = 1 / (1 - H_{t, ii})^{1/2}$ for HC2 and $\zeta_{i} = 1 / (1 - H_{t, ii})$ for HC3, for $i\in \T_{t}$. HC0 corresponds to the estimator \eqref{eq:sigman2hat_simple} without corrections. Previous literature has shown that the above corrections, especially HC3, are effective in improving the finite sample performance of variance estimator in linear regression under independent super-population sampling. More interestingly, it is also beneficial to use these rescaled residuals in the context of a completely randomized experiment, motivating 
\begin{equation}
  \hat{\sigma}_{\HC j}^{2} = \frac{n}{n_{1}(n_{1} - 1)}\sum_{i\in \T_{1}}   \td{e}_{i, j}^{2} + \frac{n}{n_{0}(n_{0} - 1)}\sum_{i\in \T_{0}} \td{e}_{i, j}^{2}
\end{equation}
with  residual $\td{e}_{i, j}$  corresponding to HC$j$ for $j =0,  1, 2, 3$. Based on the normal approximations, we can construct Wald-type confidence intervals for $\tau$ based on point estimators $\htau$ and $\hdtau$ with estimated standard errors $\hat{\sigma}_{\HC j}/n^{1/2}$.

\section{Main Results}\label{sec:main}

\subsection{Regularity conditions}\label{subsec::conditions}

We embed the finite-population quantities $\{x_i, Y_i(1), Y_i(0)\}_{i=1}^n$ into a sequence, and impose regularity conditions on this sequence. The first condition is on the sample sizes. 

\begin{assumption}\label{assumption::A1}
$n / n_{1}  = O(1)$ and $n / n_{0} = O(1)$.
\end{assumption}

Assumption \ref{assumption::A1} holds automatically if treatment and control groups have fixed proportions, e.g., $n_1/n=n_0/n=1/2$ for balanced experiments. It is not essential and can be removed at the cost of complicating the statements. 

The second condition is on $\kappa = \max_{1\leq i \leq n}H_{ii}$, the maximum leverage score, which also plays a crucial role in the theory of classic linear models \citep[e.g.][]{huber73, mammen89}. 

\begin{assumption}\label{assumption::A2}
$\kappa \log p = o(1)$.
\end{assumption}

The maximum leverage score satisfies  
\begin{equation}
  \label{eq:Hiibound}
  p / n = \tr(H)/ n\le \kappa \le \|H\|_{\op} = 1  \Longrightarrow \kappa \in [p / n, 1] .
\end{equation}
Assumption \ref{assumption::A2} permits influential observations as long as $\kappa = o(1 / \log p)$.
In the favorable case with $\kappa = O(p / n)$, it reduces to $p\log p / n\rightarrow 0$, which permits $p$ to grow as fast as $n^{\gamma}$ for any $0\leq \gamma < 1$. Moreover, it implies
\begin{equation}
  \label{eq:pn}
p/n \le \kappa = o\lb 1 / \log p \rb = o(1) \Longrightarrow p = o(n).
\end{equation}

Assumptions \ref{assumption::A1} and \ref{assumption::A2} are useful for establishing consistency. The following two extra conditions are useful for the variance estimation and asymptotic normality. The third condition is on the correlation between the potential residuals from the population \OLS ~in \eqref{eq:et}. 

\begin{assumption}\label{assumption::A3}
There exists a constant $\eta > 0$ independent of $n$ such that 
$$
\rho_{e} =     e(1)\tran e(0)    / \{  \| e(1) \|_2 \| e(0) \|_2 \}    > -1 + \eta .
$$
\end{assumption}

Assumption \ref{assumption::A3} is mild because it is unlikely to have a perfectly negative sample correlation between the treatment and control potential residuals in practice.

The fourth condition is on the following two measures of the potential residuals: 
\[
\cure_{2} = n^{-1} \max \left\{\|e(0)\|_{2}^{2} , \|e(1)\|_{2}^{2} \right\}, \quad \cure_{\infty} = \max \left\{\|e(0)\|_{\infty}, \|e(1)\|_{\infty}\right\}.
\]

\begin{assumption}\label{assumption::A4}
$\cure_{\infty}^{2} / (n\cure_{2}) = o(1).$
\end{assumption}

Assumption \ref{assumption::A4} is a Lindeberg--Feller-type condition requiring that no single residual dominates the others. A similar form appeared in \citet{hajek1960limiting}'s finite-population central limit theorem. Previous works require more stringent assumptions on the fourth moment \citep{lin13, bloniarz16} while Assumption \ref{assumption::A4} allows for heavy-tailed outcomes with $\cure_2$ growing with $n$.

These assumptions are weaker than those in previous works \citep[e.g.][]{lin13, bloniarz16, li2020rerandomization}. Supplementary Material II provides further discussions.

\subsection{Asymptotic expansions and consistency}

We start with the asymptotic expansions of $\htau$ and $\hdtau$.

\begin{theorem}\label{cor:expansion}
  Under Assumptions \ref{assumption::A1} and \ref{assumption::A2},
  \begin{eqnarray}
  &    \htau - \tau &= \taue+ O_\P\left[ \Delta + \left\{ \cure_{2}(\kappa^{2} p\log p + \kappa) / n \right\}^{1/2}\right] \label{eq:expansion_htau} , \\
   &   \hdtau - \tau &= \taue + O_\P\left[ \left\{ \cure_{2}(\kappa^{2} p \log p + \kappa) / n \right\}^{1/2}\right], 
   \label{eq:expansion_hdtau}
%&    \htau - \tau &= \taue+ O_\P\left[ \Delta + \left\{\frac{\cure_{2}(\kappa^{2} p\log p + \kappa)}{n}\right\}^{1/2}\right] \label{eq:expansion_htau} \\
% & &= \frac{\oneone \tran e_{1}(1)}{n_{1}} - \frac{\onezero \tran e_{0}(0)}{n_{0}} + O_\P\lb \lb\frac{\cure_{2} \kappa p}{n}\rb^{1/2}\rb ,  \label{eq:expansion_htau2}
% \\
% &   \hdtau - \tau &= \taue + O_\P\left[ \left\{\frac{\cure_{2}(\kappa^{2} p \log p + \kappa)}{n}\right\}^{1/2}\right],  \label{eq:expansion_hdtau}
  \end{eqnarray}
  where $ \taue = \oneone \tran e_{1}(1) / n_{1} -  \onezero \tran e_{0}(0) / n_{0}$ is the difference in means of the potential residuals. 
\end{theorem}

In \eqref{eq:expansion_htau} and \eqref{eq:expansion_hdtau}, $\taue $ has mean $0$ and variance $\sigma_n^2/n$ \citep{neyman23}, which is $O_\P\{ (\sigma_n^2 / n)^{1/2} \}$ by Chebyshev's inequality. Based on the definitions in \eqref{eq:hatDeltat}, we further have
 \begin{equation}\label{eq:Deltat}
 \Delta^{2} = \max_{t=0,1}\Delta_{t}^{2}
 \le   
 \left( n^{-1} \sum_{i=1}^{n}H_{ii} \right) \left\{  \max_{t=0,1} n^{-1} \sum_{i=1}^{n}e_{i}^2(t)  H_{ii}    \right\} 
 \le
 \cure_{2} \kappa p/n
 % \frac{\cure_{2} \kappa p}{n} 
 \end{equation}
 by the Cauchy--Schwarz inequality and the facts that $\sum_{i=1}^{n}H_{ii}=p$ and $H_{ii} \leq \kappa $. 
% \begin{equation}\label{eq:first_term}
%   \frac{\oneone \tran e_{1}(1)}{n_{1}} - \frac{\onezero \tran e_{0}(0)}{n_{0}}
% = O_\P\lb\frac{\oneone \tran e_{1}(1)}{n_{1}} - \frac{\onezero \tran e_{0}(0)}{n_{0}}\rb
% =O_\P\lb   \lb\frac{ \sigma_n^2}{ n}\rb^{1/2} \rb
% = O_\P\lb\lb\frac{\cure_{2}}{n}\rb^{1/2}\rb.
% \end{equation}
% Coupled with \eqref{eq:first_term}
Since $\kappa \le 1$ and $\sigma_n^2 = O(\cure_2)$, Theorem \ref{cor:expansion} implies   
%  \begin{eqnarray*}
%   \htau - \tau  =  O_\P\left[\left\{\frac{\cure_{2}(\kappa p + 1)}{n}\right\}^{1/2}\right] , \quad 
%     \hdtau - \tau  =   O_\P\left[\left\{\frac{\cure_{2}(\kappa^{2} p \log p + 1)}{n}\right\}^{1/2}\right],
%  \end{eqnarray*} 
$$
   \htau - \tau  =  O_\P\left[\left\{ \cure_{2}(\kappa p + 1) / n \right\}^{1/2}\right] , \quad 
     \hdtau - \tau  =   O_\P\left[\left\{ \cure_{2}(\kappa^{2} p \log p + 1) / n \right\}^{1/2}\right],
     $$ 
which  further imply the following consistency results by requiring the right-hand sides  to vanish.

\begin{theorem}\label{thm:consistency}
   Under Assumptions \ref{assumption::A1} and \ref{assumption::A2},
$\htau$ is consistent if 
 $
 \cure_{2} = o\{ n / (\kappa p + 1 ) \},
$
and $\hdtau$ is consistent if
$
\cure_{2} = o\{  n / ( \kappa^{2} p \log p + 1) \}.
$
\end{theorem}

Theorem \ref{thm:consistency} implies the following consistency results for a fixed or diverging $p$. 

\begin{corollary}\label{corollary::consistency}
Under Assumptions \ref{assumption::A1} and \ref{assumption::A2}, both $\htau$ and $\hdtau$ are consistent if either (i) $p$ is fixed and  $\cure_{2} = o(n)$ or (ii) $p$ is diverging with $n$ and $\cure_2 = O(n / p)$. 
\end{corollary}

We can prove Corollary \ref{corollary::consistency} by verifying the more stringent condition for the consistency of $\htau$  in Theorem \ref{thm:consistency}. With a fixed $p$ and $\cure_{2} = o(n)$, we have
 $ \cure_{2} / \{n / (\kappa p + 1 )\} \leq  \cure_{2}/n \times (p+1)  \rightarrow 0$ because $\kappa \le 1$; with a diverging $p$ and $\cure_2 = O(n / p)$, we have
$
  \cure_{2}  /\{   n / (\kappa p + 1 ) \}  
=   \cure_{2} / (n /p ) \times  (\kappa + 1/p) \rightarrow 0
$ 
 because Assumption \ref{assumption::A2} implies $\kappa = o(1 / \log p)$.

\subsection{Asymptotic normality and variance estimation}
In \eqref{eq:expansion_htau} and \eqref{eq:expansion_hdtau}, $\taue $ is asymptotically normal with mean $0$ and variance $\sigma_n^2/n$. Therefore, the asymptotic normalities of $\htau$ and $\hdtau$ hold if the the remainders vanish after being multiplied by $n^{1/2} / \sigma_{n}$. We first present the result for $\htau$.

\begin{theorem}\label{thm:asym_normality_htau}
  Under Assumptions \ref{assumption::A1}--\ref{assumption::A4}, $ n^{1/2}(\htau - \tau) / \sigma_{n} \rightsquigarrow N(0, 1)$ if
$\kappa^{2} p \log p = o(1)$ and $ n\Delta^{2} = o(\cure_{2}).$  
\end{theorem}

The term $n\Delta^2$ is the squared bias of $n^{1/2}\htau$. If it vanishes, $\htau$ has the same asymptotic normality as $\taue $. We can use Theorem \ref{thm:asym_normality_htau} to find more interpretable sufficient conditions to replace $ n\Delta^{2} = o(\cure_{2})$. 
An upper bound on  $\Delta$ is in \eqref{eq:Deltat}. So an obvious sufficient condition is $\kappa p = o(1)$, which also implies $\kappa^{2} p \log p = (\kappa p) (\kappa \log p ) =   o(1)$ under Assumption \ref{assumption::A2}. On the other hand, because  $e(t)$ has mean zero, we have $
 \Delta_{t} =  n^{-1}\sum_{i=1}^{n}  e_{i}(t)   \lb H_{ii} -  p / n \rb 
 $, which helps to derive another upper bound on $\Delta$. Define the maximum absolute deviation of the $H_{ii}$'s from their average as 
 $$
 \kappa_0  = \max_{1\leq i\leq n} |H_{ii} - p / n| ,
 $$
 and then we can use the Cauchy--Schwarz inequality to obtain 
  \[
  \Delta = \max_{t=0,1}|\Delta_{t}|\le \kappa_0 
 \max_{t=0,1}  n^{-1} \sum_{i=1}^{n}|e_{i}(t)| 
 \le \kappa_0  \cure_{2}^{1/2}.
 \]
So another sufficient condition is  $\kappa_0 = o(n^{-1/2})$. This condition implies that $\kappa \leq \kappa_0 + p/n =o(n^{-1/2}) + p/n$, which, coupled with $p = o\{n^{2/3} / (\log n)^{1/3}\}$, implies   
$
\kappa^{2} p \log p = o(1). 
$
The following corollary summarizes the results from the above discussion. 

\begin{corollary}\label{cor:asym_normality_htau}
  Under Assumptions \ref{assumption::A1}--\ref{assumption::A4}, $ n^{1/2}(\htau - \tau) / \sigma_{n} \rightsquigarrow N(0, 1)$ if either (i) $\kappa p = o(1)$ or (ii) 
$p = o\{ n^{2/3} / (\log n)^{1/3}\}$ and   $\kappa_0 = o(n^{-1/2})$. 
\end{corollary}

Consider  the favorable case with $\kappa = O(p / n)$. Condition (i) reduces to $p = o(n^{1/2})$, so Corollary \ref{cor:asym_normality_htau} extends \cite{lin13}'s result to $p = o(n^{1/2})$ without any further assumptions.
Condition (ii) states that when all the leverage scores are within an $o(n^{-1/2})$ neighborhood of their average $p / n$, the requirement on $p$ can be relaxed to $o\{ n^{2/3} / (\log n)^{1/3}\}$.  Supplementary Material II shows that when the $x_i$s are realizations of multivariate normal vectors as assumed by \cite{wager16}, the leverage score conditions hold with high probability.

Although we can relax the constraint on the dimension $p$ under condition (ii), it is not ideal to impose an extra condition on the leverage scores. When $p > n^{1/2}$, the leverage score condition is more stringent than that in the favorable case. By contrast, the debiased estimator is asymptotically normal without any additional condition.

\begin{theorem}\label{thm:asym_normality_hdtau}
  Under Assumptions \ref{assumption::A1}--\ref{assumption::A4}, $ n^{1/2}(\hdtau - \tau) / \sigma_{n} \rightsquigarrow N(0, 1)$ if $\kappa^{2} p \log p = o(1)$. 
\end{theorem}

In the favorable case with $\kappa = O(p / n)$, the condition in Theorem \ref{thm:asym_normality_hdtau} reduces to $p^{3}\log p / n^2 = o(1)$, which permits $p$ to grow as fast as $o\{ n^{2/3} / (\log n)^{1/3}\}$, verifying the claim in Section \ref{sec:intro}. In general, it is strictly weaker than the condition in Theorem \ref{thm:asym_normality_htau}, which relies on an extra assumption that $n\Delta^2 = o(\cure_2)$. In the favorable case, as shown in Corollary \ref{cor:asym_normality_htau}, Theorem \ref{thm:asym_normality_hdtau} removes the condition on $\kappa_0$.

The variance estimators $\hat{\sigma}_{\HC j}^{2}$'s are all asymptotically equivalent because the correction terms  are negligible under our asymptotic regime. They are asymptotically conservative estimators of $\sigma_{n}^{2}$, so the Wald-type confidence intervals for $\tau$ are all asymptotically conservative.
\begin{theorem}\label{thm:variance_est}
Under Assumptions \ref{assumption::A1}--\ref{assumption::A4}, there exists a non-negative sequence $a_{n} = o_{\P}(1)$ which depends on $\{ x_i, Y_i(1), Y_i(0)\}_{i=1}^{n}$ such that $\hat{\sigma}_{\HC j}^{2} / \sigma_{n}^{2}\ge 1 - a_{n}$ for all $j\in \{0, 1, 2, 3\}$. 
\end{theorem}

\subsection{Comparison with existing results}\label{sec::comparison}

Theoretical analyses under the finite-population randomization model are challenging due to the lack of probability tools. The closest work to ours is \cite{bloniarz16}, which allows $p$ to grow with $n$ and potentially exceed $n$. However, they assume that the potential outcomes have sparse linear representations based on the covariates, and require $s = o (n^{1/2} / \log p) $ where $s$ is a measure of sparsity. Under additional regularities conditions, they show that $\hat{\tau}(\hat{\beta}_1^{\text{lasso}}, \hat{\beta}_0^{\text{lasso}})$ is consistent and asymptotically normal with $(\hat{\beta}_1^{\text{lasso}}, \hat{\beta}_0^{\text{lasso}})$ being the LASSO coefficients of the covariates. Although the LASSO-adjusted estimator can handle ultra-high dimensional case where $p >\!\!> n$, it has three limitations. First, the requirement $s <\!\!< n^{1/2} / \log p$ is stringent. 
Second, the penalty level of the LASSO depends on unobserved quantities. Although they use the cross-validation to select the penalty level, the theoretical properties of this procedure is still unclear. Third, their ``restrictive eigenvalue condition'' imposes certain non-singularity on the submatrices of the covariate matrix. However, the covariate matrix can be ill-conditioned especially when interaction terms of the basic covariates are included in practice. In addition, this condition is computationally challenging to check. 
Although our results cannot deal with the case of $p > n$, we argue that $p<n$ without sparsity is an important regime in many applications. 

Due to the numerical equivalence of the regression-adjusted estimator to the \OLS~estimator, it is attempting to view our theory as a special case of the existing literature on high dimensional linear models \citep[e.g.][]{huber73, portnoy85, mammen89, lei16, cattaneo2018inference}. However, the two approaches are fundamentally different. They assume a linear model for  the observed outcomes $Y_{i}^{\mathrm{obs}} = \alpha + T_{i}\tau + x_{i}\tran \beta + \eps_{i}$, where $T_i$ denotes the treatment indicator, $x_{i}$ denotes the covariates to be adjusted for, and $\eps_{i}$ denotes the random error for unit $i$. Under their framework, the linear model must be correctly-specified with the random error $\eps_{i}$ being an important component in statistical inference. Moreover, a linear model implicitly assumes treatment-unit additivity, that is, the treatment effect is either constant or uncorrelated with covariates. By contrast, we do not assume any correctly specified linear model for the potential outcomes but treat them as fixed quantities instead. \citet{neyman23}'s model allows for arbitrary treatment effect heterogeneity which suggests that the additive linear model is an inadequate specification \citep{freedman08a}. Therefore, the results in this paper are distinct from those assuming linear models; they are not directly comparable. Similarly, although \cite{wager16} relax the assumption of the constant treatment effect in linear models and can handle the high dimensional case with sparsity level $s = o(n / \log p)$ or $p / n \rightarrow \gamma\in (0, \infty)$, their theory requires $X$ to be normal and $Y(t)$ to be a homoskedastic linear model of $X$. By contrast, our analysis needs none of these assumptions.

\section{Numerical Experiments}\label{sec:experiment}

\subsection{Data generating process}\label{sec::dgp}

To confirm and complement our theory, we use extensive numerical experiments to examine the finite-sample performance of the estimators $\htau$ and $\hdtau$ as well as the variance estimators $\hat{\sigma}^{2}_{\textup{HC}j}$ for $j = 0, 1, 2, 3$. To save space, we only present the results for one synthetic data and relegate the results for other synthetic data to Supplementary Material III. All programs to replicate the results in this article can be found in \texttt{https://github.com/lihualei71/RegAdjNeymanRubin/.}

%We examine the moderate-sample performance of the estimators. 
We set $n = 2000, n_{1} = n\pi_{1}$ with $\pi_{1} = 0.2$ and generate a matrix $\mathcal{X}\in \R^{n\times n}$ with \iid ~entries from $t(2)$. We only generate one copy of $X$ per experiment and keep it fixed. For each exponent $\gamma \in \{0, 0.05, \ldots, 0.7 \}$, we let $p = \lceil n^{\gamma}\rceil$ and take the first $p$ columns of $\mathcal{X}$ as the covariate matrix. In Supplementary Material III, we also simulate $X$ with $N(0,1)$ and $t(1)$ entries with both $\pi_{1}\in \{0.2, 0.5\}$.  We select $t(2)$ distribution for presentation because it is neither too idealized as $N(0, 1)$, for which $\kappa\sim p / n$, nor too irregular as $t(1)$. It is helpful to illustrate and complement our theory.

With $X$, we construct the potential outcomes from $ Y(1) = X\beta_{1}^{*} +  \eps(1) $ and $ Y(0) = X\beta_{0}^{*} + \eps(0)$. 
Because 
$\hat{\beta}_{t} - \beta_{t}^{*} = (X_{t}\tran X_{t})^{-1}X_{t} \tran\eps(t)$ does not depend on $\beta_{t}^{*}$, we take $\beta_{1}^{*} =  \beta_{0}^{*} = 0 \in \R^{p}$ without changing the bias, variance and coverage properties of the estimates $\htau$ and $\hdtau$. We generate $\{ \eps(1), \eps(0) \} $ as realizations of random vectors with \iid ~entries from $N(0,1)$, $t(2)$, or $t(1)$. We also consider another case with $\eps(1) = \eps(0)$ that corresponds to the sharp null hypothesis in Supplementary Material III.
%. As a result, the estimates $\htau$ and $\hdtau$ are constants in $(\beta_{1}^{*}, \beta_{0}^{*})$ and thus their bias, variance and coverage.} For this reason, we take $\beta_{1}^{*} =  \beta_{0}^{*} = 0 \in \R^{p}$ without loss of generality. Furthermore we $ \sigma_{1}^{*} = \sigma_{0}^{*} = 1$, and $\eps(1), \eps(0)\in \R^{n}$ for simplicity. % Note that for given $\eps(1), \eps(0)$ and $X$, both the ATE estimate ($\htau$ or $\hdtau$) and the variance estimate are invariant to the choices of $\beta_{1}^{*}$ and $\beta_{0}^{*}$.  
%Similarly, we generate $(\eps(1), \eps(0))$ as realizations of random vectors with i.i.d. entries from $N(0,1)$, or $t(2)$, or $t(1)$.
Given $X\in \R^{n\times p}$ and potential outcomes $Y(1), Y(0)\in \R^{n}$, we generate 5000 binary vectors $T\in \R^{n}$, and for each $T$, we observe half of the potential outcomes.

\subsection{Repeated sampling evaluations}\label{subsec:simulation}

Based on the observed data, we obtain two estimates $\htau$ and $\hdtau$, as well as four variance estimates $\hat{\sigma}^{2}_{\textup{HC}j}\,\, (j = 0,1,2,3)$ and the theoretical asymptotic variance $\sigma_{n}^{2}$. Below $\hat{\tau}$ can be either $\htau$ or $\hdtau$, and $\hat{\sigma}^2$ can be any of the five estimates. 
Let $\hat{\tau}_{1}, \ldots, \hat{\tau}_{R}$ denote the estimates in $R=5000$ replicates, and $\tau$ denote the true \ATE. The empirical relative absolute bias is
$
n^{1/2}|  R^{-1}\sum_{k=1}^{R}\hat{\tau}_{k} - \tau | / \sigma_{n}.
$
Similarly, let $\hat{\sigma}_{1}^{2}, \ldots, \hat{\sigma}_{R}^{2}$ denote the variance estimates obtained in $R$ replicates, and $\hat{\sigma}_{*}^{2}$ denote the empirical variance of $( n^{1/2}\hat{\tau}_{1}, \ldots, n^{1/2} \hat{\tau}_{R})$. We compute the standard deviation inflation ratio  
$
%\mathrm{SDR}(\hat{\sigma})  = 
 R^{-1}\sum_{k=1}^{R}  \hat{\sigma}_{k} / \hat{\sigma}_{*} .
$
Note that $\hat{\sigma}_{*}^{2}$ is an unbiased estimate of true sampling variance of $n^{1/2}\hat{\tau}$, which can be different from the theoretical asymptotic variance $\sigma_{n}^{2}$.
For each estimate and variance estimate, we compute the $t$-statistic $n^{1/2}(\hat{\tau} - \tau) / \hat{\sigma}$ % and the z-score  $n^{1/2}(\hat{\tau} - \tau) / \hat{\sigma}_{*}$
. For each $t$-statistic, we estimate the empirical $95\%$ coverage rate by the proportion within $[-1.96, 1.96]$, the $95\%$ quantile range of $N(0,1)$.

In summary, we compute three measures defined above: the relative bias, standard deviation inflation ratio, and $95\%$ coverage rate.  We repeat $50$ times using different random seeds and record the medians of each measure. Figure \ref{fig::simulation01} summarizes the results.

\subsection{Results}\label{subsec:simulation_results}

\begin{figure}[htp]
	\centering
	\begin{subfigure}{0.6\textwidth}  
		\includegraphics[width=\textwidth]{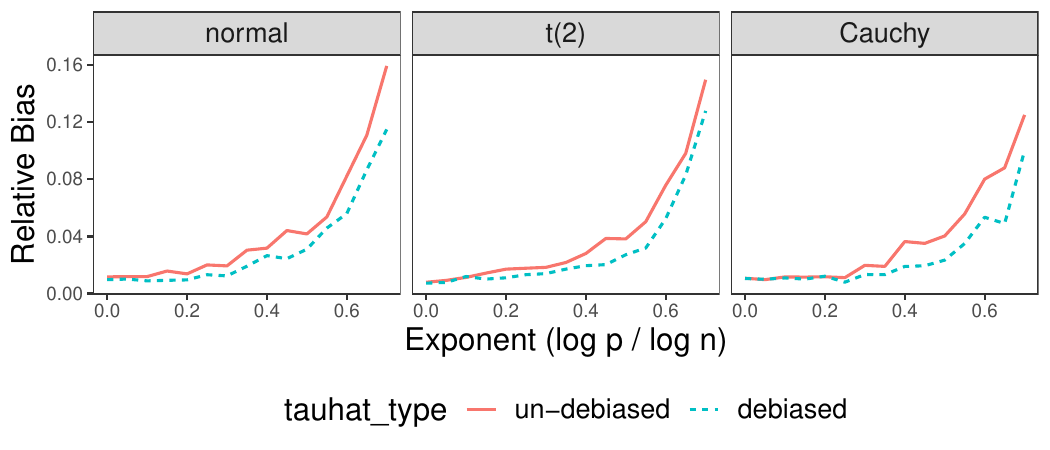}
		\caption{ Relative bias of $\hdtau$ and $\htau$.}\label{fig:t2_bias}
	\end{subfigure}
	\begin{subfigure}{0.6\textwidth}  
		\includegraphics[width=\textwidth]{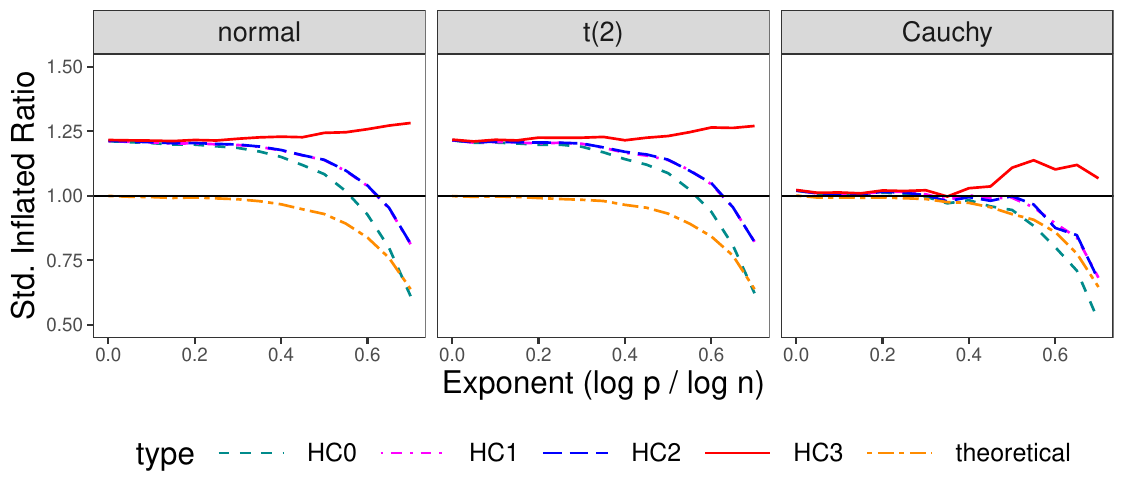}
		\caption{Ratio of standard deviation between five standard deviation estimates, $\sigma_{n}, \hat{\sigma}_{\textup{HC}0}, \hat{\sigma}_{\textup{HC}1}, \hat{\sigma}_{\textup{HC}2}, \hat{\sigma}_{\textup{HC}3}$, and the true standard deviation of $\htau$.}\label{fig:t2_sdinflate}
	\end{subfigure}
		\begin{subfigure}{0.6\textwidth}  
		\includegraphics[width=\textwidth]{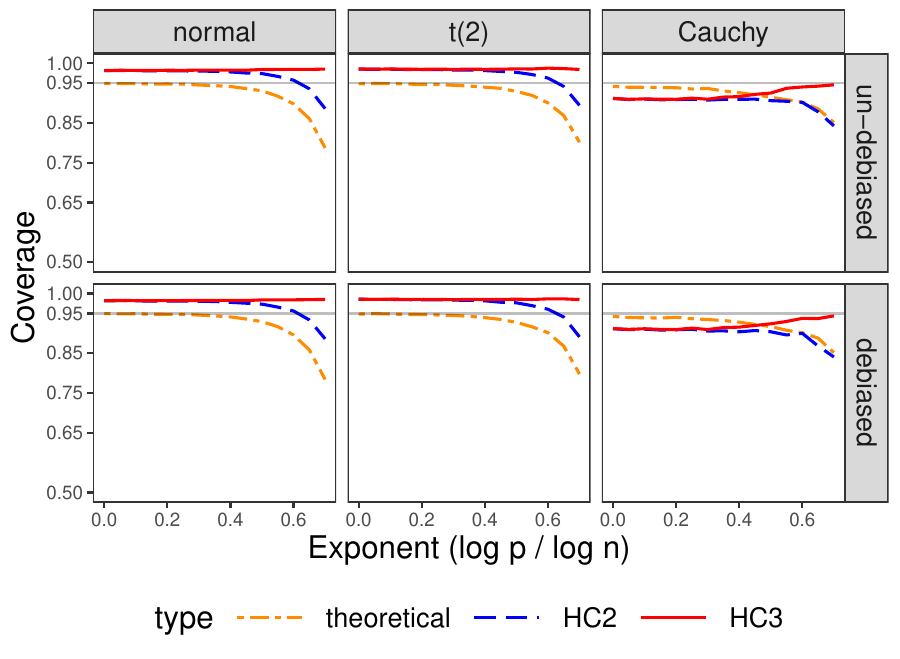}
		\caption{Empirical $95\%$ coverage rates of $t$-statistics derived from two estimators and four variance estimators (% ``truth" for the true sampling variance (of $\htau$), 
                  ``theoretical'' for $\sigma_{n}^{2}$, ``HC2'' for $\hat{\sigma}_{\text{HC}2}^{2}$ and ``HC3''  for $\hat{\sigma}_{\text{HC}3}^{2}$)}\label{fig:t2_coverage}
	\end{subfigure}
	\caption{Simulation with $\pi_{1} = 0.2$. $X$ is a realization of a random matrix with $t(2)$ entries, and $\eps(t)$ is a realization of a random vector with entries from a distribution corresponding to each column. }\label{fig::simulation01}
\end{figure}

From Figure \ref{fig:t2_bias}, $\hdtau$ does reduce the bias regardless of the distribution of potential outcomes, especially for moderately large $p$.
For standard deviation inflation ratios, the true sampling variances of $n^{1/2}\htau$ and $n^{1/2}\hdtau$ are almost identical and thus we set the sampling variance of $n^{1/2}\htau$ as the baseline variance $\hat{\sigma}_{*}^{2}$. Figure \ref{fig:t2_sdinflate} shows an interesting phenomenon that the theoretical asymptotic variance $\sigma_{n}^{2}$ tends to underestimate the true sampling variance for large $p$. Theorem \ref{cor:expansion} partially suggests this. The theoretical asymptotic variance is simply the variance of $\hat{\tau}_e$ while the finite sample variance also involves the remainder, which can be large in the presence of high dimensional or influential observations. All variance estimators overestimate $\sigma_{n}^{2}$ because they all ignore the third term of $\sigma_{n}^{2}$. However, all estimators, except the HC3 estimator, tend to underestimate the true sampling variance for large $p$. By contrast, the HC3 estimator does not suffer from anti-conservatism in this case.

Figures \ref{fig:t2_sdinflate} shows that HC0 and HC1 variance estimates lie between the theoretical asymptotic variance and the HC2 variance estimate. For better visualization, Figures \ref{fig:t2_coverage} only shows the $95\%$ coverage rates of $t$-statistics computed from $\sigma_{n}^{2}, \hat{\sigma}_{\textup{HC}2}^{2}$ and $\hat{\sigma}_{\textup{HC}3}^{2}$, based on which we  draw the following conclusions. First, as we pointed out previously, the coverage rates based on two estimates are almost identical because the relative bias is small in these scenarios. Second, as Figures \ref{fig:t2_sdinflate} suggests, the $t$-statistic with HC3 variance estimate has the best coverage rate, which is robust with covariates of an increasing dimension. By contrast, the theoretical asymptotic variance and the HC$j$ $(j=0,1,2)$ variance estimates yield significantly lower coverage rates for large $p$. We recommend $\hat{\sigma}_{\textup{HC}3}^{2}$.

\subsection{Effectiveness of debiasing}\label{subsec:simulation_debiase}

In the aforementioned settings, $\hdtau$ yields almost identical inference as $\htau$. This is not surprising because in the above scenarios the potential outcomes are generated from linear models and thus the regression-adjusted estimator has bias close to zero.
However, in practice, the potential outcomes might not have prefect linear relationships with the covariates. To illustrate the potential benefits of debiasing, we consider the worst-case situation which maximizes the bias. Specifically, we consider the case where $\eps(0) = \eps$ and $\eps(1) = 2\eps$ for some vector $\eps$ that satisfies \eqref{eq:error_key_property} with sample variance $1$. To maximize the bias term, we take $\eps$ as the solution of
\begin{equation}
  \label{eq:worst_pout}
  \max_{\eps\in \R^{n}}\bigg|\frac{n_{1}}{n_{0}}\Delta_{0} - \frac{n_{0}}{n_{1}}\Delta_{1}\bigg| 
  = 
    \max_{\eps\in \R^{n}}\lb\frac{2n_{0}}{n_{1}} - \frac{n_{1}}{n_{0}}\rb\bigg|\sum_{i=1}^{n}H_{ii}\eps_{i}\bigg|, 
  %\,\, \mathrm{s.t.}\,\, \|\eps\|_{2}^{2} / n = 1, X\tran \eps = \one\tran \eps = 0. 
\end{equation}
such that $\|\eps\|_{2}^{2} / n = 1$ and $ X\tran \eps = \one\tran \eps = 0$. Supplementary Material III gives more details of constructing $\eps$. From \eqref{eq:worst_pout}, the bias is amplified when the group sizes are unbalanced, and it effectively imposes a non-linear relationship between potential outcomes and covariates.

We perform simulation detailed in Section \ref{subsec:simulation} based on potential outcomes in \eqref{eq:worst_pout} and report the relative bias and coverage rate to demonstrate the effectiveness of debiasing. To save space, we only report the coverage rates based on $\hat{\sigma}_{\textup{HC}2}^{2}$ and $\hat{\sigma}_{\textup{HC}3}^{2}$. Figure \ref{fig::simulation02} summarizes the results. 
Unlike the previous settings, the relative bias in this setting is large enough to affect the coverage rate.  
The debiased estimator reduces a fair proportion of bias  and improves the coverage rate especially when the dimension is high. We provide experimental results in more settings in Supplementary Material III.

\begin{figure}[t]
	\centering
	\begin{subfigure}{0.46\textwidth}  
          \centering
		\includegraphics[width=0.8\textwidth]{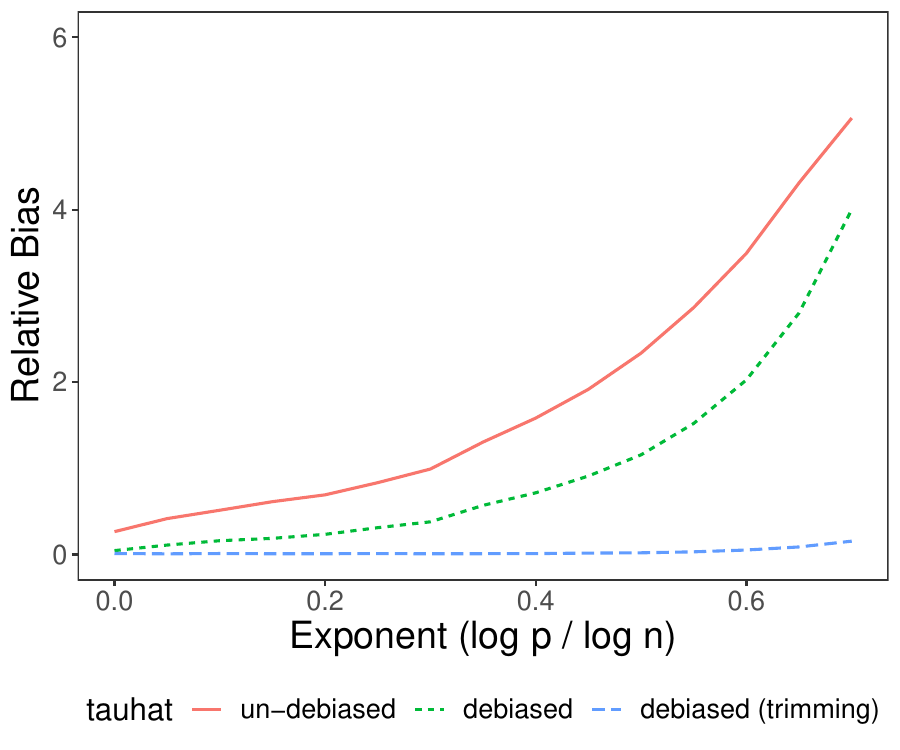}
             %   \hspace{0.1\textwidth}
		\caption{ Relative bias of $\hdtau$ and $\htau$.}\label{fig:t2_bias_worst}
	\end{subfigure}
        \begin{subfigure}{0.45\textwidth}  
          \centering
          \includegraphics[width=1\textwidth]{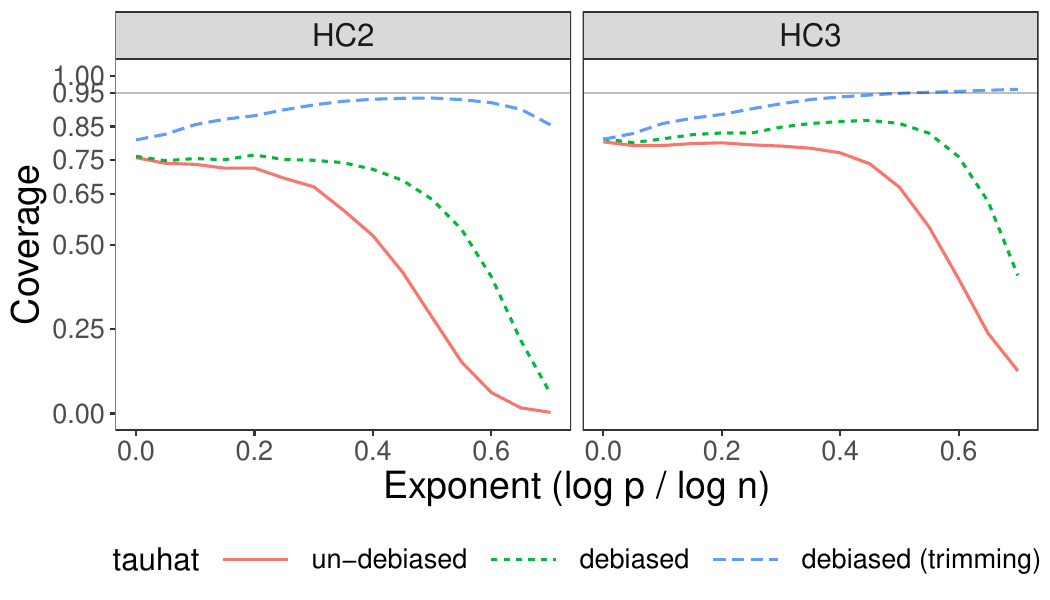}
		\caption{Empirical $95\%$ coverage rates of $t$-statistics derived from two estimators and two variance estimators (``HC2'' for $\hat{\sigma}_{\text{HC}2}^{2}$ and ``HC3''  for $\hat{\sigma}_{\text{HC}3}^{2}$)}\label{fig:t2_coverage_worst}
	\end{subfigure}
	\caption{Simulation. $X$ is a realization of a random matrix with $t(2)$ entries, $\pi_{1} = 0.2$ and $\eps(t)$ is defined in \eqref{eq:worst_pout}.}\label{fig::simulation02}
\end{figure}

\subsection{Trimming covariates}\label{subsec:simulation_regularization}

Because our theory holds even for mis-specified linear models, we can preprocess the covariate matrix $X$ arbitrarily without changing the estimand, provided that the preprocessing step does not involve $T$ or $Y^{\mathrm{obs}}$. This is a feature of our finite-population theory. Moreover, our asymptotic theory suggests that the maximum leverage score of the design matrix affects the properties of $\htau$ and $\hdtau$. When there are many influential observations, it is beneficial to reduce $\kappa$ by trimming the values of covariates before regression adjustment.  Importantly, trimming covariates should not use any information of $T$ or $Y^{\mathrm{obs}}$.

 For the cases considered in previous subsections, we consider trimming each covariate at its 2.5\% and 97.5\% quantiles. For the 50 design matrices used in Section \ref{sec:experiment} with $p = \lceil n^{2/3}\rceil$ and $n = 2000$, the average of $\kappa$ is $0.9558$ with standard error $0.0384$. After trimming, the average of $\kappa$ reduces dramatically to $0.0704$ with standard error $0.0212$. Figure \ref{fig::simulation02} shows that the bias is significantly reduced and the coverage rate gets drastically improved after trimming covariates.

\section*{Acknowledgment}
We thank the editor, associate editor, two reviewers, Cheng Gao, and Luke Miratrix for helpful suggestions. Peng Ding was partially supported by a grant from the U.S. National Science Foundation.

\bibliographystyle{biometrika}
\bibliography{high_dim_Neyman_Rubin}

\clearpage

 \appendix

% \begin{changemargin}{-1cm}{-1cm}
% \renewcommand {\theequation} {S\arabic{equation}}
\renewcommand {\thefigure} {S\arabic{figure}}

~\\
\begin{center}
  \begin{LARGE}
    \textbf{Supplementary Materials}
  \end{LARGE}
\end{center}

~\\

Supplementary Material I gives all the proofs, including Section \ref{sec:technicallemmas} for technical lemmas, Section \ref{sec:main_proofs} for the proofs of the main results, Section \ref{sec:concentration} for the concentration inequalities for sampling without replacement, Section \ref{app:mean_variance} for the proof of Lemma \ref{thm:mean},  and Section \ref{subapp:lemmas} for the proofs of other lemmas in Section \ref{subsec:lemmas2}. 

 Supplementary Material II discusses the assumptions, including Section \ref{sec::discussion-assumptions} for Propositions \ref{prop:kappa_iid} and \ref{prop:eq}, Section \ref{sec::useful-results-proofs} for the lemmas, and Sections \ref{section:proof-of-F1} and \ref{subapp:cure} for the proofs of Propositions \ref{prop:kappa_iid} and \ref{prop:eq}, respectively.

     Supplementary Material III contains additional experiments.

 \bigskip

\begin{center}
  \begin{Large}
    Supplementary Material I: Proofs
  \end{Large}
\end{center}
 
\section{Technical Lemmas}\label{sec:technicallemmas}
We start by stating several useful lemmas and proceed to prove main results in Section \ref{sec:main} in Section \ref{sec:main_proofs}. The lemmas will be proved later in Sections \ref{sec:concentration} and \ref{app:mean_variance}.

\subsection{Some general results for sampling without replacement}\label{subsec:lemmas1}

Completely randomized experiments have deep connections with sampling without replacement because the treatment and control groups are simple random samples from a finite population of $n$ units. Let $\T$ denote a random size-$m$ subset of $  \{1, \ldots, n\}$ over all $\binom{n}{m}$ subsets, and $\S^{p-1} = \{  (\omega_1, \ldots, \omega_p)\tran :  \omega_1^2+\cdots + \omega_p^2 =1 \}$ denote the $(p-1)$-dimensional unit sphere. 
The first lemma gives the mean and variance of the sample total from sampling without replacement. See \citet[][Theorem 2.2]{cochran07} for a proof. 

\begin{lemma}\label{lem:var_sum}
Let $(w_1, \ldots, w_n)$ be fixed scalars with mean $\bar{w} = n^{-1}\sum_{i=1}^{n}w_i$. Then $\sum_{i\in \T} w_i$ has mean $m\bar{w}$ and variance
\[\Var\lb\sum_{i\in \T} w_i  \rb = \frac{m(n-m)}{n(n-1)}\sum_{i=1}^{n}(w_i - \bar{w})^{2}.\]
\end{lemma}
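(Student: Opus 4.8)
The plan is to represent the sample total through membership indicators and reduce the computation to the first two moments of these indicators, which are fixed by elementary symmetry. First I would define $Z_i = \one\{i \in \T\}$ for $i = 1, \ldots, n$, so that $\sum_{i \in \T} w_i = \sum_{i=1}^n w_i Z_i$. Because the draw is uniform over all $\binom{n}{m}$ subsets, each $Z_i$ shares the same marginal law and each pair $(Z_i, Z_j)$ with $i \neq j$ shares the same joint law. A direct count gives $\P(Z_i = 1) = \binom{n-1}{m-1}/\binom{n}{m} = m/n$ and, for $i \neq j$, $\P(Z_i = 1, Z_j = 1) = \binom{n-2}{m-2}/\binom{n}{m} = m(m-1)/\{n(n-1)\}$.

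The mean follows at once from linearity of expectation: $\E \sum_{i=1}^n w_i Z_i = (m/n) \sum_{i=1}^n w_i = m\bar{w}$.

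For the variance I would first record $\Var(Z_i) = (m/n)(1 - m/n) = m(n-m)/n^2$ and, after simplifying $\P(Z_i=1,Z_j=1) - (m/n)^2$, the covariance $\Cov(Z_i, Z_j) = -m(n-m)/\{n^2(n-1)\}$ for $i \neq j$. Expanding $\Var(\sum_i w_i Z_i) = \sum_i w_i^2 \Var(Z_i) + \sum_{i\neq j} w_i w_j \Cov(Z_i, Z_j)$ and substituting these values gives $\frac{m(n-m)}{n^2} \sum_i w_i^2 - \frac{m(n-m)}{n^2(n-1)} \sum_{i\neq j} w_i w_j$. The remaining step is purely algebraic: substituting $\sum_{i\neq j} w_i w_j = (\sum_i w_i)^2 - \sum_i w_i^2 = n^2 \bar{w}^2 - \sum_i w_i^2$ and collecting terms over the common denominator $n-1$ collapses the expression to $\frac{m(n-m)}{n(n-1)}(\sum_i w_i^2 - n\bar{w}^2) = \frac{m(n-m)}{n(n-1)} \sum_i (w_i - \bar{w})^2$, which is exactly the claimed formula.

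No step poses a genuine conceptual obstacle; the only place that demands care is the covariance, where the negative sign and the factor $1/(n-1)$ must be tracked accurately. This negative covariance is the signature of the negative dependence inherent in sampling without replacement, and it is precisely where the finite-population correction factor $(n-m)/(n-1)$ originates, distinguishing this variance from its with-replacement counterpart.
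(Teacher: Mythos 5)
Your proposal is correct: the moments of the membership indicators $\P(Z_i=1)=m/n$, $\P(Z_i=Z_j=1)=m(m-1)/\{n(n-1)\}$, the resulting variance and covariance, and the closing algebra all check out. The paper gives no proof of its own for this lemma---it defers to \citet[][Theorem 2.2]{cochran07}---and the classical argument there is precisely this indicator-variable computation exploiting the exchangeability of the draws, so your route is essentially the same as the one the paper relies on.
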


The second lemma gives the Berry--Esseen-type bound for the finite population central limit theorem. See \citet{bikelis69} and \citet{hoglund78} for proofs. 

\begin{lemma}\label{prop:finite_CLT}
  Let $(w_{1}, \ldots, w_{n})$ be fixed scalars with
$\bar{w} = n^{-1}\sum_{i=1}^{n}w_{i}$ and $\quad S_w^{2} = \sum_{i=1}^{n}(w_{i} - \bar{w})^{2}.$
Let $m = n f$ for some $f \in (0, 1)$. Then 
\begin{align*}
d_{\textup{K}}\lb \frac{\sum_{i\in \T  }(w_{i} - \bar{w})}{ S_w (f(1-f))^{1/2}}, N(0, 1)\rb
\le \frac{C}{(f(1-f))^{1/2}}\frac{\sum_{i=1}^{n}|w_{i} - \bar{w}|^{3}}{S_w^{3}} 
 \le \frac{C}{(f(1-f))^{1/2}}\frac{\max_{1\leq i\leq n}|w_{i} - \bar{w}|}{S_w},
\end{align*}
where $d_\textup{K}$ denotes the Kolmogorov distance between two distributions, and $C$ is a universal constant. 
\end{lemma}

The following two lemmas give novel vector and matrix concentration inequalities for sampling without replacement.

\begin{lemma}\label{thm:vector_concentration}
Let $(u_{1}, \ldots, u_{n})$ be a finite population of $p$-dimensional vectors with 
$\sum_{i=1}^{n}u_{i} = 0.$
Then for any $\delta\in (0, 1)$, with probability at least $1 - \delta$,
\[\left\|\sum_{i\in \T}u_{i}\right\|_{2}\le \|U\|_{F}\lb\frac{m(n - m)}{n(n - 1)}\rb^{1/2} + \|U\|_{\op}\lb 8\log \frac{1}{\delta}\rb^{1/2}\]
where $u_{i}\tran$ is the $i$-th row of the matrix $U\in\R^{n\times p}$. 
\end{lemma}

\begin{lemma}\label{thm:matrix_concentration}
Let $(V_{1}, \ldots, V_{n})$ be a finite population of $(p\times p)$-dimensional Hermittian matrices with
$\sum_{i=1}^{n}V_{i} = 0.$
Let $C(p) = 4(1 + \lceil 2\log p\rceil)$, and
\[\nu^{2} = \left\|\frac{1}{n}\sum_{i=1}^{n}V_{i}^{2}\right\|_{\op}, \quad \nu_{-}^{2} = \sup_{\omega\in \S^{p-1}}\frac{1}{n}\sum_{i=1}^{n}(\omega\tran V_{i}\omega)^{2}, \quad \nu_{+} = \max_{1\leq i \leq n}\|V_{i}\|_{\op}. \] 
Then for any $\delta \in (0, 1)$, with probability at least $1 - \delta$, 
\[\left\|\sum_{i\in \T}V_{i}\right\|_{\op}\le (nC(p))^{1/2}\nu + C(p)\nu_{+} + \lb 8n\log \frac{2}{\delta}\rb^{1/2}\,\nu_{-}.\]
\end{lemma}

The following lemma gives the mean and variance of the summation over randomly selected rows and columns from a deterministic matrix $Q \in \R^{n\times n}.$ 

\begin{lemma}\label{thm:mean}
Let $Q\in \R^{n\times n}$ be a deterministic matrix, and $Q_{\T} \equiv \sum_{i,j\in \T} Q_{ij}$. Assume $n\ge 4$. Then
 \[\E Q_{\T} = \frac{m(n - m)}{n(n - 1)}\tr(Q) + \frac{m(m - 1)}{n(n - 1)} \one\tran Q\one.\] 
 If $Q$ further satisfies $
\one\tran Q = Q\one = 0,
$
then 
\[\Var(Q_{\T}) \le \frac{m(n - m)}{n(n - 1)}\|Q\|_{F}^{2}.\] 
\end{lemma}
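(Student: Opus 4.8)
The plan is to pass to the sampling indicators $\xi_i = \one(i\in\T)$ and write $Q_\T = \sum_{i,j=1}^{n}Q_{ij}\xi_i\xi_j$, so that both the mean and the variance become polynomial moments of the $\xi_i$ under uniform size-$m$ sampling. For the mean, I would split the double sum into its diagonal and off-diagonal parts and insert $\E\xi_i = m/n$ and $\E[\xi_i\xi_j] = \tfrac{m(m-1)}{n(n-1)}$ for $i\neq j$; writing $\sum_{i\neq j}Q_{ij} = \one\tran Q\one - \tr(Q)$ and collecting the coefficient of $\tr(Q)$ into $\tfrac{m(n-m)}{n(n-1)}$ then gives the stated identity directly.

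For the variance I would first use the hypotheses $\one\tran Q = Q\one = 0$ to simplify $Q_\T$. Setting $\eta_i = \xi_i - m/n$ and expanding $Q_\T = \sum_{i,j}Q_{ij}(\eta_i + \tfrac mn)(\eta_j + \tfrac mn)$, the vanishing of all row and column sums annihilates the cross and constant terms, leaving the pure quadratic form $Q_\T = \eta\tran Q\eta$. Two costless reductions follow: since $Q_\T$ is unchanged on replacing $Q$ by its symmetric part and symmetrization neither increases $\|Q\|_F$ (triangle inequality) nor destroys the two orthogonality conditions, I may assume $Q$ symmetric; and the indicator vector obeys $\one\tran\eta = 0$ and $\Cov(\eta) = \tfrac{m(n-m)}{n(n-1)}\projone$, while the law of $\eta$ is exchangeable in its coordinates.

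I would then evaluate $\Var(\eta\tran Q\eta) = \sum_{i,j,k,l}Q_{ij}Q_{kl}\,\E[\eta_i\eta_j\eta_k\eta_l] - (\E[\eta\tran Q\eta])^2$. By exchangeability the fourth moment $\E[\eta_i\eta_j\eta_k\eta_l]$ depends only on the coincidence pattern of $(i,j,k,l)$, and each pattern value is an explicit polynomial in $m/n$ built from the inclusion probabilities $p_r = \P(\{1,\dots,r\}\subseteq\T) = \tfrac{m(m-1)\cdots(m-r+1)}{n(n-1)\cdots(n-r+1)}$, $r\le 4$. The crucial point is that, once these are contracted against $Q_{ij}Q_{kl}$ and the identities $\sum_j Q_{ij} = \sum_i Q_{ij} = 0$ are invoked repeatedly, the only non-vanishing scalars that can be assembled from two copies of $Q$ are $\|Q\|_F^2 = \tr(Q^2)$, $\sum_i Q_{ii}^2$ and $(\tr Q)^2$ (contractions such as $\sum_i(\sum_j Q_{ij})^2$ or $\sum_i Q_{ii}\sum_j Q_{ij}$ vanish identically). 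Carrying out the bookkeeping, the variance collapses to the closed form
\[
\Var(Q_\T) = \frac{m(n-m)}{n(n-1)}\,\|Q\|_F^2 \;-\; \beta\sum_{i=1}^n Q_{ii}^2 \;+\; \gamma\,(\tr Q)^2,
\]
where $\beta\ge 0$ and $\gamma\ge 0$ are explicit combinations of $p_1,\dots,p_4$ satisfying $n\gamma\le\beta$.

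The conclusion is then immediate: the two correction terms combine non-positively because Cauchy--Schwarz gives $(\tr Q)^2 = (\sum_i Q_{ii})^2\le n\sum_i Q_{ii}^2$, so $\gamma(\tr Q)^2\le n\gamma\sum_i Q_{ii}^2\le\beta\sum_i Q_{ii}^2$, leaving $\Var(Q_\T)\le\tfrac{m(n-m)}{n(n-1)}\|Q\|_F^2$. The main obstacle is the preceding step: organizing the fourth-moment expansion so that the massive cancellation forced by the zero row- and column-sum conditions is made explicit, confirming that the coefficient of $\|Q\|_F^2$ is exactly $\tfrac{m(n-m)}{n(n-1)}$, and verifying that the surviving coefficients satisfy $\beta\ge 0$ and $n\gamma\le\beta$. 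This cancellation is genuinely necessary rather than cosmetic: the stated inequality is attained with equality (for instance whenever $Q$ has vanishing diagonal, so that both correction terms disappear), so one cannot salvage the bound by discarding the fourth-order contribution with a lossy term-by-term estimate.
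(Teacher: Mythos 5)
Your mean calculation and your preliminary reductions are sound: with $\one\tran Q = Q\one = 0$ one indeed has $Q_\T = \eta\tran Q\eta$ for the centered indicators, symmetrization is costless, and for symmetric $Q$ the exact variance is a linear combination of the three invariants $\|Q\|_F^2$, $\sum_i Q_{ii}^2$, $(\tr Q)^2$. The fatal step is the deferred ``bookkeeping'': your claimed closed form, with coefficient exactly $\tfrac{m(n-m)}{n(n-1)}$ on $\|Q\|_F^2$, is false. Take $n=5$, $m=2$, $u = (1,-1,0,0,0)\tran$, $v=(0,0,1,-1,0)\tran$ and $Q = uv\tran + vu\tran$. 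This $Q$ is symmetric, satisfies $\one\tran Q = Q\one = 0$, and has zero diagonal and zero trace, so both of your correction terms vanish identically and your formula forces $\Var(Q_\T) = \tfrac{m(n-m)}{n(n-1)}\|Q\|_F^2 = \tfrac{3}{10}\cdot 8 = \tfrac{12}{5}$, whatever $\beta$ and $\gamma$ are. But direct enumeration of the ten $2$-subsets (here $Q_\T = 2Q_{ab}$ for $\T=\{a,b\}$, which equals $\pm 2$ on four subsets and $0$ on the other six) gives $\E Q_\T = 0$ and $\Var(Q_\T) = \tfrac{16}{10} = \tfrac{8}{5}$. The same example refutes your closing claim that equality holds whenever the diagonal of $Q$ vanishes: here the inequality is strict, $\tfrac{8}{5} < \tfrac{12}{5}$.

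The reason the computation does not collapse as you assert is that the fourth-moment contributions from index patterns sharing one or zero indices do not cancel against $\lb\E\,\eta\tran Q\eta\rb^2$. In the paper's notation, the exact coefficient of $\|Q\|_F^2$ in the symmetric case is $2(c_2 - 2c_3 + c_4)$, where $c_2 = \Var\lb I(1,2\in \T)\rb$, $c_3 = \Cov\lb I(1,2\in \T), I(1,3\in \T)\rb$, $c_4 = \Cov\lb I(1,2\in \T), I(3,4\in \T)\rb$; in the example above this equals $\tfrac{1}{5}$, not $c_1 = \tfrac{3}{10}$ (the two coincide only for special pairs such as $n=4$, $m=2$). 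Proving that this exact coefficient never exceeds $c_1$ --- i.e.\ that $C_\textup{III} = c_2 - 2c_3 + c_4 \le c_1/2$ --- together with controlling the diagonal and trace terms, is precisely where the paper's proof does its real work: exact evaluation of the six indicator variances/covariances $c_1,\dots,c_6$, reduction of the multi-index sums via the row/column-sum identities, a case analysis over $m=1,2,3,\ge 4$ (with separate checks for small $n$), and only then the Cauchy--Schwarz step $(\tr Q)^2 \le n\sum_i Q_{ii}^2$ that you also use. So your strategy is essentially the paper's, but the one step you declared to be routine cancellation is both the entire difficulty of the lemma and, as stated, arithmetically wrong.
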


Lemmas \ref{thm:vector_concentration}--\ref{thm:mean} are critical for our proofs. The proofs of Theorem \ref{thm:vector_concentration} and \ref{thm:matrix_concentration} are presented in Section \ref{sec:concentration} and the proof of Theorem \ref{thm:mean} is presented in Section \ref{app:mean_variance}. They are novel tools to the best of our knowledge and potentially useful in other contexts such as survey sampling, matrix sketching, and transductive learning.

\subsection{Some results particularly useful for our setting} \label{subsec:lemmas2}

We first give an implication of Assumption \ref{assumption::A3}, a lower bound on $\sigma_{n}^{2}$ under Assumption \ref{assumption::A1}. 
\begin{lemma}\label{lem:A3}
  Under Assumptions \ref{assumption::A1} and \ref{assumption::A3}, 
$
\sigma_{n}^{2} \ge \eta\min\left\{ n_{1} / n_{0},  n_{0} / n_{1} \right\}\cure_{2}.
$
\end{lemma}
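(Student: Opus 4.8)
The plan is to work from the completed-square form \eqref{eq:sigman2} of the variance. Writing $p=\|e(1)\|_{2}$, $q=\|e(0)\|_{2}$, and $r=n_{1}/n_{0}$, expanding the square gives
\[
n\sigma_{n}^{2}=\frac{1}{r}\,p^{2}+2\,e(1)\tran e(0)+r\,q^{2},
\]
because the cross coefficient is $2\sqrt{(n_{0}/n_{1}n)(n_{1}/n_{0}n)}=2/n$. Assumption \ref{assumption::A3} states $e(1)\tran e(0)=\rho_{e}pq\ge(-1+\eta)pq$, and since $\rho_{e}\le1$ always, I may assume $\eta\in(0,1]$, replacing $\eta$ by $\min\{\eta,1\}$ if necessary (this only relaxes Assumption \ref{assumption::A3}, and downstream the prefactor is used merely as a positive constant). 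The cross coefficient $2$ being positive, this yields
\[
n\sigma_{n}^{2}\ \ge\ g(p,q):=\frac{1}{r}\,p^{2}-2(1-\eta)pq+r\,q^{2},
\]
and the whole problem reduces to a lower bound for the two-variable quadratic form $g$.

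The heart of the argument is to establish the two simultaneous bounds $g(p,q)\ge \eta r^{-1}p^{2}$ and $g(p,q)\ge \eta r\,q^{2}$. For the first I would rewrite
\[
g(p,q)-\frac{\eta}{r}\,p^{2}=r\,q^{2}-2(1-\eta)pq+\frac{1-\eta}{r}\,p^{2},
\]
which is a quadratic in $q$ with positive leading coefficient $r$; a one-line discriminant computation gives $4(1-\eta)^{2}p^{2}-4(1-\eta)p^{2}=-4\eta(1-\eta)p^{2}\le0$ for $\eta\in[0,1]$, so the right-hand side is nonnegative for every $q$. The second bound follows from the symmetry of $g$ under $(p,q,r)\mapsto(q,p,1/r)$, which leaves $g$ invariant and produces the mirror-image discriminant $-4\eta(1-\eta)q^{2}\le0$.

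To finish I would combine the two bounds using $\min\{r,1/r\}\le 1/r$ and $\min\{r,1/r\}\le r$: if $p^{2}\ge q^{2}$ then $g\ge \eta r^{-1}p^{2}\ge\eta\min\{r,1/r\}\max\{p^{2},q^{2}\}$, while if $q^{2}>p^{2}$ then $g\ge\eta r\,q^{2}\ge\eta\min\{r,1/r\}\max\{p^{2},q^{2}\}$. Since $\max\{p^{2},q^{2}\}=\max\{\|e(1)\|_{2}^{2},\|e(0)\|_{2}^{2}\}=n\cure_{2}$ by definition, dividing by $n$ gives the claim. I expect the main obstacle to be exactly the retention of a $\max\{p^{2},q^{2}\}$ factor: the naive completion of squares $g=(r^{-1/2}p-r^{1/2}q)^{2}+2\eta pq$ only yields $n\sigma_{n}^{2}\ge2\eta pq$, which degenerates when one residual norm is much smaller than the other, so the two-discriminant step is precisely what upgrades the product $pq$ to the maximum. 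Assumption \ref{assumption::A1} is not needed for the displayed inequality itself; it enters only to guarantee that the prefactor $\min\{n_{1}/n_{0},n_{0}/n_{1}\}$ is bounded below by a positive constant, which is how the bound is used downstream as $\sigma_{n}^{2}\gtrsim\cure_{2}$.
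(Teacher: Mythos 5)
Your proof is correct, and it starts where the paper's does: both expand $\sigma_n^2$ into the quadratic form $\frac{n_0}{n_1n}\|e(1)\|_2^2+\frac{n_1}{n_0n}\|e(0)\|_2^2+\frac{2\rho_e}{n}\|e(1)\|_2\|e(0)\|_2$ (the paper from \eqref{eq:sigman}, you from \eqref{eq:sigman2} --- the same expansion) and feed Assumption \ref{assumption::A3} in through the cross term. The finishing step is where you diverge. The paper splits on the sign of $\rho_e$: if $\rho_e\ge0$ it drops the cross term, and if $\rho_e<0$ it absorbs the cross term via the weighted AM--GM inequality $2pq\le\frac{n_0}{n_1}p^2+\frac{n_1}{n_0}q^2$, giving $\sigma_n^2\ge(1+\rho_e)\lb\frac{n_0}{n_1n}p^2+\frac{n_1}{n_0n}q^2\rb\ge\eta\min\{n_1/n_0,n_0/n_1\}\cure_2$. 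You avoid the case split: you substitute the worst-case correlation $-1+\eta$ once, then verify $g\ge\frac{\eta}{r}p^2$ and $g\ge\eta rq^2$ by two discriminant checks tied together by the $(p,q,r)\mapsto(q,p,1/r)$ symmetry. A nonpositive discriminant there is equivalent to an AM--GM with $\eta$-dependent weights, so the two arguments are close cousins; both correctly upgrade the degenerate product bound $2\eta pq$ to a bound against $\max\{p^2,q^2\}$, yours via the two one-sided inequalities, the paper's by factoring $(1+\rho_e)$ out of the weighted sum (a marginally stronger intermediate statement, since the sum dominates the max). Two details you handle more carefully than the paper: the explicit reduction to $\eta\le1$, which the paper's $\rho_e\ge0$ case tacitly requires in its final step $\min\{n_1/n_0,n_0/n_1\}\cure_2\ge\eta\min\{n_1/n_0,n_0/n_1\}\cure_2$; and the observation that Assumption \ref{assumption::A1} plays no role in the displayed inequality itself --- indeed the paper's own proof never uses it, and it matters only downstream where the prefactor must be bounded away from zero.
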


Recall $H_t= X_{t}(X_{t}\tran X_{t})^{-1}X_{t}\tran $ and define $\Sigma_t = n_t^{-1} X_{t}\tran X_{t}$ $(t=0,1)$.
The following explicit formula is the starting point of our proof.
\begin{lemma}\label{lem:tauhat}
We have
\begin{equation}
  \label{eq:ATE}
  \htau - \tau = \frac{\oneone \tran e_{1}(1) / n_{1} - \oneone \tran H_1 e_{1}(1) / n_{1}}{1 - \oneone \tran H_1 \oneone  / n_{1}} - \frac{\onezero \tran e_{0}(0) / n_{0} - \onezero \tran H_0e_{0}(0) / n_{0}}{1 - \onezero \tran H_0\onezero  / n_{0}}.
\end{equation}
\end{lemma}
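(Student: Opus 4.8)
The plan is to reduce the identity to a per-arm statement about the fitted intercept and then take a difference. Since $\tau = \mu_1 - \mu_0$ and $\htau = \hat{\mu}_1 - \hat{\mu}_0$ by construction, we have $\htau - \tau = (\hat{\mu}_1 - \mu_1) - (\hat{\mu}_0 - \mu_0)$, so it suffices to show that for each $t\in\{0,1\}$,
\[
\hat{\mu}_t - \mu_t = \frac{\one\tran e_t(t)/n_t - \one\tran H_t e_t(t)/n_t}{1 - \one\tran H_t\one/n_t},
\]
and then subtract the $t=1$ and $t=0$ expressions. The whole lemma is therefore an exercise in extracting the intercept from the within-arm OLS of $Y_t^{\mathrm{obs}} = Y_t(t)$ on $[\one,\ X_t]$.

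First I would apply the Frisch--Waugh--Lovell theorem to partial out the covariates $X_t$ and isolate the intercept. Projecting both $\one$ and $Y_t(t)$ onto the orthogonal complement of $\mathrm{col}(X_t)$ via $\id - H_t$, and using the symmetry and idempotence of $\id - H_t$, gives
\[
\hat{\mu}_t = \frac{\one\tran(\id - H_t) Y_t(t)}{\one\tran(\id - H_t)\one} = \frac{\one\tran Y_t(t) - \one\tran H_t Y_t(t)}{n_t - \one\tran H_t\one}.
\]
Next I would substitute the population-OLS decomposition restricted to arm $t$, namely $Y_t(t) = \mu_t\one + X_t\beta_t + e_t(t)$, obtained by taking the rows of \eqref{eq:et} indexed by $\T_t$. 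The key simplification is that $H_t X_t = X_t$, so the two $\one\tran X_t\beta_t$ contributions (one from $\one\tran Y_t(t)$, one from $\one\tran H_t Y_t(t)$) cancel exactly. What survives in the numerator is $\mu_t(n_t - \one\tran H_t\one) + \one\tran e_t(t) - \one\tran H_t e_t(t)$, and dividing through by the denominator yields $\hat{\mu}_t - \mu_t$ in the claimed form after dividing numerator and denominator by $n_t$.

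The computation is short, so the main thing to get right is bookkeeping rather than a genuine obstacle. The one point requiring care is that $H_t$ as defined is the projection onto $\mathrm{col}(X_t)$ alone, not onto $\mathrm{col}([\one,\ X_t])$; the intercept must be partialled out separately, which is precisely what the Frisch--Waugh--Lovell step accomplishes. I would also check that the denominator $1 - \one\tran H_t\one/n_t$ is strictly positive: since $H_t$ is a projection, $\one\tran H_t\one = \|H_t\one\|_2^2 \le \|\one\|_2^2 = n_t$, with equality only if $\one \in \mathrm{col}(X_t)$, which is ruled out whenever the augmented design $[\one,\ X_t]$ has full column rank, the standing condition for $\hat{\mu}_t$ to be well-defined. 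Everything else is linear algebra.
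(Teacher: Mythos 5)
Your proposal is correct and follows essentially the same route as the paper's proof: apply the Frisch--Waugh theorem to write $\hat{\mu}_t$ as the coefficient from regressing $(\id - H_t)Y_t^{\mathrm{obs}}$ on $(\id - H_t)\onet$, substitute the decomposition $Y_t(t) = \mu_t \onet + X_t\beta_t + e_t(t)$, use $(\id - H_t)X_t = 0$ to cancel the covariate term, and take the difference of the two arms via $\tau = \mu_1 - \mu_0$. Your additional check that the denominator $1 - \onet\tran H_t \onet / n_t$ is strictly positive (equivalently, that $\onet \notin \mathrm{col}(X_t)$) is a sensible piece of bookkeeping that the paper leaves implicit.
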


As in the  main text, we assume that the covariate matrix has centered columns and full column rank. 
The quantities $\mu_{t}$, $e(t)$, and our estimators $(\htau, \hdtau)$ are all invariant if $X$ is transformed to $X Z$ for any full rank matrix $Z \in \R^{p\times p}$.
Thus, without loss of generality, we further assume
\begin{equation}
  \label{eq:orthonormal}
n^{-1} X\tran X = \id .
\end{equation}
Otherwise, suppose $X$ has the singular value decomposition $U\Sigma V\tran $ with $U\in \R^{n\times p}, \Sigma, V\in \R^{p\times p}$, then we can replace $X$ by $n^{1/2}U = X(n^{1/2}V \Sigma^{-1})$ to ensure \eqref{eq:orthonormal}. We can verify that the key properties in \eqref{eq:error_key_property} still hold. Assuming \eqref{eq:orthonormal}, we can rewrite the hat matrix and the leverage scores as
\begin{equation}\label{eq:hatmatrix}
H = n^{-1} XX\tran , \quad H_{ii} =  n^{-1}   \|x_{i}\|_{2}^{2} ,\quad H_{ij} = n^{-1} x_i \tran x_j .
\end{equation}
Note that the invariance property under the standardization \eqref{eq:orthonormal} is a feature of the regression adjustment based on  ordinary least squares. It does not hold for many other estimators \citep[e.g.,][]{bloniarz16, wager16}.

We will repeatedly use the following results to obtain the stochastic orders of the terms in \eqref{eq:ATE}. They are consequences of Lemmas \ref{thm:vector_concentration} and \ref{thm:matrix_concentration}.

\begin{lemma}\label{lem:vector_concentration}
Under Assumption \ref{assumption::A1}, for $t = 0, 1$,
\begin{gather*} 
\frac{\onet\tran e_t(t)  }{n_{t}} = O_\P\lb \lb\frac{\cure_{2}}{n}\rb^{1/2}\rb, \quad \left\|\frac{X_{t}\tran \onet }{n_{t}}\right\|_{2} = O_\P\lb \lb\frac{p}{n}\rb^{1/2}\rb ,  \quad 
\left\|\frac{X_{t}\tran e_{t}(t)}{n_{t}}\right\|_{2} = O_\P\lb \lb\cure_{2}\kappa\rb^{1/2}\rb .
\end{gather*}
\end{lemma}

\begin{lemma}\label{lem:matrix_concentration}
Under Assumptions \ref{assumption::A1}, \ref{assumption::A2} and \eqref{eq:orthonormal}, for $t = 0, 1$, 
\begin{gather*} 
\left\|\Sigma_t - \id \right\|_{\op} = O_\P\lb \lb\kappa\log p\rb^{1/2}\rb, \quad \left\|\Sigma_t^{-1}\right\|_{\op} = O_\P(1), \quad 
\left\|\Sigma_t^{-1} - \id \right\|_{\op} = O_\P\lb (\kappa\log p)^{1/2}\rb. 
\end{gather*}
\end{lemma}

The following lemma states some key properties of an intermediate quantity, which will facilitate our proofs.
\begin{lemma}\label{lemma:Mt}
Define $Q(t)   = H\diag\lb e(t)\rb = \lb H_{ij}e_{j}(t)    \rb_{i, j = 1}^{n}$. It satisfies
\begin{eqnarray*}
\one\tran Q(t) =0, \quad Q(t) \one =0,\quad 
\one\tran Q(t) \one = 0, \quad 
\textup{tr}(Q(t) ) =n  \Delta_t,\quad    \|Q(t) \|_{F}^{2} = \sum_{i=1}^{n}e_{i}^2(t)   H_{ii} \le n \cure_{2}\kappa .
 \end{eqnarray*}
\end{lemma}

\section{Proofs of the main results}\label{sec:main_proofs}
\subsection{Proof of the asymptotic expansions}\label{subsec:proof_expansion}

We first prove a more refined expansion than Theorem \ref{cor:expansion}.
  \begin{theorem}\label{thm:expansion}
  Under Assumptions \ref{assumption::A1} and \ref{assumption::A2},
  \begin{align}
   \htau - \tau  = \lb\frac{\oneone \tran e_{1}(1) }{n_{1}} - \frac{\onezero \tran e_{0}(0)}{n_{0}}\rb + \lb\frac{n_{1}}{n_{0}}\Delta_{0} - \frac{n_{0}}{n_{1}}\Delta_{1}\rb  
 + O_\P\lb \lb\frac{\cure_{2} (\kappa^{2} p \log p + \kappa) }{n}\rb^{1/2}\rb.\label{eq:expansion}
  \end{align}
\end{theorem}

\begin{proof}[Proof of Theorem \ref{thm:expansion}]
We need to analyze the terms in \eqref{eq:ATE}. First, by Lemmas \ref{lem:vector_concentration} and \ref{lem:matrix_concentration},
\begin{align*}
  \frac{\onet \tran H_t \onet }{n_{t}} &= \frac{\onet \tran X_{t}}{n_{t}}\Sigma_t^{-1}\frac{X_{t}\tran \onet }{n_{t}}
  \leq    \left\| \Sigma_t^{-1}  \right\|_{\op}  \left \|  \frac{X_{t}\tran \onet }{n_{t}}  \right \|_2^2
  = O_\P\lb \frac{p}{n}\rb.
\end{align*}
Using \eqref{eq:pn} that $p = o(n)$, we obtain that 
\begin{equation}\label{eq:expansion1}
\frac{1}{1 - \onet \tran H_t \onet  / n_{t}} = 1 + O_\P\lb\frac{p}{n}\rb.
\end{equation}
Second,  
\begin{align}
  \frac{\onet \tran H_t e_{t}(t)}{n_{t}} = \frac{\onet \tran X_{t}}{n_{t}}\Sigma_t^{-1}\frac{X_{t}\tran e_{t}(t)}{n_{t}} 
=  \frac{\onet \tran X_{t}}{n_{t}}\frac{X_{t}\tran e_{t}(t)}{n_{t}} 
+ \frac{\onet \tran X_{t}}{n_{t}}\lb \Sigma_t^{-1} - \id \rb\frac{X_{t}\tran e_{t}(t)}{n_{t}} \equiv   R_{t1} + R_{t2} . \label{eq:R1+R2} 
\end{align}
Note that here we do not use the naive bound for $\onet \tran H_t e_{t}(t) / n_t$ as for $\onet \tran H_t \onet / n_t$ in \eqref{eq:expansion1} because this gives weaker results. Instead, we bound $R_{t1}$ and $ R_{t2}$ separately. Lemmas \ref{lem:vector_concentration} and \ref{lem:matrix_concentration} imply
\begin{align}\label{eq:R2}
  R_{t2}  \le \left\|  \Sigma_t^{-1} - \id  \right\|_\op  
  \left\| \frac{X_{t}\tran \onet }{n_{t}} \right\|_2  
  \left\| \frac{X_{t}\tran e_{t}(t)}{n_{t}} \right\|_2  
  = O_\P\lb \lb\frac{\cure_{2}\kappa^{2} p \log p}{n}\rb^{1/2}\rb.
\end{align}
We apply Chebyshev's inequality to obtain that
\begin{equation}
  \label{eq:Markov}
  R_{t1} = \E R_{t1} + O_\P\lb\Var(R_{t1})^{1/2}\rb.
\end{equation}
Therefore, to bound $R_{t1}$, we need to calculate its first two moments. 
Recalling \eqref{eq:hatmatrix} and the definition of $Q(t)$ in Lemma \ref{lemma:Mt}, we have
\begin{align}
  R_{t1} &= \frac{1}{n_{t}^{2}}\lb\sum_{i\in \T_{t}}x_{i}\tran \rb\lb \sum_{i\in \T_{t}} x_{i}e_{i}(t)\rb  
  = \frac{1}{n_{t}^{2}} \sum_{i\in \T_{t}} \sum_{j\in \T_{t}} x_{i}\tran x_{j}e_{j}(t)  \nonumber\\
  &= \frac{1}{n_{t}^{2}} \sum_{i\in \T_{t}} \sum_{j\in \T_{t}} nH_{ij}e_{j}(t)   = \frac{n}{n_{t}^{2}}\sum_{i\in \T_{t}} \sum_{j\in \T_{t}} Q_{ij}(t) .\label{eq:R1}
\end{align}
Lemmas \ref{thm:mean} and \ref{lemma:Mt} imply the expectation of $R_{t1}$:
\begin{align}
  \E R_{t1} = \frac{n}{n_{t}^{2}}\lb\frac{n_{1}{n_{0}}}{n(n - 1)} \tr\lb Q(t) \rb + \frac{n_{t}(n_{t} - 1)}{n (n - 1)}\one\tran  Q(t) \one \rb 
 = \frac{n n_{1}n_{0}}{n_{t}^{2}(n - 1)}\Delta_{t} 
 = \frac{n_{1}n_{0}}{n_{t}^{2}}\Delta_{t} + O\lb\frac{|\Delta_{t}|}{n}\rb.\label{eq:ER1}
\end{align}
We then bound the variance of $ R_{t1} $:  
\begin{eqnarray}
  \Var(R_{t1}) &=& \frac{n^{2}}{n_{t}^{4}}\Var\lb\sum_{i,j\in \T_{t}}Q_{ij}(t) \rb 
\leq    \frac{n^{2}}{n_{t}^{4}}   \frac{n_1 n_0}{  n(n-1) } \| Q(t) \|_F^2  \label{eq::varianceofQ} \\
&\leq &  \frac{n^{2}}{n_{t}^{4}}   \frac{n_1 n_0}{  n(n-1) } n\mathcal{E}_2 \kappa 
=  O\lb\frac{\cure_{2}\kappa}{n}\rb, \label{eq:VarR1}
\end{eqnarray}
 where \eqref{eq::varianceofQ} follows from Lemma \ref{thm:mean}, \eqref{eq:VarR1} follows from Lemma \ref{lemma:Mt} and Assumption \ref{assumption::A1}. 
Putting \eqref{eq:R1+R2}--\eqref{eq:ER1} and \eqref{eq:VarR1} together, we obtain that
\begin{align}
  \frac{\onet \tran H_t e_{t}(t)}{n_{t}} &= \frac{n_{1}n_{0}}{n_{t}^{2}}\Delta_{t} + O_\P\lb \lb\frac{\cure_{2}\kappa^{2} p \log p}{n}\rb^{1/2} + \frac{|\Delta_{t}|}{n} + \lb\frac{\cure_{2}\kappa}{n}\rb^{1/2}\rb.\label{eq:annoyingterm1}
\end{align}
By \eqref{eq:pn} and \eqref{eq:Deltat}, \eqref{eq:annoyingterm1} further simplifies to 
\begin{equation}
  \label{eq:annoyingterm2}
  \frac{\onet \tran H_t e_{t}(t)}{n_{t}} = \frac{n_{1}n_{0}}{n_{t}^{2}}\Delta_{t} + O_\P\lb \lb\frac{\cure_{2}\kappa^{2} p\log p}{n}\rb^{1/2} + \lb\frac{\cure_{2}\kappa}{n}\rb^{1/2}\rb.
\end{equation}
Using Lemma \ref{lem:vector_concentration}, \eqref{eq:annoyingterm2}, and the fact that $\kappa \le 1$, we have 
\begin{align}
  &\frac{\onet \tran e_{t}(t)}{n_{t}} - \frac{\onet \tran H_t  e_{t}(t)}{n_{t}}= O_\P\lb \lb\frac{\cure_{2}}{n}\rb^{1/2} + \Delta + \lb\frac{\cure_{2}\kappa^{2} p\log p}{n}\rb^{1/2}\rb.\label{eq:annoyingterm3}
\end{align}

Finally, putting \eqref{eq:expansion1}, \eqref{eq:annoyingterm2} and \eqref{eq:annoyingterm3} together into \eqref{eq:ATE}, we obtain that
\begin{align}
 &\htau - \tau   \nonumber\\
  =& \lb \frac{\oneone \tran e_{1}(1)}{n_{1}} - \frac{\oneone \tran H_1 e_{1}(1)}{n_{1}}\rb \lb 1 + O_\P\lb\frac{p}{n}\rb\rb
  - \lb \frac{\onezero \tran e_{0}(0)}{n_{0}} - \frac{\onezero \tran H_0e_{0}(0)}{n_{0}}\rb \lb 1 + O_\P\lb\frac{p}{n}\rb\rb
\nonumber\\
 =& \frac{\oneone \tran e_{1}(1)}{n_{1}} - \frac{\onezero \tran e_{0}(0)}{n_{0}} + \frac{\onezero \tran H_0e_{0}(0)}{n_{0}} - \frac{\oneone \tran H_1 e_{1}(1)}{n_{1}} 
 + O_\P\lb \lb\frac{p^{2}\cure_{2}}{n^{3}}\rb^{1/2} + \frac{p\Delta}{n} + \lb\frac{\cure_{2}\kappa^{2} p^{3}\log p}{n^{3}}\rb^{1/2}\rb
\nonumber\\
 =& \frac{\oneone \tran e_{1}(1)}{n_{1}} - \frac{\onezero \tran e_{0}(0)}{n_{0}} + \frac{n_{1}}{n_{0}}\Delta_{0} - \frac{n_{0}}{n_{1}}\Delta_{1} 
 + O_\P\lb \lb\frac{p^{2}\cure_{2}}{n^{3}}\rb^{1/2} + \frac{p\Delta}{n} + \lb\frac{\cure_{2}\kappa^{2} p\log p}{n}\rb^{1/2}   + \lb\frac{\cure_{2}\kappa}{n}\rb^{1/2}\rb. \label{eq::Opterm}
\end{align}
where \eqref{eq::Opterm} uses \eqref{eq:pn} that $p = o(n)$. The fourth terms dominates the first term in \eqref{eq::Opterm} because $1 \ge \kappa\ge p / n$. The third term dominates the second term in \eqref{eq::Opterm}  because, by \eqref{eq:Deltat}, 
\[\frac{p\Delta}{n} \le \kappa \Delta \le \kappa^{1/2}\Delta = O\lb \lb\frac{\cure_{2}\kappa^{2} p}{n}\rb^{1/2}\rb.\]
Deleting the first two terms in \eqref{eq::Opterm}, we complete the proof. 
\end{proof}

\begin{proof}[Proof of Theorem \ref{cor:expansion}]
Assumption \ref{assumption::A1} implies that
$
  \frac{n_{1}}{n_{0}}\Delta_{0} - \frac{n_{0}}{n_{1}}\Delta_{1} = O\lb \Delta\rb,
$
which, coupled with Theorem \ref{thm:expansion}, implies
\eqref{eq:expansion_htau}. 
The key is to prove the result for the debiased estimator. By definition,
\begin{align*}
   \hdtau - \tau  &= \frac{\oneone \tran e_{1}(1)}{n_{1}} - \frac{\onezero \tran e_{0}(0)}{n_{0}} + \frac{n_{1}}{n_{0}}(\Delta_{0} - \hat{\Delta}_{0}) - \frac{n_{0}}{n_{1}}(\Delta_{1} - \hat{\Delta}_{1})\\
  & \quad + O_\P\lb \lb\frac{\cure_{2}\kappa^{2} p\log p}{n}\rb^{1/2}  + \lb\frac{\cure_{2}\kappa}{n}\rb^{1/2}\rb,
\end{align*}
and therefore, the key is to bound $|\Delta_{t} - \hat{\Delta}_{t}|$. 

We introduce an intermediate quantity 
$
\td{\Delta}_{t} =  n_{t}^{-1}  \sum_{i\in \T_{t}}H_{ii}e_{i}(t).
$
It has mean
$
\E \td{\Delta}_{t} = \Delta_{t} 
$
and variance
\begin{equation}
  \label{eq:VartdDeltat}
\Var(\td{\Delta}_{t}) \le  \frac{1}{n_{t}^{2}}\frac{n_{1}n_{0}}{n(n - 1)}\sum_{i=1}^{n}H_{ii}^{2}e_{i}^2(t)  \le \frac{n\cure_{2}\kappa^{2}}{n_{t}^{2}} = O\lb\frac{\cure_{2}\kappa^{2}}{n}\rb,
\end{equation}
from Lemma \ref{lem:var_sum} and Assumption \ref{assumption::A1}. Equipped with the first two moments, we use Chebyshev's inequality to obtain  
\begin{equation}
  \label{eq:tdDeltat-Deltat}
  |\td{\Delta}_{t} - \Delta_{t}| = O_\P\lb \lb\frac{\cure_{2}\kappa^{2}}{n}\rb^{1/2}\rb.
\end{equation}
Next we bound $|\hat{\Delta}_{t} - \td{\Delta}_{t}|$. The Cauchy--Schwarz inequality implies 
\begin{equation}
  \label{eq:hatDeltat-tdDeltat1}
  |\hat{\Delta}_{t} - \td{\Delta}_{t}| \le \frac{1}{n_{t}}\sum_{i\in \T_{t}} H_{ii}|\hat{e}_{i} - e_{i}(t)|\le \lb\frac{1}{n_{t}}\sum_{i\in \T_{t}}H_{ii}^{2}\rb^{1/2}\lb\frac{1}{n_{t}}\sum_{i\in \T_{t}}(\hat{e}_{i} - e_{i}(t))^{2}\rb^{1/2}.
\end{equation}
First, 
\begin{equation}
  \label{eq:hatDeltat-tdDeltat3}
  \frac{1}{n_{t}}\sum_{i\in \T_{t}}H_{ii}^{2}\le \frac{n\kappa}{n_{t}}\lb\frac{1}{n}\sum_{i=1}^{n}H_{ii}\rb = O\lb\frac{\kappa p}{n}\rb.
\end{equation}
Second, using the fact
$
\hat{e}_{t} = (\id - H_{t})e_{t}(t),
$
we have 
\begin{align}
  \frac{1}{n_{t}}\sum_{i\in \T_{t}}(\hat{e}_{i} - e_{i}(t))^{2} &= \frac{1}{n_{t}}\|\hat{e}_{t} - e_{t}(t)\|_{2}^{2} =
\frac{1}{n_t}e_{t}\tran (t)  H_t e_{t}(t)\nonumber\\
  & =  \lb\frac{X_{t}\tran e_{t}(t)}{n_{t}}\rb\tran \Sigma_t^{-1}\frac{X_{t}\tran e_{t}(t)}{n_{t}} \le  \left\|\Sigma_t\right\|_{\op}^{-1}\left\|\frac{X_{t}\tran e_{t}(t)}{n_{t}}\right\|_{2}^{2}= O_\P(\cure_{2}\kappa),\label{eq:hatDeltat-tdDeltat2}
\end{align}
where the last line follows from Lemma \ref{lem:vector_concentration}. 
Putting \eqref{eq:hatDeltat-tdDeltat3} and  \eqref{eq:hatDeltat-tdDeltat2} into \eqref{eq:hatDeltat-tdDeltat1}, we obtain  
\begin{equation}
  \label{eq:hatDeltat-tdDeltat}
  |\hat{\Delta}_{t} - \td{\Delta}_{t}| = O_\P\lb \lb\frac{\cure_{2}\kappa^{2} p}{n}\rb^{1/2}\rb.
\end{equation}
Combining \eqref{eq:tdDeltat-Deltat} and \eqref{eq:hatDeltat-tdDeltat} together, we have 
$
|\hat{\Delta}_{t} - \Delta_{t}| = O_\P\lb \lb \cure_{2}\kappa^{2} p / n\rb^{1/2} \rb.
$
We complete the proof by invoking Theorem \ref{thm:expansion}.
\end{proof}

\subsection{Proof of asymptotic normality}

\begin{proof}[Proofs of Theorem \ref{thm:asym_normality_htau} and \ref{thm:asym_normality_hdtau}]
We first prove the asymptotic normality of $\taue $.
%\begin{eqnarray}
%\label{eq::cltfirstterm}
%\frac{n^{1/2}}{\sigma_{n}}\lb \frac{\oneone \tran e_{1}(1)}{n_{1}} - \frac{\onezero \tran e_{0}(0)}{n_{0}}\rb\rightsquigarrow N(0, 1).
%\end{eqnarray}
Recalling $0 = \one\tran e(0) =  \one \tran e_1(0)  + \one \tran  e_0(0) $, we obtain that
\begin{eqnarray}
n^{1/2} \taue 
%  \frac{n^{1/2}}{n_{1}}\oneone \tran e_{1}(1) - \frac{n^{1/2}}{n_{0}}\onezero \tran e_{0}(0) 
=  \frac{n^{1/2}}{n_{1}}\oneone \tran e_{1}(1) + \frac{n^{1/2}}{n_{0}}\oneone \tran e_{1}(0) 
=\sum_{i\in \T_{1}}\lb \frac{n^{1/2}}{n_{1}} e_{i}(1)+ \frac{n^{1/2}}{n_{0}}e_{i}(0)\rb. \label{eq::srsrepresentation}
\end{eqnarray}
Let $w_{i} = \frac{n^{1/2}}{n_{1}} e_{i}(1)+ \frac{ n^{1/2} }{n_{0}}e_{i}(0)$. Based on \eqref{eq:sigman}, we can verify that
\begin{align*}
&S_w^{2} \equiv  \sum_{i=1}^{n}(w_{i} - \bar{w})^{2} = \sum_{i=1}^{n}w_{i}^{2} 
=  n \sum_{i=1}^{n}  \lb \frac{e_{i}(1)}{n_{1}} + \frac{ e_{i}(0) }{n_{0}} \rb^2 
= \frac{n^{2}}{n_{1}n_{0}}\sigma_{n}^{2}.
\end{align*}
Applying Lemma \ref{prop:finite_CLT} to \eqref{eq::srsrepresentation}, we have 
$
d_{\text{K}} (  n^{1/2}\sigma_{n} \taue, N(0, 1) )= O(   \max_{1\leq i \leq n}|w_{i}| / S_w ) .
$
Lemma \ref{lem:A3} and Assumption \ref{assumption::A4} imply 
$
S_w^{-1} = O( \cure_{2}^{-1 / 2} ) 
$
and
$ \max_{1\leq i \leq n}|w_{i}| = O( \cure_{\infty} / n^{1/2 })= o( \cure_{2}^{1 / 2} ). 
$
Therefore, $\taue$ is asymptotically normal because convergence in Kolmogorov distance implies weak convergence.

We then prove the asymptotic normality of the two estimators.
Theorem \ref{cor:expansion} and Lemma \ref{lem:A3} imply
  \begin{align*}
  \frac{n^{1/2}(\htau - \tau)}{\sigma_{n}} 
= &\frac{n^{1/2} \taue }{\sigma_{n}}  + O_\P\lb \frac{(\cure_{2}\kappa^{2} p\log p)^{1/2}}{\sigma_{n}} + \frac{n^{1/2}\Delta}{\sigma_{n}} + \frac{(\cure_{2}\kappa)^{1/2}}{\sigma_{n}}\rb\\
= & \frac{n^{1/2} \taue}{\sigma_{n}}  + O_\P\lb (\kappa^{2} p\log p)^{1/2} + \lb\frac{n}{\cure_{2}}\rb^{1/2}\Delta + \kappa^{1/2}\rb.
  \end{align*}
  We complete the proof  by noting that $\kappa = o(1)$ under Assumption \ref{assumption::A2}. The same proof carries over to $\hdtau$.
\end{proof}

\subsection{Proof of asymptotic conservatism of variance estimators}\label{subsec:proof_var}

\begin{proof}[Proof of Theorem \ref{thm:variance_est}]
First, we prove the result for $j = 0$. Recalling $\hat{e}_{t} = (\id  - H_t )e_{t}(t)$, we have
\begin{align}
\frac{1}{n_{t}}\sum_{i\in \T_{t}}\hat{e}_{i}^{2} = \frac{1}{n_{t}} e_{t}(t) \tran   (\id  - H_t )e_{t}(t) 
=  \frac{1}{n_{t}}\sum_{i\in \T_{t}}e_{i}^2(t)  
 -
 \lb\frac{X_{t}\tran e_{t}(t)}{n_{t}}\rb\tran 
 \Sigma_t^{-1}
 \frac{X_{t}\tran e_{t}(t)}{n_{t}}
 \triangleq S_{t1}   - S_{t2}  .\label{eq:S1-S2}
\end{align}
Lemma \ref{lem:vector_concentration} and Assumption \ref{assumption::A2} together imply a bound for $S_{t2}$:
\begin{equation}
  \label{eq:S2t}
  S_{t2} \leq  \left\|   \Sigma_t^{-1} \right\|_\op  \left\|  \frac{X_{t}\tran e_{t}(t)}{n_{t}} \right\|_2^2    
  = O_\P\lb \cure_{2}\kappa\rb = o_\P\lb \cure_{2} \rb.
\end{equation}
The first term, $S_{t1}$, has mean $\E S_{t1}   = n^{-1}\sum_{i=1}^{n}e_{i}^2(t)$ and variance
\begin{eqnarray}
  \Var(S_{t1}  ) & \le & \frac{1}{n_{t}^{2}}\frac{n_{1}n_{0}}{n(n  - 1)}\sum_{i=1}^{n}e_{i}^4(t) \label{eq::variance-St1} \\
  & \le& \frac{n}{n_{t}^{2}}\cure_{\infty}^{2} \cure_{2} = O\lb\frac{\cure_{\infty}^{2}\cure_{2}}{n}\rb \label{eq:VarS1t} \\
  &=& o_\P(\cure_{2}^{2}), \label{eq::finalboundvarSt1}
\end{eqnarray}
where \eqref{eq::variance-St1} follows from Lemma \ref{lem:var_sum}, \eqref{eq:VarS1t} follows from the definitions of $\cure_{2}$ and $\cure_{\infty}$ and Assumption \ref{assumption::A1}, and \eqref{eq::finalboundvarSt1} follows from Assumption \ref{assumption::A4} that $\cure_{\infty}^{2} = o(n\cure_{2}  )$. Therefore, Chebyshev's inequality implies 
\begin{equation}\label{eq:SMarkov}
S_{t1}   = \E S_{t1}   + O_\P\lb \Var(S_{t1})^{1/2}\rb 
= \frac{1}{n}\sum_{i=1}^{n}e_{i}^2(t) + o_\P(\cure_{2}) . 
\end{equation}
Combining the bounds for $S_{t1} $ in \eqref{eq:SMarkov} and $S_{t2} $ in \eqref{eq:S2t}, we have 
\begin{equation}\label{eq:avgeit}
\frac{1}{n_{t}}\sum_{i\in \T_{t}}\hat{e}_{i}^{2} = \frac{1}{n}\sum_{i=1}^{n}e_{i}^2(t)   + o_\P\lb \cure_{2}\rb.
\end{equation}
Using the formula of $\hat{\sigma}^{2}$ in \eqref{eq:sigman2hat_simple} and Assumption \ref{assumption::A1}, we have
\begin{align*}
  \hat{\sigma}_{\HC 0}^{2} 
&= \frac{n}{n_1 - 1} \lb     \frac{1}{n}\sum_{i=1}^{n}e_{i}^2(1)   + o_\P\lb \cure_{2}\rb \rb 
+ \frac{n}{n_0 - 1} \lb \frac{1}{n}\sum_{i=1}^{n}e_{i}^2(0)   + o_\P\lb \cure_{2}\rb \rb \\
 &  = \frac{1}{n_{1}}\sum_{i=1}^{n} e_i^2(1) + \frac{1}{n_{0}}\sum_{i=1}^{n}e_i^2(0) + o_\P(\cure_{2}).
\end{align*}
Using the formula of $\sigma_n^2$ in \eqref{eq:sigman}, we have
$$
 \hat{\sigma}_{\HC 0}^{2}  \ge \sigma_{n}^{2} + \frac{1}{n}\sum_{i=1}^{n}(e_{i}(1)- e_{i}(0))^{2} + o_\P(\cure_{2}) \ge \sigma_{n}^{2} + o_\P(\cure_{2}),
$$
which, coupled with Lemma \ref{lem:A3}, implies the result on $\hat{\sigma}^{2}_{\HC 0}$.

Next we prove that the $\hat{\sigma}^{2}_{\HC j}$'s are asymptotically equivalent. It suffices to show   
$
\min_{1\leq i \leq n }|\td{e}_{i, j}| / |\hat{e}_{i}| = 1 + o_\P(1)
$
for $j=1,2,3$. 
The proof for $j = 1$ follows from $p/n = o(1)$ in \eqref{eq:pn} and Assumption \ref{assumption::A1}. The proofs for $j=2,3$ follow from
$
\max_{t=0,1} \max_{i\in \T_{t}}H_{t,ii} = o_\P(1),
$
which holds by Lemma \ref{lem:matrix_concentration} and Assumption \ref{assumption::A2}: 
\[
\max_{i\in \T_{t}}H_{t,ii} = \max_{i\in \T_{t}}  n_{t}^{-1}x_{i}\tran \Sigma_t^{-1}x_{i} 
= O_\P\left(  n_t^{-1} \max_{1\leq i \leq n}\|x_{i}\|_{2}^{2} \right) = O_\P\lb\kappa\rb = o_\P(1).
\]
\end{proof}

\section{Concentration Inequalities for Sampling Without Replacement}\label{sec:concentration}

\subsection{Some existing tools}
The proofs rely on concentration inequalities for sampling without replacement. \citet[][Theorem 4]{hoeffding63} proved the following result that sampling without replacement is more concentrated in convex ordering than sampling with replacement. 

\begin{proposition}\label{prop:hoeffding}
Let $C = (c_{1}, \ldots, c_{n})$ be a finite population of fixed elements. Let $Z_{1}, \ldots, Z_{m}$ be a random sample with replacement from $C$ and $W_{1}, \ldots, W_{m}$ be a random sample without replacement from $C$. If the function $f(x)$ is continuous and convex, then
$
\E f\lb\sum_{i=1}^{m}Z_{i}\rb\ge \E f\lb\sum_{i=1}^{m}W_{i}\rb.
$
\end{proposition}

From Proposition \ref{prop:hoeffding}, most concentration inequalities for independent sampling carry over to sampling without replacement. Later a line of works, in different contexts, showed an even more surprising phenomenon that sampling without replacement can have strictly better concentration than independent sampling \citep[e.g.,][]{serfling74, diaconis87, lee98, bobkov04, cortes09, el09, bardenet15, tolstikhin17}. In particular, \citet[][Theorem 9]{tolstikhin17} proved a useful concentration inequality for the empirical processes for sampling without replacement. 

\begin{proposition}\label{prop:emp_process}
Let $C = (c_{1}, \ldots, c_{n})$ be a finite population of fixed elements, and $W_{1}, \ldots, W_{m}$ be a random sample without replacement from $C$. Let $\F$ be a class of functions on $C$, and 
\[S(\F) = \sup_{f\in \F} \sum_{i=1}^{m}f(W_{i}),\quad 
\nu(\F)^{2} = \sup_{f\in \F}\Var(f(W_{1})). \]
Then 
\[\P(S(\F) - \E[S(\F)]\ge t)\le \exps{-\frac{(n + 2)t^{2}}{8n^{2}\nu(\F)^{2}}} . \]
\end{proposition}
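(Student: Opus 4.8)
The plan is to reduce the tail bound to a sub-Gaussian estimate on the moment generating function of the centered supremum and then optimize the Chernoff bound. Writing $Z = S(\F)$, it suffices to establish that for every $\lambda > 0$,
\[
\log \E\, e^{\lambda(Z - \E Z)} \le \frac{2 n^2 \nu(\F)^2}{n+2}\,\lambda^2 ,
\]
since then $\P(Z - \E Z \ge t) \le \exp\{-\lambda t + 2n^2\nu(\F)^2\lambda^2/(n+2)\}$, and minimizing over $\lambda$ at $\lambda = (n+2)t/(4n^2\nu(\F)^2)$ yields exactly the stated tail. To set up, I would encode the sample without replacement by the uniform measure on size-$m$ subsets $A \subseteq \{1,\dots,n\}$ (equivalently, the first $m$ coordinates of a uniformly random permutation), so that $Z = Z(A)$ is a function on this ``slice,'' and let $f^\star = f^\star(A)$ denote a function attaining $\sup_{f\in\F}\sum_{i\in A} f(c_i)$.

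The engine is the entropy method (Herbst's argument). I would establish a modified logarithmic Sobolev inequality for the uniform (Bernoulli--Laplace / slice) measure whose Dirichlet form is built from elementary swaps exchanging an in-sample index $i \in A$ with an out-of-sample index $j \notin A$. The key structural feature of a supremum is that such a swap can only raise $Z$ through the current maximizer: replacing $c_i$ by $c_j$ increases $Z$ by at most $f^\star(c_j) - f^\star(c_i)$. This controls the positive part of the discrete gradient of $e^{\lambda Z}$ pointwise by quantities depending only on $f^\star$, which is precisely the one-sided information matching the one-sided tail in the statement. Feeding this into the modified log-Sobolev inequality for $e^{\lambda Z}$ and running Herbst's differential inequality on $\psi(\lambda) = \log \E\, e^{\lambda(Z - \E Z)}$ reduces the whole problem to bounding the aggregated squared swap-differences.

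For that gradient bound I would use that a single draw without replacement is marginally uniform on the population, so $\Var(f(W_1)) = n^{-1}\sum_{i=1}^n (f(c_i) - \bar f)^2$ with $\bar f = n^{-1}\sum_i f(c_i)$; by the definition of $\nu(\F)^2$ this is at most $\nu(\F)^2$ for every $f$, in particular for the configuration-dependent maximizer $f^\star$. Writing $g_i = f^\star(c_i) - \bar{f^\star}$, a direct expansion gives $\sum_{i \in A,\, j \notin A}(g_j - g_i)^2 \le 2n\sum_{i=1}^n g_i^2 \le 2n^2 \nu(\F)^2$, and combining this with the explicit log-Sobolev constant of the slice measure produces the factor $2n^2/(n+2)$ and closes the MGF estimate.

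The hardest part will be the second step: establishing the modified log-Sobolev (sub-Gaussian) inequality for the slice measure with the sharp $(n+2)$ constant while respecting both the absence of a product structure and the one-sidedness forced by the supremum. This is exactly where the dependence inherent to sampling without replacement bites, since standard tensorization of entropy is unavailable and one must instead exploit the precise log-Sobolev geometry of the Bernoulli--Laplace model (or, equivalently, a carefully chosen exchangeable-pair / Stein-type argument over random transpositions). I note in passing that the reduction to i.i.d.\ sampling via Proposition~\ref{prop:hoeffding} is tempting but does not deliver the clean centered tail: convex ordering only gives $\E Z \le \E Z_{\mathrm{iid}}$, so centering the without-replacement supremum at its own smaller mean while importing the with-replacement moment generating function injects a strictly positive correction that corrupts the bound. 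Hence the direct entropy-method route on the slice is the appropriate approach.
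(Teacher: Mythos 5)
You should first be aware that the paper does not prove this proposition at all: it is imported verbatim as Theorem 9 of \citet{tolstikhin17} and used as a black box to establish Lemmas \ref{thm:vector_concentration} and \ref{thm:matrix_concentration}. So the comparison here is between your sketch and the cited literature rather than any internal argument. Within your sketch, the routine parts are correct: the Chernoff calculus is right (a sub-Gaussian bound $\log\E\exps{\lambda(Z-\E Z)}\le 2n^{2}\nu(\F)^{2}\lambda^{2}/(n+2)$ does give the stated tail after optimizing $\lambda$); the aggregation $\sum_{i\in A,\,j\notin A}(g_{j}-g_{i})^{2}\le 2n\sum_{i=1}^{n}g_{i}^{2}\le 2n^{2}\nu(\F)^{2}$ is valid because the $g_{i}$ are centered and $\Var(f(W_{1}))=n^{-1}\sum_{i=1}^{n}(f(c_{i})-\bar{f})^{2}$; and your remark that the convex-ordering reduction of Proposition \ref{prop:hoeffding} cannot deliver a tail centered at the smaller without-replacement mean is exactly the reason this result is not a corollary of i.i.d.\ empirical process theory.

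However, there is a genuine gap, and it is not a small one: the ``modified logarithmic Sobolev inequality for the slice measure with the sharp $(n+2)$ constant'' into which your entire argument funnels is asserted, not proved, and its constant is reverse-engineered from the target bound. That inequality \emph{is} the proposition; everything else in your plan (Herbst's argument, Chernoff optimization, the variance bookkeeping) is standard. Nothing in the sketch indicates how the factor $n+2$ would emerge from the Dirichlet form of the Bernoulli--Laplace model: the known log-Sobolev and modified log-Sobolev constants for that chain (Lee--Yau, Gao--Quastel, Bobkov--Tetali) are not stated in the one-sided, supremum-compatible form you need, are generally available only up to unspecified universal constants, and depend on $m$, whereas the claimed bound is uniform in $m$. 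In addition, your one-sided gradient bound has the direction backwards: with $f^{\star}$ the maximizer at the current configuration $A$, what you can control is the \emph{decrease}, $Z(A)-Z(A_{i\to j})\le f^{\star}(c_{i})-f^{\star}(c_{j})$; the increase under a swap is bounded only via the maximizer of the post-swap configuration, so in fact the increase is at \emph{least} $f^{\star}(c_{j})-f^{\star}(c_{i})$, not at most. A usable entropy inequality must therefore be stated for the correctly signed part of the discrete gradient, and getting this sign wrong breaks the Herbst step for the upper tail. As it stands, your proposal is a plausible research plan that relocates, rather than closes, the difficulty.
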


Proposition \ref{prop:emp_process} gives a sub-gaussian tail of $S(\F)$ with the sub-gaussian parameter depending solely on the variance. In contrast, the concentration inequalities in the standard empirical process theory for independent sampling usually requires the functions in $\F$ to be uniformly bounded and the tail is either sub-gaussian with the sub-gaussian parameter being the uniform bound on $\F$ or sub-exponential with Bernstein-style behaviors; see \cite{boucheron13} for instance. Therefore, Proposition \ref{prop:emp_process} provides a more precise statement that sampling without replacement is more concentrated than independent sampling for a large class of statistics.

We need the following result from \citet[][Theorem 5.1.(2)]{tropp16} to prove the matrix concentration inequality.
\begin{proposition}\label{prop:norm_mean}
Let $\td{V}_{1}, \ldots, \td{V}_{m}$ be independent $p\times p$ random matrices with $\E \td{V}_{i} = 0$ for all $i$. Let $C(p) = 4(1 + \lceil 2\log p\rceil)$. Then 
\[\lb\E \left\|\sum_{i=1}^{m}\td{V}_{i}\right\|_{\op}^{2}\rb^{\frac{1}{2}}\le C(p)^{1/2} \left\|\sum_{i=1}^{m}\E \td{V}_{i}^{2}\right\|_{\op}^{\frac{1}{2}} + C(p)\lb \E \max_{1\leq i \leq n}\|\td{V}_{i}\|_{\op}^{2}\rb^{\frac{1}{2}}.\]
\end{proposition}

\subsection{Proofs of Lemma \ref{thm:vector_concentration} and \ref{thm:matrix_concentration}}

We will use the facts that for any $u\in \R^{p}$ and Hermitian $V\in \R^{p\times p}$,
\[\|u\|_{2} = \sup_{\omega\in \S^{p-1}}u\tran \omega, \quad \|V\|_{\op} = \sup_{\omega\in \S^{p-1}}|\omega\tran V\omega|.\]

\begin{proof}[Proof of Lemma \ref{thm:vector_concentration}]
Let 
$C = (u_{1}, \ldots, u_{n}) 
$
and
$ \F = \{f_{\omega}(u) = u\tran \omega: \omega\in \S^{p-1}\}.
$
Let $u$ be a vector that is randomly sampled from $C$. Then 
\begin{align*}
  \nu^{2}(\F) &= \sup_{\omega\in \S^{p-1}}\Var(u\tran \omega) \le \sup_{\omega\in \S^{p-1}}\E(u\tran \omega)^{2} \\
& = \sup_{\omega\in \S^{p-1}}  \frac{1}{n}\sum_{i=1}^{n}(u_{i}\tran \omega)^{2} 
= \sup_{\omega\in \S^{p-1}}  \omega\tran \lb\frac{1}{n}\sum_{i=1}^{n}u_{i}u_{i}\tran \rb\omega 
  = \sup_{\omega\in \S^{p-1}} \omega\tran \lb\frac{U\tran U}{n}\rb\omega \le \frac{\|U\|_{\op}^{2}}{n}.
\end{align*}
By Proposition \ref{prop:emp_process}, 
\begin{align*}
  \P\lb \left\|\sum_{i\in \T}u_{i}\right\|_{2} \ge \E\left\|\sum_{i\in \T}u_{i}\right\|_{2} + t \rb\le \exps{-\frac{(n + 2)t^{2}}{8n \|U\|_{\op}^{2}}}\le \exps{-\frac{t^{2}}{8\|U\|_{\op}^{2}}},
\end{align*}
or, equivalently, with probability $1 - \delta$,
\begin{equation}
  \label{eq:vector1}
  \left\|\sum_{i\in \T}u_{i}\right\|_{2} \le \E\left\|\sum_{i\in \T}u_{i}\right\|_{2} + \|U\|_{\op}\, \lb 8\log \frac{1}{\delta}\rb^{1/2}.
\end{equation}
By the Cauchy--Schwarz inequality, 
\[\lb\E\left\|\sum_{i\in \T}u_{i}\right\|_{2}\rb^{2}\le \E\left\|\sum_{i\in \T}u_{i}\right\|_{2}^{2} = \sum_{j=1}^{p}\E\lb\sum_{i\in \T}u_{ij}\rb^{2}.\]
Lemma \ref{lem:var_sum} implies 
\[\E\lb\sum_{i\in \T}u_{ij}\rb^{2} = \frac{m(n - m)}{n(n - 1)}\sum_{i=1}^{n}u_{ij}^{2}.\]
As a result,
\begin{equation}\label{eq:vector2}
\lb\E\left\|\sum_{i\in \T}u_{i}\right\|_{2}\rb^{2}\le \frac{m(n - m)}{n(n - 1)}\sum_{i=1}^{n}\|u_{i}\|_{2}^{2} = \|U\|_{F}^{2}\,\frac{m(n - m)}{n(n - 1)}.
\end{equation}
We complete the proof by using \eqref{eq:vector1} and \eqref{eq:vector2}.
\end{proof}

\begin{proof}[Proof of Lemma \ref{thm:matrix_concentration}]

Let $V$ be a matrix that is randomly sampled from a set of matrices $C = (V_{1}, \ldots, V_{n})$ and $ \F = \{f_{\omega}(V) = \omega\tran V\omega: \omega\in \S^{p-1}\}$. Then
$$
  \nu^{2}(\F) = \sup_{\omega\in \S^{p-1}}\Var(\omega\tran V\omega) \le \sup_{\omega\in \S^{p-1}}\E(\omega\tran V\omega)^{2} 
 =  \sup_{\omega\in \S^{p-1}} \frac{1}{n}\sum_{i=1}^{n}(\omega\tran V_{i}\omega)^{2} = \nu^{2}_{-}.
$$ 
By Proposition \ref{prop:emp_process}, 
\begin{align*}
  \P\lb \sup_{\omega\in \S^{p-1}}\omega\tran \lb\sum_{i\in\T_{t}}V_{i}\rb\omega \ge \E\left[ \sup_{\omega\in \S^{p-1}}\omega\tran \lb\sum_{i\in\T_{t}}V_{i}\rb\omega \right] + t \rb\le \exps{-\frac{(n + 2)t^{2}}{8n^{2}\nu^{2}_{-}}}\le \exps{-\frac{t^{2}}{8n \nu^{2}_{-}}}.
\end{align*}
Since $\sup_{\omega\in \S^{p-1}}\omega\tran V \omega\le \|V\|_{\op}$, it implies
\begin{align*}
  \P\lb \sup_{\omega\in \S^{p-1}}\omega\tran \lb\sum_{i\in\T_{t}}V_{i}\rb\omega \ge \E\left[\left\|\sum_{i\in\T_{t}}V_{i}\right\|_{\op}\right] + t \rb\le \exps{-\frac{(n + 2)t^{2}}{8n^{2}\nu^{2}_{-}}}\le \exps{-\frac{t^{2}}{8n \nu^{2}_{-}}}.
\end{align*}
Similarly,
\begin{align*}
  \P\lb -\sup_{\omega\in \S^{p-1}}\omega\tran \lb\sum_{i\in\T_{t}}V_{i}\rb\omega \ge \E\left[\left\|\sum_{i\in\T_{t}}V_{i}\right\|_{\op}\right] + t \rb\le \exps{-\frac{t^{2}}{8n \nu^{2}_{-}}}.
\end{align*}
As a consequence,
\begin{align*}
  \P\lb  \left\|\sum_{i\in\T_{t}}V_{i}\right\|_{\op} \ge \E\left[\left\|\sum_{i\in\T_{t}}V_{i}\right\|_{\op}\right] + t \rb\le 2\exps{-\frac{t^{2}}{8n \nu^{2}_{-}}}.
\end{align*}
or, equivalently,  with probability $1 - \delta$,
\begin{equation}
  \label{eq:matrix1}
  \left\|\sum_{i\in \T}V_{i}\right\|_{\op} \le \E\left\|\sum_{i\in \T}V_{i}\right\|_{\op} + \lb 8n\log \frac{2}{\delta}\rb^{1/2}\,\nu_{-}.
\end{equation}

We then bound $\E\left\|\sum_{i\in \T}V_{i}\right\|_{\op} .$
Let $\td{V}_{1}, \ldots, \td{V}_{m}$ be an i.i.d. random sample with replacement from $C$. We have
\begin{align}
  &\E\left\|\sum_{i\in \T}V_{i}\right\|_{\op} \le \E\left\|\sum_{i=1}^{m}\td{V}_{i}\right\|_{\op}
  \le \ \lb  \E\left\|\sum_{i=1}^{m}\td{V}_{i}\right\|_{\op}^{2}\rb^{\frac{1}{2}} \le (nC(p))^{1/2}\nu + C(p)\nu_{+},\label{eq:matrix2}
\end{align}
where the first inequality follows from Proposition \ref{prop:hoeffding} due to the convexity of $\|\cdot\|_{\op}$, the second inequality follows from the Cauchy--Schwarz inequality, and the third inequality follows from Proposition \ref{prop:norm_mean}.

Combining \eqref{eq:matrix1} and \eqref{eq:matrix2}, we complete the proof.
\end{proof}

\section{ Proof of Lemma \ref{thm:mean}}
\label{app:mean_variance}

When $m = 0$ or $m = n$, $Q_{\T}$ is deterministic with zero variance and the inequality holds automatically. Thus we assume $1\le m\le n-1$.
Let $\sum_{[i_1, \ldots, i_k]}$ denote the sum over all $(i_1, \ldots, i_k)$ with mutually distinct elements in $\{1,\ldots, n\}$. For instance, $\sum_{[i, j]}$ denotes the sum over all pairs $(i, j)$ with $i\not = j$. 
We first state a basic result for sampling without replacement.

\begin{lemma}\label{lem:trivial_comb}
  Let $i_{1}, \ldots, i_{k}$ be distinct indices in $\{1,\ldots, n  \}$ and $\T$ be a uniformly random subset of $\{ 1,\ldots, n\}$ with size $m$. Then
\[\P\lb i_{1}, \ldots, i_{k}\in \T\rb = \frac{m\cdots (m - k + 1)}{n\cdots (n - k + 1)}.\]
\end{lemma}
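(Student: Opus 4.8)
The plan is to compute the probability by a direct counting argument. Since $\T$ is uniform over all $\binom{n}{m}$ size-$m$ subsets of $\{1,\ldots,n\}$, the probability of the event $\{i_1,\ldots,i_k\in\T\}$ equals the number of size-$m$ subsets containing the fixed distinct indices $i_1,\ldots,i_k$, divided by $\binom{n}{m}$. So the whole proof reduces to counting favorable subsets and simplifying a ratio of binomial coefficients.

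First I would count the favorable subsets. A size-$m$ subset containing $i_1,\ldots,i_k$ is obtained by adjoining to $\{i_1,\ldots,i_k\}$ an arbitrary size-$(m-k)$ subset of the remaining $n-k$ indices; there are $\binom{n-k}{m-k}$ such choices. Hence
\[
\P\lb i_1,\ldots,i_k\in\T\rb=\frac{\binom{n-k}{m-k}}{\binom{n}{m}}.
\]
Next I would simplify this ratio. Writing both binomial coefficients in factorial form, the factor $(n-m)!$ cancels and one is left with
\[
\frac{\binom{n-k}{m-k}}{\binom{n}{m}}=\frac{(n-k)!\,m!}{(m-k)!\,n!}=\frac{m!/(m-k)!}{n!/(n-k)!}=\frac{m(m-1)\cdots(m-k+1)}{n(n-1)\cdots(n-k+1)},
\]
which is exactly the claimed falling-factorial expression.

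There is essentially no analytic obstacle here; the only point requiring a moment of care is the degenerate range $k>m$. In that case no size-$m$ subset can contain $k$ distinct elements, so the left-hand side is zero; correspondingly the numerator $\binom{n-k}{m-k}$ vanishes (equivalently, the falling factorial $m(m-1)\cdots(m-k+1)$ contains the zero factor $m-m$), so the formula remains valid without separate treatment. Alternatively, one could give a sequential exchangeability proof, revealing membership of $i_1,\ldots,i_k$ one index at a time and multiplying the conditional inclusion probabilities $m/n,\,(m-1)/(n-1),\ldots,(m-k+1)/(n-k+1)$, but the counting argument is the most transparent and will be the one I carry out.
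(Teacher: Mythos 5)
Your proof is correct and complete: the counting argument giving $\binom{n-k}{m-k}/\binom{n}{m}$, the factorial simplification to the falling-factorial ratio, and the observation that the degenerate case $k>m$ is handled automatically are all accurate. The paper states this lemma as a basic fact without providing any proof, so there is nothing to compare against; your argument is the standard one and correctly fills that gap.
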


  By definition,
\begin{equation}\label{eq:QT}
Q_{\T} = \sum_{i=1}^{n}Q_{ii}I(i\in \T) + \sum_{[i, j]} Q_{ij}I(i,j\in \T).
\end{equation}
The mean of $Q_\T$ follows directly from Lemma \ref{lem:trivial_comb}:  
\begin{align*}
\E Q_{\T} = \sum_{i=1}^{n}Q_{ii}\cdot \frac{m}{n} + \sum_{[i, j]} Q_{ij}\cdot \frac{m(m-1)}{n(n-1)} 
  = \frac{m(n - m)}{n(n - 1)}\tr(Q) + \frac{m(m - 1)}{n(n - 1)}(\one\tran Q\one).
\end{align*}

The rest of this section proves the result of the variance. 
Let 
\begin{align*}
&  c_1 = \frac{m(n - m)}{n(n - 1)}, \quad  c_2 = \Var\lb I(1, 2\in \T)\rb = c_1 \frac{(m-1)(n + m - 1)}{n(n-1)},\\
&  c_3 = \Cov\lb I(1, 2\in \T), I(1, 3\in \T)\rb = c_1\frac{(m - 1)(mn - 2m - 2n + 2)}{n(n-1)(n-2)},\\
  &  c_4 = \Cov\lb I(1, 2\in \T), I(3, 4\in \T)\rb = c_1\frac{(m - 1)(-4mn + 6n + 6m - 6)}{n(n-1)(n - 2)(n - 3)},\\
& c_5 = \Cov\lb I(1 \in \T), I(1, 2\in \T)\rb = c_1\frac{m - 1}{n},\\
& c_6 = \Cov\lb I(1 \in \T), I(2, 3\in \T)\rb = -c_1\frac{2(m - 1)}{n(n - 2)}.
\end{align*}
Using \eqref{eq:QT}, we have 
\begin{align}
\Var(Q_{\T})  &=  \underbrace{  \Var\lb \sum_{i=1}^{n}Q_{ii}I(i\in \T)\rb }_{V_\textup{I}}
+ \underbrace{  \Var\lb \sum_{[i, j]} Q_{ij}I(i, j\in \T)\rb}_{V_\textup{II}} \nonumber\\
 & \quad + 2  \underbrace{  \Cov\lb \sum_{i=1}^{n}Q_{ii}I(i\in \T), \sum_{[i, j]} Q_{ij}I(i, j\in \T)   \rb  }_{V_\textup{III}}.  
 \label{eq:var_decompose}
\end{align}

The next subsection deals with the three terms in \eqref{eq:var_decompose}, separately. 

\subsection{Simplifying  \eqref{eq:var_decompose}}

\paragraph{Term $V_\textup{I}$}
Lemma \ref{lem:var_sum} implies
\begin{align}
V_\textup{I}=  \Var\lb \sum_{i=1}^{n}Q_{ii}I(i\in \T)\rb  = \frac{m(n - m)}{n(n-1)}\sum_{i=1}^{n}\lb Q_{ii} - \frac{1}{n}\sum_{i=1}^{n}Q_{ii}\rb^{2} 
  = c_1\sum_{i=1}^{n}Q_{ii}^{2} - \frac{c_1}{n}(\tr(Q))^{2}. \label{eq:var_decompose1}
\end{align}

\paragraph{Term $V_\textup{II}$}
We expand $V_\textup{II}$ as
\begin{align*}
  &V_\textup{II} = \Var\lb \sum_{[i, j]}Q_{ij}I(i, j\in \T)\rb = \Cov\lb \sum_{[i, j]}Q_{ij}I(i, j\in \T), \sum_{[i', j']}Q_{i'j'}I(i', j'\in \T)\rb \\
&  =\sum_{[i, j]} \lb Q_{ij}^{2} + Q_{ij}Q_{ji}\rb\Var(I(i, j\in \T)) + \sum_{[i, j, k, \ell]}Q_{ij}Q_{k\ell} \Cov\lb I(i, j\in \T), I(k, \ell\in \T)\rb  \\
& \quad + \sum_{[i, j, k]}\lb Q_{ij}Q_{ik} + Q_{ij}Q_{ki}\rb \Cov\lb I(i, j\in \T), I(i, k\in \T)\rb  \\
& \quad + \sum_{[i, j, k]}\lb Q_{ij}Q_{jk} + Q_{ij}Q_{kj}\rb \Cov\lb I(i, j\in \T), I(j, k\in \T)\rb  \\
 &   =  c_2 \sum_{[i, j]}\lb Q_{ij}^{2} + Q_{ij}Q_{ji}\rb + c_4 \sum_{[i, j, k, \ell]}Q_{ij}Q_{k\ell} 
 + c_3\sum_{[i, j, k]}\lb Q_{ij}Q_{ik} + Q_{ij}Q_{ki} + Q_{ij}Q_{jk} + Q_{ij}Q_{kj}\rb . 
\end{align*}
We then reduce the summation over $[i,j,k,l]$ to summations over fewer indices.
First, 
\begin{align*}
  \lb \sum_{[i, j]} Q_{ij}\rb^{2} = \sum_{[i, j]}\lb Q_{ij}^{2} + Q_{ij}Q_{ji}\rb + \sum_{[i, j, k, \ell]}Q_{ij}Q_{k\ell} 
 + \sum_{[i, j, k]}\lb Q_{ij}Q_{ik} + Q_{ij}Q_{ki} + Q_{ij}Q_{jk} + Q_{ij}Q_{kj}\rb.
\end{align*}
Second, $\one\tran Q\one = 0$ implies
$\sum_{[i, j]} Q_{ij} = -\sum_{i=1}^{n}Q_{ii} = -\tr(Q),$
which further implies   
\begin{align*}
\sum_{[i, j, k, \ell]}Q_{ij}Q_{k\ell}  = \lb\tr(Q)\rb^{2} - \sum_{[i, j]}\lb Q_{ij}^{2} + Q_{ij}Q_{ji}\rb 
 - \sum_{[i, j, k]}\lb Q_{ij}Q_{ik} + Q_{ij}Q_{ki} + Q_{ij}Q_{jk} + Q_{ij}Q_{kj}\rb.
\end{align*}
The above two facts simplify $V_{\textup{II}}$ to
\begin{align}
V_\textup{II} &=   c_{4}(\tr(Q))^{2} + (c_2 - c_4)\sum_{[i, j]}\lb Q_{ij}^{2} + Q_{ij}Q_{ji}\rb  \nonumber\\
& + (c_3 - c_4)\sum_{[i, j, k]}\lb Q_{ij}Q_{ik} + Q_{ij}Q_{ki} + Q_{ij}Q_{jk} + Q_{ij}Q_{kj}\rb\label{eq:var_decompose2_3}.
\end{align}
We then reduce the summation over $[i,j,k]$ to summations over fewer indices.
Note that $\one\tran Q = Q\one = 0$ implies 
$  \sum_{j=1}^{n}Q_{ij} = \sum_{i=1}^{n}Q_{ij} = 0,$
which further implies 
\begin{align*}
  \sum_{[i, j, k]} Q_{ij}Q_{ik} &= \sum_{[i, j]}Q_{ij}\sum_{k\not= i,j}Q_{ik} = -\sum_{[i, j]}Q_{ij}(Q_{ii} + Q_{ij}) \\
  & = -\sum_{i=1}^nQ_{ii}\sum_{j\not = i}Q_{ij} - \sum_{[i, j]}Q_{ij}^{2} = \sum_{i=1}^{n}Q_{ii}^{2} - \sum_{[i, j]}Q_{ij}^{2}.
\end{align*}
Similarly, 
\[\sum_{[i, j, k]} Q_{ij}Q_{kj} = \sum_{i=1}^{n}Q_{ii}^{2} - \sum_{[i, j]}Q_{ij}^{2},\quad 
 \sum_{[i, j, k]} Q_{ij}Q_{ki} = \sum_{[i, j, k]} Q_{ij}Q_{jk} = \sum_{i=1}^{n}Q_{ii}^{2} - \sum_{[i, j]}Q_{ij}Q_{ji}.\]
Using the above three identities to simplify the third term in \eqref{eq:var_decompose2_3}, we obtain
\begin{align}
V_\textup{II}  =  c_{4}(\tr(Q))^{2} + 4(c_3 - c_4) \sum_{i=1}^n Q_{ii}^{2} 
 + (c_2 - 2c_3 + c_4)\sum_{[i, j]}\lb Q_{ij}^{2} + Q_{ij}Q_{ji}\rb . \label{eq:var_decompose2_4}
\end{align}

\paragraph{Term $V_\textup{III}$}
The covariance term is
\begin{eqnarray*}
V_\textup{III} &=   &\Cov\lb \sum_{i=1}^{n}Q_{ii}I(i\in \T), \sum_{[i, j]} Q_{ij}I(i, j\in \T)\rb \\
 & =& \sum_{[i, j]}Q_{ii}(Q_{ij} + Q_{ji})\Cov\lb I(i\in \T), I(i, j\in \T)\rb  
 + \sum_{[i, j, k]}Q_{ii}Q_{jk}\Cov\lb I(i\in \T), I(j, k\in \T)\rb \\
&  =& c_5 \sum_{[i, j]}Q_{ii}(Q_{ij} + Q_{ji}) + c_6 \sum_{[i, j, k]}Q_{ii}Q_{jk}. 
\end{eqnarray*}
Similar to previous arguments,
\begin{eqnarray*}
  \sum_{[i, j]}Q_{ii}(Q_{ij} + Q_{ji}) & = & \sum_{i=1}^nQ_{ii}\sum_{j\not = i}(Q_{ij} + Q_{ji}) = -2\sum_{i=1}^nQ_{ii}^{2} ,\\
  \sum_{[i, j, k]}Q_{ii}Q_{jk} & =& \sum_{[i, j]}Q_{ii}\sum_{k\not = i,j}Q_{jk} = -\sum_{[i, j]}Q_{ii}(Q_{jj} + Q_{ji})\\
& =& -\sum_{i=1}^nQ_{ii} \sum_{j\not = i}(Q_{jj} + Q_{ji}) = -\sum_{i=1}^nQ_{ii} \lb \tr(Q) - Q_{ii} - Q_{ii}\rb \\
& = &-(\tr(Q))^{2} + 2\sum_{i=1}^nQ_{ii}^{2}.
\end{eqnarray*}
Using the above two identities, we can simplify $V_\textup{III}$ to
\begin{align}
 V_\textup{III}   = -c_6 (\tr(Q))^{2} - 2(c_5 - c_6)\sum_{i=1}^nQ_{ii}^{2}.  \label{eq:var_decompose3_2}
\end{align}

Putting \eqref{eq:var_decompose1}, \eqref{eq:var_decompose2_4} and \eqref{eq:var_decompose3_2} together, we obtain that 
\begin{align}
\Var(Q_\T)  &=   \underbrace{  (c_1 + 4c_3 - 4c_4 - 4c_5 + 4c_6)}_{C_\textup{I}}  \sum_{i=1}^{n}Q_{ii}^{2}  
+  \underbrace{   \lb c_4 - \frac{c_1}{n} - 2c_6\rb  }_{C_\textup{II}} (\tr(Q))^{2} \nonumber\\
 & + \underbrace{   (c_2 - 2c_3 + c_4)}_{C_\textup{III}}  \sum_{[i, j]}(Q_{ij}^{2} + Q_{ij}Q_{ji}).\label{eq:exact_var}
\end{align}
We simplify \eqref{eq:exact_var} in the next subsection by deriving bounds for the coefficients.

\subsection{Bounding the coefficients $C_\textup{I}, C_\textup{II}$ and $C_\textup{III}$ in \eqref{eq:exact_var}}
\paragraph{Bounding $C_\textup{I}$}
We have 
\begin{align}
C_\textup{I} &= c_1 + 4c_3 - 4c_4 - 4c_5 + 4c_6 \nonumber\\
 & =  c_1 + 4c_1\frac{m - 1}{n}\lb \frac{mn - 2m - 2n + 2}{(n - 1)(n - 2)} + \frac{4mn - 6m - 6n + 6}{(n - 1)(n - 2)(n - 3)} - 1 - \frac{2}{n - 2}\rb.\nonumber
\end{align}
Through a tedious calculation, we obtain that
\begin{align*}
  &\frac{mn - 2m - 2n + 2}{(n - 1)(n - 2)} + \frac{4mn - 6m - 6n + 6}{(n - 1)(n - 2)(n - 3)} - 1 - \frac{2}{n - 2} = -\frac{(n - m - 1)n}{(n - 2)(n - 3)}.
\end{align*}
Thus,
$
  C_\textup{I} = c_1\lb 1 - \frac{4(m - 1)(n - m - 1)}{(n - 2)(n - 3)}\rb.
$

\paragraph{Bounding $C_\textup{II}$}
We have 
\begin{align*}
C_\textup{II} = & \,\, c_4 - \frac{c_1}{n} - 2c_6 = -\frac{c_1}{n} + c_1\frac{m - 1}{n(n - 2)}\lb \frac{-4mn + 6m + 6n - 6}{(n - 1)(n - 3)} + 4\rb\\
= & -\frac{c_1}{n} + c_1\frac{(m - 1)(4n^2 - 4mn + 6m - 10n + 6)}{n(n - 1)(n - 2)(n - 3)}\\
= & -\frac{c_1}{n}\lb 1 - \frac{(m - 1)(n - m - 1)(4n - 6)}{(n - 1)(n - 2)(n - 3)}\rb\\
\le & \,\, c_1 \frac{(m - 1)(n - m - 1)(4n - 6)}{n(n - 1)(n - 2)(n - 3)} \le  \,\, \frac{c_1}{n} \frac{4(m - 1)(n - m - 1)}{(n - 2)(n - 3)}.
\end{align*}
% We consider three cases.

% \noindent $\bullet$ If $n\ge 6$, then
% \begin{align*}
%   &|(m - 1)(2n^2 - 4mn + 6m - 2n)| = (4n - 6)\left|(m - 1)\lb m - \frac{n^{2} - n}{2n - 3}\rb\right|\\
%  \le &(4n - 6)\left|m\lb m - \frac{n^{2} - n}{2n - 3}\rb\right|\le (4n - 6)\lb \frac{n^{2} - n}{4n - 6}\rb^{2} = \frac{n^{2}(n - 1)^{2}}{4n - 6},
% \end{align*}
% which further implies 
% \begin{align*}
% \frac{(m - 1)(2n^2 - 4mn + 6m - 2n)}{(n - 1)(n - 2)(n - 3)}\le \frac{n^{2}(n - 1)}{(4n - 6)(n - 2)(n - 3)}\le \frac{5}{6}\le 1.
% \end{align*}

% \noindent $\bullet$ If $n = 5$, then 
% \[\frac{(m - 1)(2n^2 - 4mn + 6m - 2n)}{(n - 1)(n - 2)(n - 3)} = \frac{(m - 1)(20 - 7m)}{12}\le 1.\]

% \noindent $\bullet$ If $n = 4$, then 
% \[\frac{(m - 1)(2n^2 - 4mn + 6m - 2n)}{(n - 1)(n - 2)(n - 3)} = \frac{(m - 1)(12 - 5m)}{3}\le 1.\]

% \noindent In summary, we obtain that
% $
%   C_\textup{II} \le 0.
% $

\paragraph{Bounding $C_\textup{III}$}
We consider four cases.

\noindent $\bullet$ If $m = 1$, then $c_2 = c_3 = c_4 = 0$ and
$C_{\textup{III}}\le \frac{c_{1}}{2}.$

\noindent $\bullet$ If $m = 2$, since $n\ge 4$, we have
\begin{align*}
  C_{\textup{III}} &= c_1\lb\frac{n + 1}{n(n - 1)} - \frac{-4}{n(n - 1)(n - 2)} - \frac{2}{n(n - 1)(n - 2)}\rb \\
& = c_1\lb \frac{n + 1}{n(n - 1)} + \frac{2}{n(n - 1)(n - 2)}\rb \le \frac{c_1}{2}.
\end{align*}

\noindent $\bullet$ If $m = 3$, then
\begin{align*}
  C_{\textup{III}} &= c_1\lb\frac{2(n + 2)}{n(n - 1)} - \frac{4(n - 4)}{n(n - 1)(n - 2)} - \frac{12n - 24}{n(n - 1)(n - 2)(n - 3)}\rb\\
& = c_1\lb\frac{2(n + 2)}{n(n - 1)} - \frac{4(n - 4)}{n(n - 1)(n - 2)} - \frac{12}{n(n - 1)(n - 3)}\rb.
\end{align*}
If $n\ge 7$,
$C_{\textup{III}}\le c_1\frac{2(n + 2)}{n (n - 1)}\le \frac{c_1}{2}.$
For $n = 4, 5, 6$, we can also verify that
$C_{\textup{III}}\le\frac{c_1}{2}$.

\noindent $\bullet$ If $m \ge 4$, then 
\[4mn - 6m - 6n + 6 = (2m - 6)(n - 3) + 2(mn - 6)\ge (2m + 2)(n - 3).\]
and thus
\[c_{4}\le -c_{1}\frac{(2m + 2)(m - 1)}{n(n - 1)(n - 2)}.\]
Then we have
\begin{align}
  C_{\textup{III}} & \le c_{1}\frac{m - 1}{n(n - 1)}\lb n + m - 1 - \frac{2(mn - 2m - 2n + 2)}{n - 2} - \lihua{\frac{2m + 2}{n - 2}}\rb\nonumber\\
& = c_{1}\frac{m - 1}{n(n - 1)}\lb n + m - 1 - \lihua{\frac{2mn - 4n - 2m + 6}{n - 2}}\rb\nonumber\\
& = c_{1}\frac{m - 1}{n(n - 1)}\lb n - m + 3 - \lihua{\frac{2m - 2}{n - 2}}\rb\nonumber\\
& = c_{1}\lb \frac{(m - 1)(n - m + 3)}{n(n - 1)} - \frac{2(m - 1)^{2}}{n(n - 1)(n - 2)}\rb \nonumber\\
& \le c_{1}\lb \frac{(n + 2)^{2}}{4n(n - 1)} - \frac{2(m - 1)^{2}}{n(n - 1)(n - 2)}\rb \nonumber\\
& \le c_{1}\lb \frac{(n + 2)^{2}}{4n(n - 1)} - \frac{18}{n(n - 1)(n - 2)}\rb.
% & \le c_{1}\frac{m - 1}{n(n - 1)}\lb n + m - 1 - \frac{2mn - 4m}{n - 2}\rb\nonumber\\
% & = c_{1}\frac{(m - 1)(n - m - 1)}{n(n - 1)}\le c_1 \frac{(n - 2)^{2}}{4n(n - 1)}\le \frac{c_1}{4}\le \frac{c_1}{2}.\label{eq:term3}
\end{align}
If $n\ge 7$,
$C_{\textup{III}}\le c_{1}\frac{(n + 2)^{2}}{4n(n - 1)}\le \frac{81c_{1}}{168}\le \frac{c_1}{2}.$
For $n = 4, 5, 6$, we can also verify that 
$C_{\textup{III}}\le \frac{c_1}{2}.$

Therefore, we always have 
$
  C_{\textup{III}}\le \frac{c_{1}}{2}.
$

\medskip 
Using the above bounds for $(C_{\textup{I}}, C_{\textup{II}}, C_{\textup{III}})$ in \eqref{eq:exact_var}, we obtain that
\begin{align*}
\Var\lb Q_\T \rb &\le c_1\lb 1 - \frac{4(m - 1)(n - m - 1)}{(n - 2)(n - 3)}\rb\sum_{i=1}^{n}Q_{ii}^{2} 
  \\
  & \quad + c_1\frac{4(m - 1)(n - m - 1)}{(n - 2)(n - 3)} \frac{(\tr(Q))^{2}}{n} + \frac{c_1}{2}\sum_{[i, j]}(Q_{ij}^{2} + Q_{ij}Q_{ji}).
\end{align*}
Because $(\tr(Q))^{2} \le n\sum_{i=1}^{n}Q_{ii}^{2}$ and $Q_{ij}Q_{ji}\le  (Q_{ij}^{2} + Q_{ji}^{2} ) / 2$, we conclude that 
$\Var\lb Q_\T\rb \le c_1 \|Q\|_{F}^{2}.$

\section{Proofs of other lemmas in Section \ref{subsec:lemmas2}}\label{subapp:lemmas}
 
\begin{proof}[Proof of Lemma \ref{lem:A3}]
  Using the definitions of $ \sigma_{n}^{2}$ and $\rho_{e}$, we have
  \begin{align*}
    \sigma_{n}^{2} &= \lb\frac{1}{n_{1}} - \frac{1}{n}\rb\sum_{i=1}^{n}e_i^2(1) + \lb\frac{1}{n_{0}} - \frac{1}{n}\rb\sum_{i=1}^{n}e_i^2(0) + \frac{2}{n}\sum_{i=1}^{n}e_{i}(1)   e_{i}(0)\\
& = \frac{n_{0}}{n_{1}n}\sum_{i=1}^{n}e_i^2(1) + \frac{n_{1}}{n_{0}n}\sum_{i=1}^{n}e_i^2(0) + \frac{2\rho_{e}}{n}\lb\sum_{i=1}^{n}e_i^2(1)\rb^{1/2}\lb\sum_{i=1}^{n}e_i^2(0)\rb^{1/2}.
  \end{align*}
If $\rho_{e} \ge 0$, then 
\[\sigma_{n}^{2}\ge \frac{n_{0}}{n_{1}n}\sum_{i=1}^{n}e_i^2(1) + \frac{n_{1}}{n_{0}n}\sum_{i=1}^{n}e_i^2(0)\ge \min\left\{\frac{n_{1}}{n_{0}}, \frac{n_{0}}{n_{1}}\right\}\cure_{2}.\]
If $\rho_{e} < 0$, then using the fact  
\[
\lb   a\lb\frac{n_0}{n_1}\rb^{1/2} - b\lb\frac{n_1}{n_0}\rb^{1/2}   \rb^2    \geq 0 \Longleftrightarrow
2ab \le \frac{n_{0}}{n_{1}}a^{2} + \frac{n_{1}}{n_{0}}b^{2},
\]
we obtain that
\begin{align*}
  \sigma_{n}^{2} &\ge (1 + \rho_{e})\lb\frac{n_{0}}{n_{1}n}\sum_{i=1}^{n}e_i^2(1) + \frac{n_{1}}{n_{0}n}\sum_{i=1}^{n}e_i^2(0)\rb \ge \eta\min\left\{\frac{n_{1}}{n_{0}}, \frac{n_{0}}{n_{1}}\right\}\cure_{2}.
\end{align*}
Putting the pieces together, we complete the proof.
\end{proof}

\begin{proof}[Proof of Lemma \ref{lem:tauhat}]   
Recall that $\hat{\mu}_{t}$ is the intercept from the  ordinary least squares fit of $Y_{t}^{\mathrm{obs}}$ on $\onet $ and $X_{t}.$ From the Frisch--Waugh--Lovell Theorem \citep{ding2020frisch}, it is identical to the coefficient of the  ordinary least squares fit of the residual $ (\id - H_t ) Y_{t}^{\mathrm{obs}}$ on the residual $ (\id - H_t ) \onet $, after projecting onto $X_t$:  
\[\hat{\mu}_{t} 
= \frac{  \onet \tran  (\id - H_t ) \tran     (\id - H_t ) Y_{t}^{\mathrm{obs}}    }{  \|  (\id - H_t ) \onet  \|_2^2  }
= \frac{\onet \tran  (\id - H_t ) Y_{t}^{\mathrm{obs}} }{\onet \tran  (\id - H_t ) \onet }.\]
Using the definition \eqref{eq:et} and the fact that $ (\id - H_t ) X_{t} = 0$, we have 
\begin{eqnarray*}
(\id - H_t ) Y_{t}^{\mathrm{obs}} =  (\id - H_t ) (\mu_{t}\onet  + X_{t}\beta_{t} + e_{t}(t)) = \mu_{t} (\id - H_t ) \onet  +  (\id - H_t ) e_{t}(t),\\
\Longrightarrow
 \hat{\mu}_{t} = \mu_{t} + \frac{\onet \tran  (\id - H_t )   e_t(t)  }{\onet \tran  (\id - H_t ) \onet } = \mu_{t} + \frac{\onet \tran e_{t}(t) / n_{t} - \onet \tran H_t e_{t}(t) / n_{t}}{1 - \onet \tran H_t \onet  / n_{t}}.
\end{eqnarray*}  
Recalling that $\tau = \mu_1 - \mu_0$, we complete the proof. 
\end{proof}

\begin{proof}[Proof of Lemma \ref{lem:vector_concentration}]
Because $\|  U \|_\op \leq \|  U \|_F$, Lemma \ref{thm:vector_concentration} implies that with probability $1-\delta$,
\[\left\|\sum_{i\in \T}u_{i}\right\|_{2}  \Big /  \|U\|_{F} \le \lb\frac{m(n - m)}{n(n - 1)}\rb^{1/2} +  \lb 8\log \frac{1}{\delta}\rb^{1/2},
\]
which further implies
$
\left\|\sum_{i\in \T}u_{i}\right\|_{2} = O_\P \lb  \|U\|_{F}  \rb.
$

Let $u_{i} = e_{i}(t)$ with $\sum_{i=1}^{n} u_{i} = 0$, $U = (u_{1}, \ldots,u_{n})\tran \in \R^{n\times 1}$, and
$ \|U\|_{F}^{2} = \sum_{i=1}^{n}u_{i}^{2} =\sum_{i=1}^{n} e_{i}^2(t) .$
Therefore,  
\[ \onet\tran e_{t}(t)   = \left\|   \sum_{i\in \mathcal{T}_t} u_i \right\|_{2} =O_\P( \|  U \|_F)
= O_\P\lb    \lb\sum_{i=1}^{n} e_{i}^2(t)\rb^{1/2} \rb = O_\P\lb (n\cure_{2})^{1/2}\rb.\]

Let $u_{i} = x_{i}$ with $\sum_{i=1}^{n} u_{i} = 0$, $U=X$, and $\| U \|_F =  \|X\|_{F}  = (\tr(X\tran X))^{1/2} = \tr(n \id ) = np$. Therefore,
$$
\|X_{t}\tran \onet \|_{2} = \left \|   \sum_{i\in \mathcal{T}_t}  u_i  \right  \|_2   = O_\P\lb \|U\|_{F}\rb = O_\P\lb (np)^{1/2}\rb.
$$

Let $u_{i} = x_{i}e_{i}(t)$ with $\sum_{i=1}^{n} u_{i} = 0$ due to \eqref{eq:error_key_property}. Therefore,
\[\|X_{t}\tran e_{t}(t)\|_{2}  
= \left \|   \sum_{i\in \mathcal{T}_t}  u_i  \right  \|_2   
= O_\P\lb \lb\sum_{i=1}^{n}\|x_{i}\|^{2}e_{i}^2(t)\rb^{1/2}\rb.\]
Recalling \eqref{eq:hatmatrix} that
$  \|x_{i}\|^{2}_2 = n H_{ii}\le n\kappa,$
we have 
$\|X_{t}\tran e_{t}(t)\|_{2} = O_\P\lb n(\cure_{2}\kappa)^{1/2}\rb.$

\end{proof}

We need the following proposition to prove Lemma \ref{lem:matrix_concentration}.
\begin{proposition}\label{prop:matrix_inv_diff}
$A$ and $ B$ are two symmetric matrices. $A$ is positive definite,  and $A + B$ is invertible. Then
\[\|(A + B)^{-1} - A^{-1}\|_{\op}\le \frac{\|A^{-1}\|_{\op}^{2}\cdot \|B\|_{\op}}{1 - \min\{1, \|A^{-1}\|_{\op}\cdot \|B\|_{\op}\}}.\]
\end{proposition}

\begin{proof}[Proof of Proposition \ref{prop:matrix_inv_diff}]
Let $M = A^{-\frac{1}{2}}BA^{-\frac{1}{2}}$ and $\Lambda(M)$ be its spectrum. By definition, $\|M\|_{\op}\le \|A^{-1}\|_{\op}\cdot \|B\|_{\op}.$ If $\|A^{-1}\|_{\op}\cdot \|B\|_{\op} \ge 1$, the inequality is trivial because the right-hand side of it is $\infty$. Without loss of generality, we assume $\|A^{-1}\|_{\op}\cdot \|B\|_{\op} < 1$, which implies $\|M\|_{\op} < 1.$
Proposition \ref{prop:matrix_inv_diff} follows by combining the following two results: 
  \begin{align*}
\|(A + B)^{-1} - A^{-1}\|_{\op} & = \|A^{-\frac{1}{2}}((\id  + M)^{-1} - \id )A^{-\frac{1}{2}}\|_{\op} 
  \le \|A^{-1}\|_{\op}\cdot \|\id  - (\id  + M)^{-1}\|_{\op}  , \\
  \|\id  - (\id  + M)^{-1}\|_{\op} &\le \sup_{\lambda\in \Lambda(M)}\bigg|\frac{\lambda}{1 + \lambda}\bigg| =  \frac{\|M\|_{\op}}{1 - \|M\|_{\op}} \leq \frac{\|A^{-1}\|_{\op} \cdot \|B\|_{\op}}{1 - \|A^{-1}\|_{\op}\cdot \|B\|_{\op}} .
  \end{align*}
%and
%\[\|\id  - (\id  + M)^{-1}\|_{\op}\le \sup_{\lambda\in \Lambda(M)}\bigg|\frac{\lambda}{1 + \lambda}\bigg| =  \frac{\|M\|_{\op}}{1 - \|M\|_{\op}} \leq \frac{\|A^{-1}\|_{\op} \cdot \|B\|_{\op}}{1 - \|A^{-1}\|_{\op}\cdot \|B\|_{\op}}   .\]
%we have 
%\begin{align*}
%\|(A + B)^{-1} - A^{-1}\|_{\op}
%\le \frac{\|A^{-1}\|_{\op}^{2}\cdot \|B\|_{\op}}{1 - \|A^{-1}\|_{\op}\cdot \|B\|_{\op}}.
%\end{align*}
\end{proof}

\begin{proof}[Proof of Lemma \ref{lem:matrix_concentration}]
  Let $V_{i} = x_{i}x_{i}\tran  - \id $, then $\sum_{i=1}^{n}V_{i} = 0$. In the following, we will repeatedly use the basic facts: $n^{-1} X\tran X = \id$, $\| x_i\|_2^2 = nH_{ii}$, and $\sum_{i=1}^{n}x_{i}x_{i}\tran = XX\tran = nH$.
  Recalling the definitions of $\nu, \nu_{+}$ and $\nu_{-}$ in Lemma \ref{thm:matrix_concentration}, we have
\begin{align*}
   \nu^{2} &= \left\|\frac{1}{n}\sum_{i=1}^{n}V_{i}^{2}\right\|_{\op} 
   = \left\|\frac{1}{n}\sum_{i=1}^{n}\lb \|x_{i}\|_{2}^{2} x_{i}x_{i}\tran  - 2x_{i}x_{i}\tran  + \id \rb \right\|_{\op} 
  = \left\|\lb\frac{1}{n}\sum_{i=1}^{n}\|x_{i}\|_{2}^{2} x_{i}x_{i}\tran \rb - \id \right\|_{\op}  \\
&  = \left\|\lb \sum_{i=1}^{n}H_{ii} x_{i}x_{i}\tran \rb - \id \right\|_{\op}
  \le \left\|\sum_{i=1}^{n}H_{ii} x_{i}x_{i}\tran \right\|_{\op} + 1 \le \kappa \left\|\sum_{i=1}^{n}x_{i}x_{i}\tran \right\|_{\op} + 1  
= n \kappa  \| H\|_\op + 1 = n \kappa + 1,\\
  \nu_{+} &= \max_{1\leq i \leq n}\|x_{i}x_{i}\tran  - \id \|_{\op} \le \max_{1\leq i \leq n}\|x_{i}\|_{2}^{2} + 1 
  =  n\max_{1\leq i \leq n} H_{ii}  +1  = n\kappa + 1,\\
  \nu_{-}^{2} & = \sup_{\omega\in \S^{p-1}}\frac{1}{n}\sum_{i=1}^{n}(\omega\tran V_{i}\omega)^{2} = \sup_{\omega\in \S^{p-1}}\frac{1}{n}\sum_{i=1}^{n}((x_{i}\tran \omega)^{2} - 1)^{2}
 = \sup_{\omega\in \S^{p-1}}\frac{1}{n}\sum_{i=1}^{n}\left[(x_{i}\tran \omega)^{4} - 2(x_{i}\tran \omega)^{2} + 1\right]\\
& = \sup_{\omega\in \S^{p-1}}\frac{1}{n}\sum_{i=1}^{n}(x_{i}\tran \omega)^{4} - 2\omega\tran \lb\frac{X\tran X}{n}\rb\omega + 1
 = \sup_{\omega\in \S^{p-1}}\frac{1}{n}\sum_{i=1}^{n}(x_{i}\tran \omega)^{4} - 1 \le \sup_{\omega\in \S^{p-1}}\frac{1}{n}\sum_{i=1}^{n}(x_{i}\tran \omega)^{4}\\
& \le \sup_{\omega\in \S^{p-1}}\frac{1}{n}\sum_{i=1}^{n}\|x_{i}\|_{2}^{2}(x_{i}\tran \omega)^{2} = \left\|\sum_{i=1}^{n}H_{ii}x_{i}x_{i}\tran \right\|_{\op} \le n\kappa .
\end{align*}
By Lemma \ref{thm:matrix_concentration},
\begin{align*}
  \left\|\Sigma_t - \id \right \|_{\op} 
  &= \frac{1}{n_{t}}\left\|\sum_{i \in \mathcal{T}_t} V_{i}\right\|_{\op}
  =  O_\P\lb \frac{1}{n_{t}}\left[n (C(p)\kappa)^{1/2} + nC(p)\kappa + n \kappa^{1/2}\right]\rb \\
  & =   O_\P\lb (\kappa\log p)^{1/2} + \kappa\log p\rb.
\end{align*}
By Assumption \ref{assumption::A2}, $\kappa \log p = o(1)$, and therefore the first result holds:
\begin{equation}\label{eq:inv1}
\left\|\Sigma_t - \id \right \|_{\op} = O_\P\lb (\kappa\log p)^{1/2} \rb = o_\P(1).
\end{equation}

 Thus with probability $1 - o(1)$, by Weyl's inequality, 
\begin{equation}\label{eq:original_concentration}
\left\|\Sigma_t - \id \right \|_{\op} \le \frac{1}{2}\Longrightarrow \lambda_{\min}(\id) - \lambda_{\min}(\Sigma_t) \le \frac{1}{2} \Longrightarrow \lambda_{\min}(\Sigma_t)\ge \frac{1}{2}
\end{equation}
where $\lambda_{\min}$ denotes the minimum eigenvalue. Note that for any Hermitian matrix $A$, $\|A^{-1}\|_{\op} = \lambda_{\min}(A)^{-1}$. Thus with probability $1 - o(1)$,
\begin{equation}\label{eq:le2}
\left\|\Sigma_t^{-1}\right\|_{\op}\le 2.
\end{equation}
Therefore, the second result holds:
$
\left\|\Sigma_t^{-1}\right\|_{\op} = O_\P(1).
$

To prove the third result, we apply Proposition \ref{prop:matrix_inv_diff} with $A = \id $ and $B = \Sigma_t  -  \id $. By \eqref{eq:original_concentration} and \eqref{eq:le2}, with probability $1 - o(1)$, $A + B$ is invertible and
$\|B\|_{\op} \le 1/2.$
Together with \eqref{eq:inv1}, we have
\[\left\|\Sigma_t^{-1} - \id \right \|_{\op} = O_\P\lb\frac{\|B\|_{\op}}{1 - \|B\|_{\op}}\rb = O_\P(\|B\|_{\op}) = O_\P((\kappa \log p)^{1/2}).\]
\end{proof}

\begin{proof}[Proof of Lemma \ref{lemma:Mt}]
First, \eqref{eq:error_key_property} implies
\[\one\tran Q(t)   = \one\tran H \diag(e(t)) = \one\tran X(X\tran X)^{-1}X\tran \diag(e(t)) = 0,\]
\[Q(t) \one = H \diag(e(t))\one = He(t) = X(X\tran X)^{-1}X\tran e(t) = 0,\]
which further imply $\one\tran Q(t) \one = 0.$
Second, \eqref{eq:hatDeltat} implies 
$
\text{tr}(Q(t) ) =n  \Delta_t . 
$
Third, 
$$
  \|Q(t) \|_{F}^{2} = \sum_{i=1}^{n} \sum_{j=1}^{n} H_{ij}^{2}e_{j}^2(t)    = \sum_{j=1}^{n}e_{j}^2(t)    \lb\sum_{i=1}^{n}H_{ij}^{2}\rb.
$$
Because $H$ is idempotent, 
$ H\tran H = H  \Longrightarrow \sum_{i=1}^{n}H_{ij}^{2} = H_{jj}$ for all $j$. Thus,
$
  \|Q(t) \|_{F}^{2} = \sum_{j=1}^{n}e_{j}^2(t)   H_{jj} \le n \cure_{2}\kappa.
$
\end{proof}

\clearpage

~\\
\begin{center}
  \begin{Large}
    Supplementary Material II: Discussion of the Assumptions
  \end{Large}
\end{center}

\section{Discussion of Assumptions}\label{sec::discussion-assumptions}
 Although Assumptions \ref{assumption::A2} and \ref{assumption::A4} are about fixed quantities in the finite population, it is helpful to consider the case where the quantities are realizations of random variables. This approach connects the assumptions to more comprehensible conditions on the data generating process. See \cite{portnoy84, portnoy85, lei16} for examples in other contexts. We emphasize that we do not need the assumptions in this subsection for our main theory but use them to aid interpretation. % The readers who believe our assumptions to be mild can skip this Section at first read.

For Assumption \ref{assumption::A2}, we consider the case where $(x_{i})_{i=1}^{n}$ are realizations of i.i.d. random vectors. \cite{anatolyev17} show that under mild conditions each leverage score concentrates around $p/n$. Here we  further consider the magnitude of the maximum leverage score $\kappa$.

\begin{proposition}\label{prop:kappa_iid}
  Let $Z_{i}$ be i.i.d. random vectors in $\R^{p}$ with arbitrary mean. Assume that $Z_{i}$ has independent entries with
$\max_{1\leq j \leq p}\E |Z_{ij} - \E Z_{ij}|^{\delta} \le M = O(1)$
for some $\delta > 2$. 
  Define $Z = (Z_{1}\tran , \ldots, Z_{n}\tran )\tran \in \R^{n\times p}$ and $X =\projone  Z$ where $\projone$ is defined in Section \ref{sec:intro} so that $X$ has centered columns. If $p = O(n^{\gamma})$ for some $\gamma < 1$, then over the randomness of $Z$, 
$$
\kappa_0 = O_\P\lb \frac{p^{2 / \min\{\delta, 4\}}}{n^{(\delta - 2) / \delta}} + \frac{p^{3/2}}{n^{3/2}}\rb,
\quad 
\kappa = O_\P\lb\frac{p}{n} + \frac{p^{2 / \min\{\delta, 4\}}}{n^{(\delta - 2) / \delta}}\rb.
$$ 
\end{proposition}

When $\delta > 4$, Proposition \ref{prop:kappa_iid} implies that $\kappa = O_\P(p / n + n^{-(\delta - 4) / 2\delta}(p / n)^{1/2})$. In this case, Assumption \ref{assumption::A2} holds with high probability if $p = O(n^{\gamma})$ for any $\gamma < 1$. In particular, the fixed-$p$ regime corresponds to $\gamma =0.$
Under $ p = o(n^{2/3} /(\log n)^{1/3})$, we can use Proposition \ref{prop:kappa_iid} to verify that the condition (ii) of Corollary \ref{cor:asym_normality_htau} holds with high probability if entries of $X$ are independent and have finite $12$-th moments, and that the condition $\kappa^{2} p \log p = o(1)$ holds if $p = o(n^{2/3} / (\log n)^{1/3})$ and entries of $X$ are independent and have finite $(6+\eps)$-th moments.

The hat matrix of $X$ is invariant to any nonsingular linear transformation of the columns. Consequently, $X$ and $XA$ have the same leverage scores for any invertible $A\in \R^{n\times n}$. Thus we can extend Proposition \ref{prop:kappa_iid} to random matrices with correlated columns in the form of $\projone ZA$. In particular, when $Z_{i} \stackrel{\text{i.i.d.}}{\sim} N(\mu, \id)$ and $A = \Sigma^{1/2}$, $Z_{i}\tran A\stackrel{\text{i.i.d.}}{\sim} N(\Sigma^{1/2}\mu, \Sigma)$. The previous argument implies that Proposition \ref{prop:kappa_iid} holds for $X = \projone ZA$.

For Assumption \ref{assumption::A4}, we consider the case where the $Y_{i}(t)$'s are realizations of i.i.d. random variables, and make a connection with the usual moment conditions. This helps to understand the growth rates of $\cure_{2}$ and $\cure_{\infty}$.

\begin{proposition}\label{prop:eq}
  Let $Y(t)\in \R^{n}$ be a non-constant random vector with i.i.d. entries, and $X$ be any fixed matrix with centered columns. If for some $\delta > 0$,
$
\E |Y_{i}(t) - \E Y_{i}(t)|^{\delta} < \infty
$
for $t=0,1$,
then

\[\cure_{2} = \left\{
    \begin{array}{ll}
      O_\P(1) & (\delta \ge 2)\\
      o_\P(n^{2 / \delta - 1}) & (\delta < 2)
    \end{array}
\right., \quad \cure_{\infty} = O_{\P}(n^{1/\delta}).\]
Furthermore, $\cure_{2}^{-1} = O_\P(1)$ if $Y_{i}(1)$ or $Y_{i}(0)$ is not a constant.
\end{proposition}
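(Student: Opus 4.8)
The plan is to use that each residual vector is the image of the outcome under an orthogonal projection. Let $P = n^{-1}\one\one\tran + H$ be the projection onto $\mathrm{span}(\one, X)$, where $H = X(X\tran X)^{-1}X\tran$; since $\one\tran X = 0$ this decomposition is valid and $(\id - P)\one = 0$. Because $(\mu_t,\beta_t)$ is the population OLS fit, $e(t) = (\id-P)Y(t)$, and using $(\id-P)\one = 0$ I may recenter to $U = Y(t) - (\E Y_i(t))\one$, whose entries are i.i.d., mean zero, with $\E|U_i|^\delta < \infty$, so that $e(t) = (\id-P)U$. I will bound $\|e(t)\|_2$ and $\|e(t)\|_\infty$ for a single $t$; the maxima over $t \in \{0,1\}$ follow verbatim.

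For $\cure_2$ the projection is a contraction, so $\|e(t)\|_2 \le \|U\|_2$ and it suffices to control the i.i.d.\ average $n^{-1}\|U\|_2^2 = n^{-1}\sum_i U_i^2$. If $\delta \ge 2$, the summands have finite mean $\sigma^2 = \Var(Y_i(t))$ and the weak law gives $n^{-1}\sum_i U_i^2 \stackrel{\P}{\rightarrow} \sigma^2$, whence $\cure_2 = O_\P(1)$. If $\delta < 2$, I apply the Marcinkiewicz--Zygmund strong law to the nonnegative variables $U_i^2$ at index $s = \delta/2 \in (0,1)$, for which $\E (U_i^2)^s = \E|U_i|^\delta < \infty$; this yields $n^{-2/\delta}\sum_i U_i^2 \to 0$ almost surely and hence $\cure_2 = o_\P(n^{2/\delta - 1})$.

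For $\cure_\infty$ the delicate regime is $\delta > 2$, where the target $n^{1/\delta}$ is smaller than the $\sqrt n$ afforded by the crude estimate $\|e(t)\|_\infty \le \|e(t)\|_2 \le \|U\|_2$. That crude estimate already suffices when $\delta \le 2$: it gives $O_\P(\sqrt n) = O_\P(n^{1/\delta})$ at $\delta = 2$ and $o_\P(n^{1/\delta})$ for $\delta < 2$ via the preceding step. For $\delta > 2$ I decompose coordinatewise using $e(t) = U - \bar U\one - HU$ and $(HU)_i = H_{ii}U_i + R_i$ with $R_i = \sum_{j\ne i}H_{ij}U_j$, giving $\|e(t)\|_\infty \le 2\|U\|_\infty + |\bar U| + \max_i|R_i|$. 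A union bound with Markov's inequality gives $\|U\|_\infty = O_\P(n^{1/\delta})$, while $|\bar U| = O_\P(n^{-1/2})$ by the law of large numbers. The crux is $\max_i|R_i|$: for each fixed $i$, Rosenthal's inequality for the independent mean-zero summands $H_{ij}U_j$ gives $\E|R_i|^\delta \le C_\delta[\sigma^\delta(\sum_{j\ne i}H_{ij}^2)^{\delta/2} + \E|U_1|^\delta\sum_{j\ne i}|H_{ij}|^\delta]$, and the projection identities $\sum_{j\ne i}H_{ij}^2 \le H_{ii} \le 1$ and $\sum_{j\ne i}|H_{ij}|^\delta \le (\max_j|H_{ij}|)^{\delta-2}\sum_{j\ne i}H_{ij}^2 \le 1$ (using $|H_{ij}| \le \sqrt{H_{ii}H_{jj}} \le 1$ and $\delta - 2 \ge 0$) render $\E|R_i|^\delta$ uniformly $O(1)$. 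A union bound over the $n$ coordinates then yields $\max_i|R_i| = O_\P(n^{1/\delta})$, and therefore $\cure_\infty = O_\P(n^{1/\delta})$.

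For the lower bound $\cure_2^{-1} = O_\P(1)$ under non-degeneracy, I take the index $t$ with $Y_i(t)$ nonconstant and expand $\|e(t)\|_2^2 = \|U\|_2^2 - n\bar U^2 - \|HU\|_2^2$. When $\sigma^2 < \infty$ it is strictly positive; then $n^{-1}\|U\|_2^2 \to \sigma^2$ and $\bar U^2 \to 0$, while $\E\|HU\|_2^2 = \sigma^2\tr(H) = \sigma^2 p$ forces $n^{-1}\|HU\|_2^2 = O_\P(p/n) = o_\P(1)$ in the relevant regime $p = o(n)$ (guaranteed by \eqref{eq:pn}). Hence $n^{-1}\|e(t)\|_2^2 \to \sigma^2 > 0$, so $\cure_2$ is bounded below with high probability; if instead $\sigma^2 = \infty$, the same expansion shows $\cure_2$ diverges, so $\cure_2^{-1}$ is controlled a fortiori. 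I expect the genuine difficulty to be the bound on $\max_i|R_i|$ in the $\delta > 2$ case: the naive projection/Cauchy--Schwarz estimate is off by a polynomial factor, and recovering the sharp $n^{1/\delta}$ rate requires the $\delta$-th moment (Rosenthal) inequality together with the cancellation $\sum_j H_{ij}^2 = H_{ii} \le 1$.
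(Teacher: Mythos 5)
Your bounds on $\cure_2$ and $\cure_\infty$ are correct. The $\cure_2$ argument (projection contraction plus the Marcinkiewicz--Zygmund law applied to $U_i^2$) is the paper's argument essentially verbatim. For $\cure_\infty$ you deviate in a useful way: the paper truncates $Y_j(t)$ at the level $Dn^{1/\delta}$ in \emph{both} moment regimes and then applies a second-moment bound ($\delta<2$) or Rosenthal on the truncated, recentered variables ($\delta\ge 2$), whereas you dispatch $\delta\le 2$ with the crude bound $\|e(t)\|_\infty\le\|e(t)\|_2\le\|U\|_2$ (which indeed gives $O_\P(n^{1/\delta})$ at $\delta=2$ and $o_\P(n^{1/\delta})$ below it) and handle $\delta>2$ by applying Rosenthal directly to the untruncated sums $\sum_{j\ne i}H_{ij}U_j$ --- legitimate since finite variance is then available --- together with the leverage identities $\sum_j H_{ij}^2=H_{ii}\le 1$ and $|H_{ij}|\le 1$. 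Your per-coordinate bound $\E|R_i|^\delta=O(1)$ only yields $\max_i|R_i|=O_\P(n^{1/\delta})$ where the paper gets $O_\P(p^{1/\delta})$, but the weaker rate is all the proposition asks for.

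The genuine gap is in the final claim $\cure_2^{-1}=O_\P(1)$. Your expansion $\|e(t)\|_2^2=\|U\|_2^2-n\bar U^2-\|HU\|_2^2$ together with $\E\|HU\|_2^2=\sigma^2 p$ proves it only when $\sigma^2=\Var(Y_i(t))<\infty$. But the proposition assumes only a $\delta$-th moment for some $\delta>0$, so $\sigma^2=\infty$ is squarely within scope (e.g.\ $\delta=1$), and there your one-line dismissal --- ``the same expansion shows $\cure_2$ diverges, so $\cure_2^{-1}$ is controlled a fortiori'' --- does not follow from anything you have written: when $\sigma^2=\infty$ the bound $\E\|HU\|_2^2=\sigma^2p$ is vacuous, and nothing in your argument prevents $n\bar U^2+\|HU\|_2^2$ from absorbing essentially all of the diverging $\|U\|_2^2$. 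The danger is concrete: with infinite variance, most of $\|U\|_2^2$ is carried by a few huge entries, and if those entries happen to sit on high-leverage coordinates, the residual-maker annihilates their contribution, leaving $\|e(t)\|_2^2$ with no a priori lower bound. The conclusion is still true, but proving it is precisely the hard part of the statement, and the paper resolves it with an idea absent from your proposal: an exchangeability argument. Since the entries of $Y(1)$ are i.i.d., the ratio $Y(1)\tran H(\pi)Y(1)/Y(1)\tran Y(1)$ has the same distribution for every permutation $\pi$ of the coordinates; it is bounded by $1$, so one may average over all $\pi$, which replaces $H$ by the matrix $H^*$ with $H^*_{ii}=p/n$ and $H^*_{ij}=-p/\{n(n-1)\}$, giving
\begin{equation*}
\E\!\left[\frac{Y(1)\tran HY(1)}{Y(1)\tran Y(1)}\right]\le \frac{p}{n-1}
\end{equation*}
with \emph{no} moment assumption at all. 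Markov's inequality then gives $Y(1)\tran HY(1)\le \tfrac12 Y(1)\tran Y(1)$ with probability $1-o(1)$ (using $p=o(n)$, which both you and the paper invoke), and a truncated law of large numbers, $n^{-1}\sum_i Y_i(1)^2 I(|Y_i(1)|\le k)\to \E\bigl[Y_1(1)^2I(|Y_1(1)|\le k)\bigr]>0$, bounds $n^{-1}\|Y(1)\|_2^2$ below; combining the two yields $\cure_2^{-1}=O_\P(1)$ uniformly over the tail behavior. To repair your proof you need this step, or some substitute for it, in the infinite-variance case.
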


When $\delta > 2$, Proposition \ref{prop:eq} implies $\cure_{\infty}^{2} / (n\cure_{2}) = O_\P(n^{2/\delta - 1}) = o_\P(1)$, and thus Assumption \ref{assumption::A4} holds with high probability.
From Proposition \ref{prop:eq}, the condition $\cure_{2} = o(n)$ corresponds to the existence of finite first moment under a super-population i.i.d sampling.

\section{Some useful results for the proofs} \label{sec::useful-results-proofs}
 
The proofs rely on the following results.

\begin{proposition}\label{prop:yaskov}[modified version of Corollary 3.1 of \cite{yaskov14}]
  Let $Z_{i}$ be i.i.d. random vectors in $\R^{p}$ with mean $0$ and covariance $\id $. Suppose 
\[L(\delta)\triangleq   \sup_{ \nu \in \mathcal{S}^{p-1} } 
\E |\nu\tran Z_{i}|^{\delta} < \infty\]
for some $\delta > 2$. For any constant $C > 0$, with probability $1 - e^{-Cp}$,
\[\lambda_{\min}\lb \frac{Z\tran Z}{n}\rb\ge 1 - 5\lb\frac{pC}{n}\rb^{\frac{\delta}{\delta + 2}}L(\delta)^{\frac{2}{\delta + 2}}\lb 1+ \frac{1}{C}\rb.\]
\end{proposition}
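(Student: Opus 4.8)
The plan is to follow (and adapt the constants of) Yaskov's argument, whose backbone is the variational identity
\[
\lambda_{\min}\lb \frac{Z\tran Z}{n}\rb = \inf_{\nu\in \S^{p-1}}\frac{1}{n}\sum_{i=1}^{n}(\nu\tran Z_{i})^{2},
\]
so that it suffices to produce a lower bound holding uniformly over the sphere with probability $1 - e^{-Cp}$. Because $\delta$ may be barely larger than $2$, the summands $(\nu\tran Z_{i})^{2}$ are heavy-tailed and a direct empirical-process bound is unavailable. First I would truncate: for a threshold $\theta > 0$ use $(\nu\tran Z_{i})^{2}\ge \min\{(\nu\tran Z_{i})^{2},\theta\}$, so the lower bound is driven by the bounded functional $g_{\theta}(\nu\tran Z_{i}) = \min\{(\nu\tran Z_{i})^{2},\theta\}\in[0,\theta]$. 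The truncation bias is controlled uniformly by the moment hypothesis: since $\E(\nu\tran Z_{i})^{2}=1$,
\[
\E\, g_{\theta}(\nu\tran Z_{i}) \ge 1 - \E\big[(\nu\tran Z_{i})^{2}\one\{(\nu\tran Z_{i})^{2}>\theta\}\big]\ge 1 - \theta^{-(\delta-2)/2}L(\delta),
\]
the last step being Markov's inequality, using that on the event $|\nu\tran Z_{i}|>\sqrt\theta$ one has $(\nu\tran Z_{i})^{2}\le \theta^{(2-\delta)/2}|\nu\tran Z_{i}|^{\delta}$.

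Next I would establish uniform concentration of the truncated quadratic form. Since $g_{\theta}(\nu\tran Z_{i})$ is bounded by $\theta$, with variance proxy again controlled through $L(\delta)$, a Bernstein/Hoeffding estimate gives, for each fixed $\nu$, an exponential lower-tail bound for $n^{-1}\sum_{i}g_{\theta}(\nu\tran Z_{i})$ about its mean. Passing from a single $\nu$ to all of $\S^{p-1}$ is done by a union bound over an $\eps$-net $\mathcal{N}$ of the sphere, whose cardinality is $(3/\eps)^{p}=e^{O(p)}$; requiring the union bound to survive at confidence $1-e^{-Cp}$ forces the fluctuation scale to absorb both the intrinsic concentration rate and the net entropy, which is of order $p$. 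This is precisely the source of the $(1+1/C)$ factor: one contribution independent of $C$, and one of order $1/C$ arising from matching the union-bound exponent against the $Cp$ budget. Finally I would choose $\theta$ to balance the truncation bias $\theta^{-(\delta-2)/2}L(\delta)$ against the fluctuation term; this optimization is what produces the exponents $\delta/(\delta+2)$ on $pC/n$ and $2/(\delta+2)$ on $L(\delta)$, and collecting the numerical constants yields the factor $5$.

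The delicate step, and the main obstacle, is the passage from $\mathcal{N}$ to the full sphere, since the map $\nu\mapsto g_{\theta}(\nu\tran Z_{i})$ is \emph{not} uniformly Lipschitz: its gradient involves the unbounded $\|Z_{i}\|_{2}$, which is not controlled by $L(\delta)$ alone, so a crude net-approximation error cannot be handled. I would dispatch this exactly as Yaskov does, keeping the whole argument one-sided: because only a lower bound on the infimum is required, it suffices to bound the \emph{decrease} of the truncated sum under an $\eps$-perturbation of $\nu$, and this can be folded into the choice of truncation level and net radius so that the approximation error is of the same order as the fluctuation term, hence negligible after the optimization. Since the statement is the stated ``modified version'' of Yaskov's Corollary 3.1, the remaining work is bookkeeping: checking that Yaskov's moment functional coincides with $L(\delta)=\sup_{\nu\in\S^{p-1}}\E|\nu\tran Z_{i}|^{\delta}$, and tracking constants to arrive at the displayed bound.
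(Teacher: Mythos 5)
The decisive step in your outline is deferred, and the deferral is circular. You correctly identify the real obstacle: with $\delta$ only slightly above $2$, the truncated functional $\nu \mapsto n^{-1}\sum_{i}\min\{(\nu\tran Z_{i})^{2},\theta\}$ has no usable uniform modulus of continuity — perturbing $\nu$ by $\eps$ costs a term involving $\max_{1\le i\le n}\|Z_{i}\|_{2}$ or $\lambda_{\max}(Z\tran Z/n)$, neither of which is controlled by $L(\delta)$ alone — so the $\eps$-net union bound you propose does not obviously go through. Your resolution is ``I would dispatch this exactly as Yaskov does,'' but in a blind proof of a (modified) Yaskov corollary this is an appeal to the very argument being reconstructed: the uniformity over $\S^{p-1}$ under a bare $(2+\eps)$-moment assumption on one-dimensional projections \emph{is} the mathematical content of the result. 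Without supplying that argument, what you have is a plan whose single hard step is filled by citation to itself; the remaining ``bookkeeping'' (the exponents $\delta/(\delta+2)$ and $2/(\delta+2)$, the factor $5$, the factor $1+1/C$) is asserted rather than derived.

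The paper's own proof shows that a far more economical route is both available and complete: it does not reprove any concentration at all, but quotes the intermediate inequality established on page 6 of Yaskov's proof — for every $a>0$, with $y = p/n$ and $L = L(\delta)$,
\[
\P\lb \lambda_{\min}\lb \frac{Z\tran Z}{n}\rb < 1 - 4La^{-\delta/2} - 5ay\rb \le \exps{-La^{-1-\delta/2}n},
\]
and then optimizes the free parameter by setting $a = (Cy/L)^{-2/(\delta+2)}$. This choice makes the exponent exactly $-Cp$, and the deviation becomes
\[
y^{\frac{\delta}{\delta+2}}L^{\frac{2}{\delta+2}}\lb 4C^{\frac{\delta}{\delta+2}} + 5C^{-\frac{2}{\delta+2}}\rb \le 5\lb Cy\rb^{\frac{\delta}{\delta+2}}L^{\frac{2}{\delta+2}}\lb 1 + \frac{1}{C}\rb,
\]
where the ``$1$'' absorbs the $4La^{-\delta/2}$ term and the ``$1/C$'' absorbs the $5ay$ term. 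This also corrects your account of the constants: the factor $(1+1/C)$ is not an entropy-versus-budget tradeoff in a union bound; it is elementary algebra after the one-parameter optimization of an inequality someone else already proved. If you want a self-contained proof you must genuinely rebuild Yaskov's sphere-uniformity argument; if you are content to lean on his paper, the correct move is to cite a specific inequality from a point after the hard work is done, as the paper does, not to re-sketch the proof with its crux left blank.
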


\begin{proof}
Write $y = p / n$ and $L = L(\delta)$. The proof of Corollary 3.1 of \citet[][page 6]{yaskov14} showed that for any $a > 0$,
\[\P\lb \lambda_{\min}\lb \frac{Z\tran Z}{n}\rb < 1 - 4La^{- \delta / 2} - 5ay\rb\le \exps{-La^{-1 - \delta / 2}n}.\]
Let $a = \lb Cy / L\rb^{-2/ (\delta + 2)}$. Then the right-hand side is $1 - e^{-Cp}$. Thus with probability $1 - e^{-Cp}$, 
\begin{align*}
 \lambda_{\min}\lb \frac{Z\tran Z}{n}\rb  
 \ge 1 - y^{\frac{\delta}{\delta + 2}}L^{\frac{2}{\delta + 2}}\lb 5 C^{-\frac{2}{\delta + 2}} + 4C^{\frac{\delta}{\delta + 2}}\rb 
  \ge 1 - 5\lb Cy\rb^{\frac{\delta}{\delta + 2}}L^{\frac{2}{\delta + 2}}\lb 1+ \frac{1}{C}\rb.
\end{align*}
\end{proof}

\begin{proposition}[Theorem 1 of \cite{tikhomirov17}]\label{prop:operator_norm}
  Let $Z_{i}$ be i.i.d. random vectors in $\R^{p}$ with mean $0$ and covariance $\id$. Suppose 
\[L(\delta)\triangleq \sup_{ \nu \in \mathcal{S}^{p-1} } 
\E |\nu\tran Z_{i}|^{\delta} < \infty\]
for some $\delta > 2$. Then with probability at least $1 - 1/n$,
\[\nu(\delta)^{-1}\left\|\frac{Z\tran Z}{n} - \id \right \|_{\op} \le \frac{\max_{1\leq i \leq n}\|Z_{i}\|_{2}^{2}}{n} + L(\delta)^{\frac{2}{\delta}}\left\{\lb \frac{p}{n}\rb^{\frac{\delta - 2}{\delta}}\log^{4}\lb \frac{n}{p}\rb + \lb\frac{p}{n}\rb^{\frac{\min\{\delta - 2, 2\}}{\min\{\delta, 4\}}}\right\},\]
for some constant $\nu(\delta)$ depending only on $\delta$.
\end{proposition}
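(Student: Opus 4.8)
This proposition is quoted verbatim as Theorem 1 of \cite{tikhomirov17}, so within the present paper it carries no independent proof: one simply invokes the cited result (it is used, together with Proposition \ref{prop:yaskov}, only to control $\|\Sigma_t - \id\|_{\op}$-type quantities in the proofs of Propositions \ref{prop:kappa_iid} and \ref{prop:eq}). For completeness I sketch the strategy one would follow to establish such a spectral-norm concentration bound from scratch. The natural starting point is the variational identity
\[
\left\|\frac{Z\tran Z}{n} - \id\right\|_{\op} = \sup_{\nu\in\S^{p-1}}\left|\frac{1}{n}\sum_{i=1}^{n}(\nu\tran Z_{i})^{2} - 1\right|,
\]
which reduces the matrix problem to controlling the supremum, over the unit sphere, of a centred empirical average of the squared linear statistics $(\nu\tran Z_{i})^{2}$.

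The first move is a truncation. One splits each $Z_{i}$ according to whether $\|Z_{i}\|_{2}$ exceeds a threshold of order $\sqrt{n}$; conditioned on $\max_{i}\|Z_{i}\|_{2}$, the contribution of the large vectors is deterministic, and it is precisely this piece that produces the leading $\max_{1\le i\le n}\|Z_{i}\|_{2}^{2}/n$ term in the bound. The moment hypothesis $L(\delta)<\infty$ then controls how much mass the truncated part loses and supplies a Bernstein-type tail for each \emph{fixed} direction $\nu$: since $(\nu\tran Z_{i})^{2}$ has a finite $(\delta/2)$-th moment, $\frac{1}{n}\sum_{i}(\nu\tran Z_{i})^{2}$ concentrates around its mean at scale $(p/n)^{(\delta-2)/\delta}$ up to logarithmic corrections.

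The main obstacle is upgrading this pointwise control into a bound uniform over the whole sphere, with the sharp exponents and only the stated $\log^{4}(n/p)$ loss. A single $\epsilon$-net of $\S^{p-1}$ has cardinality $(C/\epsilon)^{p}$, and a crude union bound over it injects a factor $p$ into the exponent, which is far too wasteful under heavy tails. The correct route is a \emph{generic chaining} (or multi-scale net) argument: one writes the supremum over $\S^{p-1}$ as a sum of increments along successively finer nets, bounds each increment by its own tail estimate, and sums the resulting entropy integral. Heavy-tailedness forces each chaining increment to be split into a sub-Gaussian bulk and a sparse large component, and it is the bookkeeping of these two regimes across scales that generates the two competing terms $(p/n)^{(\delta-2)/\delta}\log^{4}(n/p)$ and $(p/n)^{\min\{\delta-2,2\}/\min\{\delta,4\}}$, together with the precise power of the logarithm. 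Assembling the chaining bound for the truncated bulk with the deterministic large-vector term, and choosing the truncation level to balance the two contributions, yields the claimed inequality with dimension-free failure probability $1/n$.
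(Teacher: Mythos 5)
Your treatment is correct and matches the paper exactly: Proposition \ref{prop:operator_norm} is imported verbatim from \cite{tikhomirov17} and the paper supplies no independent proof, so invoking the citation is all that is required. Your optional sketch of the truncation-plus-chaining strategy behind such heavy-tailed spectral bounds is a reasonable gloss on the cited work, but it plays no role in the paper's own argument and need not be assessed further.
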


\begin{proposition}[Theorem 2 of \cite{vonbahr65}]\label{prop:vonbahresseen}
  Let $Z_{i}$ be independent mean-zero random variables. Then for any $r \in [1, 2)$,
\[\E \bigg|\sum_{i=1}^{n}Z_{i}\bigg|^{r}\le 2\sum_{i=1}^{n}\E |Z_{i}|^{r}.\]
\end{proposition}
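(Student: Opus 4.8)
The plan is to reduce the inequality to a deterministic pointwise estimate and then telescope over the partial sums $S_k = \sum_{i=1}^k Z_i$ (with $S_0 = 0$), using independence and the mean-zero hypothesis to annihilate a cross term at each step. We may assume the right-hand side is finite, as otherwise there is nothing to prove. The engine of the argument is the claim that for every $r \in [1,2)$ and all real $a,b$,
\[
|a+b|^r \le |a|^r + r\,\mathrm{sgn}(a)\,|a|^{r-1}\,b + 2|b|^r. \qquad (\star)
\]
Granting $(\star)$, I would substitute $a = S_{k-1}$ and $b = Z_k$ and take expectations. An easy induction keeps every $\E|S_k|^r$ finite (and hence $\E|S_{k-1}|^{r-1} < \infty$), so since $Z_k$ is independent of $S_{k-1}$ with $\E Z_k = 0$, the cross term factorises and vanishes:
\[
r\,\E\big[\mathrm{sgn}(S_{k-1})\,|S_{k-1}|^{r-1}\,Z_k\big] = r\,\E\big[\mathrm{sgn}(S_{k-1})\,|S_{k-1}|^{r-1}\big]\,\E Z_k = 0.
\]
This leaves the recursion $\E|S_k|^r \le \E|S_{k-1}|^r + 2\,\E|Z_k|^r$, which telescopes over $k=1,\dots,n$ to the asserted bound $\E|S_n|^r \le 2\sum_{k=1}^n \E|Z_k|^r$.

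The substance lies entirely in the pointwise inequality $(\star)$, and this is the step I expect to be the main obstacle. First I would strip away the easy structure. The case $r=1$ is immediate from the triangle inequality together with $\mathrm{sgn}(a)\,b \ge -|b|$. For $1 < r < 2$, the inequality $(\star)$ is homogeneous of degree $r$ and invariant under $(a,b)\mapsto(-a,-b)$, so it suffices to take $a \ge 0$; the case $a=0$ reads $|b|^r \le 2|b|^r$ and is trivial, while for $a>0$ rescaling to $a=1$ turns $(\star)$ into the single-variable statement
\[
\psi(b) \triangleq 2|b|^r - |1+b|^r + 1 + rb \ge 0 \qquad (b \in \R), \qquad \psi(0)=0.
\]

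Finally I would establish $\psi \ge 0$ by a short case analysis powered by the concavity and subadditivity of $t \mapsto t^{r-1}$ on $[0,\infty)$, valid because $r-1 \in (0,1)$. For $b>0$ one has $\psi'(b) = r\big[2b^{r-1} + 1 - (1+b)^{r-1}\big] \ge 0$, using $(1+b)^{r-1} \le 1 + b^{r-1}$, so $\psi$ increases away from $\psi(0)=0$. For $b=-c$ with $0 < c \le 1$, the needed bound $(1-c)^r \le 1 - rc + 2c^r$ reduces, upon differentiating in $c$, to $2c^{r-1} + (1-c)^{r-1} \ge 1$, which follows from $c^{r-1} \ge c$ and $(1-c)^{r-1} \ge 1-c$ on $[0,1]$; and for $c>1$ one bounds $(c-1)^r \le c^r$ and is left with the elementary $c^r - rc + 1 \ge 0$ for $c \ge 1$. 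Assembling the cases yields $(\star)$ and completes the argument. The one genuinely delicate point is controlling signs and the non-differentiability of $t \mapsto |t|^r$ at the origin, which is exactly why isolating $r=1$ and normalizing $a=1$ before differentiating is worthwhile.
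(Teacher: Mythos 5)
Your proof is correct. Note that the paper does not actually prove this proposition: it imports it verbatim as Theorem 2 of von Bahr and Esseen (1965), so there is no internal argument to compare against, and what you have written is a self-contained replacement for that citation. Your route --- telescoping over the partial sums $S_k$ and annihilating the cross term $r\,\E\big[\mathrm{sgn}(S_{k-1})|S_{k-1}|^{r-1}Z_k\big]$ via independence and centering --- is in fact the classical strategy behind the von Bahr--Esseen inequality, with all the weight carried by the pointwise bound
\[
|a+b|^r \le |a|^r + r\,\mathrm{sgn}(a)\,|a|^{r-1}b + 2|b|^r ,
\]
and your verification of that bound checks out: the reduction to $a=1$ by homogeneity and sign symmetry is legitimate; for $b>0$ subadditivity of $t\mapsto t^{r-1}$ gives $\psi'(b)\ge r b^{r-1}\ge 0$; on $0<c\le 1$ the derivative bound $2c^{r-1}+(1-c)^{r-1}\ge 1+c\ge 1$ follows from $t^{r-1}\ge t$ on $[0,1]$; and for $c>1$ the remainder $c^r-rc+1$ is increasing with value $2-r>0$ at $c=1$. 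The integrability bookkeeping is also right: $\E|Z_k|\le(\E|Z_k|^r)^{1/r}<\infty$ and $\E|S_{k-1}|^{r-1}<\infty$ by Lyapunov's inequality, so the cross term genuinely factorizes (Fubini applies) and vanishes. Two minor remarks: your constant $2$ matches the statement as quoted in the paper, though the original theorem of von Bahr and Esseen gives the slightly sharper constant $2-n^{-1}$; and when $S_{k-1}=0$ (and in the case $r=1$) the factor $\mathrm{sgn}(S_{k-1})|S_{k-1}|^{r-1}$ should be read with the convention $\mathrm{sgn}(0)=0$, which your separate treatment of $r=1$ and of $a=0$ already accommodates.
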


\section{Proof of Proposition \ref{prop:kappa_iid}}\label{section:proof-of-F1}

\subsection{A lemma}

First we prove a more general result. 
\begin{lemma}\label{prop:kappa}
Let $Z_{i}$ be i.i.d. random vectors in $\R^{p}$ with mean $\mu \in \R^p $ and covariance matrix $\Sigma\in \R^{p\times p}$. Let $\td{Z}_{i} = \Sigma^{-1/2}(Z_{i} - \mu)$, and assume
\[\sup_{\nu\in \S^{p-1}}\E |\nu\tran \td{Z}_{i}|^{\delta} = O(1), \quad \text{and}\quad 
\max_{1\leq i \leq n}\big|\|\td{Z}_{i}\|_{2}^{2} - \E \|\td{Z}_{i}\|_{2}^{2}\big| = O_\P(\omega(n, p)),\]
for some $\delta> 2$ and some function $\omega(n, p)$ increasing in $n$ and $p$. Further let $Z = (Z_{1}\tran , \ldots, Z_{n}\tran )\tran $ and $X = \projone  Z$ so that $X$ has centered columns. If $p = O(n^{\gamma})$ for some $\gamma < 1$, then over the randomness of $Z$, 
\[
\kappa_0 = O_\P\lb \frac{\omega(n, p)}{n} + \lb\frac{p}{n}\rb^{\frac{2\delta - 2}{\delta}}\log^{4}\lb \frac{n}{p}\rb + \lb\frac{p}{n}\rb^{\frac{\min\{2\delta - 2, 6\}}{\min\{\delta, 4\}}}\rb.\]
\end{lemma}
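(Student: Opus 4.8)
The plan is to use the invariance of leverage scores under right multiplication by an invertible matrix to reduce to the standardized vectors $\td Z_{i}$, and then to control the resulting sample second-moment matrix with the two spectral inequalities above. Since $H_{ii}$ is unchanged when $X$ is replaced by $X\Sigma^{-1/2}$, and $X\Sigma^{-1/2} = \projone(Z - \one\mu\tran)\Sigma^{-1/2} = \projone\td Z \equiv \td X$ because $\projone\one = 0$, I may assume throughout that the rows $\td Z_{i}$ have mean $0$ and covariance $\id$. Writing $\td x_{i} = \td Z_{i} - \bar{\td Z}$ for the centered rows, $\hat{\Sigma} = n^{-1}\sum_{i}\td Z_{i}\td Z_{i}\tran$, and $M = n^{-1}\td X\tran\td X = \hat{\Sigma} - \bar{\td Z}\,\bar{\td Z}\tran$, the leverage score becomes $H_{ii} = n^{-1}\td x_{i}\tran M^{-1}\td x_{i}$. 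Splitting $M^{-1} = \id + (M^{-1} - \id)$ then gives the two-sided bound
\[
\frac{1}{n}\|\td x_{i}\|_{2}^{2}\bigl(1 - \|M^{-1}-\id\|_{\op}\bigr) \le H_{ii} \le \frac{1}{n}\|\td x_{i}\|_{2}^{2}\bigl(1 + \|M^{-1}-\id\|_{\op}\bigr),
\]
so the claim will follow from two facts: (a) $n^{-1}\max_{i}\|\td x_{i}\|_{2}^{2} = p/n + O_\P(\omega(n,p)/n)$, and (b) a sharp bound on $\|M^{-1}-\id\|_{\op}$. For (a), the identity $\E\|\td Z_{i}\|_{2}^{2} = \tr(\id) = p$ combined with the hypothesis yields $\max_{i}\|\td Z_{i}\|_{2}^{2} = p + O_\P(\omega(n,p))$, while the centering corrections built from $\bar{\td Z}$ are lower order since $\E\|\bar{\td Z}\|_{2}^{2} = p/n$.

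For (b), the two listed propositions play complementary roles. Proposition \ref{prop:yaskov}, applied with $L(\delta) = O(1)$ as assumed, gives $\mineig(\hat{\Sigma}) \ge 1 - o_\P(1)$ because $p/n \to 0$; subtracting the rank-one matrix $\bar{\td Z}\,\bar{\td Z}\tran$, whose norm is $O_\P(p/n)$, keeps $\mineig(M)$ bounded away from $0$, so that $\|M^{-1}\|_{\op} = O_\P(1)$. This robust lower eigenvalue bound is the reason Proposition \ref{prop:yaskov} is needed beyond the operator-norm inequality, since in the heavy-tailed regime the term $\max_{i}\|\td Z_{i}\|_{2}^{2}/n$ appearing in Proposition \ref{prop:operator_norm} need not vanish. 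Proposition \ref{prop:operator_norm} then supplies the sharp rate
\[
\|\hat{\Sigma}-\id\|_{\op} = O_\P\Bigl(\tfrac{p}{n} + \tfrac{\omega(n,p)}{n} + (p/n)^{\frac{\delta-2}{\delta}}\log^{4}(n/p) + (p/n)^{\frac{\min\{\delta-2,2\}}{\min\{\delta,4\}}}\Bigr) \equiv \epsilon_{n},
\]
where I substitute $\max_{i}\|\td Z_{i}\|_{2}^{2}/n = p/n + O_\P(\omega(n,p)/n)$. Since $M^{-1}-\id = -M^{-1}(M-\id)$, this gives $\|M^{-1}-\id\|_{\op} \le \|M^{-1}\|_{\op}\|M-\id\|_{\op} = O_\P(\epsilon_{n})$.

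Combining (a) and (b) in the two-sided bound yields $\kappa = p/n + O_\P(\omega(n,p)/n) + (p/n)\,O_\P(\epsilon_{n}) + (\text{lower order})$. The decisive cross term $(p/n)\,\epsilon_{n}$ multiplies each power of $p/n$ in $\epsilon_{n}$ by one additional factor; using $1 + \tfrac{\delta-2}{\delta} = \tfrac{2\delta-2}{\delta}$ and the identity $\min\{2\delta-2,6\} = \min\{\delta,4\} + \min\{\delta-2,2\}$, this reproduces exactly the terms $(p/n)^{(2\delta-2)/\delta}\log^{4}(n/p)$ and $(p/n)^{\min\{2\delta-2,6\}/\min\{\delta,4\}}$ in the assertion, while the leftover pieces $p^{2}/n^{2}$ and $p\,\omega(n,p)/n^{2}$ are dominated because $p/n \to 0$.

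The main obstacle is precisely this bookkeeping in the last two steps: feeding the $\omega(n,p)$ bound correctly into Tikhomirov's $\max_{i}\|\td Z_{i}\|_{2}^{2}/n$ term and checking that every secondary contribution—the centering, the rank-one downdate, and the various cross products—is indeed dominated by the three displayed error terms. The conceptual crux that organizes all of this is the recognition that the leverage-score error is essentially $(p/n)\times\|\hat{\Sigma}-\id\|_{\op}$, so the final exponents are exactly those of Proposition \ref{prop:operator_norm} shifted up by one.
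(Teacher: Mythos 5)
Your proposal follows essentially the same route as the paper's own proof: the same reduction to standardized rows via invariance of leverage scores, the same split $H_{ii} = n^{-1}\|\td{x}_i\|_2^2 + n^{-1}\td{x}_i\tran(M^{-1}-\id)\td{x}_i$, Yaskov's bound to keep $\lambda_{\min}$ (hence $\|M^{-1}\|_{\op}$) under control precisely because the Tikhomirov bound need not vanish for heavy tails, Tikhomirov's inequality fed with $\max_i\|\td{Z}_i\|_2^2/n = p/n + O_\P(\omega(n,p)/n)$, and the same exponent bookkeeping at the end. The one detail you gloss over is the cross term of order $(\omega(n,p)/n)^2$ produced when multiplying (a) by (b), which is not dominated by $\omega(n,p)/n$ if $\omega(n,p)/n\to\infty$; the paper dispatches it by observing that $\kappa\le 1$ makes the claimed bound trivial in that regime, a one-line patch your argument should include.
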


\begin{proof}[Proof of Lemma \ref{prop:kappa}]
Let $\td{Z} = (\td{Z}_{1}\tran , \ldots, \td{Z}_{n})\tran $ and 
$\td{X} = \projone  \td{Z}.$
Then 
$\td{X} = \projone  \lb Z - \one  \mu\tran \rb\Sigma^{-\frac{1}{2}} = \projone  Z\Sigma^{-\frac{1}{2}},$
and thus
\begin{align*}
  \td{X}(\td{X}\tran \td{X})^{-1}\td{X}\tran  &= \projone  Z\lb Z\tran \projone  Z\rb^{-1}Z\tran \projone 
 = X(X\tran X)^{-1}X\tran .
\end{align*}
Therefore, we can assume $\mu = 0$ and $ \Sigma = \id $ without loss of generality, in which case $Z_{i} = \td{Z}_{i}$ has mean 0 and covariance matrix $\id$.

By definition,
$H_{ii} = x_{i}\tran (X\tran X)^{-1}x_{i}$, and therefore
\begin{align*}
  H_{ii}  = \frac{1}{n}x_{i}\tran \lb \lb\frac{X\tran X}{n}\rb^{-1} - \id \rb  x_{i} + \frac{\|x_{i}\|_{2}^{2}}{n} 
 \le \frac{\|x_{i}\|_{2}^{2}}{n}\lb 1 + \left\|\lb\frac{X\tran X}{n}\rb^{-1} - \id \right \|_{\op}\rb.
\end{align*}
As a result,
\begin{equation}
  \label{eq:kappaHii1}
\kappa_0 \le \max_{1\le i\le n}\frac{|\|x_{i}\|_{2}^{2} - p|}{n} +  \left\|\lb\frac{X\tran X}{n}\rb^{-1} - \id \right \|_{\op}\max_{1\le i\le n}\frac{\|x_{i}\|_{2}^{2}}{n}.
\end{equation}
To bound $\kappa$, we need to bound two key terms below.

\paragraph{Bounding $\left\|\lb  n^{-1} X\tran X \rb^{-1} - \id \right \|_{\op}$} Let $\bar{Z} = n^{-1}  \sum_{i=1}^{n}Z_{i}$. Note that
\[\E \|\bar{Z}\|_{2}^{2} = \frac{1}{n^{2}}\sum_{i=1}^{n}\E \|Z_{i}\|_{2}^{2} = \frac{1}{n}\E \|Z_{1}\|_{2}^{2} = \frac{p}{n}.\]
By Markov's inequality,
\begin{equation}\label{eq:barZ}
  \|\bar{Z}\|_{2}^{2} = O_{\P}\lb\frac{p}{n}\rb,
\end{equation}
and more precisely, 
\begin{equation}\label{eq:E1}
  \P\lb \|\bar{Z}\|_{2}^{2} \le \lb\frac{p}{n}\rb^{1/2}\rb = 1 - \P\lb \|\bar{Z}\|_{2}^{2} > \lb\frac{p}{n}\rb^{1/2}\rb \ge 1 - \lb\frac{p}{n}\rb^{1/2}.
\end{equation} 
Let $\mathcal{A}_{1}$ denote the above event that $\|\bar{Z}\|_{2}^{2} \le (p / n)^{1/2}$. Then
$ 
\P(\mathcal{A}_{1}) \ge 1 - (p / n)^{1/2}.
$

By Proposition \ref{prop:operator_norm}, 
\[\left\|\frac{Z\tran Z}{n} - \id \right \|_{\op}= O_\P\lb \frac{\max_{1\leq i \leq n}\|Z_{i}\|_{2}^{2}}{n} + \lb\frac{p}{n}\rb^{\frac{\delta - 2}{\delta}}\log^{4}\lb \frac{n}{p}\rb + \lb\frac{p}{n}\rb^{\frac{\min\{\delta - 2, 2\}}{\min\{\delta, 4\}}}\rb  .\]
By the condition of Lemma \ref{prop:kappa}, 
\begin{equation}\label{eq:max2norm}
\frac{\max_{1\leq i \leq n}\|Z_{i}\|_{2}^{2}}{n} = \frac{p}{n}  +    \max_{1\leq i \leq n}\frac{\|Z_{i}\|_{2}^{2}    - \E \|Z_{i}\|_{2}^{2}}{n}  = \frac{p}{n} + O_\P\lb \frac{\omega(n, p)}{n}\rb.
\end{equation}
Combining the above three equations, we have 
\begin{align}
  & \left\|\frac{X\tran X}{n} - \id \right \|_{\op} = \left\|\frac{Z\tran Z}{n} - \id - \bar{Z}\bar{Z}\tran \right\|_{\op}\le \left\|\frac{Z\tran Z}{n} - \id \right \|_{\op} + \|\bar{Z}\|_{2}^{2}\nonumber\\
= & O_\P\lb \frac{p}{n} + \frac{\omega(n, p)}{n} + \lb\frac{p}{n}\rb^{\frac{\delta - 2}{\delta}}\log^{4}\lb \frac{n}{p}\rb + \lb\frac{p}{n}\rb^{\frac{\min\{\delta - 2, 2\}}{\min\{\delta, 4\}}}\rb\nonumber\\
= & O_\P\lb\frac{\omega(n, p)}{n} + \lb\frac{p}{n}\rb^{\frac{\delta - 2}{\delta}}\log^{4}\lb \frac{n}{p}\rb + \lb\frac{p}{n}\rb^{\frac{\min\{\delta - 2, 2\}}{\min\{\delta, 4\}}}\rb,\label{eq:XTX-I}
\end{align}
where the last line uses the fact that the third term dominates the first term due to $p / n \rightarrow 0$. On the other hand, by Proposition \ref{prop:yaskov} with $C = (n / p)^{1/2}$, with probability $1 - e^{-(np)^{1/2}}$, 
\begin{align}
  \lambda_{\min}\lb\frac{Z\tran Z}{n}\rb&\ge 1 - 5\lb\lb\frac{p}{n}\rb^{1/2}\rb^{\frac{\delta}{\delta + 2}}L(\delta)^{\frac{2}{\delta + 2}}\lb 1 + \lb\frac{p}{n}\rb^{1/2}\rb  \ge 1 - 10\lb\frac{p}{n}\rb^{\frac{\delta}{2(\delta + 2)}}L(\delta)^{\frac{2}{\delta + 2}}. \label{eq::A2}
\end{align}
Let $\mathcal{A}_{2}$ denote the event in \eqref{eq::A2}. Then
$
\P(\mathcal{A}_{2})\ge 1 - e^{-(np)^{1/2}}.
$

Note that for any Hermitian matrices $A$ and $B$, the convexity of $\|\cdot\|_{\op}$ implies that
\[|\lambda_{\min}(A) - \lambda_{\min}(B)| = |\lambda_{\max}(-A) - \lambda_{\max}(-B)| \le \|-A - (-B)\|_{\op} = \|A - B\|_{\op}.\]
 We have
\[\lambda_{\min}\lb\frac{X\tran X}{n}\rb\ge \lambda_{\min}\lb\frac{Z\tran Z}{n}\rb - \|\bar{Z}\bar{Z}\tran \|_{\op} = \lambda_{\min}\lb\frac{Z\tran Z}{n}\rb - \|\bar{Z}\|_{2}^{2}.\]
Let $\mathcal{A} = \mathcal{A}_{1}\cup \mathcal{A}_{2}$. Then on $\mathcal{A}$, 
\begin{equation*}
  \lambda_{\min}\lb\frac{X\tran X}{n}\rb \ge 1 - 10\lb\frac{p}{n}\rb^{\frac{\delta}{2(\delta + 2)}}L(\delta)^{\frac{2}{\delta + 2}} - \lb\frac{p}{n}\rb^{1/2}.
\end{equation*}
Since $p / n\rightarrow 0$, for sufficiently large $n$, 
\[\lambda_{\min}\lb\frac{X\tran X}{n}\rb \ge \frac{1}{2}\]
with probability 
\[\P(\mathcal{A})\ge \P(\mathcal{A}_{1}) + \P(\mathcal{A}_{2}) - 1\ge 1 - e^{-(np)^{1/2}} - \lb\frac{p}{n}\rb^{1/2} = 1 - o(1).\]

Finally, using Proposition \ref{prop:matrix_inv_diff} with $A=\id$ and $B = n^{-1} X\tran X - \id$, by Slutsky's lemma, we have that
\begin{equation}
  \label{eq:XTXinv-I}
  \left\|\lb\frac{X\tran X}{n}\rb^{-1} - \id \right \|_{\op} = O_\P\lb\frac{\omega(n, p)}{n} + \lb\frac{p}{n}\rb^{\frac{\delta - 2}{\delta}}\log^{4}\lb \frac{n}{p}\rb + \lb\frac{p}{n}\rb^{\frac{\min\{\delta - 2, 2\}}{\min\{\delta, 4\}}}\rb.
\end{equation}
Because  $p = O(n^{\gamma})$ for some $\gamma < 1$, 
\begin{equation}
  \label{eq:XTXinv-I2}
 \lb\frac{p}{n}\rb^{\frac{\delta - 2}{\delta}}\log^{4}\lb \frac{n}{p}\rb + \lb\frac{p}{n}\rb^{\frac{\min\{\delta - 2, 2\}}{\min\{\delta, 4\}}} = o(1).
\end{equation}

\paragraph{Bounding $  \max_{1\leq i\leq n} \|x_{i}\|_{2}^{2} $}
Because  $x_{i} = Z_{i} - \bar{Z}$, the Cauchy--Schwarz inequality implies 
\[\|x_{i}\|_{2}^{2} = \|Z_{i}\|_{2}^{2} - 2Z_{i}\tran \bar{Z} + \|\bar{Z}\|_{2}^{2}\le \|Z_{i}\|_{2}^{2} + 2\|Z_{i}\|_{2}\|\bar{Z}\|_{2} + \|\bar{Z}\|_{2}^{2}.\]
By \eqref{eq:max2norm}, \eqref{eq:barZ} and the fact that $p = \E \|Z_{i}\|_{2}^{2}$, we have
\begin{align}
  \frac{\max_{1\leq i \leq n}|\|x_{i}\|_{2}^{2} - p|}{n}& \le \frac{\max_{i}\big|\|Z_{i}\|_{2}^{2} - \E \|Z_{i}\|_{2}^{2}+ 2 \|Z_{i}\|_{2}\|\bar{Z}\|_{2} + \|\bar{Z}\|_{2}^{2}\big|}{n} \nonumber\\
& = O_\P\lb \frac{\omega(n, p)}{n} + \lb\frac{(p + \omega(n, p))p}{n^{3}}\rb^{1/2} + \frac{p}{n^{2}}\rb\nonumber\\
& = O_\P\lb \frac{\omega(n, p)}{n} + \lb\frac{\omega(n, p)p}{n^{3}}\rb^{1/2} + \frac{p}{n^{3/2}} \rb\nonumber.
\end{align}
Because  $\omega(n, p)$ is increasing and $p / n\rightarrow 0$, we have
\[\lb\frac{\omega(n, p)p}{n^{3}}\rb^{1/2} = O\lb\frac{\omega(n, p)}{n}\lb\frac{p}{n}\rb^{1/2}\rb = o\lb\frac{\omega(n, p)}{n}\rb.\]
Thus, we obtain that
\begin{equation}
  \label{eq:max2normx0}
  \frac{\max_{1\leq i \leq n}|\|x_{i}\|_{2}^{2} - p|}{n} = O_\P\lb \frac{\omega(n, p)}{n}  + \frac{p}{n^{3/2}} \rb,
\end{equation}
and thus
\begin{equation}
  \label{eq:max2normx}
  \frac{\max_{1\leq i \leq n}\|x_{i}\|_{2}^{2}}{n} = O_\P\lb \frac{\omega(n, p)}{n}  + \frac{p}{n} \rb.
\end{equation}

Putting \eqref{eq:kappaHii1} and  \eqref{eq:XTXinv-I}--\eqref{eq:max2normx} together and using some tedious cancellations, we have
\begin{align}
\kappa_0
  =  O_\P\lb \frac{\omega(n, p)}{n}  + \frac{p}{n^{3/2}} +\frac{\omega^{2}(n, p)}{n^{2}} + \lb\frac{p}{n}\rb^{1 + \frac{\delta - 2}{\delta}}\log^{4}\lb \frac{n}{p}\rb + \lb\frac{p}{n}\rb^{1 + \frac{\min\{\delta - 2, 2\}}{\min\{\delta, 4\}}}\rb.\label{eq:Hii1}
\end{align}
Because
\[
\lb\frac{p}{n}\rb^{1 + \frac{\min\{\delta - 2, 2\}}{\min\{\delta, 4\}}}\ge \lb\frac{p}{n}\rb^{3/2}\ge \frac{p}{n^{3/2}},
\]
\eqref{eq:Hii1} further simplifies to
\begin{align*}
\kappa_0 &= O_\P\lb \frac{\omega(n, p)}{n} + \frac{\omega^{2}(n, p)}{n^{2}} + \lb\frac{p}{n}\rb^{\frac{2\delta - 2}{\delta}}\log^{4}\lb \frac{n}{p}\rb + \lb\frac{p}{n}\rb^{\frac{\min\{2\delta - 2, 6\}}{\min\{\delta, 4\}}}\rb\\
& = O_\P\lb \frac{\omega(n, p)}{n}\max\left\{ \frac{\omega(n, p)}{n}, 1\right\} + \lb\frac{p}{n}\rb^{\frac{2\delta - 2}{\delta}}\log^{4}\lb \frac{n}{p}\rb + \lb\frac{p}{n}\rb^{\frac{\min\{2\delta - 2, 6\}}{\min\{\delta, 4\}}}\rb .
\end{align*}
If $\omega(n, p) / n\ge 1$, then the lemma is proved by the fact that $\kappa\le 1$. Otherwise, the lemma is also proved. 
\end{proof}

\subsection{Using Lemma \ref{prop:kappa} to prove Proposition \ref{prop:kappa_iid}}

We have argued in the proof of Proposition \ref{prop:kappa} that we can assume $\mu = 0$ without loss of generality. Because the hat matrix is invariant to rescaling, we further assume $\E Z_{ij}^{2} = 1$ without loss of generality. Based on Proposition \ref{prop:kappa}, it suffices to verify
\begin{eqnarray}
  \label{eq:kappa_goal1}
\sup_{\nu\in \S^{p-1}}\E |\nu\tran Z_{i}|^{\delta} &=& O(1), 
\\
  \label{eq:kappa_goal2}
\max_{1\leq i \leq n} \big| \|Z_{i}\|_{2}^{2} - \E \|Z_{i}\|_{2}^{2}\big| &=& O_\P\lb n^{\frac{2}{\delta}}p^{\frac{2}{\min\{\delta, 4\}}}\rb.
\end{eqnarray}
If \eqref{eq:kappa_goal1} and \eqref{eq:kappa_goal2} hold, by Proposition \ref{prop:kappa}, we have that 
\[
\kappa_0 = O_\P\lb \frac{p^{2/\min\{\delta, 4\}}}{n^{(\delta - 2) / \delta}} + \lb\frac{p}{n}\rb^{\frac{2\delta - 2}{\delta}}\log^{4}\lb \frac{n}{p}\rb + \lb\frac{p}{n}\rb^{\frac{\min\{2\delta - 2, 6\}}{\min\{\delta, 4\}}}\rb.\]
Then we can prove Proposition \ref{prop:kappa_iid} by considering two cases.

\paragraph{Case 1} 
If $\delta > 4$, then $\frac{2\delta - 2}{\delta} > \frac{3}{2} = \frac{\min\{2\delta - 2, 6\}}{\min\{\delta, 4\}}$. Thus the third term dominates the second term in the above $O_\P(\cdot)$, implying   
\begin{align*}
\kappa_0= O_\P\lb \frac{p^{1/2}}{n^{(\delta - 2) / \delta}} + \lb\frac{p}{n}\rb^{\frac{3}{2}}\rb.
\end{align*}

\paragraph{Case 2}
If $\delta \le 4$, then 
\[
\kappa_0 = O_\P\lb\frac{p^{2 / \delta}}{n^{(\delta - 2) / \delta}} + \lb\frac{p}{n}\rb^{\frac{2\delta - 2}{\delta}} \log^{4}\lb\frac{n}{p}\rb\rb.\]
Because $p = O(n^{\gamma})$ for some $\gamma < 1$ and
\[\lb\frac{p}{n}\rb^{\frac{2\delta - 2}{\delta}} = \frac{p^{2 / \delta}}{n^{(\delta - 2) / \delta}}\frac{p^{(2\delta - 4) / \delta}}{n}\le \frac{p^{2 / \delta}}{n^{(\delta - 2) / \delta}}\frac{p}{n} , \]
the first term dominates in the above $O_\P(\cdot)$, implying 
\[
\kappa_0  = O_\P\lb\frac{p^{2 / \delta}}{n^{(\delta - 2) / \delta}}\rb 
=  O_\P\lb\frac{p^{2 / \delta}}{n^{(\delta - 2) / \delta}} + \lb\frac{p}{n}\rb^{\frac{3}{2}}\rb .
\]
The last identity holds because $p^{3/2}/n^{3/2}$ is of smaller order and thus we can add it back.

We will prove \eqref{eq:kappa_goal1} and \eqref{eq:kappa_goal2} below. 

\paragraph{Proving \eqref{eq:kappa_goal1}}
By \citet{rosenthal70}'s inequality, 
\begin{align*}
  \E |\nu\tran Z_{i}|^{\delta} &= \E \bigg|\sum_{j=1}^{p}\nu_{j}Z_{ij}\bigg|^{\delta} \le C \lb \sum_{j=1}^{p}|\nu_{j}|^{\delta} \E |Z_{ij}|^{\delta} + \lb \sum_{j=1}^{p}\nu_{j}^{2} \E Z_{ij}^{2}\rb^{\delta / 2}\rb
\end{align*}
where $C$ is a constant depending only on $\delta$. Because  $\|\nu\|_{2} = 1$, we have $\max_{1\leq j \leq p}|\nu_{j}|\le 1$ and thus
\[\sum_{j=1}^{p}|\nu_{j}|^{\delta} \E |Z_{ij}|^{\delta}\le M \sum_{j=1}^{p}|\nu_{j}|^{\delta} \le M \sum_{j=1}^{p}|\nu_{j}|^{2} =  M.\]
H\"{o}lder's inequality implies
$\E Z_{ij}^{2} \le \lb \E |Z_{ij}|^{\delta}\rb^{2/\delta}\le M^{2 / \delta},$
which further implies 
\[\lb \sum_{j=1}^{p}\nu_{j}^{2} \E Z_{ij}^{2}\rb^{\delta / 2}\le \lb M^{2 / \delta}\rb^{\delta / 2} = M.\]
Because the above two bounds hold regardless of $\nu$, we have 
$
\sup_{\nu\in \S^{p-1}} \E |\nu\tran Z_{i}|^{\delta}\le 2CM = O(1).
$

\paragraph{Proving \eqref{eq:kappa_goal2}}
Let 
$W_{ij} = Z_{ij}^{2} - \E Z_{ij}^{2}.
$
Using Jensen's inequality that $\E |(X + Y) / 2|^{r}\le (\E |X|^{r} + \E |Y|^{r}) / 2$ for any random variables $X, Y$ and $r > 1$, we obtain that
\begin{align*}
  \E |W_{ij}|^{\delta / 2}\le 2^{\delta / 2 - 1}\lb\E |Z_{ij}|^{\delta} + (\E Z_{ij}^{2})^{\delta / 2}\rb 
 \le 2^{\delta / 2}\E |Z_{ij}|^{\delta}\le 2^{\delta / 2}M \triangleq \td{M}.
\end{align*}
We consider two cases.

\paragraph{Case 1: $\delta \ge 4$} 
By H\"{o}lder's inequality,
$\E W_{ij}^{2}\le \td{M}^{4 / \delta}.$
By \citet{rosenthal70}'s inequality,
\begin{align*}
  \E |\|Z_{i}\|_{2}^{2} - \E \|Z_{i}\|_{2}^{2}|^{\delta / 2}  &= \E \bigg|\sum_{j=1}^{p}W_{ij}\bigg|^{\delta / 2}
\le C\lb \sum_{j=1}^{p}\E |W_{ij}|^{\delta / 2} + \lb \sum_{j=1}^{p}\E W_{ij}^{2}\rb^{\delta / 4}\rb \\
  & \le  C\lb p\td{M} + p^{\delta / 4}\td{M}\rb \le C\td{M}p^{\delta / 4},
\end{align*}
which implies 
$
\E \big| \|Z_{i}\|_{2}^{2} - \E \|Z_{i}\|_{2}^{2}\big|^{\delta / 2} = O\lb p^{\delta / 4}\rb.
$
As a result,
\begin{align*}
  &\E   \left\{   \max_{1\leq i \leq n}\big| \|Z_{i}\|_{2}^{2} - \E \|Z_{i}\|_{2}^{2}\big|^{\delta / 2} \right\}
   \le \sum_{i=1}^{n}\E \big| \|Z_{i}\|_{2}^{2} - \E \|Z_{i}\|_{2}^{2}\big|^{\delta / 2} = O\lb n p^{\delta / 4}\rb.
\end{align*}
By Markov's inequality,
$
\max_{1\leq i \leq n}\big| \|Z_{i}\|_{2}^{2} - \E \|Z_{i}\|_{2}^{2}\big| = O_\P\lb n^{2/\delta}p^{1/2}\rb.
$

\paragraph{Case 2: $\delta < 4$} 
By Proposition \ref{prop:vonbahresseen}, with $\delta / 2\in (1, 2)$, 
\[\E |\|Z_{i}\|_{2}^{2} - \E \|Z_{i}\|_{2}^{2}|^{\delta / 2} = \E \bigg|\sum_{j=1}^{p}W_{ij}\bigg|^{\delta / 2}\le 2\sum_{j=1}^{p}\E |W_{ij}|^{\delta / 2}\le 2p\td{M}.\]
Similar to Case 1,
$
\max_{1\le i\le n}\big| \|Z_{i}\|_{2}^{2} - \E \|Z_{i}\|_{2}^{2}\big| = O_\P\lb n^{2 / \delta}p^{2 / \delta}\rb.
$

\section{Proof of Proposition \ref{prop:eq}}\label{subapp:cure}

Let $\bar{Y}(t) = n^{-1} \sum_{i=1}^{n}Y_{i}(t)$. Note that $H\one = X(X\tran X)^{-1}X\tran \one = 0$. By definition, 
$
e(t) = (\id  - H)\{  Y(t) - \bar{Y}(t)\one \}  = (\id  - H)\{  Y(t) - \E Y_{i}(t)\one \} .
$
Throughout the rest of the proof, we assume that $\E Y_{i}(t) = 0$ without loss of generality, and define 
$M(\delta)\triangleq \max_{t=0,1}\E |Y_{i}(t)|^{\delta}.$

\subsection{Bounding $\cure_2$}
Let $Z_{i} = Y_{i}(t)^{2}$. Then the moment condition reads
$\E |Z_{i}|^{\delta / 2} < \infty.$
The Kolmogorov--Marcinkiewicz--Zygmund strong law of large number \citep[][Theorem 4.23]{kallenberg06} implies 
\begin{eqnarray*}
\frac{1}{n}\sum_{i=1}^{n}Z_{i}\stackrel{\text{a.s.}}{\rightarrow} \E Z_{1}\Longrightarrow \frac{1}{n}\sum_{i=1}^{n}Z_{i} = O_\P(1), \quad \text{ if } \delta \ge 2, \\
\frac{1}{n^{2/\delta}}\sum_{i=1}^{n}Z_{i}\stackrel{\text{a.s.}}{\rightarrow} 0 \Longrightarrow \frac{1}{n}\sum_{i=1}^{n}Z_{i} = o_\P(n^{2/\delta - 1}),\quad 
\text{ if }  \delta < 2.
\end{eqnarray*}
The bound on $\cure_{2}$ then follows from
\[
\frac{1}{n}\|e(t)\|_{2}^{2} = \frac{1}{n}Y(t)\tran (\id  - H)Y(t)\le \frac{1}{n}\|Y(t)\|_{2}^{2} = \frac{1}{n}\sum_{i=1}^{n}Z_{i}.
\]

\subsection{Bounding $\cure_2^{-1}$}
Without loss of generality, we assume that $Y_{i}(1)$ is not a constant with probability $1$. First we show  
\[
\frac{Y(1)\tran H Y(1)}{Y(1)\tran Y(1)} = o_{\P}(1).
\]
For any permutation $\pi$ on $\{1, \ldots, n\}$, let $H(\pi)$ denote the matrix with
\[H(\pi)_{ij} = H_{\pi(i), \pi(j)}.\]
Because the $Y_{i}(1)$'s are i.i.d., for any $\pi$, 
\[(Y_{1}(1), \ldots, Y_{n}(1))\stackrel{\text{d}}{=}(Y_{\pi^{-1}(1)}(1), \ldots, Y_{\pi^{-1}(n)}(1)),\]
and thus
\begin{align*}
  &\frac{Y(1)\tran H(\pi) Y(1)}{Y(1)\tran Y(1)} = \frac{\sum_{i = 1}^{n}\sum_{j=1}^nH_{\pi(i), \pi(j)}Y_{i}(1) Y_{j}(1)}{\sum_{i=1}^{n}Y_{i}(1)^{2}} \\
  & =  \frac{\sum_{i = 1}^{n}\sum_{j=1}^nH_{i, j}Y_{\pi^{-1}(i)}(1) Y_{\pi^{-1}(j)}(1)}{\sum_{i=1}^{n}Y_{\pi^{-1}(i)}(1)^{2}}\stackrel{\text{d}}{=} \frac{Y(1)\tran H Y(1)}{Y(1)\tran Y(1)}.
\end{align*}
Furthermore, 
$
 Y(1)\tran H Y(1) / Y(1)\tran Y(1) \le 1,
$ so it has finite expectation. This implies that
\[
\E \frac{Y(1)\tran H Y(1)}{Y(1)\tran Y(1)} = \frac{1}{n!}\sum_{\pi} \frac{Y(1)\tran H(\pi) Y(1)}{Y(1)\tran Y(1)} = \frac{1}{n!}\frac{Y(1)\tran H^{*} Y(1)}{Y(1)\tran Y(1)},
\]
where $H^{*} = \sum_{\pi}H(\pi) / n!$ with the summation over all possible permutations. We can show that
\[H_{ii}^{*} = \frac{1}{n}\sum_{i=1}^{n}H_{ii} = \frac{p}{n}, \quad H_{ij}^{*} = \frac{1}{n(n - 1)}\sum_{i\not = j}H_{ij} = -\frac{1}{n(n - 1)}\sum_{i=1}^{n}H_{ii} = - \frac{p}{n (n - 1)},\]
where the last equality uses the fact that $\sum_{i = 1}^{n}\sum_{j=1}^nH_{ij} = 0$. Therefore,
\begin{align*}
 &\E \frac{Y(1)\tran H Y(1)}{Y(1)\tran Y(1)} = \E \frac{Y(1)\tran H^{*} Y(1)}{Y(1)\tran Y(1)}
= \E\frac{\frac{p}{n}Y(1)\tran Y(1) - \frac{p}{n(n - 1)}\sum_{i\not = j}Y_{i}(1)Y_{j}(1)}{Y(1)\tran Y(1)}  \\
= & \frac{p}{n-1} - \frac{p}{n(n - 1)}\E\frac{(\sum_{i=1}^{n}Y_{i}(1))^{2}}{Y(1)\tran Y(1)}\le \frac{p}{n - 1}.
\end{align*}
By Markov's inequality, with probability $1 - \frac{2p}{n - 1} = 1 - o(1)$, 
\[\frac{Y(1)\tran H Y(1)}{Y(1)\tran Y(1)} \le \frac{1}{2}.\]
Let $\mathcal{A}$ denote this event. Then 
$ \P(\mathcal{A}^{c}) = o(1),$ 
and on $\mathcal{A}$,
\[\frac{1}{n}\|e(1)\|_{2}^{2} = \frac{1}{n}Y(1)\tran (\id  - H)Y(1)\ge \frac{1}{2n}\|Y(1)\|_{2}^{2}.\]

~\\
\noindent  On the other hand, fix $k > 0$, and let 
$\td{Z}_{i} = Y_{i}(1)I(|Y_{i}(1)|\le k).$
For sufficiently large $k$,
$\E \td{Z}_{i} > 0.$
By the law of large numbers,
$
 n^{-1}\sum_{i=1}^{n}\td{Z}_{i} = \E \td{Z}_{i} \times  (1 + o_\P(1)).
$
Thus on $\mathcal{A}$,
\[
\cure_{2}\ge \frac{1}{2n}\sum_{i=1}^{n}Y_{i}(1)^{2} \ge \frac{1}{2n}\sum_{i=1}^{n}\td{Z}_{i} = \E \td{Z}_{i} \times (1 + o_\P(1)) 
% \Longrightarrow 
% \cure_{2}^{-1} = O_\P(1) . 
\]
Since $\P(\mathcal{A}^{c}) = o(1)$, we conclude that
$ 
\cure_{2}^{-1} = O_\P(1).
$

\subsection{Bounding $\cure_{\infty}$}
We apply the triangle inequality to obtain 
$
\|e(t)\|_{\infty}\le \|Y(t)\|_{\infty} + \|HY(t)\|_{\infty}.
$
We bound the first term using a standard technique and Markov's inequality: 
\begin{equation}\label{eq:Ytinfty1}
\E \|Y(t)\|_{\infty}^{\delta}\le \sum_{i=1}^{n}\E |Y_{i}(t)|^{\delta} = n M(\delta)
\Longrightarrow
\|Y(t)\|_{\infty} = O_\P\lb n^{1/\delta}\rb.
\end{equation}
Next we bound the second term $\|HY(t)\|_{\infty}$. Define $\td{Y}(t) = HY(t)$ with 
\[\td{Y}_{i}(t) = \sum_{j=1}^{n}H_{ij}Y_{j}(t), \quad (i=1,\ldots, n).\]
Fix $\eps > 0$ and define  
$
D = \lb\frac{M(\delta)}{\eps}\rb^{1/\delta}.
$
 We decompose $\td{Y}_{i}(t)$ into two parts:
  \begin{align*}
  \td{Y}_{i}(t) = \sum_{j=1}^{n}H_{ij}Y_{j}(t)I(|Y_{j}(t)|\le Dn^{1/\delta}) + \sum_{j=1}^{n}H_{ij}Y_{j}(t)I(|Y_{j}(t)|> Dn^{1/\delta}) 
 \triangleq R_{1, i}(t) + R_{2, i}(t).  
  \end{align*}
The second term $R_{2, i}(t)$ satisfies 
\begin{align}
  \P\lb \exists i, R_{2, i}(t) \not = 0\rb &\le \P\lb \exists j, \,\, |Y_{j}(t)| > Dn^{1/\delta}\rb\le \sum_{j=1}^{n} \P\lb |Y_{j}(t)| > Dn^{1/\delta}\rb \nonumber\\
 & \le \sum_{j=1}^{n}\frac{1}{D^{\delta} n}\E |Y_{j}(t)|^{\delta} \le \frac{M(\delta)}{D^{\delta}} = \eps.\label{eq:R2i}
\end{align}
To deal with the first term $R_{1, i}(t)$, we define 
\[w_j  (t) = Y_j (t)I(|Y_j (t)|\le Dn^{1/\delta}) - \E \left\{  Y_j (t)I(|Y_j (t)|\le Dn^{1/\delta}) \right\}  ,\] 
with $\E w_j (t) = 0$. Because
\[\one\tran H = 0 \Longrightarrow 
\sum_{j=1}^{n}H_{ij} = 0\Longrightarrow 
\sum_{j=1}^{n}H_{ij} \E \left\{  Y_j (t)I(|Y_j (t)|\le Dn^{1/\delta}) \right\}  = 0.\]
we can rewrite $R_{1,i}(t)$ as 
\[R_{1,i}(t) = \sum_{j=1}^{n}H_{ij}w_{j}(t).\]
The rest of the proof proceeds based on two cases.

\paragraph{Case 1: $\delta < 2$}
First, the $w_{j}(t)$'s are i.i.d. with second moment bounded by
  \begin{align*}
    \E w_j (t)^{2} &\le \E \left\{   Y_j ^{2}(t)I(|Y_j (t)|\le Dn^{1/\delta}) \right\} \\
&\le (Dn^{1/\delta})^{2 - \delta}\E |Y_j (t)|^{\delta}\\
& \le n^{(2 - \delta) / \delta} D^{2-\delta} M(\delta) 
  = n^{(2 - \delta) / \delta}\eps^{-(2 - \delta) / \delta} M(\delta)^{2/\delta}.
  \end{align*}
Second, using the fact that $\sum_{j=1}^{n}H_{ij}^{2} = H_{ii}$, we obtain
\[\E R_{1, i}(t)^{2} = \sum_{j=1}^{n}H_{ij}^{2}\E w_{j}(t)^{2} = \E w_{1}(t)^{2}\lb\sum_{j=1}^{n}H_{ij}^{2}\rb = H_{ii}\E w_{1}(t)^{2}.\]
Let $R_{1}(t)$ denote the vector $(R_{1, i}(t))_{i=1}^{n}$. Then
\begin{align*}
  \E \|R_{1}(t)\|_{\infty}^{2} &\le \sum_{i=1}^{n} \E R_{1, i}(t)^{2}  = \lb\sum_{i=1}^{n}H_{ii}\rb \E w_{1}(t)^{2} 
 \le  pn^{(2 - \delta) / \delta}\eps^{-(2 - \delta) / \delta} M(\delta)^{2/\delta} .
\end{align*}
By Markov's inequality, with probability $1 - \eps$,
\begin{align}\label{eq::r1case1}
  \|R_{1}(t)\|_{\infty} \le \lb  \E \|  R_{1}(t)  \|_{\infty}^{2} / \eps \rb^{1/2}
 = p^{1/2}n^{(2 - \delta) / 2\delta}\eps^{-1 / \delta} M(\delta)^{1/\delta}.
\end{align}
Combining \eqref{eq:R2i} and \eqref{eq::r1case1}, we obtain that with probability $1 - 2\eps$,
\[\|HY(t)\|_{\infty}\le p^{1/2}n^{(2 - \delta) / 2\delta}\eps^{-1 / \delta} M(\delta)^{1/\delta}.\]
Because  this holds for arbitrary $\eps$, we conclude that if $\delta < 2$, 
\[\|HY(t)\|_{\infty} = O_\P\lb p^{1/2}n^{1 / \delta - 1 / 2}\rb = o_{\P}(n^{1/\delta}) . \]

\paragraph{Case 2: $\delta \ge 2$}
Using the convexity of the mapping $|\cdot|^{\delta}$, we have 
    \begin{align*}
      \E \bigg|\frac{w_{j}(t)}{2}\bigg|^{\delta} & \le \frac{\E \left\{  |Y_{j}(t)|^{\delta} I(|Y_{j}(t)|\le Dn^{1/\delta}) \right\} 
      + |\E \left\{  Y_{j}(t)I(|Y_{j}(t)|\le Dn^{1/\delta}) \right\}|^{\delta}}{2}.
    \end{align*}
Applying Jensen's inequality on the second term, we have
$$
  \E |w_{j}(t)|^{\delta}  \le 2^{\delta}\E \left\{   |Y_{j}(t)|^{\delta} I(|Y_{j}(t)|\le Dn^{1/\delta}) \right\} 
\le 2^{\delta}\E |Y_{j}(t)|^{\delta} \le 2^{\delta}M(\delta).
$$
By \citet{rosenthal70}'s inequality, there exists a constant $C$ depending only on $\delta$, such that
\begin{align*}
& \E |R_{1, i}(t)|^{\delta} \le C\lb\sum_{j=1}^{n} \E|H_{ij}w_j (t)|^{\delta} + \lb\sum_{j=1}^{n}\E |H_{ij}w_j (t)|^{2}\rb^{\delta / 2}\rb\\
& \le C\lb \lihua{2^{\delta}}M(\delta) \sum_{j=1}^{n} |H_{ij}|^{\delta} + \lb M(2)\sum_{j=1}^{n}H_{ij}^{2}\rb^{\delta / 2}\rb\\
& \le  C\lihua{2^{\delta}}\lb M(\delta)H_{ii}^{\delta / 2 - 1}\sum_{j=1}^{n}H_{ij}^{2} + M(2)^{\delta / 2}H_{ii}^{\delta / 2}\rb\\
&  = C2^{\delta}(M(\delta) + M(2)^{\delta / 2})H_{ii}^{\delta/2} \leq   C2^{\delta}(M(\delta) + M(2)^{\delta / 2})H_{ii}.
\end{align*}
where the last two lines use $\sum_{j=1}^{n}H_{ij}^{2} = H_{ii}$, $H_{ij}^{2}\le H_{ii}$, and $ H_{ii}^{\delta/2} \leq H_{ii}$ due to $H_{ii}\leq 1$ and $\delta/2>1$. As a result,
\begin{align*}
  \E \|R_{1}(t)\|_{\infty}^{\delta}  \le \sum_{i=1}^{n}\E |R_{1, i}(t)|^{\delta}  \le C\lihua{2^{\delta}}(M(\delta) + M(2)^{\delta / 2})\sum_{i=1}^{n}H_{ii} 
 = C\lihua{2^{\delta}}(M(\delta) + M(2)^{\delta / 2})p.
\end{align*}
Markov's inequality implies that with probability $1-\eps$,
\begin{align}\label{eq::r1case2}
  \|R_{1}(t)\|_{\infty} \le \lb  \E \|      R_1(t)    \|_{\infty}^{\delta} / \eps \rb^{1/\delta}
 = p^{1/\delta} \lb C2^{\delta}(M(\delta) + M(2)^{\delta / 2}) / \eps\rb^{1/\delta}.
\end{align}
Combining \eqref{eq:R2i} and \eqref{eq::r1case2}, we obtain that with probability $1 - 2\eps$,
\[\|HY(t)\|_{\infty}\le p^{1/\delta} \lb C2^{\delta}(M(\delta) + M(2)^{\delta / 2}) / \eps\rb^{1/\delta} . \]
Because  this holds for arbitrary $\eps$, we conclude that if $\delta \ge 2$,
\[\|HY(t)\|_{\infty} = O_\P\lb (p / \eps)^{1/\delta}\rb = o_{\P}(n^{1/\delta}).\]

\clearpage

~\\
\begin{center}
  \begin{Large}
    Supplementary Material III: Experiments
  \end{Large}
\end{center}

\section{Additional Numerical Experiments}\label{app:experiments}

\subsection{Derivation of worst-case residuals}
Using the following proposition, we know that the solution of $\eps$ in Section \ref{sec::dgp} is the rescaled  ordinary least squares
residual vector obtained by regressing the leverage scores $(H_{ii})_{i=1}^n$ on $X$ with an intercept.

\begin{proposition}\label{prop:lpqp}
Let $a\in \R^{n}$ be any vector, and $A\in \R^{n\times m}$ be any matrix with $H_{A} = A(A\tran A)^{-1} A\tran $ being its hat matrix. Define $e = (\id - H_{A})a$. Then $x^{*} = n^{1/2}e / \|e\|_{2} $ is the optimal solution of
\[
\max_{x\in \R^{n}}|a\tran x|\quad \mathrm{s.t.}\,\, \|x\|_{2}^{2} / n = 1, A\tran x = 0  .
\] 
\end{proposition}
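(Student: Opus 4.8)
The plan is to exploit the projection structure imposed by the linear constraint and reduce the problem to a one-line application of the Cauchy--Schwarz inequality. First I would observe that the feasibility constraint $A\tran x = 0$ forces $x$ to lie in the orthogonal complement of the column space $\mathrm{col}(A)$, so that $H_{A}x = A(A\tran A)^{-1}A\tran x = 0$ and hence $(\id - H_{A})x = x$ for every feasible $x$. This is the only place the constraint is used, and it lets me replace $x$ by its projection for free.

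Next I would rewrite the objective using the symmetry and idempotency of the projection $\id - H_{A}$. For any feasible $x$, $a\tran x = a\tran (\id - H_{A}) x = \big((\id - H_{A})a\big)\tran x = e\tran x$, where $e = (\id - H_{A})a$. It is important here that $e$ is itself feasible, since $A\tran e = A\tran(\id - H_{A})a = 0$; thus $e$ lies in the same subspace $\mathrm{col}(A)^{\perp}$ as the admissible directions, which is exactly what makes the subsequent alignment step legitimate.

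The key step is then the Cauchy--Schwarz bound $|e\tran x| \le \|e\|_{2}\,\|x\|_{2} = \sqrt{n}\,\|e\|_{2}$, valid under the norm constraint $\|x\|_{2}^{2} = n$, with equality precisely when $x$ is a positive scalar multiple of $e$. Taking $x^{*} = n^{1/2} e / \|e\|_{2}$, I would verify directly that it is feasible, namely $\|x^{*}\|_{2}^{2}/n = 1$ and $A\tran x^{*} = 0$, and that it attains the upper bound, since $a\tran x^{*} = e\tran x^{*} = \sqrt{n}\,\|e\|_{2}$. This certifies optimality.

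I expect the only genuine subtlety to be the degenerate case $e = 0$, i.e. $a \in \mathrm{col}(A)$: then the objective vanishes identically on the feasible set and $x^{*}$ is undefined. I would handle this by assuming $a \notin \mathrm{col}(A)$, which holds in the application of Section~\ref{sec::dgp} where $a = (H_{ii})_{i=1}^{n}$ is regressed on $X$ together with an intercept and the leverage scores are not an exact linear combination of the columns. Beyond this remark no computation is required; the entire argument is structural, and the main conceptual point is simply recognizing that the constraint collapses the objective to an inner product against the residual $e$ living in the feasible subspace.
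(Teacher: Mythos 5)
Your proof is correct and follows essentially the same route as the paper's: use the constraint $A\tran x = 0$ to write $a\tran x = e\tran x$, apply Cauchy--Schwarz, and note that $x^{*} = n^{1/2}e/\|e\|_{2}$ is feasible and attains the bound. Your extra remarks---checking $A\tran e = 0$ explicitly and flagging the degenerate case $a \in \mathrm{col}(A)$---are sensible refinements the paper leaves implicit, but they do not change the argument.
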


\begin{proof}
  The constraint $A\tran x = 0$ implies $H_{A}x = 0$. Thus,
$
|a\tran x| =   |a\tran x - a\tran H_{A}x|  = |  a\tran (\id - H_A) x |
= |e\tran x|.
$
The Cauchy--Schwarz inequality implies 
$ |e\tran x|\le \|e\|_{2}\|x\|_{2} = n^{1/2}\|e\|_{2}$, with the maximum objective value achieved by $x = n^{1/2} e / \|e\|_{2}$.
\end{proof}

\subsection{Complementary experimental results on synthetic datasets}

We present more simulation results in the rest of this section and post the programs to replicate all the experimental results at 
 \texttt{https://github.com/lihualei71/RegAdjNeymanRubin/.}

Section \ref{sec:experiment} shows the results for $X$ contains i.i.d. $t(2)$ entries with $\pi_{1} = 0.2$. Here we plot the results for $X$ containing i.i.d. entries from $N(0, 1)$, $t(2)$ and $t(1)$ with $\pi_{i} = 0.2$ or $0.5$, analogous to the results in Sections \ref{subsec:simulation_results}--\ref{subsec:simulation_regularization}. The population residuals $\eps(1)$ and $\eps(0)$ are generated as realizations of random vectors with i.i.d. entries from $N(0, 1)$ or $t(2)$ or $t(1)$, or as the worst-case residuals derived above. 

But for completeness we plot the results in main text again for easy comparison. For all cases, we plotted the results with covariate trimming as well. The case with $N(0, 1)$ entries exhibits almost the same qualitative pattern; see Figure \ref{fig::simulation_normal_01} - Figure \ref{fig::simulation_normal_02}. However, for the case with $t(1)$ entries, the bias reduction is less effective and none of the variance estimates, including the HC3 estimate, is able to protect against undercoverage when $p > n^{1/2}$; see Figure \ref{fig::simulation_t1_01} - Figure \ref{fig::simulation_t1_02}. Nonetheless, for all challenging cases, covariate trimming drastically improves the coverage rate in all cases.

\begin{figure}[htp]
	\centering
	\begin{subfigure}{0.99\textwidth}  
          \centering
		\includegraphics[width=0.48\textwidth]{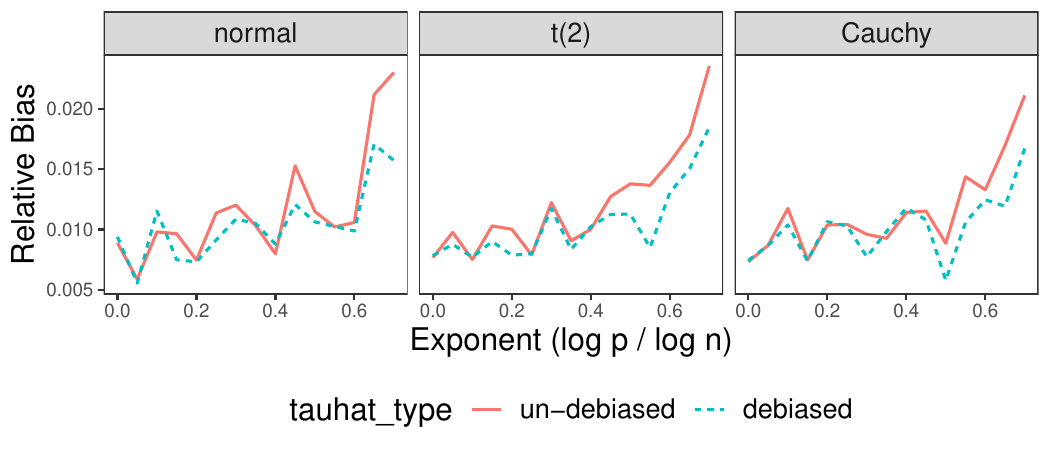}
		\includegraphics[width=0.48\textwidth]{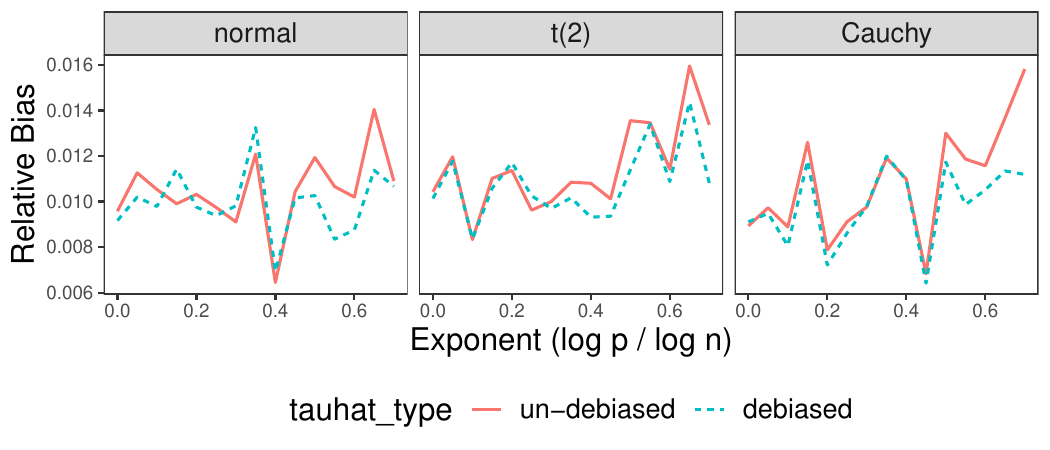}
		\caption{ Relative bias of $\hdtau$ and $\htau$.}
	\end{subfigure}
	\begin{subfigure}{0.99\textwidth}  
          \centering
		\includegraphics[width=0.48\textwidth]{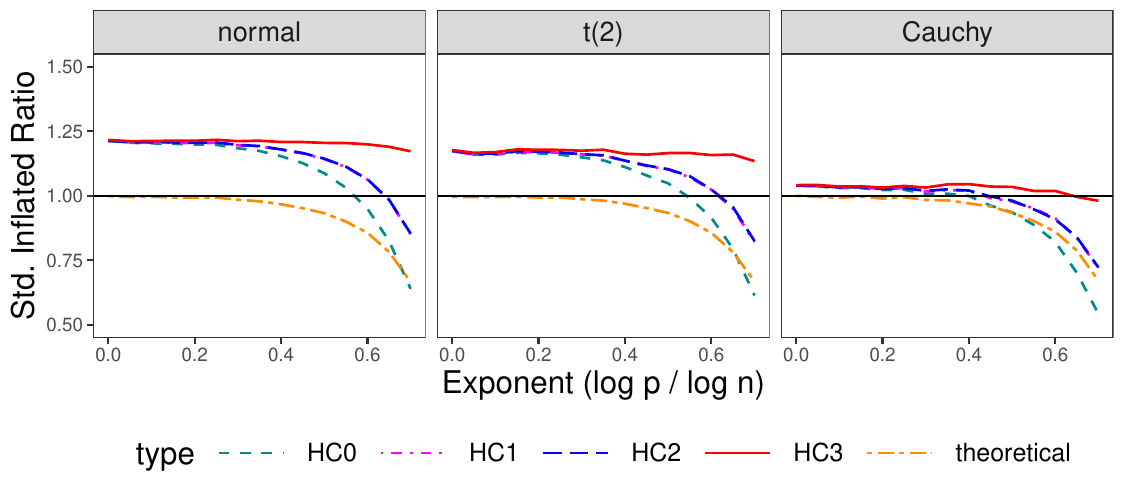}
		\includegraphics[width=0.48\textwidth]{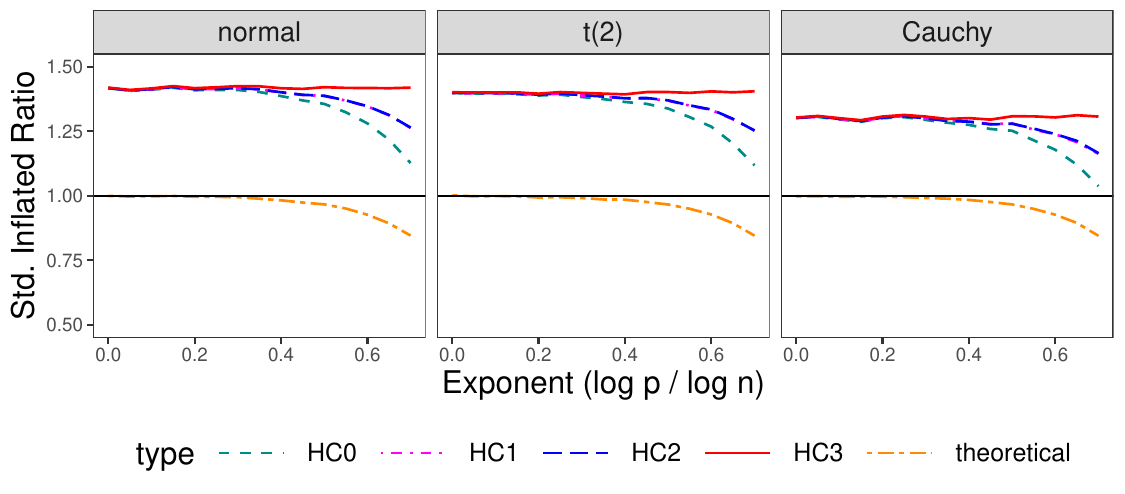}
		\caption{Ratio of standard deviation between five standard deviation estimates, $\sigma_{n}, \hat{\sigma}_{\textup{HC}0}, \hat{\sigma}_{\textup{HC}1}, \hat{\sigma}_{\textup{HC}2}, \hat{\sigma}_{\textup{HC}3}$, and the true standard deviation of $\htau$.}
	\end{subfigure}
		\begin{subfigure}{0.99\textwidth}  
                  \centering
		\includegraphics[width=0.48\textwidth]{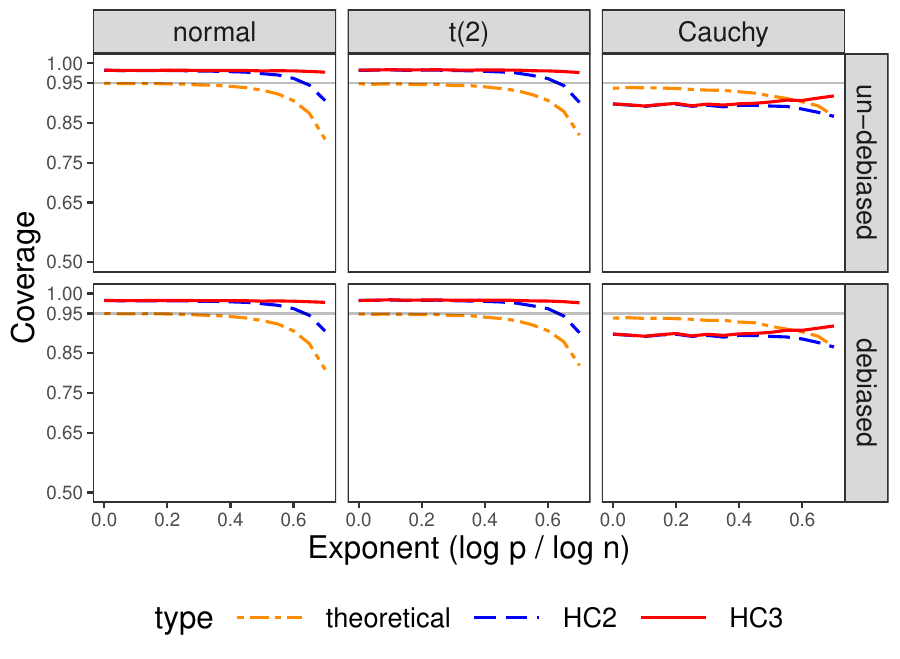}
		\includegraphics[width=0.48\textwidth]{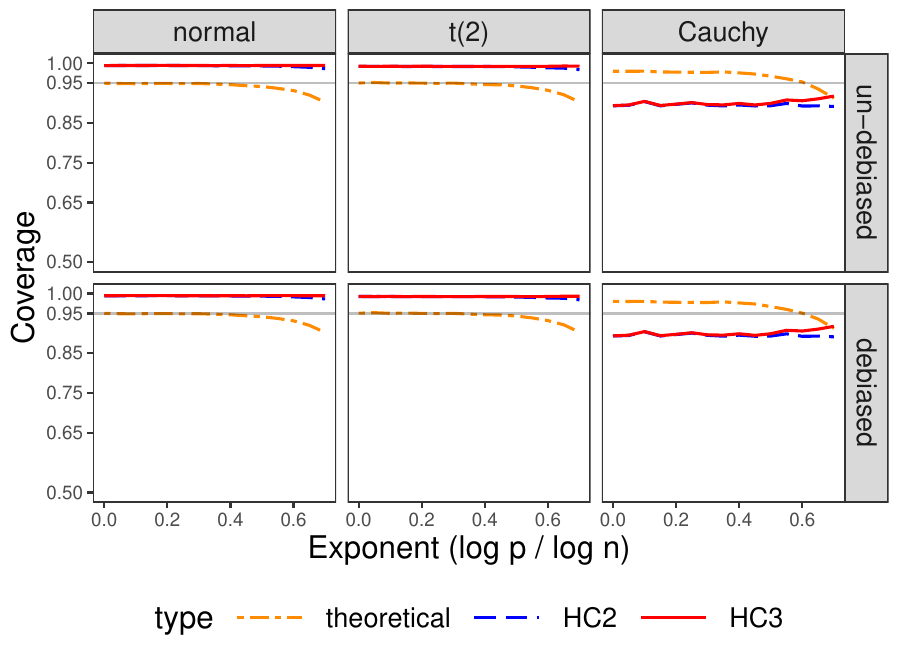}
		\caption{Empirical $95\%$ coverage rates of $t$-statistics derived from two estimators and four variance estimators (% ``truth" for the true sampling variance (of $\htau$), 
                  ``theoretical'' for $\sigma_{n}^{2}$, ``HC2'' for $\hat{\sigma}_{\text{HC}2}^{2}$ and ``HC3''  for $\hat{\sigma}_{\text{HC}3}^{2}$)}
	\end{subfigure}
	\caption{Simulation without covariate trimming. $X$ is a realization of a random matrix with i.i.d. $N(0, 1)$ entries and $\eps(t)$ is a realization of a random vector with i.i.d. entries: (Left) $\pi_{1} = 0.2$; (Right) $\pi_{1} = 0.5$. Each column corresponds to a distribution of $\eps(t)$. }\label{fig::simulation_normal_01}
\end{figure}

\begin{figure}[htp]
	\centering
	\begin{subfigure}{0.99\textwidth}  
          \centering
		\includegraphics[width=0.48\textwidth]{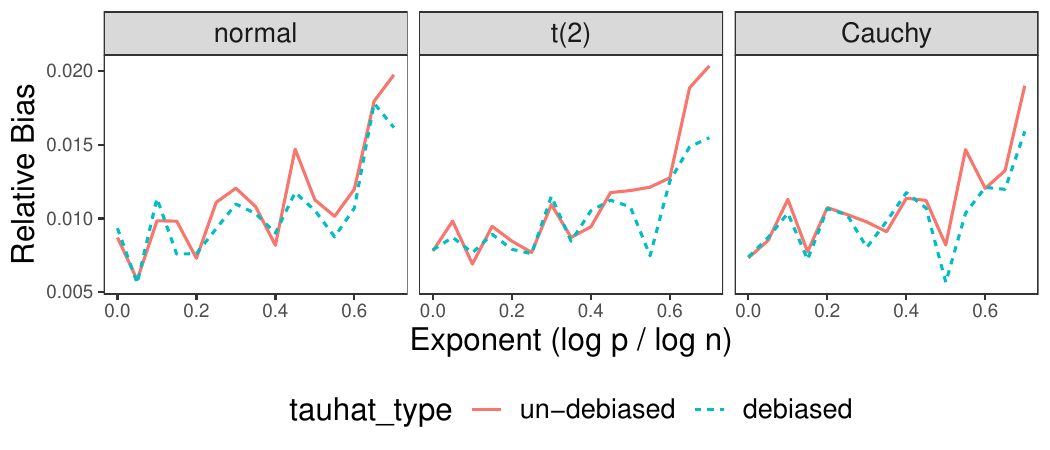}
		\includegraphics[width=0.48\textwidth]{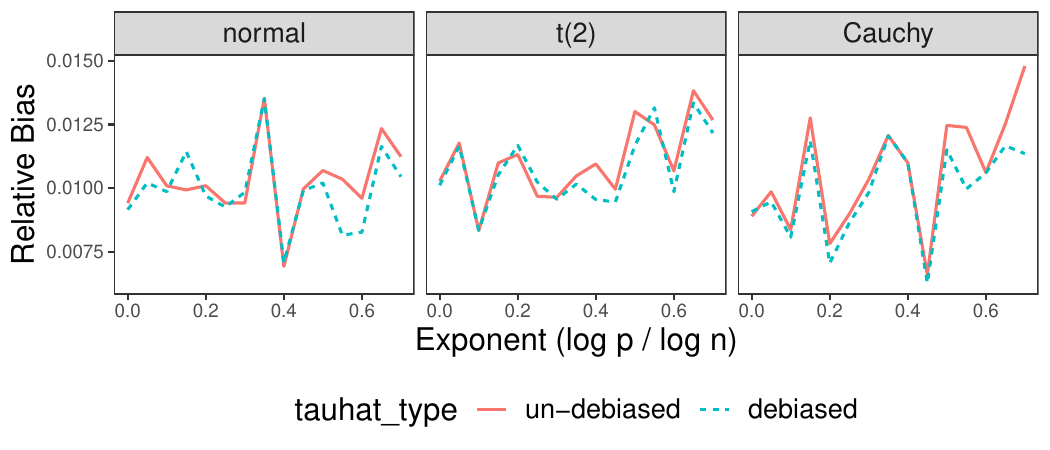}
		\caption{ Relative bias of $\hdtau$ and $\htau$.}
	\end{subfigure}
	\begin{subfigure}{0.99\textwidth}  
          \centering
		\includegraphics[width=0.48\textwidth]{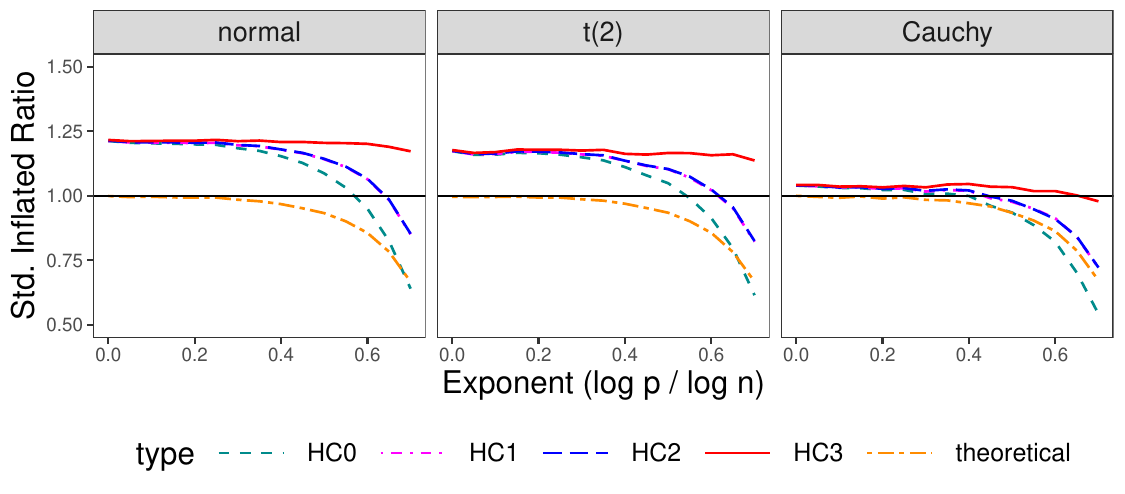}
		\includegraphics[width=0.48\textwidth]{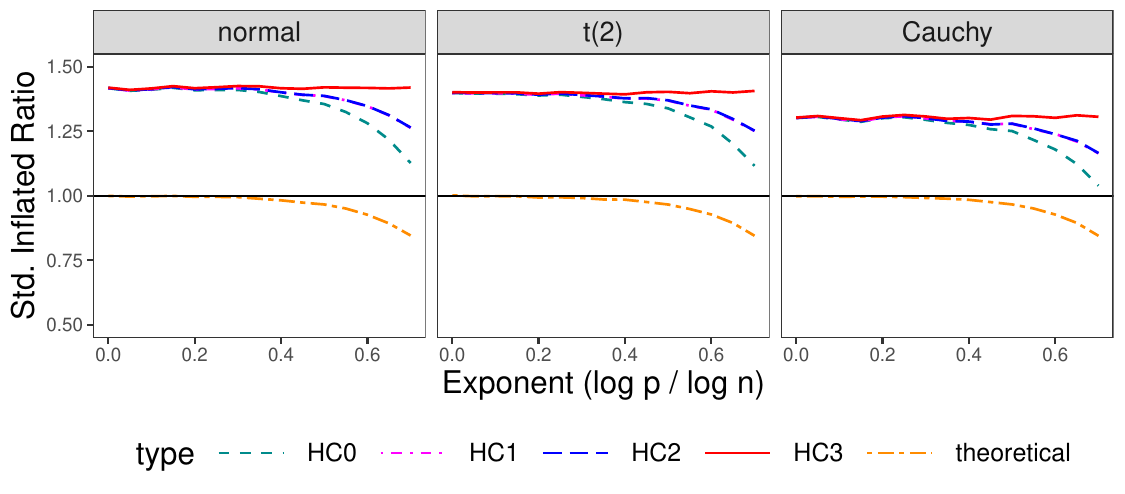}
		\caption{Ratio of standard deviation between five standard deviation estimates, $\sigma_{n}, \hat{\sigma}_{\textup{HC}0}, \hat{\sigma}_{\textup{HC}1}, \hat{\sigma}_{\textup{HC}2}, \hat{\sigma}_{\textup{HC}3}$, and the true standard deviation of $\htau$.}
	\end{subfigure}
		\begin{subfigure}{0.99\textwidth}  
                  \centering
		\includegraphics[width=0.48\textwidth]{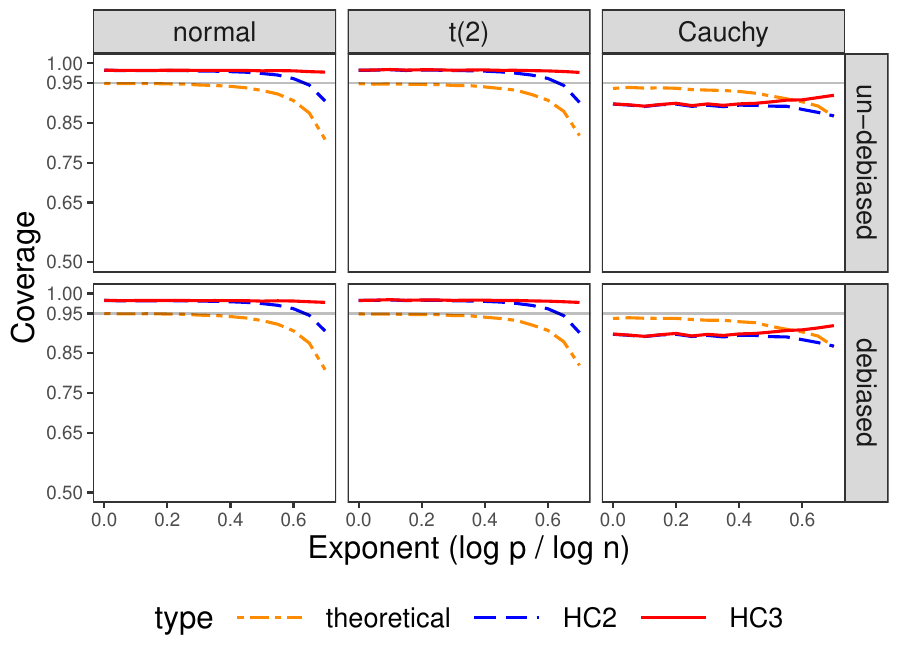}
		\includegraphics[width=0.48\textwidth]{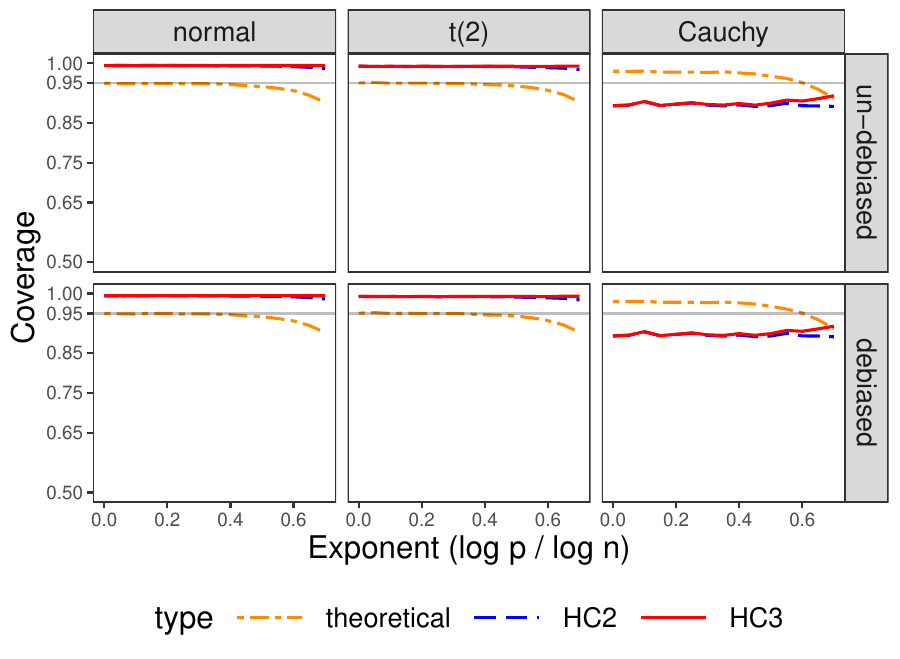}
		\caption{Empirical $95\%$ coverage rates of $t$-statistics derived from two estimators and four variance estimators (% ``truth" for the true sampling variance (of $\htau$), 
                  ``theoretical'' for $\sigma_{n}^{2}$, ``HC2'' for $\hat{\sigma}_{\text{HC}2}^{2}$ and ``HC3''  for $\hat{\sigma}_{\text{HC}3}^{2}$)}
	\end{subfigure}
	\caption{Simulation with covariate trimming. $X$ is a realization of a random matrix with i.i.d. $N(0, 1)$ entries and $\eps(t)$ is a realization of a random vector with i.i.d. entries: (Left) $\pi_{1} = 0.2$; (Right) $\pi_{1} = 0.5$. Each column corresponds to a distribution of $\eps(t)$. }
\end{figure}

\begin{figure}
	\centering
	\begin{subfigure}{0.95\textwidth}  
          \centering
		\includegraphics[width=0.48\textwidth]{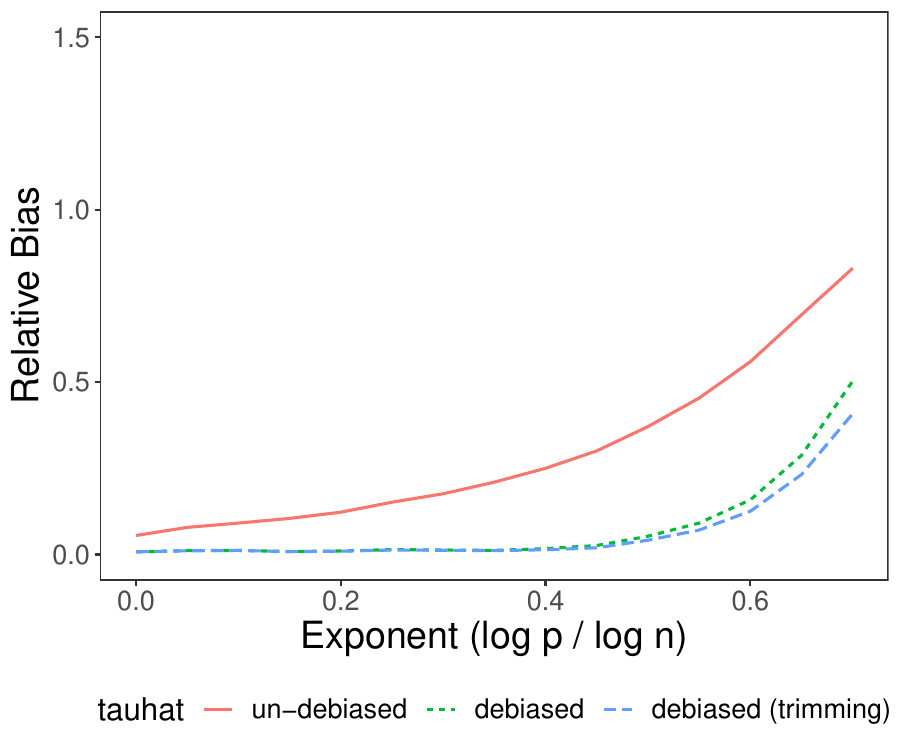}
		\includegraphics[width=0.48\textwidth]{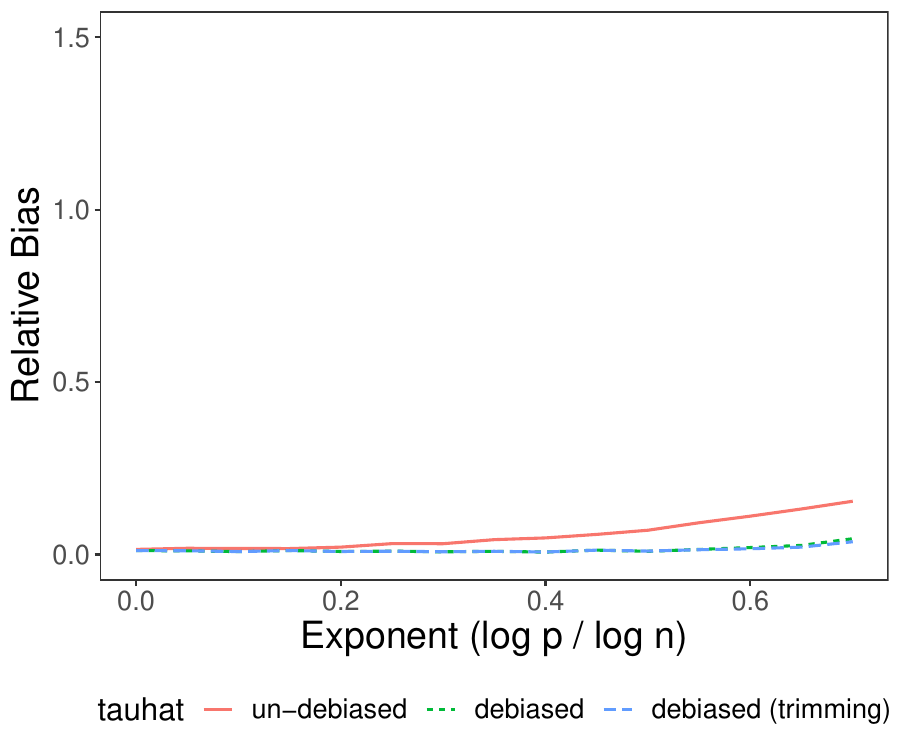}
		\caption{ Relative bias of $\hdtau$ and $\htau$.}\label{fig:normal_bias_worst}
	\end{subfigure}
		\begin{subfigure}{0.95\textwidth}  
                  \centering
		\includegraphics[width=0.48\textwidth]{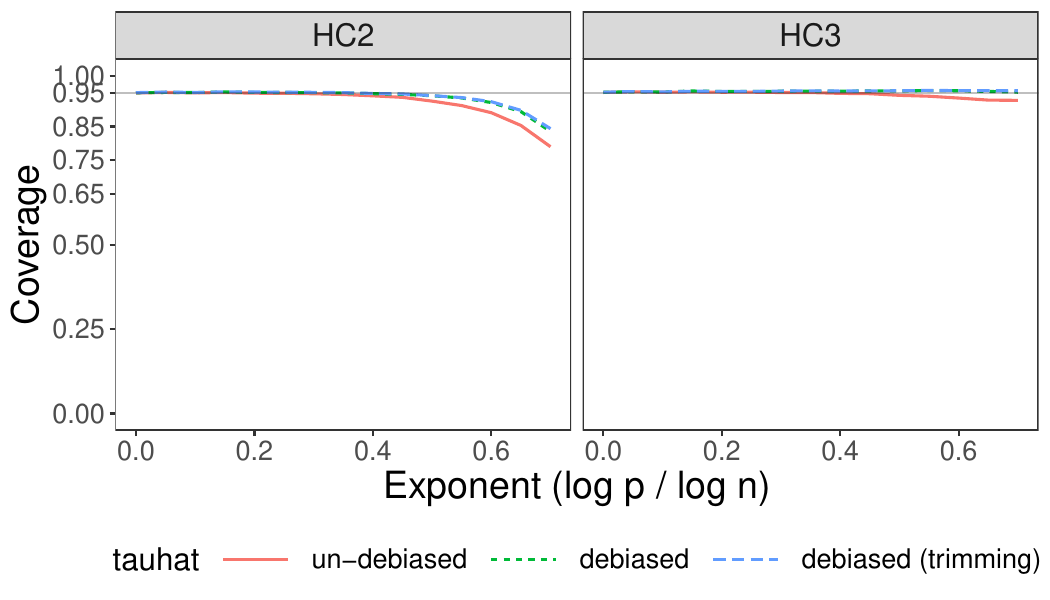}
		\includegraphics[width=0.48\textwidth]{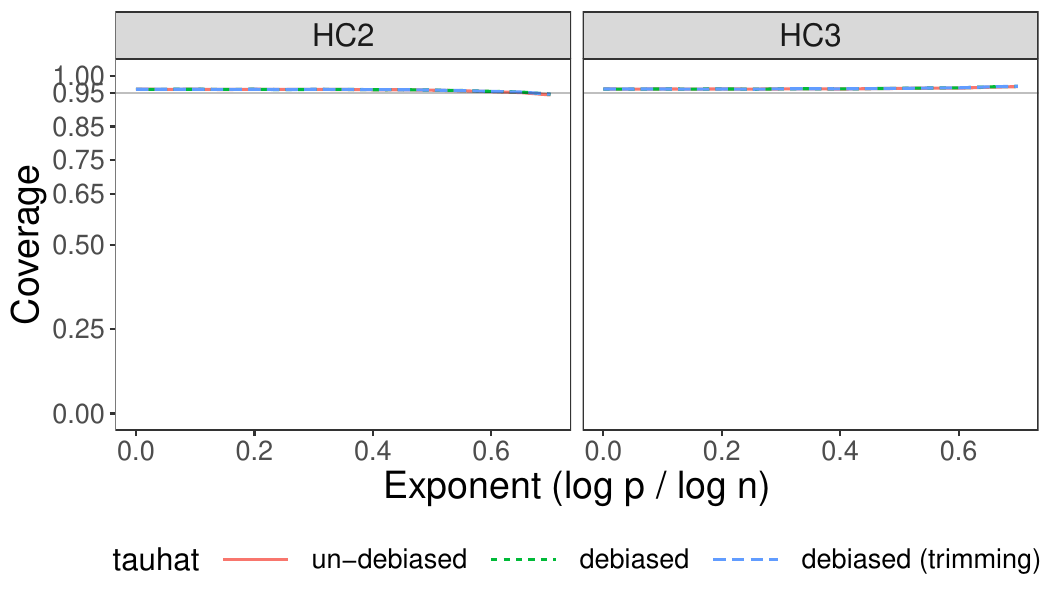}
		\caption{Empirical $95\%$ coverage rates of $t$-statistics derived from two estimators and two variance estimators (``HC2'' for $\hat{\sigma}_{\text{HC}2}^{2}$ and ``HC3''  for $\hat{\sigma}_{\text{HC}3}^{2}$)}
	\end{subfigure}
	\caption{Simulation with and without covariate trimming with $\eps(t)$ defined in \eqref{eq:worst_pout}. $X$ is a realization of a random matrix with i.i.d. $N(0, 1)$ entries: (Left) $\pi_{1} = 0.2$; (Right) $\pi_{1} = 0.5$. }\label{fig::simulation_normal_02}
\end{figure}

\begin{figure}[htp]
	\centering
	\begin{subfigure}{0.99\textwidth}  
          \centering
		\includegraphics[width=0.48\textwidth]{{synthetic-simul-t2-pi0.2-rho0-bias}.pdf}
		\includegraphics[width=0.48\textwidth]{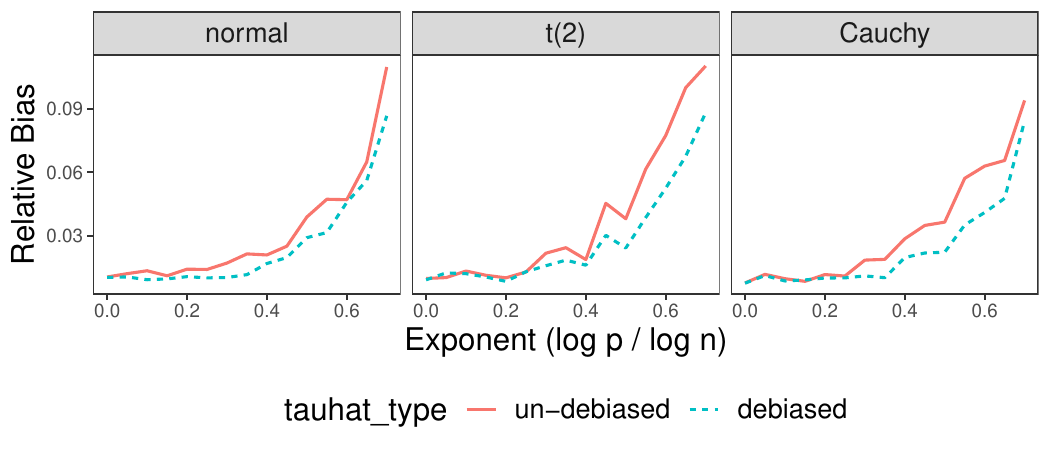}
		\caption{ Relative bias of $\hdtau$ and $\htau$.}
	\end{subfigure}
	\begin{subfigure}{0.99\textwidth}  
          \centering
		\includegraphics[width=0.48\textwidth]{{synthetic-simul-t2-pi0.2-rho0-sdinflate}.pdf}
		\includegraphics[width=0.48\textwidth]{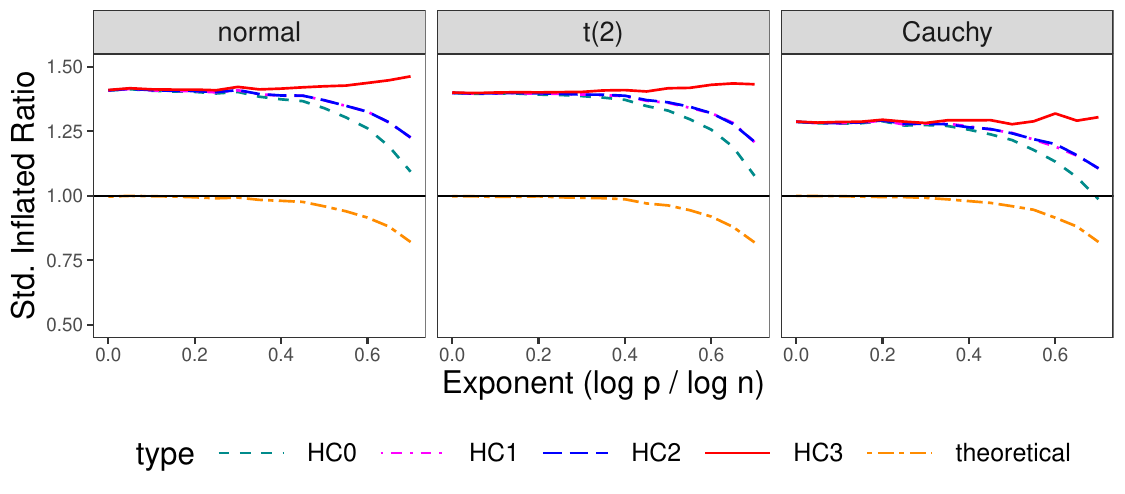}
		\caption{Ratio of standard deviation between five standard deviation estimates, $\sigma_{n}, \hat{\sigma}_{\textup{HC}0}, \hat{\sigma}_{\textup{HC}1}, \hat{\sigma}_{\textup{HC}2}, \hat{\sigma}_{\textup{HC}3}$, and the true standard deviation of $\htau$.}
	\end{subfigure}
		\begin{subfigure}{0.99\textwidth}  
                  \centering
		\includegraphics[width=0.48\textwidth]{{synthetic-simul-t2-pi0.2-rho0-coverage}.pdf}
		\includegraphics[width=0.48\textwidth]{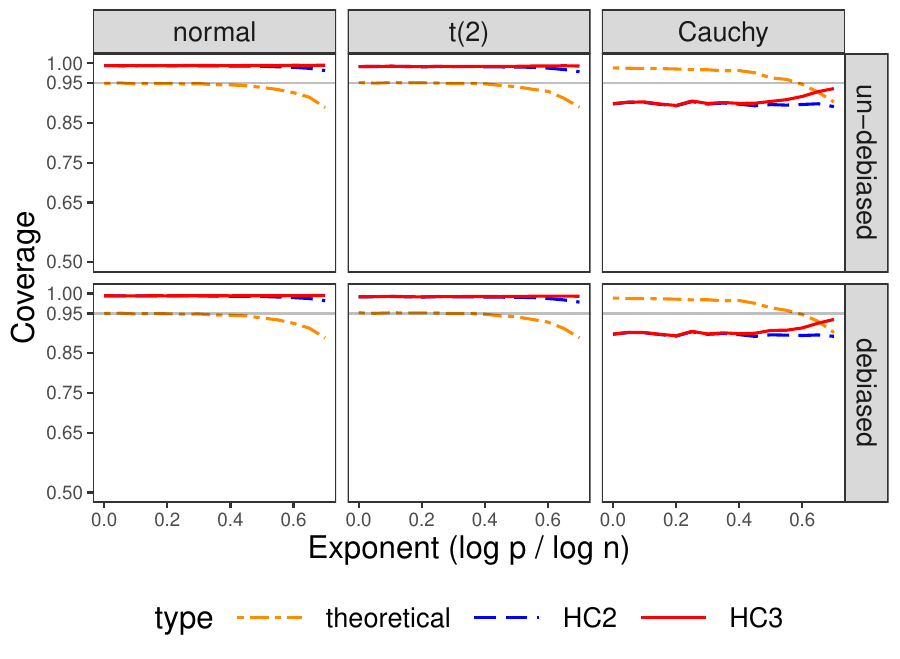}
		\caption{Empirical $95\%$ coverage rates of $t$-statistics derived from two estimators and four variance estimators (% ``truth" for the true sampling variance (of $\htau$), 
                  ``theoretical'' for $\sigma_{n}^{2}$, ``HC2'' for $\hat{\sigma}_{\text{HC}2}^{2}$ and ``HC3''  for $\hat{\sigma}_{\text{HC}3}^{2}$)}
	\end{subfigure}
	\caption{Simulation without covariate trimming. $X$ is a realization of a random matrix with i.i.d. $t(2)$ entries and $\eps(t)$ is a realization of a random vector with i.i.d. entries: (Left) $\pi_{1} = 0.2$; (Right) $\pi_{1} = 0.5$. Each column corresponds to a distribution of $\eps(t)$. }\label{fig::simulation_t2_01}
\end{figure}

\begin{figure}[htp]
	\centering
	\begin{subfigure}{0.99\textwidth}  
          \centering
		\includegraphics[width=0.48\textwidth]{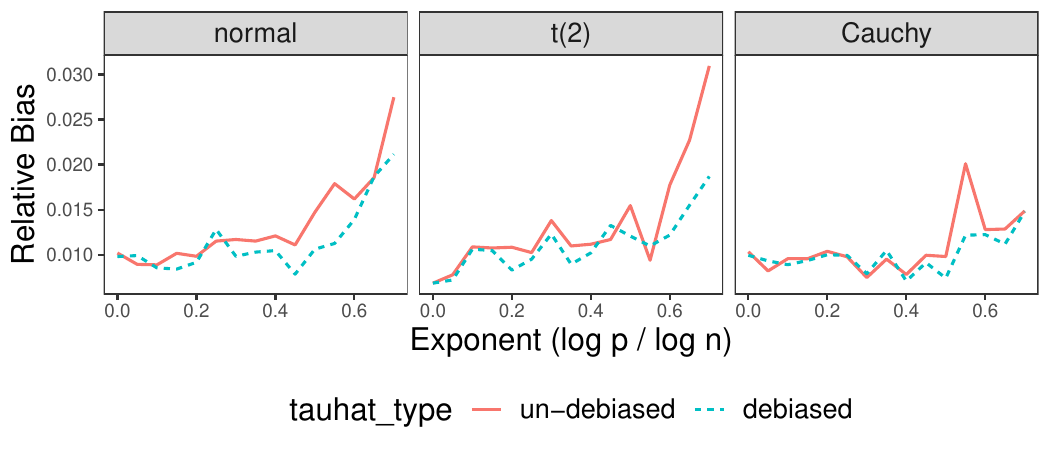}
		\includegraphics[width=0.48\textwidth]{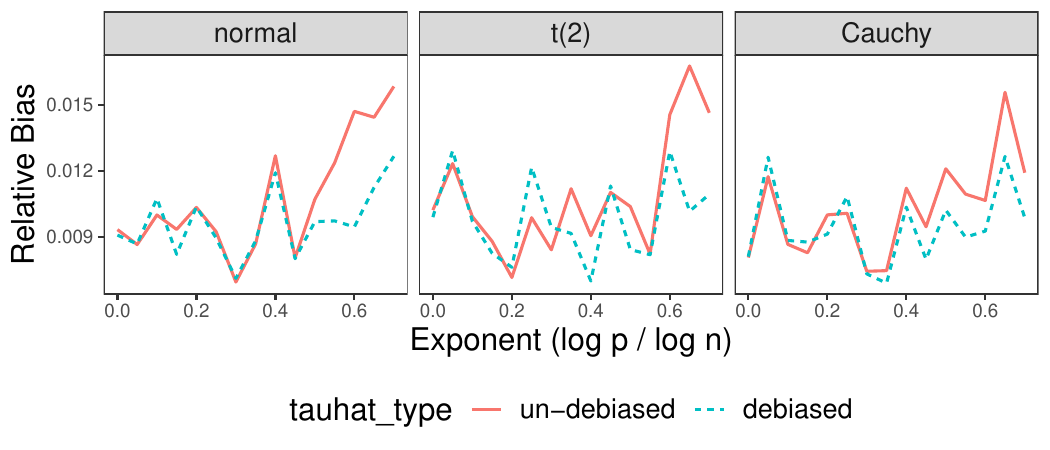}
		\caption{ Relative bias of $\hdtau$ and $\htau$.}
	\end{subfigure}
	\begin{subfigure}{0.99\textwidth}  
          \centering
		\includegraphics[width=0.48\textwidth]{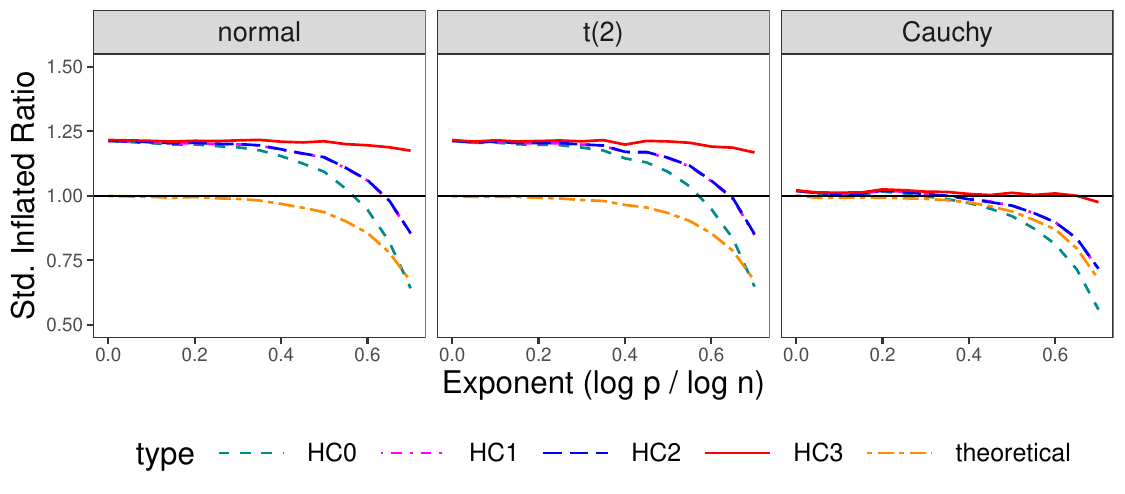}
		\includegraphics[width=0.48\textwidth]{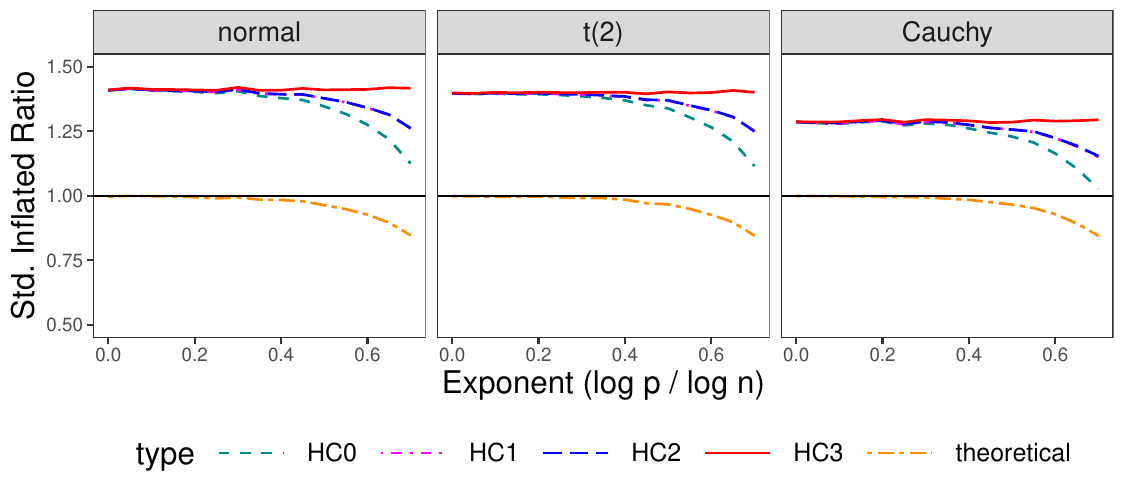}
		\caption{Ratio of standard deviation between five standard deviation estimates, $\sigma_{n}, \hat{\sigma}_{\textup{HC}0}, \hat{\sigma}_{\textup{HC}1}, \hat{\sigma}_{\textup{HC}2}, \hat{\sigma}_{\textup{HC}3}$, and the true standard deviation of $\htau$.}
	\end{subfigure}
		\begin{subfigure}{0.99\textwidth}  
                  \centering
		\includegraphics[width=0.48\textwidth]{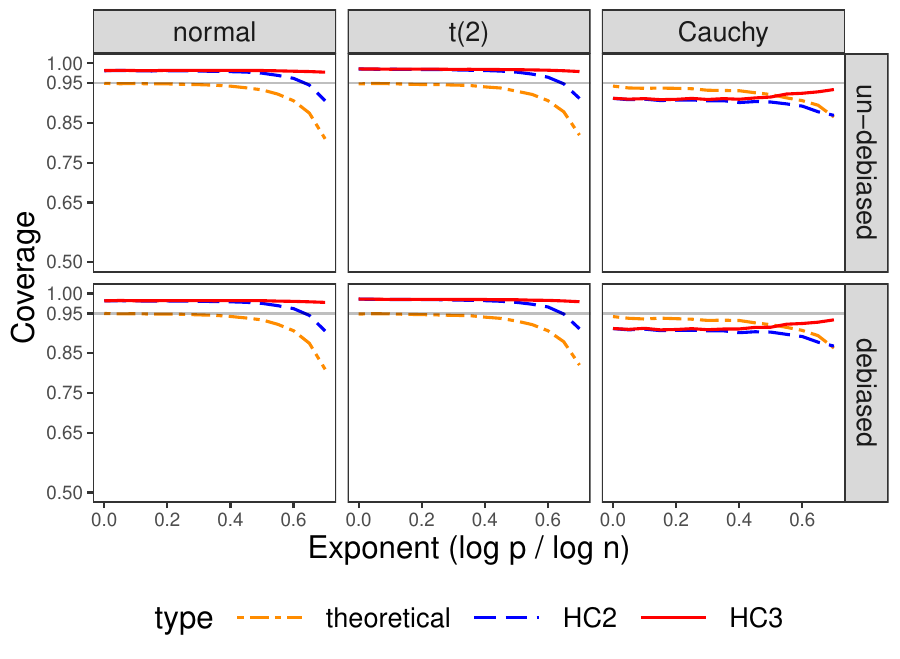}
		\includegraphics[width=0.48\textwidth]{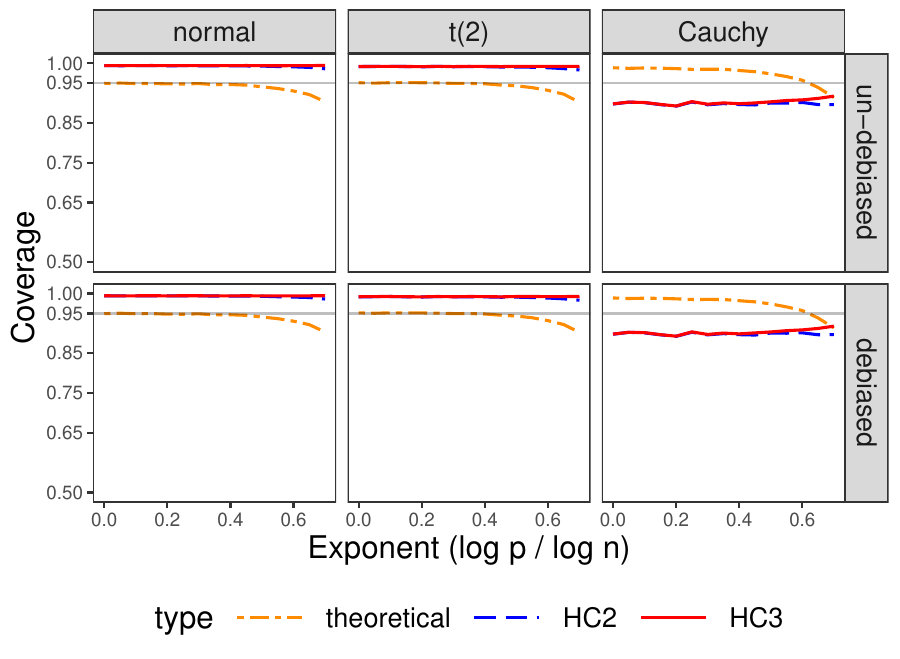}
		\caption{Empirical $95\%$ coverage rates of $t$-statistics derived from two estimators and four variance estimators (% ``truth" for the true sampling variance (of $\htau$), 
                  ``theoretical'' for $\sigma_{n}^{2}$, ``HC2'' for $\hat{\sigma}_{\text{HC}2}^{2}$ and ``HC3''  for $\hat{\sigma}_{\text{HC}3}^{2}$)}
	\end{subfigure}
	\caption{Simulation with covariate trimming. $X$ is a realization of a random matrix with i.i.d. $t(2)$ entries and $\eps(t)$ is a realization of a random vector with i.i.d. entries: (Left) $\pi_{1} = 0.2$; (Right) $\pi_{1} = 0.5$. Each column corresponds to a distribution of $\eps(t)$. }
\end{figure}

\begin{figure}
	\centering
	\begin{subfigure}{0.95\textwidth}  
          \centering
		\includegraphics[width=0.48\textwidth]{{synthetic-simul-worst-t2-pi0.2-rho0-bias}.pdf}
		\includegraphics[width=0.48\textwidth]{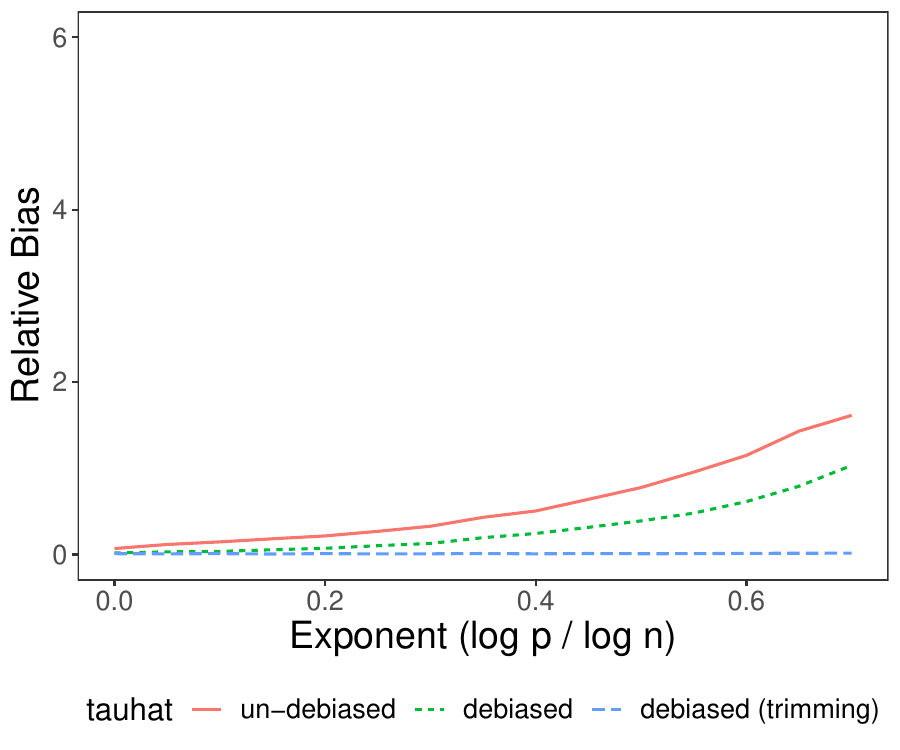}
		\caption{ Relative bias of $\hdtau$ and $\htau$.}\label{fig:t2_bias_worst}
	\end{subfigure}
		\begin{subfigure}{0.95\textwidth}  
                  \centering
		\includegraphics[width=0.48\textwidth]{{synthetic-simul-worst-t2-pi0.2-rho0-coverage}.pdf}
		\includegraphics[width=0.48\textwidth]{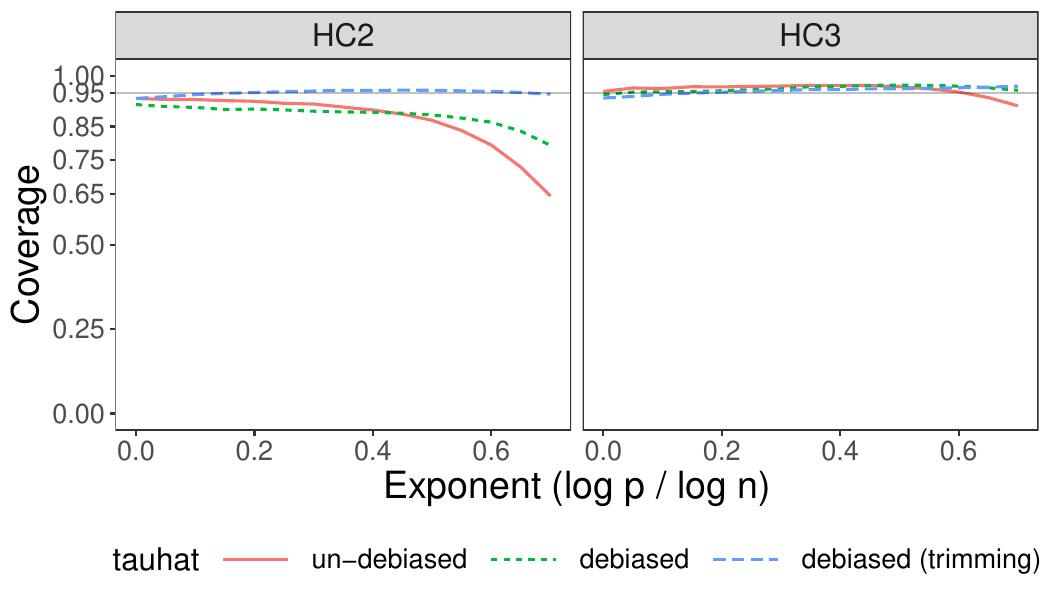}
		\caption{Empirical $95\%$ coverage rates of $t$-statistics derived from two estimators and two variance estimators (``HC2'' for $\hat{\sigma}_{\text{HC}2}^{2}$ and ``HC3''  for $\hat{\sigma}_{\text{HC}3}^{2}$)}
	\end{subfigure}
	\caption{Simulation with and without covariate trimming with $\eps(t)$ defined in \eqref{eq:worst_pout}. $X$ is a realization of a random matrix with i.i.d. $t(2)$ entries: (Left) $\pi_{1} = 0.2$; (Right) $\pi_{1} = 0.5$. }\label{fig::simulation_t2_02}
      \end{figure}

\begin{figure}[htp]
	\centering
	\begin{subfigure}{0.99\textwidth}  
          \centering
		\includegraphics[width=0.48\textwidth]{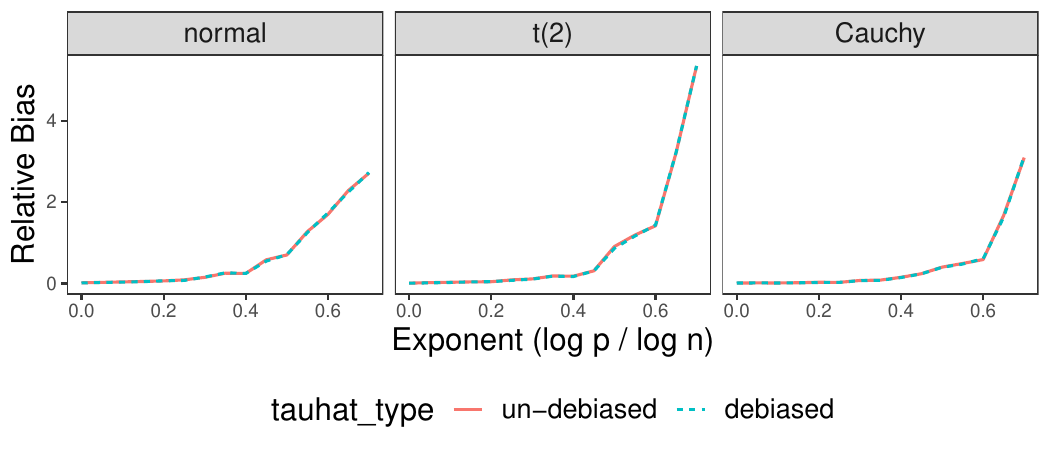}
		\includegraphics[width=0.48\textwidth]{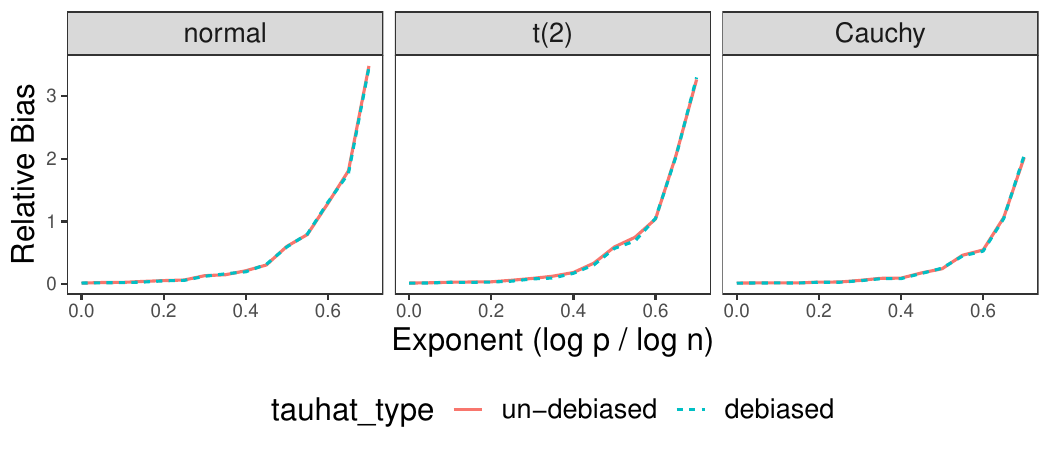}
		\caption{ Relative bias of $\hdtau$ and $\htau$.}
	\end{subfigure}
	\begin{subfigure}{0.99\textwidth}  
          \centering
		\includegraphics[width=0.48\textwidth]{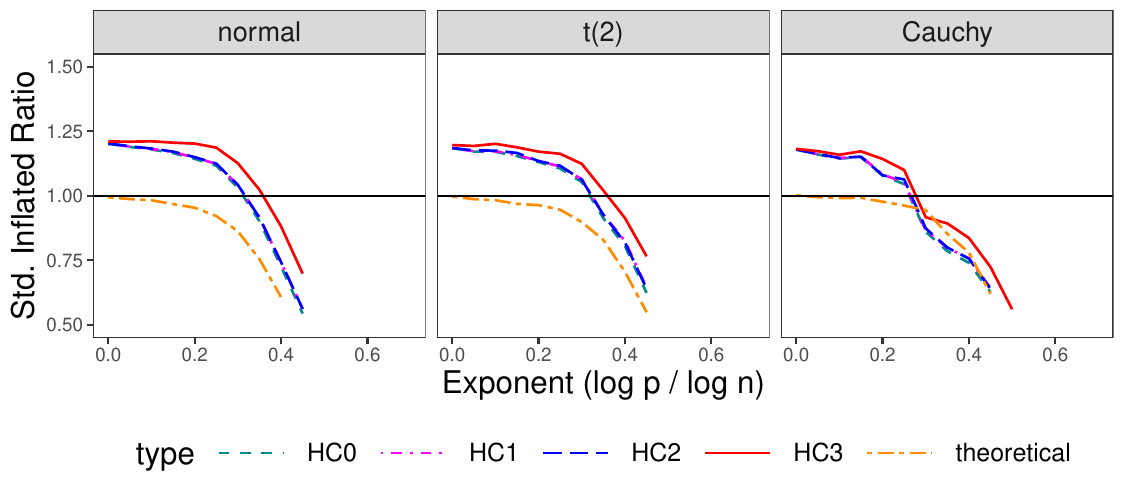}
		\includegraphics[width=0.48\textwidth]{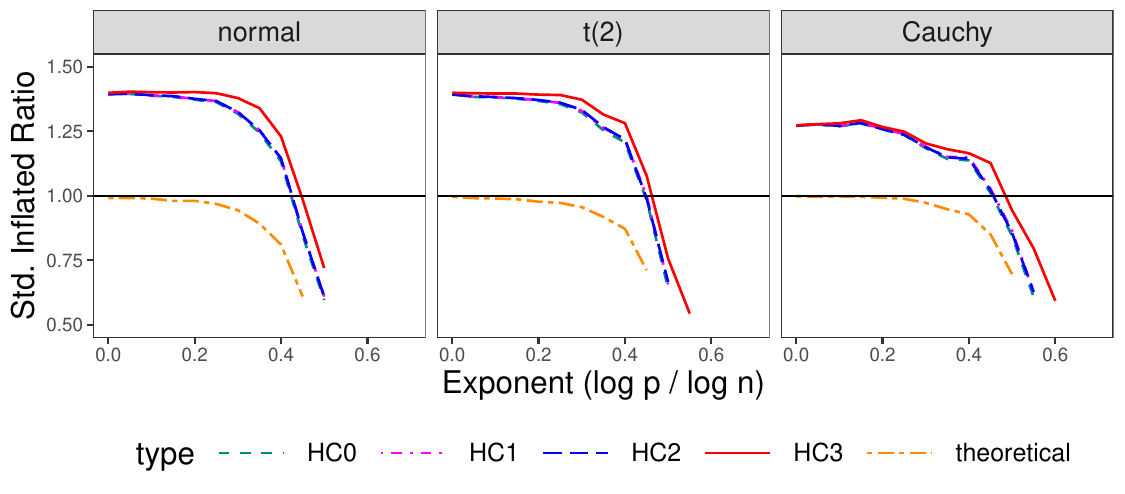}
		\caption{Ratio of standard deviation between five standard deviation estimates, $\sigma_{n}, \hat{\sigma}_{\textup{HC}0}, \hat{\sigma}_{\textup{HC}1}, \hat{\sigma}_{\textup{HC}2}, \hat{\sigma}_{\textup{HC}3}$, and the true standard deviation of $\htau$.}
	\end{subfigure}
		\begin{subfigure}{0.99\textwidth}  
                  \centering
		\includegraphics[width=0.48\textwidth]{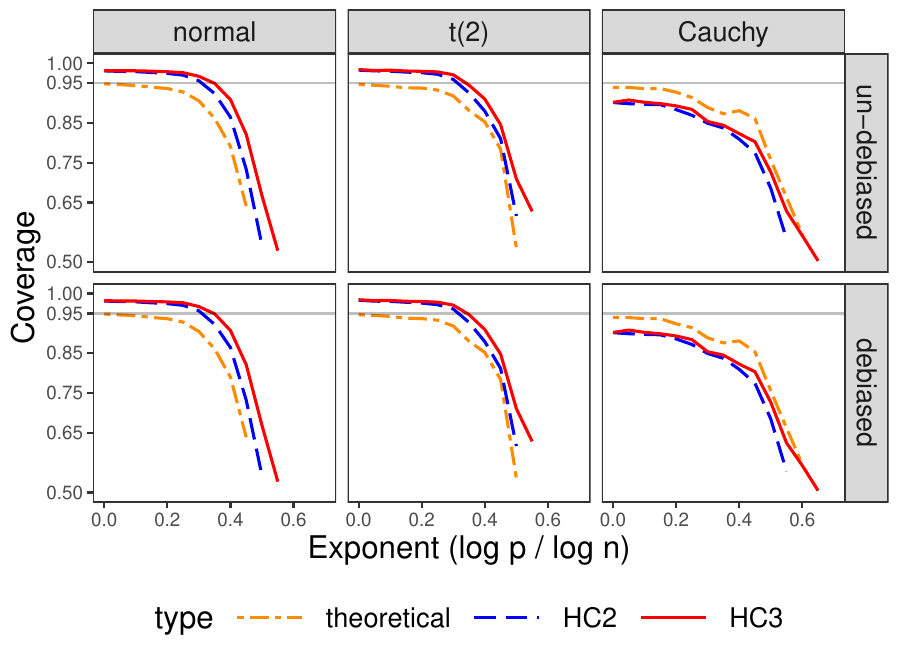}
		\includegraphics[width=0.48\textwidth]{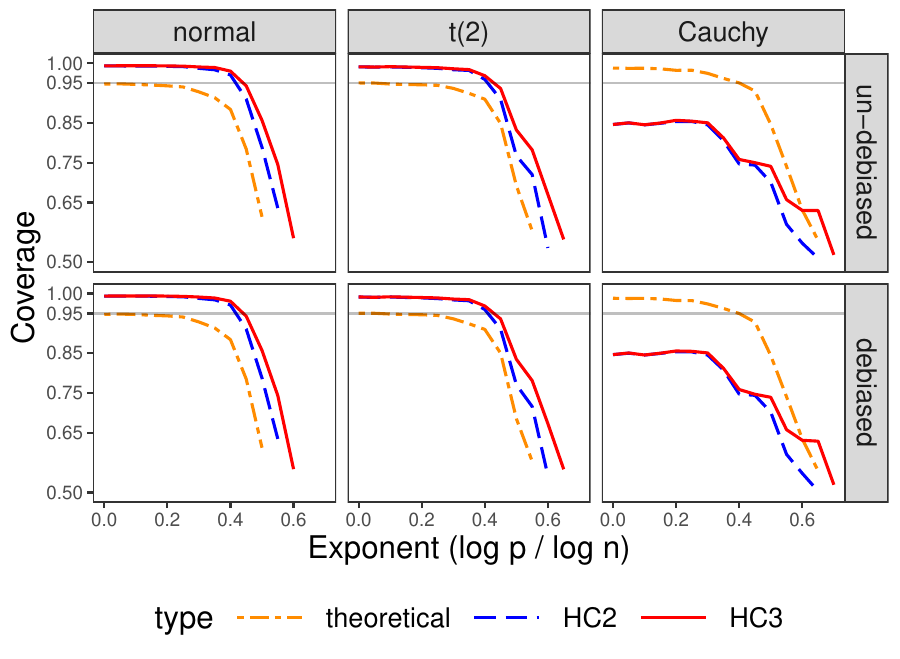}
		\caption{Empirical $95\%$ coverage rates of $t$-statistics derived from two estimators and four variance estimators (% ``truth" for the true sampling variance (of $\htau$), 
                  ``theoretical'' for $\sigma_{n}^{2}$, ``HC2'' for $\hat{\sigma}_{\text{HC}2}^{2}$ and ``HC3''  for $\hat{\sigma}_{\text{HC}3}^{2}$)}
	\end{subfigure}
	\caption{Simulation without covariate trimming. $X$ is a realization of a random matrix with i.i.d. $t(1)$ entries and $\eps(t)$ is a realization of a random vector with i.i.d. entries: (Left) $\pi_{1} = 0.2$; (Right) $\pi_{1} = 0.5$. Each column corresponds to a distribution of $\eps(t)$. }\label{fig::simulation_t1_01}
\end{figure}

\begin{figure}[htp]
	\centering
	\begin{subfigure}{0.99\textwidth}  
          \centering
		\includegraphics[width=0.48\textwidth]{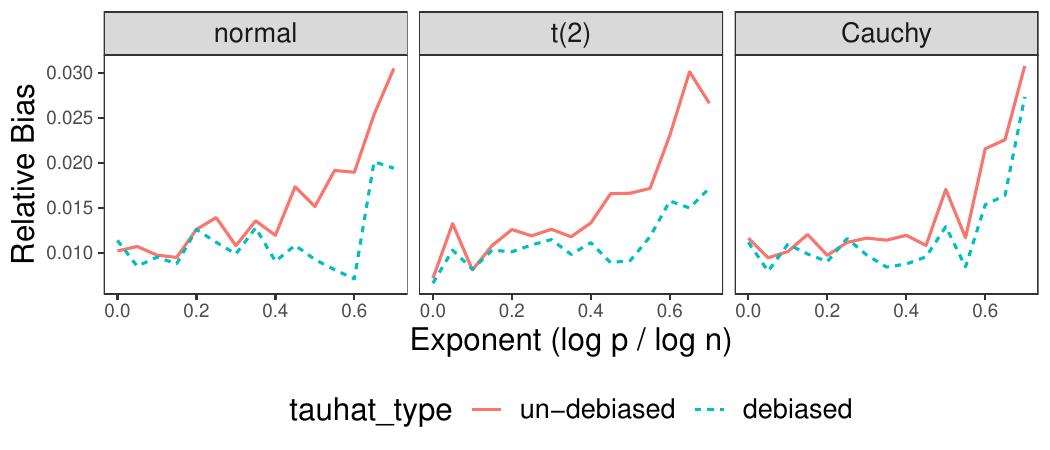}
		\includegraphics[width=0.48\textwidth]{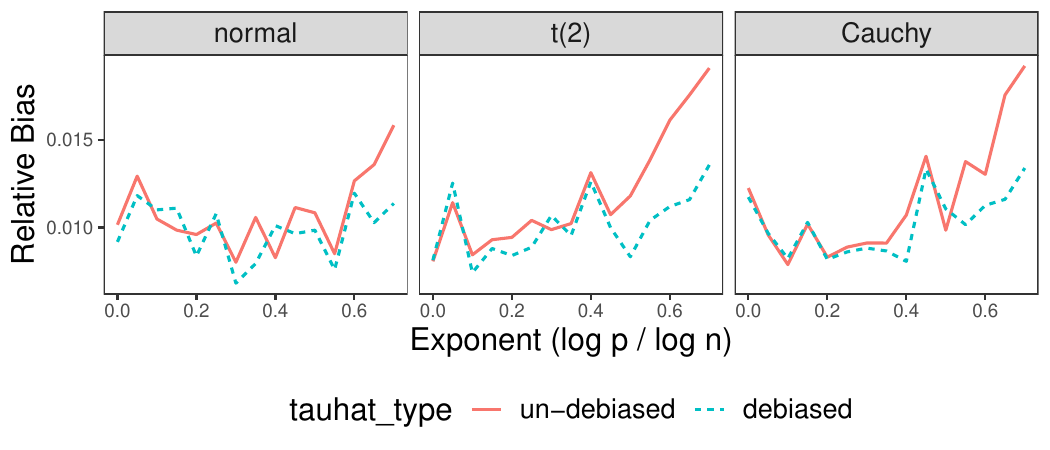}
		\caption{ Relative bias of $\hdtau$ and $\htau$.}
	\end{subfigure}
	\begin{subfigure}{0.99\textwidth}  
          \centering
		\includegraphics[width=0.48\textwidth]{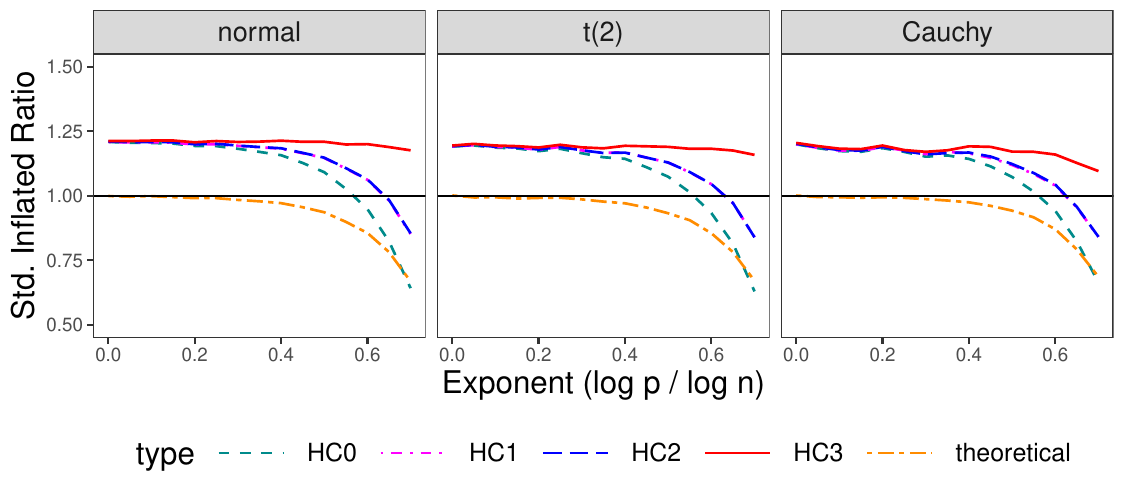}
		\includegraphics[width=0.48\textwidth]{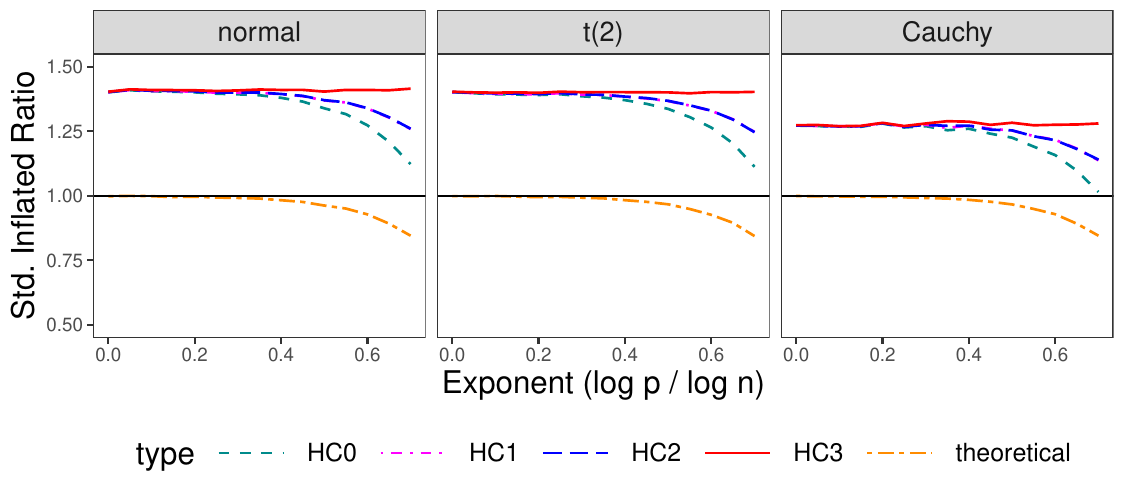}
		\caption{Ratio of standard deviation between five standard deviation estimates, $\sigma_{n}, \hat{\sigma}_{\textup{HC}0}, \hat{\sigma}_{\textup{HC}1}, \hat{\sigma}_{\textup{HC}2}, \hat{\sigma}_{\textup{HC}3}$, and the true standard deviation of $\htau$.}
	\end{subfigure}
		\begin{subfigure}{0.99\textwidth}  
                  \centering
		\includegraphics[width=0.48\textwidth]{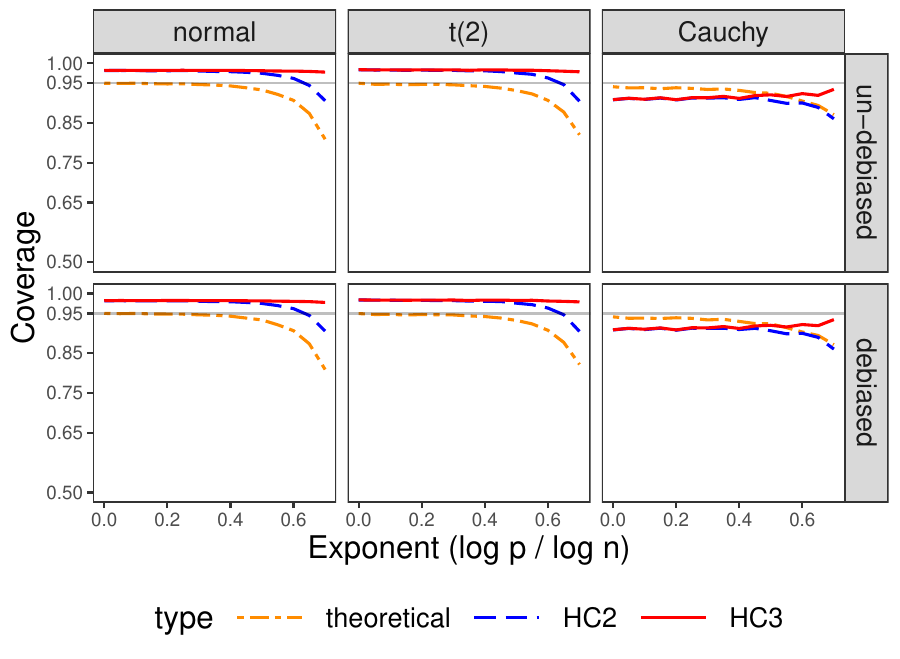}
		\includegraphics[width=0.48\textwidth]{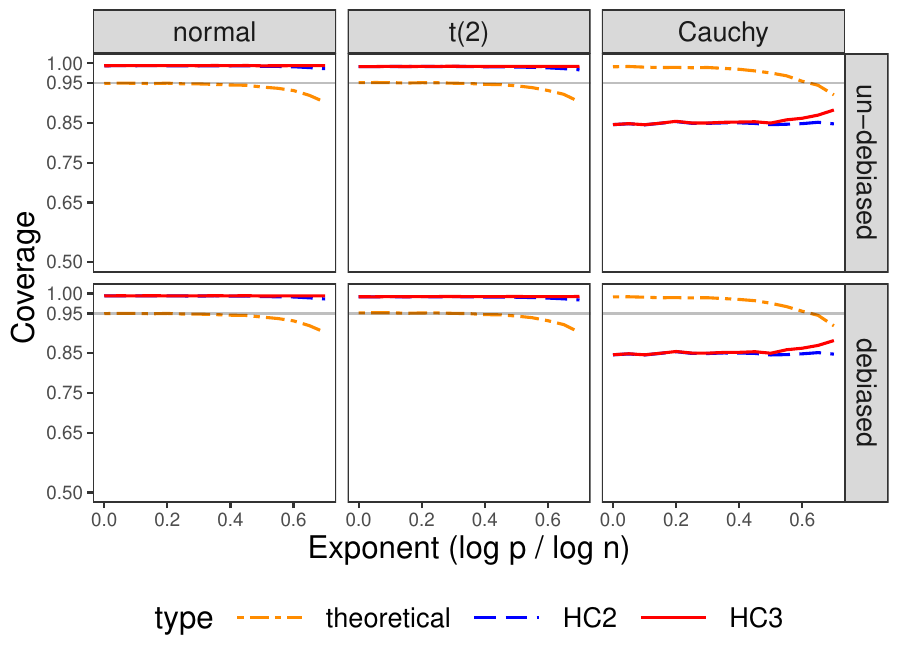}
		\caption{Empirical $95\%$ coverage rates of $t$-statistics derived from two estimators and four variance estimators (% ``truth" for the true sampling variance (of $\htau$), 
                  ``theoretical'' for $\sigma_{n}^{2}$, ``HC2'' for $\hat{\sigma}_{\text{HC}2}^{2}$ and ``HC3''  for $\hat{\sigma}_{\text{HC}3}^{2}$)}
	\end{subfigure}
	\caption{Simulation with covariate trimming. $X$ is a realization of a random matrix with i.i.d. $t(1)$ entries and $\eps(t)$ is a realization of a random vector with i.i.d. entries: (Left) $\pi_{1} = 0.2$; (Right) $\pi_{1} = 0.5$. Each column corresponds to a distribution of $\eps(t)$. }
\end{figure}

\begin{figure}
	\centering
	\begin{subfigure}{0.95\textwidth}  
          \centering
		\includegraphics[width=0.48\textwidth]{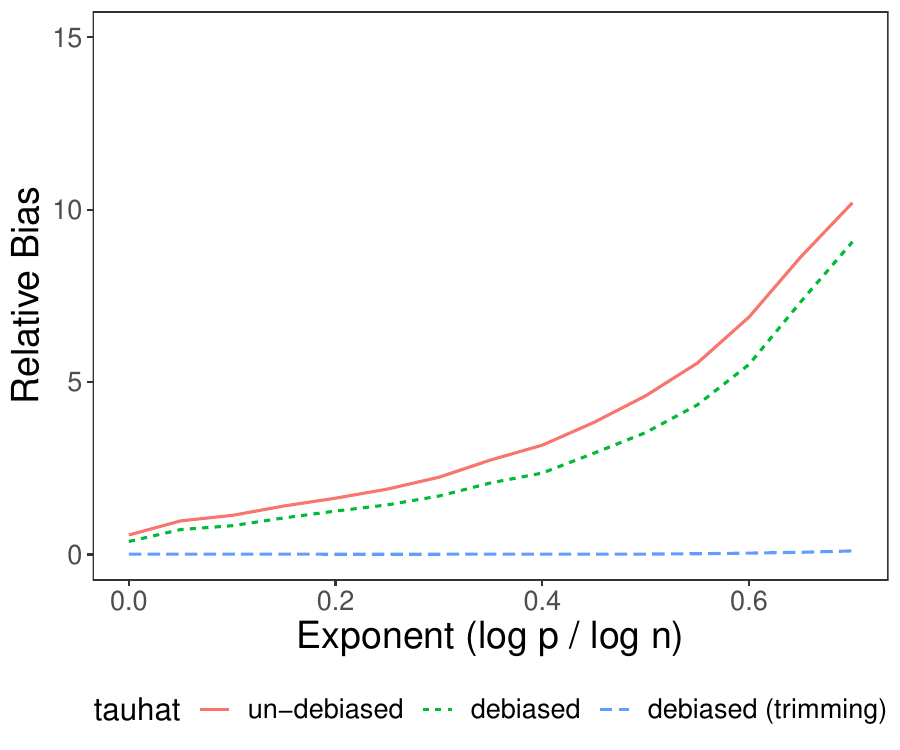}
		\includegraphics[width=0.48\textwidth]{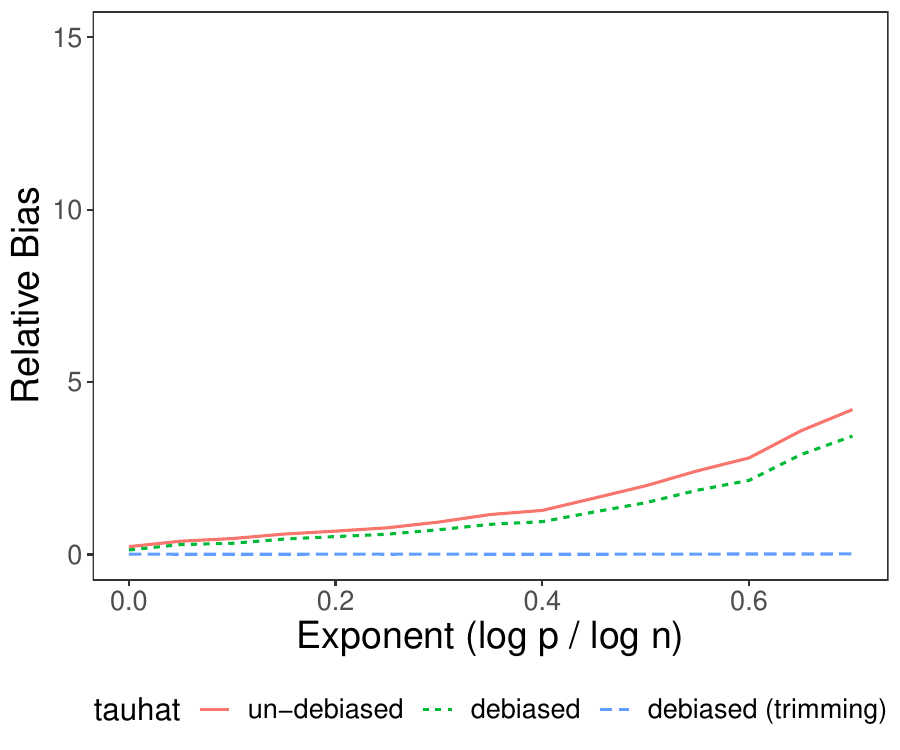}
		\caption{ Relative bias of $\hdtau$ and $\htau$.}\label{fig:t1_bias_worst}
	\end{subfigure}
		\begin{subfigure}{0.95\textwidth}  
                  \centering
		\includegraphics[width=0.48\textwidth]{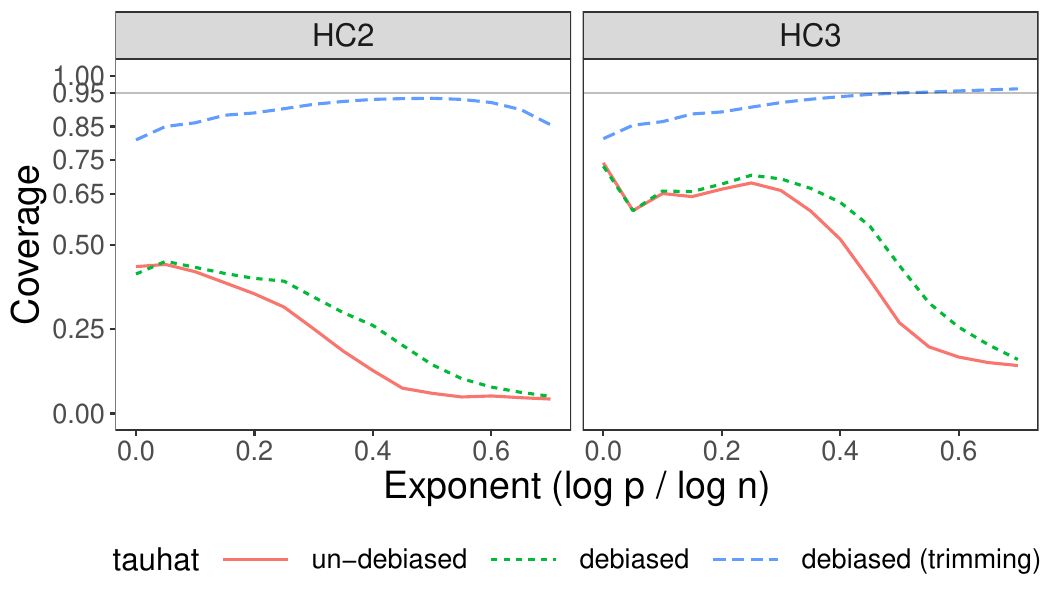}
		\includegraphics[width=0.48\textwidth]{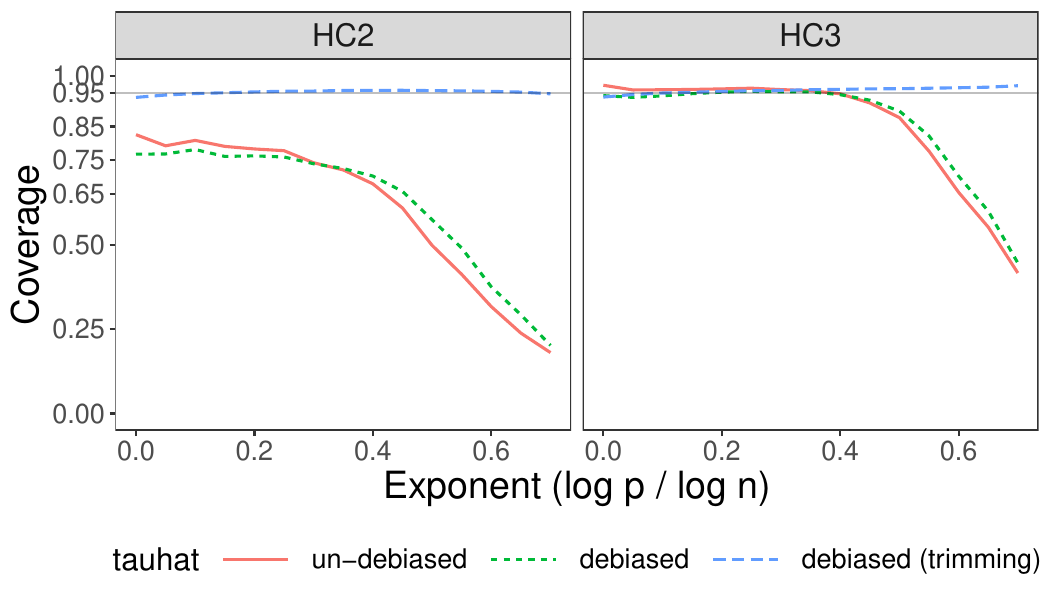}
		\caption{Empirical $95\%$ coverage rates of $t$-statistics derived from two estimators and two variance estimators (``HC2'' for $\hat{\sigma}_{\text{HC}2}^{2}$ and ``HC3''  for $\hat{\sigma}_{\text{HC}3}^{2}$)}
	\end{subfigure}
	\caption{Simulation with and without covariate trimming with $\eps(t)$ defined in \eqref{eq:worst_pout}. $X$ is a realization of a random matrix with i.i.d. $t(1)$ entries: (Left) $\pi_{1} = 0.2$; (Right) $\pi_{1} = 0.5$. }\label{fig::simulation_t1_02}
\end{figure}      

\clearpage

\subsection{Experimental results on synthetic datasets with sharp nulls}
For each setting of $X$ and $\pi_1$, we also consider the sharp null with $\eps(1) = \eps(0)$. Specifically, we generate $\eps(1)$ with i.i.d. entries from $N(0, 1)$ or $t(2)$ or $t(1)$. The latter is more challenging in terms of the variance estimation and coverage rate because the term $S_{\tau}^2 = 0$ in the asymptotic variance. The results are presented in Figs. \ref{fig::simulation_normal_01_sharp}--\ref{fig::simulation_t1_02_sharp}. We observe that all results are qualitatively similar to those for the cases in the last subsection.

\begin{figure}[htp]
	\centering
	\begin{subfigure}{0.99\textwidth}  
          \centering
		\includegraphics[width=0.48\textwidth]{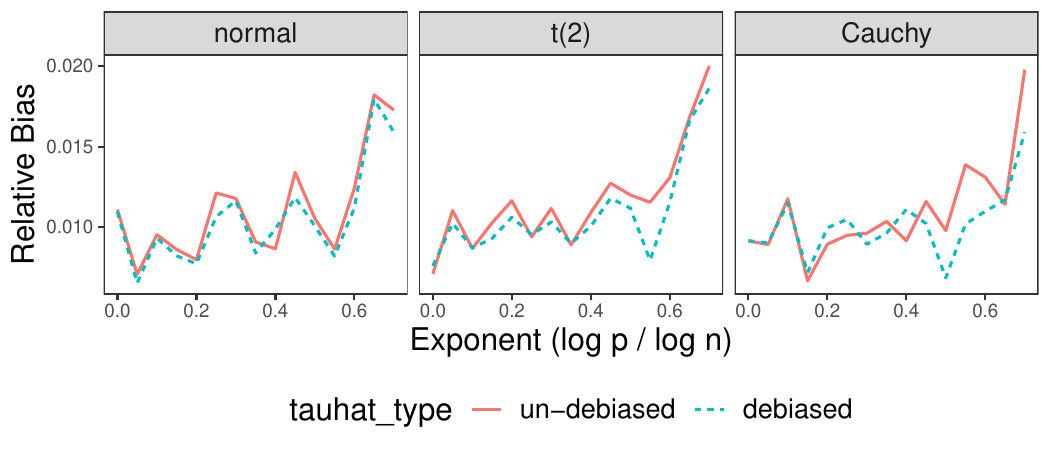}
		\includegraphics[width=0.48\textwidth]{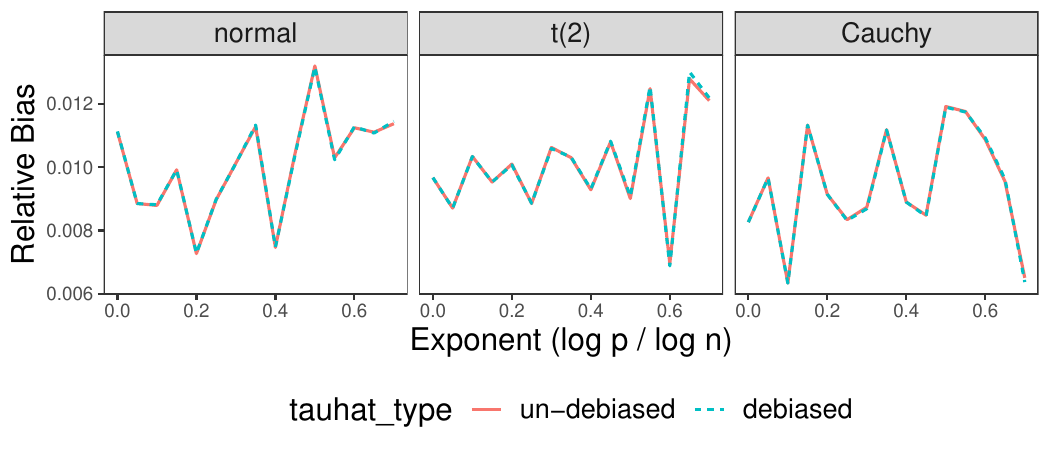}
		\caption{ Relative bias of $\hdtau$ and $\htau$.}
	\end{subfigure}
	\begin{subfigure}{0.99\textwidth}  
          \centering
		\includegraphics[width=0.48\textwidth]{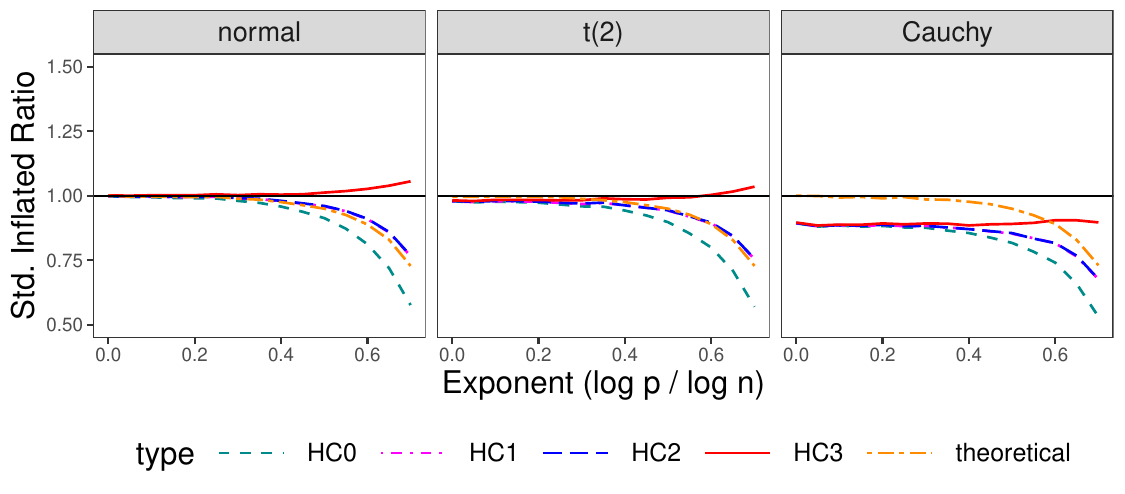}
		\includegraphics[width=0.48\textwidth]{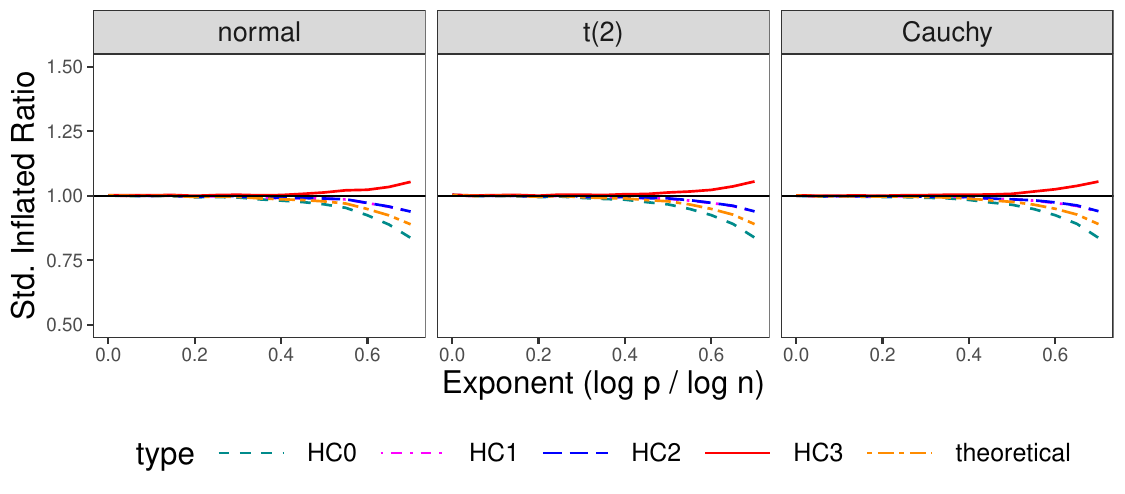}
		\caption{Ratio of standard deviation between five standard deviation estimates, $\sigma_{n}, \hat{\sigma}_{\textup{HC}0}, \hat{\sigma}_{\textup{HC}1}, \hat{\sigma}_{\textup{HC}2}, \hat{\sigma}_{\textup{HC}3}$, and the true standard deviation of $\htau$.}
	\end{subfigure}
		\begin{subfigure}{0.99\textwidth}  
                  \centering
		\includegraphics[width=0.48\textwidth]{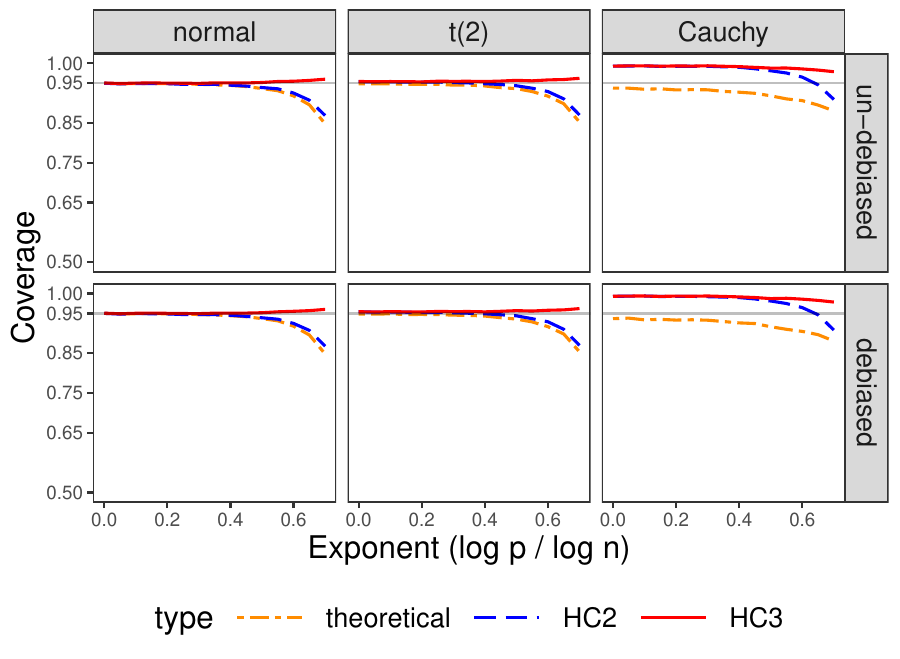}
		\includegraphics[width=0.48\textwidth]{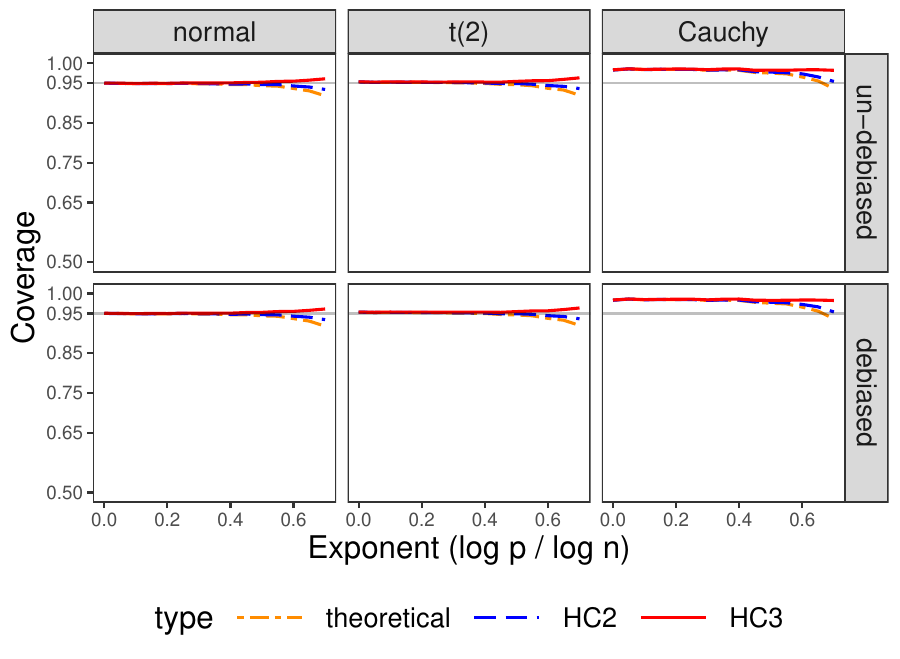}
		\caption{Empirical $95\%$ coverage rates of $t$-statistics derived from two estimators and four variance estimators (% ``truth" for the true sampling variance (of $\htau$), 
                  ``theoretical'' for $\sigma_{n}^{2}$, ``HC2'' for $\hat{\sigma}_{\text{HC}2}^{2}$ and ``HC3''  for $\hat{\sigma}_{\text{HC}3}^{2}$)}
	\end{subfigure}
	\caption{Simulation without covariate trimming. $X$ is a realization of a random matrix with i.i.d. $N(0, 1)$ entries and $\eps(1) = \eps(0)$ is a realization of a random vector with i.i.d. entries: (Left) $\pi_{1} = 0.2$; (Right) $\pi_{1} = 0.5$. Each column corresponds to a distribution of $\eps(t)$. }\label{fig::simulation_normal_01_sharp}
\end{figure}

\begin{figure}[htp]
	\centering
	\begin{subfigure}{0.99\textwidth}  
          \centering
		\includegraphics[width=0.48\textwidth]{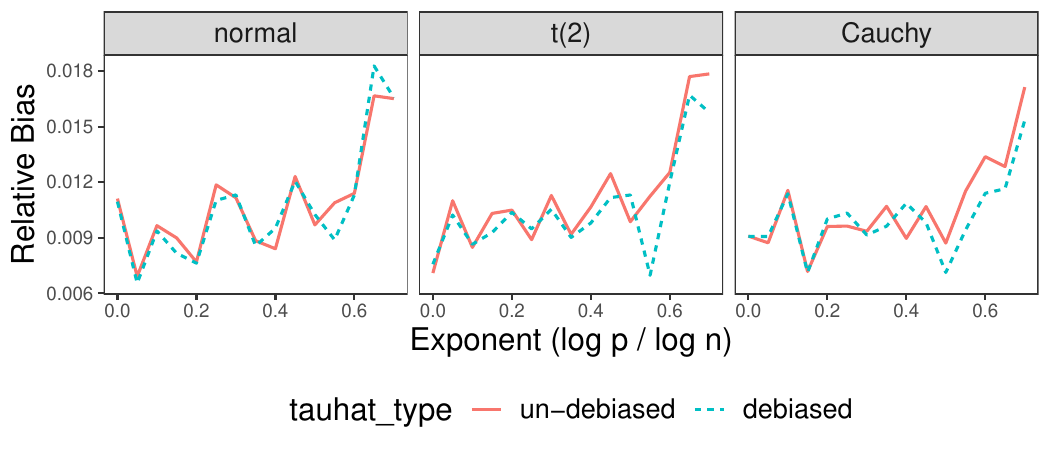}
		\includegraphics[width=0.48\textwidth]{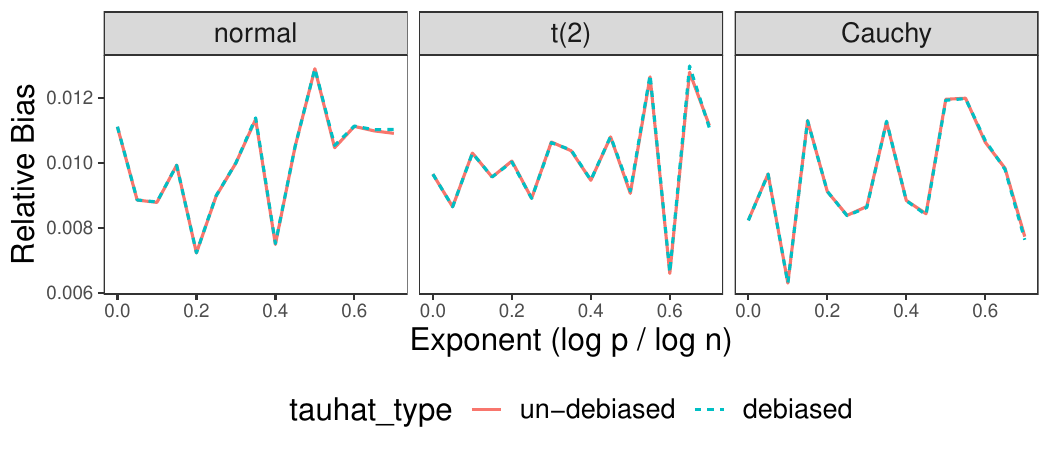}
		\caption{ Relative bias of $\hdtau$ and $\htau$.}
	\end{subfigure}
	\begin{subfigure}{0.99\textwidth}  
          \centering
		\includegraphics[width=0.48\textwidth]{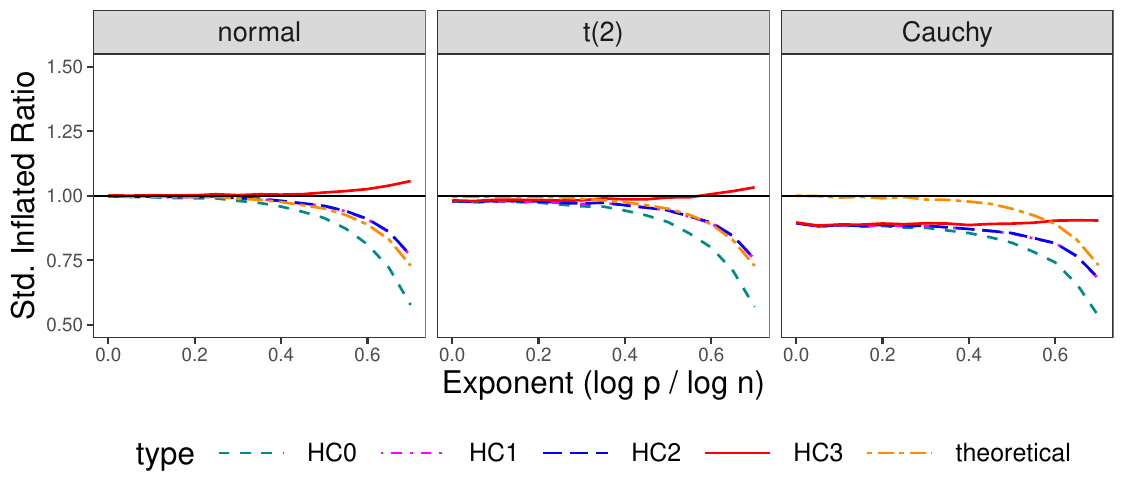}
		\includegraphics[width=0.48\textwidth]{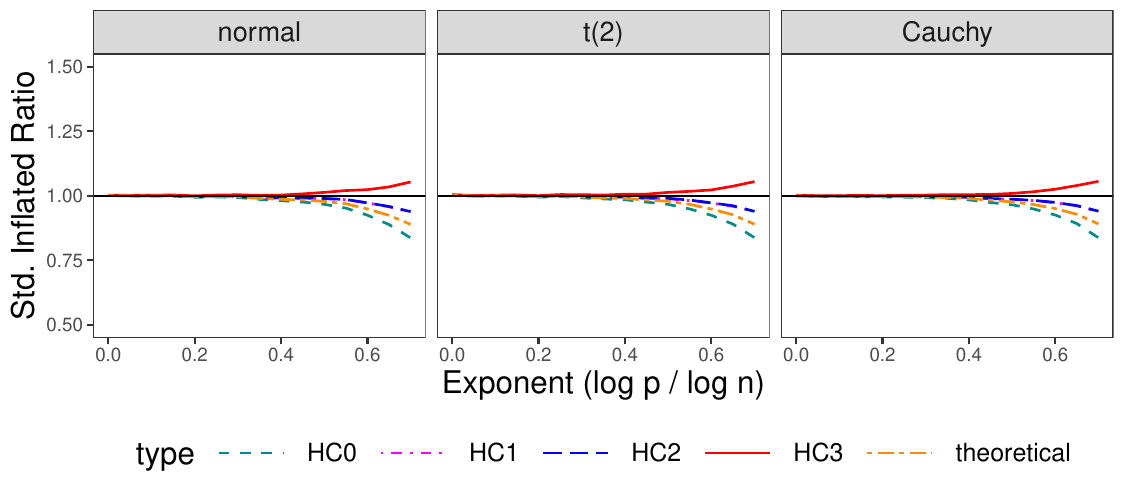}
		\caption{Ratio of standard deviation between five standard deviation estimates, $\sigma_{n}, \hat{\sigma}_{\textup{HC}0}, \hat{\sigma}_{\textup{HC}1}, \hat{\sigma}_{\textup{HC}2}, \hat{\sigma}_{\textup{HC}3}$, and the true standard deviation of $\htau$.}
	\end{subfigure}
		\begin{subfigure}{0.99\textwidth}  
                  \centering
		\includegraphics[width=0.48\textwidth]{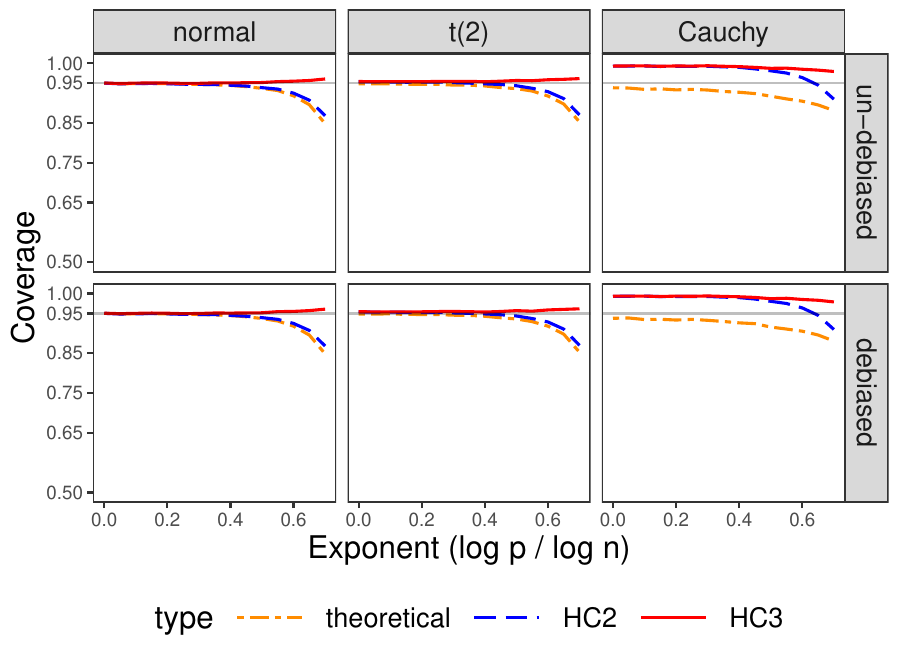}
		\includegraphics[width=0.48\textwidth]{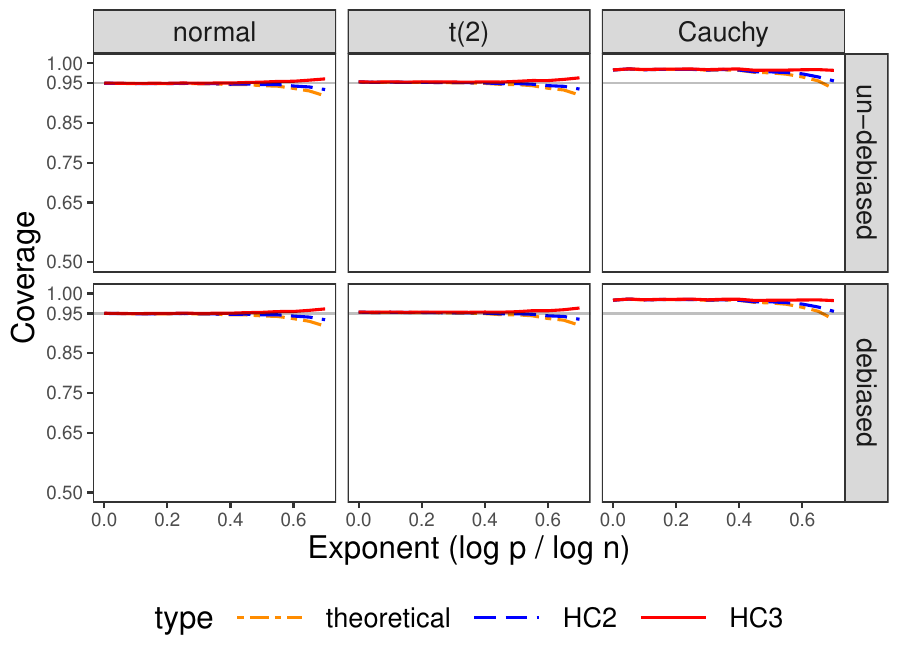}
		\caption{Empirical $95\%$ coverage rates of $t$-statistics derived from two estimators and four variance estimators (% ``truth" for the true sampling variance (of $\htau$), 
                  ``theoretical'' for $\sigma_{n}^{2}$, ``HC2'' for $\hat{\sigma}_{\text{HC}2}^{2}$ and ``HC3''  for $\hat{\sigma}_{\text{HC}3}^{2}$)}
	\end{subfigure}
	\caption{Simulation with covariate trimming. $X$ is a realization of a random matrix with i.i.d. $N(0, 1)$ entries and $\eps(1) = \eps(0)$ is a realization of a random vector with i.i.d. entries: (Left) $\pi_{1} = 0.2$; (Right) $\pi_{1} = 0.5$. Each column corresponds to a distribution of $\eps(t)$. }
\end{figure}

\begin{figure}[htp]
	\centering
	\begin{subfigure}{0.99\textwidth}  
          \centering
		\includegraphics[width=0.48\textwidth]{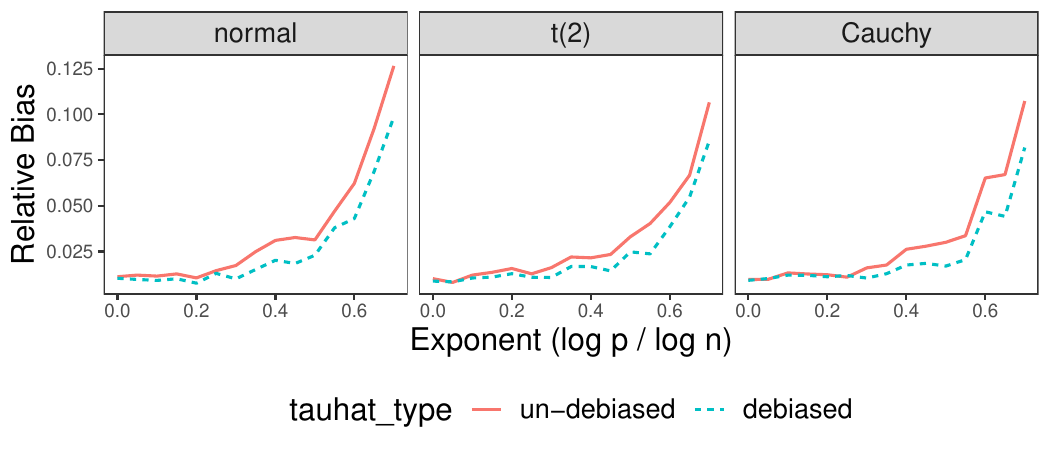}
		\includegraphics[width=0.48\textwidth]{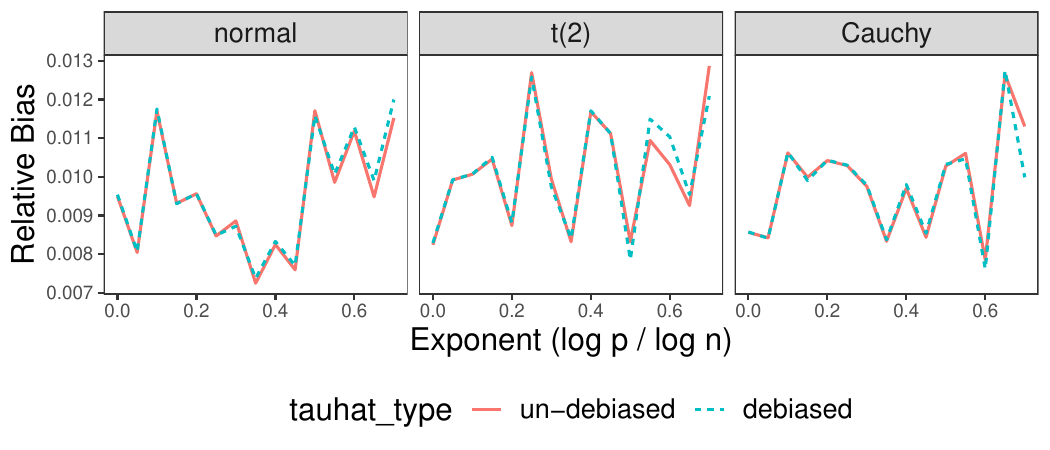}
		\caption{ Relative bias of $\hdtau$ and $\htau$.}
	\end{subfigure}
	\begin{subfigure}{0.99\textwidth}  
          \centering
		\includegraphics[width=0.48\textwidth]{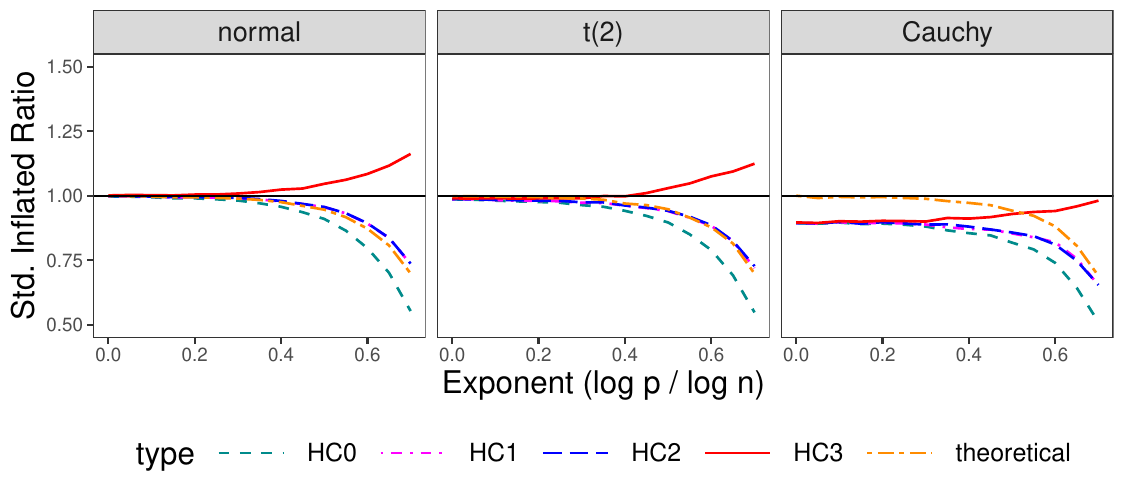}
		\includegraphics[width=0.48\textwidth]{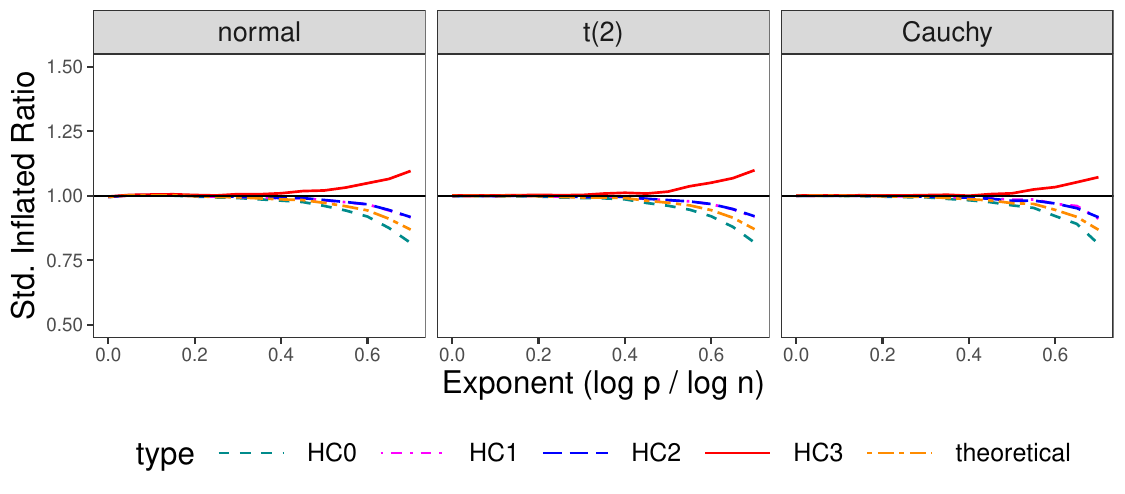}
		\caption{Ratio of standard deviation between five standard deviation estimates, $\sigma_{n}, \hat{\sigma}_{\textup{HC}0}, \hat{\sigma}_{\textup{HC}1}, \hat{\sigma}_{\textup{HC}2}, \hat{\sigma}_{\textup{HC}3}$, and the true standard deviation of $\htau$.}
	\end{subfigure}
		\begin{subfigure}{0.99\textwidth}  
                  \centering
		\includegraphics[width=0.48\textwidth]{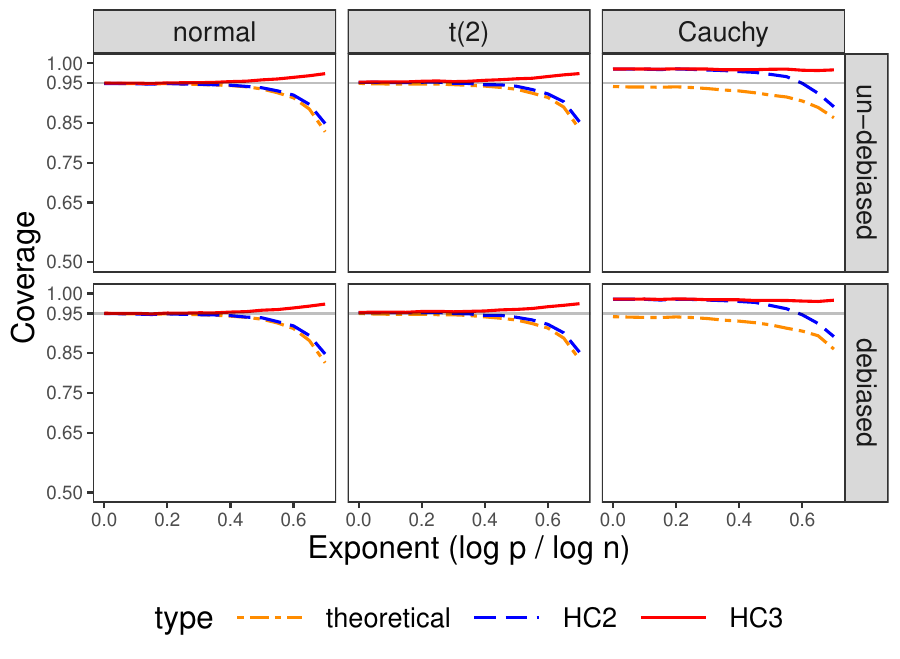}
		\includegraphics[width=0.48\textwidth]{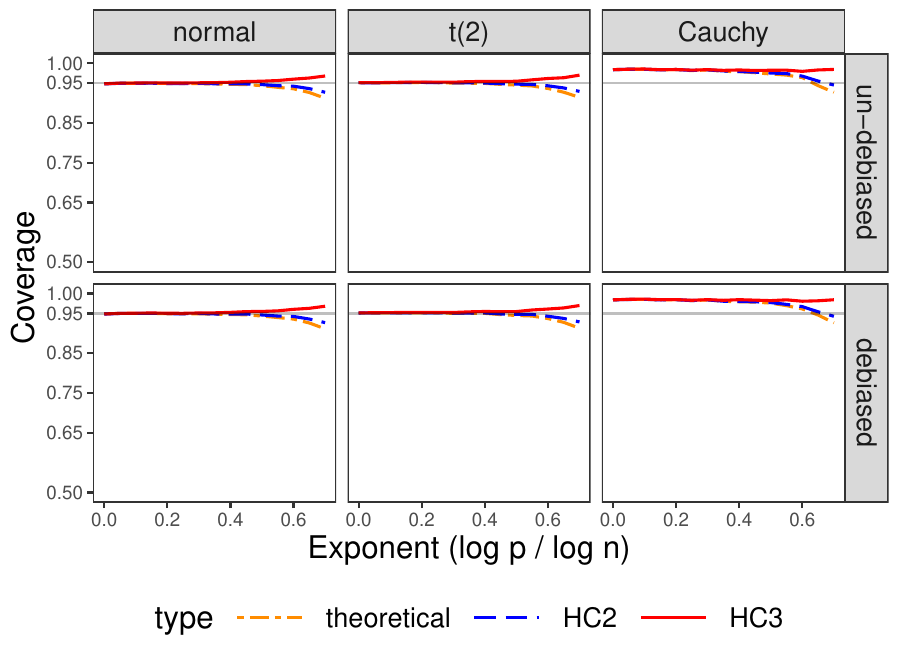}
		\caption{Empirical $95\%$ coverage rates of $t$-statistics derived from two estimators and four variance estimators (% ``truth" for the true sampling variance (of $\htau$), 
                  ``theoretical'' for $\sigma_{n}^{2}$, ``HC2'' for $\hat{\sigma}_{\text{HC}2}^{2}$ and ``HC3''  for $\hat{\sigma}_{\text{HC}3}^{2}$)}
	\end{subfigure}
	\caption{Simulation without covariate trimming. $X$ is a realization of a random matrix with i.i.d. $t(2)$ entries and $\eps(1) = \eps(0)$ is a realization of a random vector with i.i.d. entries: (Left) $\pi_{1} = 0.2$; (Right) $\pi_{1} = 0.5$. Each column corresponds to a distribution of $\eps(t)$. }\label{fig::simulation_t2_01_sharp}
\end{figure}

\begin{figure}[htp]
	\centering
	\begin{subfigure}{0.99\textwidth}  
          \centering
		\includegraphics[width=0.48\textwidth]{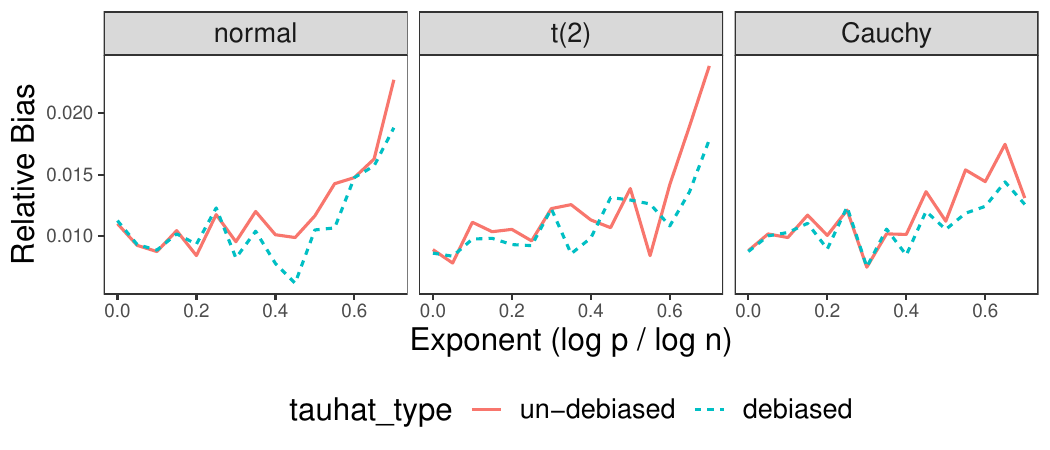}
		\includegraphics[width=0.48\textwidth]{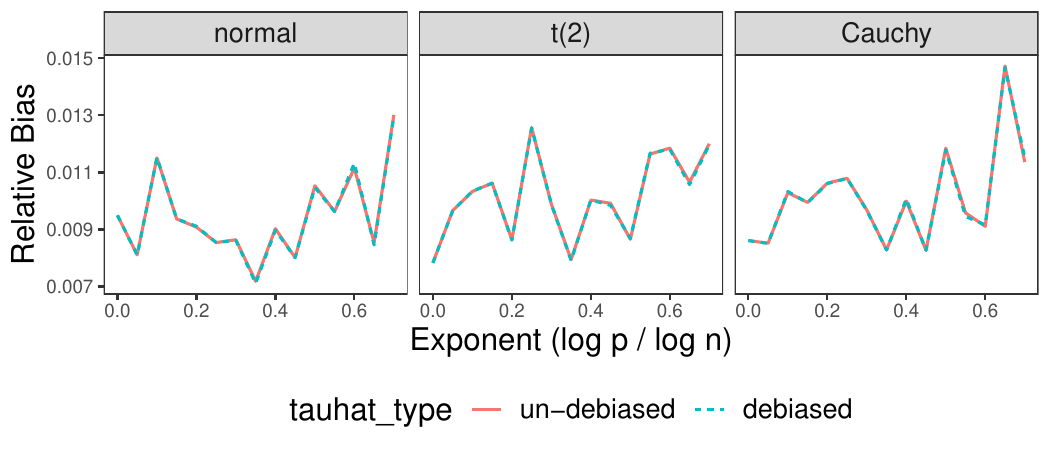}
		\caption{ Relative bias of $\hdtau$ and $\htau$.}
	\end{subfigure}
	\begin{subfigure}{0.99\textwidth}  
          \centering
		\includegraphics[width=0.48\textwidth]{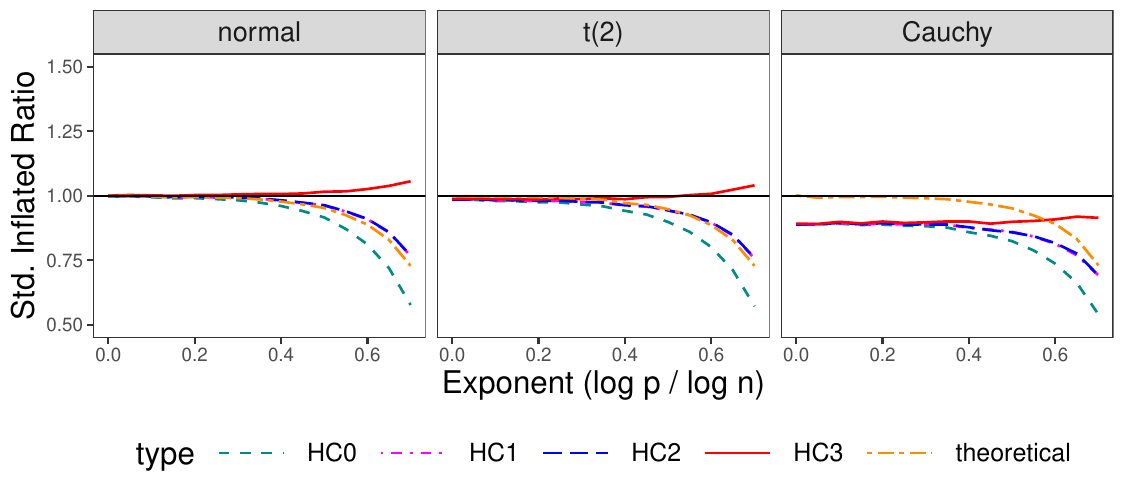}
		\includegraphics[width=0.48\textwidth]{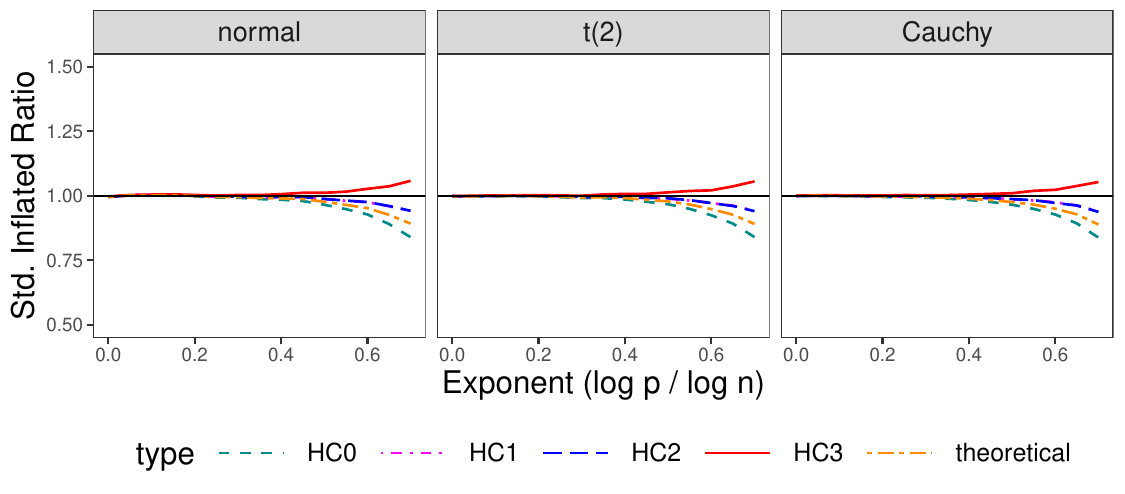}
		\caption{Ratio of standard deviation between five standard deviation estimates, $\sigma_{n}, \hat{\sigma}_{\textup{HC}0}, \hat{\sigma}_{\textup{HC}1}, \hat{\sigma}_{\textup{HC}2}, \hat{\sigma}_{\textup{HC}3}$, and the true standard deviation of $\htau$.}
	\end{subfigure}
		\begin{subfigure}{0.99\textwidth}  
                  \centering
		\includegraphics[width=0.48\textwidth]{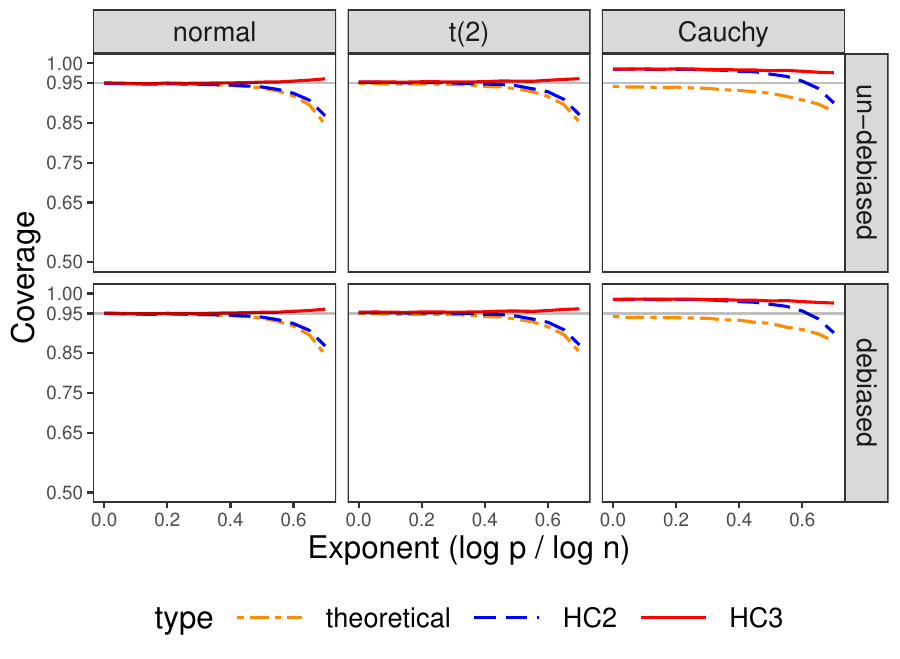}
		\includegraphics[width=0.48\textwidth]{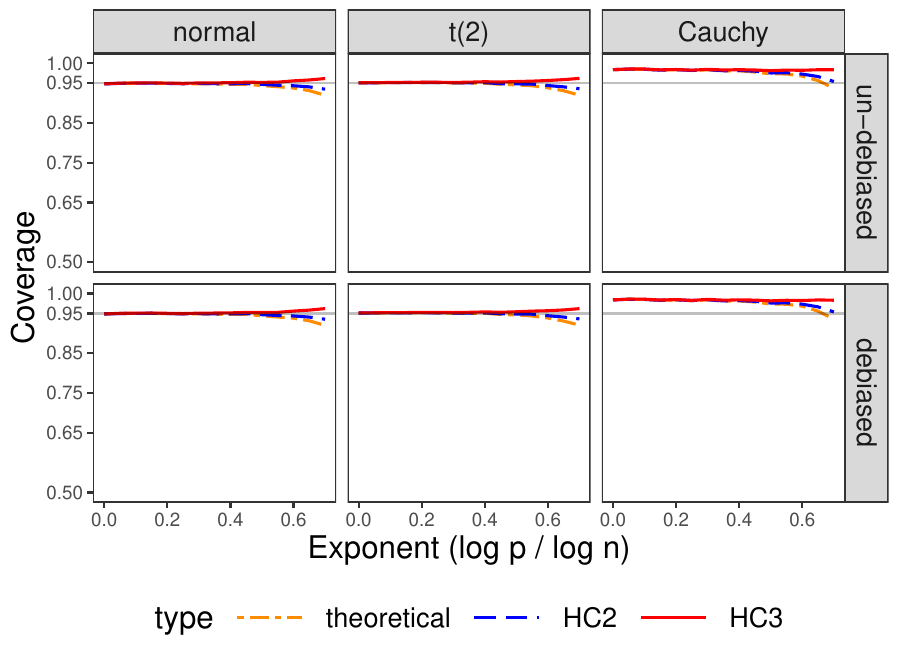}
		\caption{Empirical $95\%$ coverage rates of $t$-statistics derived from two estimators and four variance estimators (% ``truth" for the true sampling variance (of $\htau$), 
                  ``theoretical'' for $\sigma_{n}^{2}$, ``HC2'' for $\hat{\sigma}_{\text{HC}2}^{2}$ and ``HC3''  for $\hat{\sigma}_{\text{HC}3}^{2}$)}
	\end{subfigure}
	\caption{Simulation with covariate trimming. $X$ is a realization of a random matrix with i.i.d. $t(2)$ entries and $\eps(1) = \eps(0)$ is a realization of a random vector with i.i.d. entries: (Left) $\pi_{1} = 0.2$; (Right) $\pi_{1} = 0.5$. Each column corresponds to a distribution of $\eps(t)$. }
\end{figure}

\begin{figure}[htp]
	\centering
	\begin{subfigure}{0.99\textwidth}  
          \centering
		\includegraphics[width=0.48\textwidth]{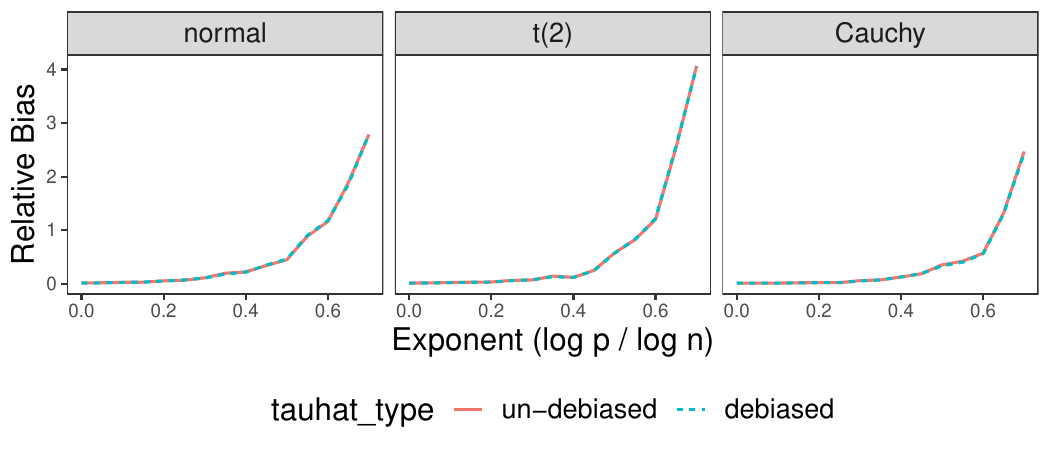}
		\includegraphics[width=0.48\textwidth]{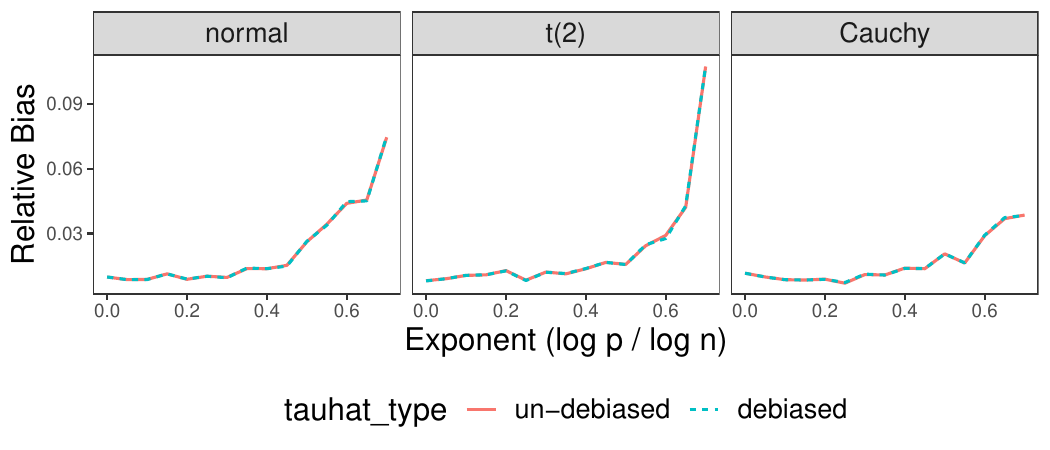}
		\caption{ Relative bias of $\hdtau$ and $\htau$.}
	\end{subfigure}
	\begin{subfigure}{0.99\textwidth}  
          \centering
		\includegraphics[width=0.48\textwidth]{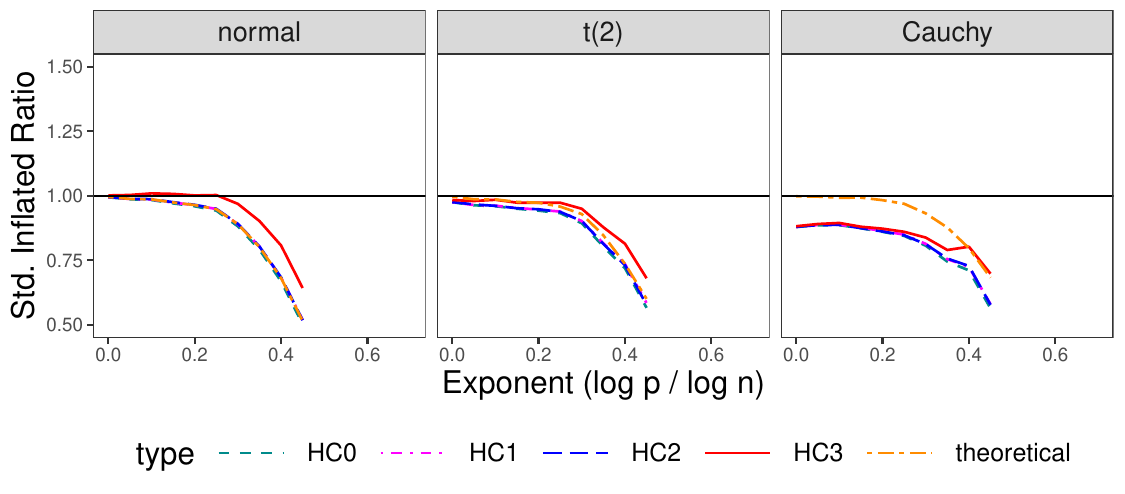}
		\includegraphics[width=0.48\textwidth]{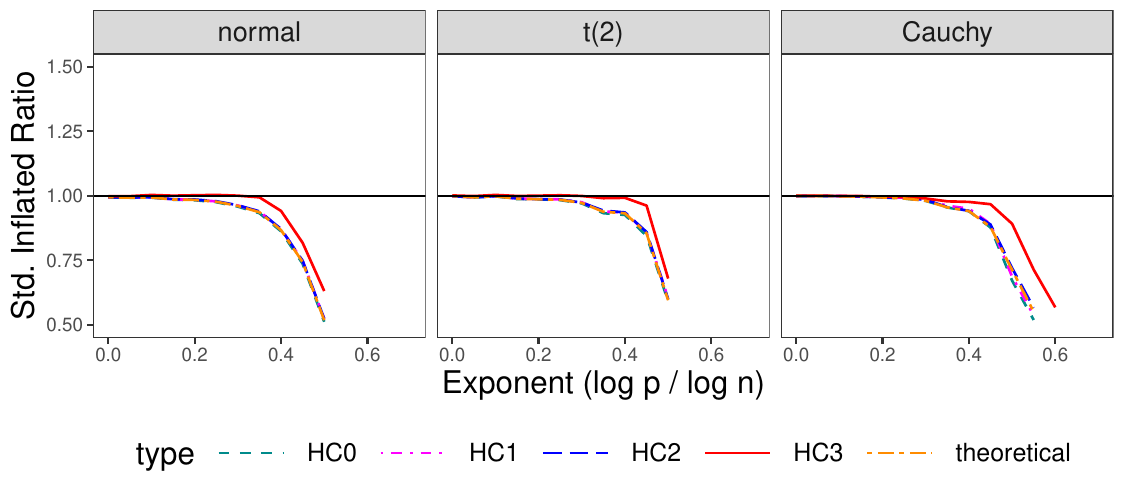}
		\caption{Ratio of standard deviation between five standard deviation estimates, $\sigma_{n}, \hat{\sigma}_{\textup{HC}0}, \hat{\sigma}_{\textup{HC}1}, \hat{\sigma}_{\textup{HC}2}, \hat{\sigma}_{\textup{HC}3}$, and the true standard deviation of $\htau$.}
	\end{subfigure}
		\begin{subfigure}{0.99\textwidth}  
                  \centering
		\includegraphics[width=0.48\textwidth]{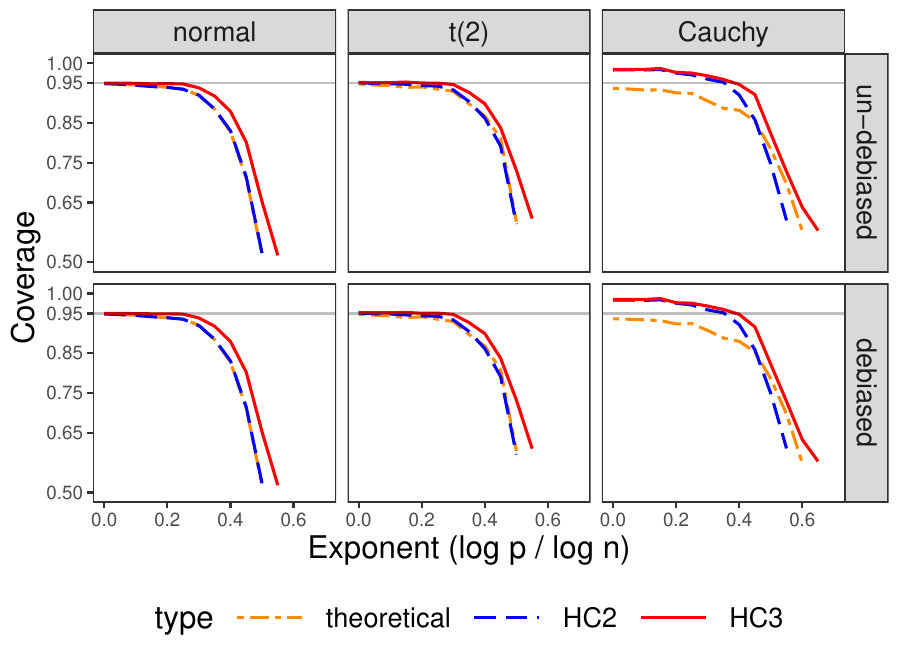}
		\includegraphics[width=0.48\textwidth]{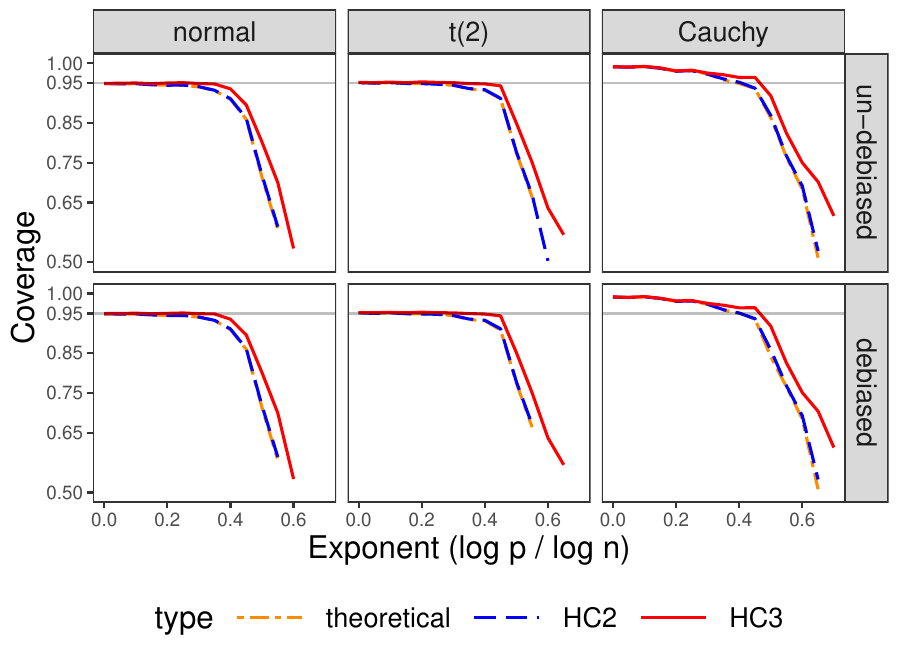}
		\caption{Empirical $95\%$ coverage rates of $t$-statistics derived from two estimators and four variance estimators (% ``truth" for the true sampling variance (of $\htau$), 
                  ``theoretical'' for $\sigma_{n}^{2}$, ``HC2'' for $\hat{\sigma}_{\text{HC}2}^{2}$ and ``HC3''  for $\hat{\sigma}_{\text{HC}3}^{2}$)}
	\end{subfigure}
	\caption{Simulation without covariate trimming. $X$ is a realization of a random matrix with i.i.d. $t(1)$ entries and $\eps(1) = \eps(0)$ is a realization of a random vector with i.i.d. entries: (Left) $\pi_{1} = 0.2$; (Right) $\pi_{1} = 0.5$. Each column corresponds to a distribution of $\eps(t)$. }\label{fig::simulation_t1_01_sharp}
\end{figure}

\begin{figure}[htp]
	\centering
	\begin{subfigure}{0.99\textwidth}  
          \centering
		\includegraphics[width=0.48\textwidth]{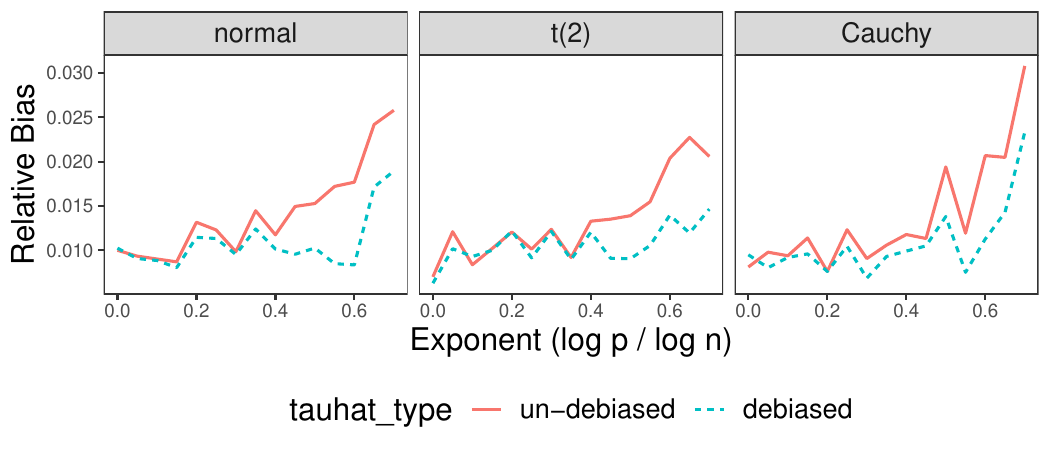}
		\includegraphics[width=0.48\textwidth]{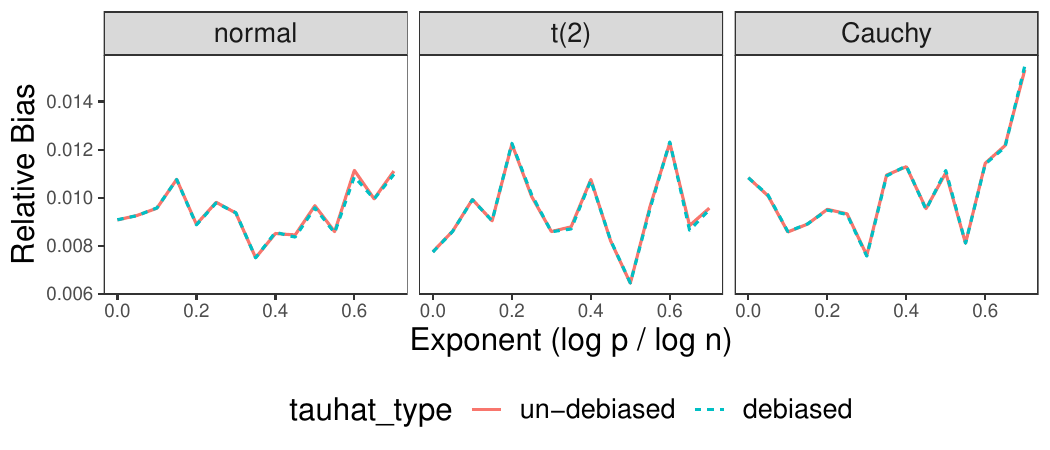}
		\caption{ Relative bias of $\hdtau$ and $\htau$.}
	\end{subfigure}
	\begin{subfigure}{0.99\textwidth}  
          \centering
		\includegraphics[width=0.48\textwidth]{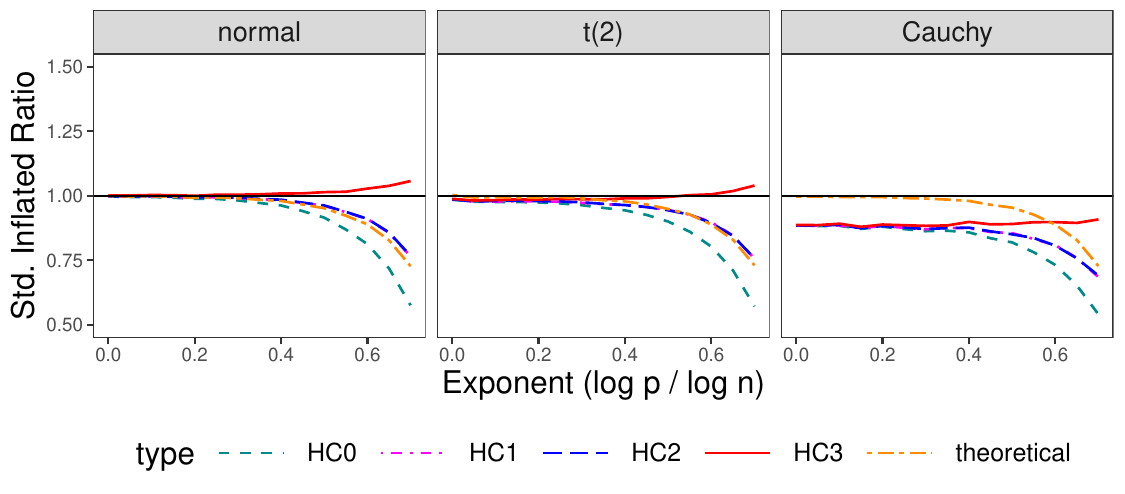}
		\includegraphics[width=0.48\textwidth]{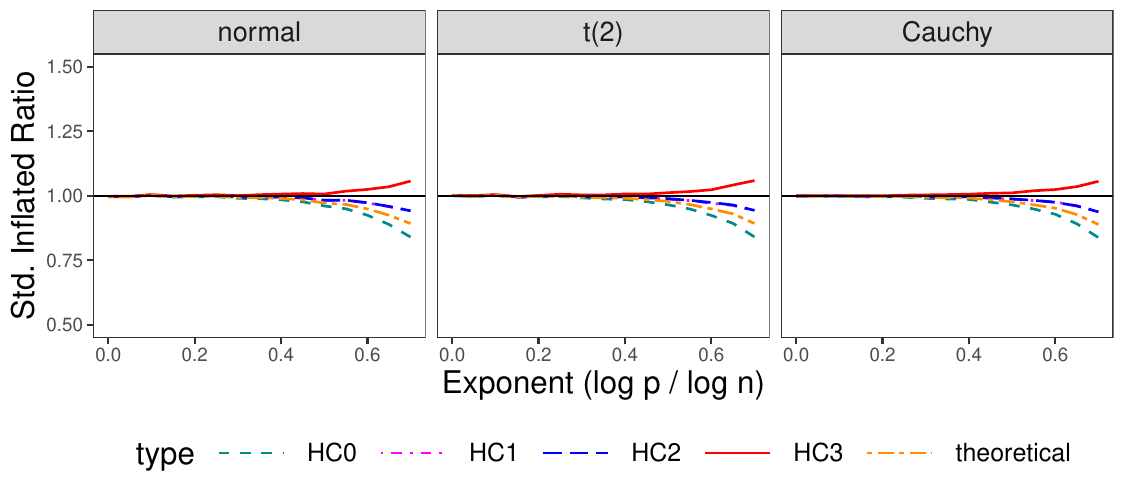}
		\caption{Ratio of standard deviation between five standard deviation estimates, $\sigma_{n}, \hat{\sigma}_{\textup{HC}0}, \hat{\sigma}_{\textup{HC}1}, \hat{\sigma}_{\textup{HC}2}, \hat{\sigma}_{\textup{HC}3}$, and the true standard deviation of $\htau$.}
	\end{subfigure}
		\begin{subfigure}{0.99\textwidth}  
                  \centering
		\includegraphics[width=0.48\textwidth]{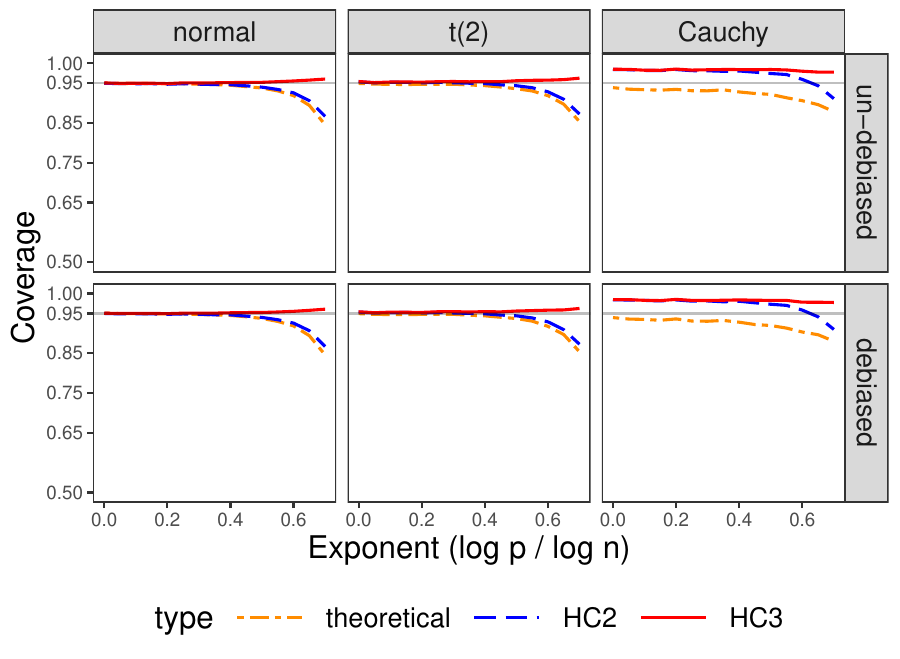}
		\includegraphics[width=0.48\textwidth]{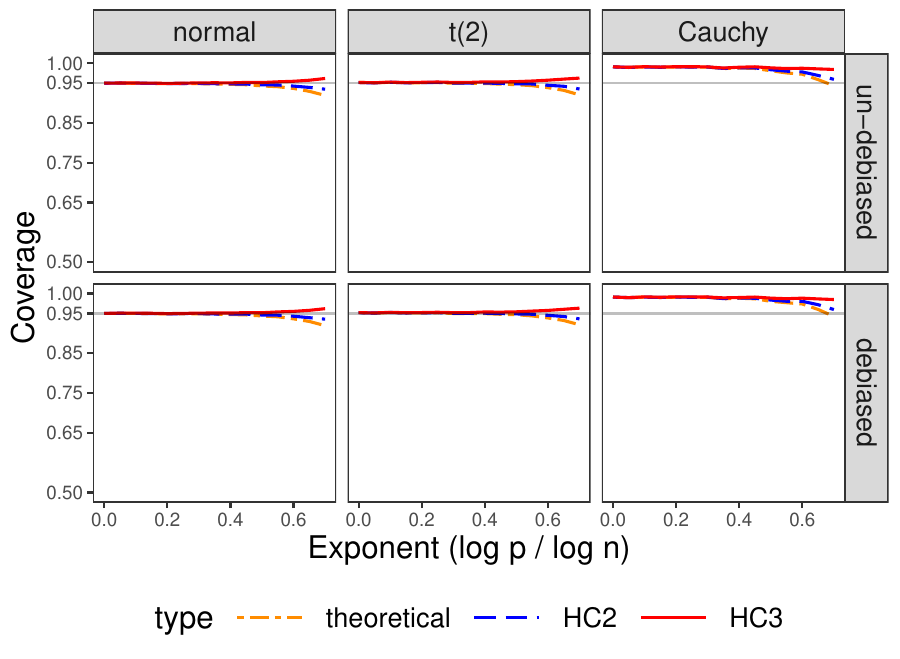}
		\caption{Empirical $95\%$ coverage rates of $t$-statistics derived from two estimators and four variance estimators (% ``truth" for the true sampling variance (of $\htau$), 
                  ``theoretical'' for $\sigma_{n}^{2}$, ``HC2'' for $\hat{\sigma}_{\text{HC}2}^{2}$ and ``HC3''  for $\hat{\sigma}_{\text{HC}3}^{2}$)}
	\end{subfigure}
	\caption{Simulation with covariate trimming. $X$ is a realization of a random matrix with i.i.d. $t(1)$ entries and $\eps(1) = \eps(0)$ is a realization of a random vector with i.i.d. entries: (Left) $\pi_{1} = 0.2$; (Right) $\pi_{1} = 0.5$. Each column corresponds to a distribution of $\eps(t)$. } \label{fig::simulation_t1_02_sharp}
\end{figure}

\end{document}